\newcommand{\s}{\Sigma}
\newcommand{\U}{\mathcal{U}}
\newcommand{\R}{\mathbb{R}}
\newcommand{\sgn}[1]{\mbox{sgn}\left( #1 \right)}
\newcommand{\0}{\mathbf{0}}
\newcommand{\e}{\varepsilon}
\newcommand{\tr}{\mathrm{tr}}
\newcommand{\G}{\mu}
\newcommand{\A}{\mathcal{A}}
\newcommand{\m}[1]{ {\scriptscriptstyle #1}}
\newcommand{\bu}{\bar{u}}
\newcommand{\bv}{\bar{v}}
\newaliascnt{defi}{definition}
\newtheorem{defi}[defi]{Definition}
\newaliascnt{theo}{definition}
\newtheorem{theo}[theo]{Theorem}
\newaliascnt{notation}{definition}
\newtheorem{notation}[notation]{Notation}
\newaliascnt{corol}{definition}
\newtheorem{corol}[corol]{Corolary}
\newaliascnt{lemma}{definition}
\newtheorem{lemma}[lemma]{Lemma}
\newaliascnt{prop}{definition}
\newtheorem{prop}[prop]{Proposition}
\newaliascnt{rem}{definition}
\newtheorem{rem}[rem]{Remark}
\theoremstyle{definition}
\newtheorem{exmp}{Example}[section]
\numberwithin{equation}{section}
 \let\oldtheequation\theequation
    \def\tagform@#1{\maketag@@@{\ignorespaces#1\unskip\@@italiccorr}}
    \renewcommand{\theequation}{(\oldtheequation)}
\author{ Carles Bonet-Reves\footnote{\footnotesize BGSMATH, Dpt. Matem\`{a}tiques, Universitat Polit\`{e}cnica de Catalunya, Diagonal 647, 08028 Barcelona,Spain.} \,, 
Juliana Larrosa\footnote{ \footnotesize Departamento de Matem\'{a}tica, Universidade Federal de Santa Maria, Avenida Roraima 1000, 97195-000  Santa Maria, RS, Brasil.} , 
Tere M-Seara$^*$}
\title{Regularization around a generic codimension one fold-fold singularity}
\date{}
\begin{document}
	
	\maketitle
	
\begin{abstract}
	This paper is devoted to study the generic fold-fold singularity of
	Filippov systems on the plane,
	its unfoldings and its Sotomayor-Teixeira regularization.
	We work with general Filippov systems and provide  the bifurcation
	diagrams of the fold-fold singularity and their unfoldings, proving
	that, under some generic conditions, is a codimension one embedded
	submanifold of  the set of all Filippov systems.
	The regularization of this singularity is studied and its bifurcation
	diagram is shown. In the visible-invisible case, the use of geometric
	singular perturbation theory has been useful to give the complete diagram
	of the unfolding, specially the appearance and disappearance  of periodic
	orbits that are not present in the Filippov vector field. In the case
	of a linear regularization, we prove that the regularized system is
	equivalent to a general slow-fast system studied by Krupa and Szmolyan
	\cite{KrupaS01}.
\end{abstract}

\begin{center}
	{\bf Keywords:} Non-smooth systems; Regularization; Bifurcations; Melnikov Method; Singular perturbation theory.
\end{center}


\section{Introduction}

In this paper, derived from the thesis \cite{larrosa}, we study the generic fold-fold singularity of Filippov systems on the plane, 
its unfoldings and its regularization, more concretely,  its Sotomayor-Teixeira regularization \cite{SotoTei}. 

The first part of the paper is devoted to study the fold-fold singularity. 
This singularity has been studied in \cite{Kuznetsov} and \cite{mst} by con\-si\-de\-ring some simple normal forms for the Filippov vector fields and 
their unfoldings, and also in the original book of Filippov \cite{Filippov}. 
A systematic study of the set of structurally stable Filippov vector fields was done in \cite{mst} but, besides the previously mentioned works, which study normal forms,
there does not exist a rigorous approach to the codimension one singularities. 
Our goal, realized in \autoref{thm:generic}, 
is to work with general Filippov systems and provide  the bifurcation diagrams of these singularities and their unfoldings, 
proving that the set of the fold-fold singularities, under some generic conditions, is a codimension one embedded submanifold of $\Xi_0$, the set 
of structurally stable Filippov systems.

The second part of the paper is dedicated to study  the regularization of the unfoldings of the fold-fold singularity and is a 
natural continuation of the paper \cite{tere}, where Filippov vector fields near a fold-regular point were considered.
It is known \cite{TeixeiraBuzziSilva,TeixeiraS12} that, under general conditions, in the so-called  sliding and escaping regions,
the regularized system has
a normally hyperbolic invariant manifold, attracting near the sliding region and repelling near the escaping one.
Furthermore, the flow of the regularized vector field reduced to  this invariant manifold
approaches to the Filippov flow. 
In \cite{tere}, these results were extended to visible tangency points, using asymptotic methods
following \cite{MischenkoR80}. 
The work \cite{KH15} extended these results  to $\mathbb{R}^3$ by use of blow-up methods.

The results in this work are mostly given in \cite{larrosa},  therefore the cumbersome computations are referred to it.

During the period of time of writing this paper, the work \cite{KristiansenH15}, where the authors study  this problem, came out.
In \cite{KristiansenH15} the authors perform some changes of variables to simplify the system and then 
study the normal form obtained using  blow-up methods and analyzing it in different charts (variables).
Their analytic approach completely characterizes the existence and the attracting/repelling character of the equilibrium points showing that in some
relevant cases, there is a curve in the parameter plane where the equilibrium of the system has a Hopf bifurcation
They also show that the  (sub/super critical) character of the Hopf bifurcation depends on the considered Filippov vector fields but also on the regularization
function. In fact, in formula (7.15) of that paper, the authors give an explicit formula for the Lyapunov coefficient at the Hopf bifurcation for the 
normal form system.
They also study the appearance and character of the family of periodic orbits at the Hopf bifurcation and their evolution. In the invisible-invisible case, they 
succeed in describing  the family as a smooth family of locally unique periodic orbits, that, in some cases, can undergo a saddle-node bifurcation.
In the visible-invisible case, they prove the existence of a curve in the parameter plane where a Maximal Canard occurs.
Moreover, they prove the existence of a family of locally unique ``big'' periodic orbits for parameters (exponentially) close to the Canard curve. 
The authors conjecture that the ``small'' curves near the Hopf bifurcation and the ``big'' curves near the Canard curve 
belong to the same  smooth family of locally unique periodic orbits.

The approach in our paper is mainly topological providing some new and slightly different
results which complement the ones obtained in  \cite{KristiansenH15}; 
one major goal is to give results directly checkable in a given system, for this reason we work in the original variables of the system, 
and we present its possible phase portraits. 
We use topological methods to  get the generic conditions which determine the phase portrait in terms of some intrinsic and explicit 
quantities that can be computed directly from the original system.
For this reason, although  \cite{KristiansenH15} already computed the values of the Hopf and Canard curves, we can not rely in their formulas (7.14) and (6.22)
because they are only valid for systems in normal form and we have done these computations for general vector fields in Propositions  
\ref{prop:canard}, \ref{prop:toptype}.

We now present these different results and the new ones presented in this paper.

In the visible-invisible case we prove the existence of a  periodic orbit for any value of the parameters   
between the Canard and the Hopf curves in theorems \ref{thm:bdVIsuper} and \ref{thm:bdVIsub} whose stability depends on the relative position of these curves. 
We stress that this result is purely topological and follows the same kind of argument used in
the invisible-invisible case to prove the existence of a stable periodic orbit for any value of the parameters after the Hopf bifurcation  curve 
in Proposition \ref{prop:phiaeZ}.
Furthermore, in proposition \ref{pocanard} we give precise information about the region of existence of the ``big'' periodic orbit which appears close to the Canard curve, 
establishing that it exists before  the Canard curve when it is unstable and after the Canard curve when it is stable using again topological reasonings. 
Moreover, the stability of this ``big'' periodic orbit is studied and we show that, 
analogously to what happens at the Hopf bifurcation, it depends on the considered Filippov vector field but also on the regularization
function as formula  \eqref{Rsign} proves.

This topological approach does not answer the conjecture of \cite{KristiansenH15} but it gives a precise information about  
the domain of existence of the periodic  orbits in the visible-invisible case and their possible saddle-node bifurcations.

Moreover, in the visible-invisible case when the transition function is linear, 
we present some new results about the position of the curve in the parameter plane where the maximal Canard exists. 
We also provide the complete bifurcation diagram of the regularized system. 

In \cite{KristiansenH15}  a  Melnikov-based argument introduced in \cite{KrupaS01}  was  used  to continue the small  periodic orbits arising at the Hopf bifurcation. 
In section \ref{sec:melnikov} we develop the Melnikov method to compute periodic orbits and in Proposition \ref{prop:propiedadesM}  we study the properties of 
the Melnikov function  and show how this function can be used, as an alternative to the Lyapunov coefficient, to detect the subcritical/supercritical
character at the Hopf bifurcation in a given system. We also give conditions on  this function that guarantee global unicity of the periodic orbits both 
in the visible-visible  or the visible-invisible cases (see Proposition \ref{prop:phiaeZ} and Theorem \ref{thm:bdIIsuper}), 
and we show that it can be used  to compute the saddle-node bifurcations in concrete examples (see Proposition \ref{prop:melnikov}). 

Now we explain the contain of the paper.
We consider  a Filippov vector field $Z=(X,Y)$ having a fold-fold point, that we assume being at the origin $(x,y)=\0$,  
we take $Z^\alpha$ its unfolding, were $\alpha$ is the unfolding parameter,
and its regularization $Z^\alpha_\e$ (see \autoref{streg}),  where $\e$ is the regularization parameter.
Our goal is to see if the dynamics of $Z^\alpha_\e$ is equivalent, from a topological point of view, to the one of $Z^\alpha$.
The results are different depending on the fold-fold type, which can be visible-visible, invisible-invisible or visible-invisible.

As can be expected, the behavior of the regularized system 
$Z^\alpha_\e$ is similar to the one of $Z^\alpha$ if we fix $\alpha \ne 0$ and consider $\e$ small enough; if $Z^\alpha$ has a sliding zone in its switching surface 
and the sliding vector field has a pseudo equilibrium $Q(\alpha)$, then the regularized vector field $Z^\alpha_\e$ has an equilibrium $P(\alpha, \e)$ of the same type. 
Both conditions depend on the original vector field $Z=(X,Y)$ satisfying $X^1\cdot Y^1(\0)<0$. Analogously, when $Z^\alpha$ has a crossing periodic orbit, 
the regularized vector field $Z^\alpha_\e$ has a periodic orbit of the same type.

In the visible-visible case, both the unfolding $Z^\alpha$ and its regularization $Z^\alpha_\e$ have the same topological type if $\alpha$ and $\e$ are small enough:
the critical point $P(\alpha, \e)$ is a saddle point for $Z^\alpha_\e$ ($Q(\alpha)$ is a pseudo-saddle for $Z^\alpha$) and there is 
no other interesting dynamics near it.

The invisible-invisible case is more involved. 
In this case, the fold-fold  is  the so-called pseudo-focus case in the language of Filippov systems \cite{Kuznetsov}, 
and its  attracting or repelling character  can be checked studying the return map around it (c.f  \cite{MR637373}).
First, we see  that the character of the critical point $P(0, \e)$ of the regularization $Z_\e$ 
is independent of the character of the fold-fold point: $P(0, \e)$  can be a (repelling or attracting) focus or a center. 
One understands better the dynamics when  one considers the regularization of the unfoldings $Z^\alpha_\e$. 
It is known that $Z^\alpha$ has a pseudo-node $Q(\alpha)$. 
We see that $P(\alpha, \e)$ is a node  with the same character as $Q(\alpha)$ for fixed $\alpha\ne 0$ and $\e$ small enough. 
We also find a curve $\mathcal{D}$ in the parameter plane of the form $\e= C \alpha ^2+\mathcal{O}( \alpha ^3)$ where the critical point $P(\alpha, \e)$ 
becomes a focus and another curve $\mathcal{H}$ of the form $\alpha = \delta _\mathcal{H}\e+ \mathcal{O}(\e^2)$ 
where there is a Hopf bifurcation which creates
a periodic orbit $\Delta^{\alpha, *} _{\e}$ ($*=s,u$ since the orbit can be stable or unstable depending of the character of the Hopf bifurcation). 
On the other hand, it is well known (\cite{Kuznetsov}) that $Z^\alpha$ has a periodic crossing cycle $\Gamma^\alpha$ for $\alpha$ at one side of $0$. 
We can prove that for $\alpha$ and $\e$ at one side of the Hopf curve, $Z^\alpha_\e$ has a periodic orbit $\Gamma^{\alpha,*}_\e$ whose character is the 
opposite to the one of the critical point. 
Moreover, for fixed $\alpha$ and $\e$ small enough a periodic orbit $\Gamma^{\alpha,*}_\e= \Gamma ^\alpha + \mathcal{O}(\e)$ 
exists for the regularization  $Z^\alpha_\e$.
One would expect that the periodic orbit created at the Hopf bifurcation of the regularization $Z^\alpha_\e$ increases in size until it becomes 
$\Gamma^{\alpha,*}_\e$, but this is not always the case. 
Depending on the attracting/repelling character of the fold-fold given by the return map and sub/supercritical character  of the Hopf
bifurcation,  both periodic orbits can appear in a saddle-node bifurcation of periodic orbits and only the ``big one'' $\Gamma^{\alpha,*}_\e$ 
persists and becomes the cycle 
$\Gamma^\alpha$. 

In short: the periodic orbit arising from the non-smooth crossing cycle can either ``die'' at the Hopf bifurcation or coexist with the periodic orbit 
born at the Hopf bifurcation, and both die in a saddle node bifurcation of periodic orbits. 
It is important to stress that, as has been already observed in \cite{KristiansenH15}, the character of the Hopf bifurcation depends on the transition function. 

The dynamics is richer in the regularization of  the visible-invisible fold. 
The first observation is that the unfoldings of this fold are of different topological behavior depending on an intrinsic quantity of the original Filippov vector
field $Z$. 
In one case, the pseudo equilibrium $Q(\alpha)$ of the unfolding $Z^\alpha$ is a saddle point and its dynamics and the one of its  regularization $Z^\alpha_\e$ are
topologically equivalent. 
This case is similar to the visible-visible fold.

The other case is the one which presents the more interesting  dynamics. 
The pseudo-equilibrium $Q(\alpha)$ of the unfolding $Z^\alpha$ is a node and  the  behavior is similar to  the invisible-invisible case.
The critical point $P(\alpha, \e)$ is also  a node with the same character as $Q(\alpha)$ 
for fixed $\alpha \ne 0$ and $\e$ small enough. It becomes a focus when the parameters cross the parabola  $\mathcal{D}$
and suffers a Hopf bifurcation at the curve $\mathcal{H}$.

The most difficult question is to determine what happens with the periodic  orbit $\Delta^{\alpha,*} _{\e}$ that appears at the Hopf bifurcation because 
there are no periodic orbits in the unfolding $Z^\alpha$.
In order to understand this phenomenon we have to investigate the slow-fast nature of the regularized vector field $Z^\alpha_\e$ when written in scaled variables.
Using the  methods of singular perturbation theory, we have proved that this slow-fast system has a stable Fenichel manifold  
and an unstable one that coincide along a maximal Canard if  the parameters are in a curve $\mathcal{C}$ of the form 
$\alpha = \delta_\mathcal{C}\e + \mathcal{O}(\e ^{3/2})$. 
The existence of this maximal Canard creates a big periodic orbit $\Delta^{\alpha,\mathcal{C}} _{\e}$ 
(the so-called Canard explosion phenomenon, see \cite{KristiansenH15}).
Then, depending o the character of both periodic orbits, the interaction  between this ``big one'' $\Delta^{\alpha,\mathcal{C}} _{\e}$ 
with the ``small one'' $\Delta^{\alpha,*}_\e$,  emerging at the Hopf bifurcation,  creates a richer dynamics that makes the orbits 
disappear when they meet at a different saddle-node bifurcations. 
Our analysis shows that, analogously to the Hopf bifurcation, the attracting or repelling character of the periodic orbit arising at the Canard also depends
on the transition function.

The paper is organized as follows.
In \autoref{sec:revisited}, we recall the basic concepts of Filippov vector fields and the intrinsic
quantities which characterize the different types of fold-fold.  
The main result of this section is \autoref{thm:generic} where we prove that
the fold-fold singularity satisfying some generic conditions is a codimension one singularity. 
The proof of the theorem also gives the dynamics of its versal unfoldings
that will be needed in the following sections where we consider the Sotomayor-Teixeira regularization.

Section \ref{sec:regularization} considers the regularization $Z^\alpha_\e$ of an unfolding of the fold-fold singularity and the slow-fast system 
\ref{vsystem} associated to it.
The first part of this section is devoted to studying the critical points of $Z^\alpha_\e$and the  second,  to studying the critical manifolds of the slow-fast system.

Section \ref{regularizationunf} gives the dynamics of the regularized vector field. 
The section is separated in three cases, one for each type of fold.
The visible-visible case is the  simplest and is studied in \autoref{sec:VVunfreg}.

The invisible-invisible case is studied in \autoref{sec:IIunfreg}. 
The main results in this section are \autoref{prop:phiaeZ}, which prove the existence 
of the periodic orbit at one side of the Hopf bifurcation, independently of the nature of this bifurcation, and also guarantees that this orbit is near the crossing
cycle of the non-smooth system when $\alpha$ is fixed and $\e$ is small enough. 
Theorems \ref{thm:bdIIsuper} and \ref{thm:bdIIsub}  provide a complete description of the evolution of the dynamics when the parameters $(\alpha,\e)$
move around the origin. 
In particular, we observe that the regularized system may have saddle-node bifurcations of periodic orbits which do not exist in the unfolding $Z^\alpha$.
Following the ideas of \cite{KrupaS01} (see also \cite{KristiansenH15}), 
we also present in \autoref{prop:melnikov} some results about the use of a suitable Melnikov function  to give the local uniqueness of the periodic orbits 
and to compute the value of the parameters where the saddle node bifurcation takes place, if it exists. 
We conclude this section showing some examples that illustrate the behavior described in these results.

The visible-invisible case is studied in \autoref{sec:VIunfreg} and presents two different behaviors.
In \autoref{ssec:VIunfregG} we study the case that the critical point is a saddle, which is similar to the visible-visible case. 
In \autoref{ssec:VIunfregL} we analyze the case where the critical point is first a node, then  becomes a focus and finally undergoes a Hopf bifurcation.
In \autoref{prop:canard} we prove the existence of a maximal Canard in a curve of the parameter plane $(\alpha,\e)$.
Theorems \ref{thm:bdVIsuper} and \ref{thm:bdVIsub} provide the phase portrait of the system, including the behavior of the periodic orbits, depending
on the position of the Canard and the Hopf curves, as well as on the nature of the Hopf bifurcation. 
In Theorem \ref{prop:linearcanard} we see that system \ref{vsystem} can be transformed into the general slow-fast system studied in \cite{KrupaS01} 
by changes of variables if the transition function $\varphi$ is linear.
This completely determines the position of the Canard curve depending on  the sign of the Lyapunov coefficient at the Hopf bifurcation. 
Therefore, in the linear case, only the results of Theorem \ref{thm:bdVIsuper} are possible. 
These theorems are complemented with \autoref{pocanard}, where we give a formula for the coefficient which determines the stable/unstable character of the 
periodic orbit near the Canard and we show how the periodic orbit disappears in a so-called Canard explosion or 
in a saddle node bifurcation depending on its stability.
We conclude this section with some examples which illustrate the most interesting behaviors described in the section.

The aim of section \ref{sec:melnikov} is to recover the periodic orbits of moderate size of the system when $\alpha = \delta \e$ 
using classical perturbation theory after some scaling of the variable $x=\sqrt{\e} u$.
This provides the so-called Melnikov function $M(v;\delta)$, whose simple zeros give locally unique periodic orbits of the system.
Even if the existence of periodic orbits in the invisible-invisible case and in the visible-invisible case of focus type are obtained 
without using this function, it is useful to derive their uniqueness and to give a computable method to obtain the value of the parameters where 
the saddle-node bifurcations occur. 
For this reason, we think is worthwhile to dedicate a short section to this function, its properties and recover the results about periodic orbits in theorems
\ref{thm:bdIIsuper}, \ref{thm:bdIIsub}, \ref{thm:bdVIsuper} and \ref{thm:bdVIsub}.

Finally, we postpone to the Appendix the more technical proofs  of \autoref{prop:canard} in \autoref{proofcanard}, the proof of \autoref{prop:linearcanard} in 
\ref{sec:linearcanard}, and the proof of \autoref{prop:phiaeZ} in \ref{ssec:propphiaZproof}.
	

\section{Generic behavior of a Filippov system around a fold-fold singularity} \label{sec:revisited}

Let $\mathcal{Z}=\mathcal{Z}^r, \, r \geq 1$ be the set of all planar Filippov systems  defined in a bounded neighborhood $\U \subset \R^2$ of the origin, 
that is 
\begin{equation} \label{Filippov}
Z(x,y)= \begin{cases}
X(x,y), & f(x,y)>0  \\
Y(x,y), & f(x,y)<0
\end{cases},
\end{equation} 
where $X=(X^1,X^2),Y=(Y^1,Y^2), f \in \mathfrak{X}^r(\U), \,  r \geq 1$. 
As we want to study local singularities we assume $f(x,y)=y$ and that the dynamics on the discontinuity curve 
$\s=\U \cap f^{-1}(0)$ is given by the Filippov convention. 
We consider $\mathcal{Z} = \mathfrak{X}^r \times \mathfrak{X}^r$ with the product $\mathcal{C}^r$ topology.

Recall that, by the Filippov convention, as can be seen in \cite{Filippov}, the discontinuity curve is decomposed as the closure of the following regions:
\begin{eqnarray*}
	\s^c &=& \{ (x,0) \in \s: \, X^2 \cdot Y^2 (x,0)  >0 \}, \\
	\s^s &=& \{ (x,0) \in \s:\,  X^2(x,0)<0 \; \textrm{ and } \; Y^2 (x,0)  >0 \}, \\
	\s^e &=& \{ (x,0) \in \s:\,  X^2(x,0)>0 \; \textrm{ and } \; Y^2 (x,0)  <0 \}.
\end{eqnarray*} 
The flow through a point $p$ in the crossing region is the  concatenation of the flow of $X$ and $Y$ through $p$ in a consistent way. 
Over the regions $\s^{s,e}$, using $x$ as a variable in $\s$, the flow is given by the  \textit{sliding vector field}, denoted by $Z^s$ and given by 
\begin{equation} \label{slidingdef}
Z^s(x) = \frac{Y^2 \cdot X^1-X^2 \cdot Y^1}{Y^2-X^2}(x,0) = \frac{\det{Z}}{Y^2 - X^2}(x,0).
\end{equation}
where \begin{equation} \label{det} \det{Z(p)}=(X^1 \cdot Y^2 - X^2\cdot Y^1)(p), \, p \in \R^2 \end{equation}  

\begin{defi}
	The point $p =(x_p,0) \in \s^{s,e}$ is a pseudo-equilibrium of $Z$ if $Z^s(x_p)=0$ and it is a hyperbolic pseudo-equilibrium of $Z^s$, if $(Z^s)'(x_p) \neq 0$. 
	Moreover, 
	\begin{itemize}
		\item $p$ is a pseudo-node if $(Z^s)'(x_p)<0$ and $p \in \s^s$ or $(Z^s)'(x_p)>0$ and $p \in \s^e$;
		\item $p$ is a pseudo-saddle if $(Z^s)'(x_p)<0$ and $p \in \s^e$ or $(Z^s)'(x_p)>0$ and $p \in \s^s$.
	\end{itemize}
\end{defi}

It follows from \ref{slidingdef} that $(x_p,0) \in \s^{e,s}$ is a pseudo-equilibrium if, and only if, $\det{Z(x_p,0)}=0$. 
Moreover, the stability of a pseudo-equilibrium $(x_p,0) \in \s^{e,s}$ is determined by 
\begin{equation} \label{pstability}
(Z^s)'(x_p)= \frac{(\det{Z})_x}{Y^2-X^2}(x_p,0).
\end{equation}

When $p \in \s^{c,s,e}$, the vector fields $X$ and $Y$ are transverse to $\s$ at the point $p$, otherwise we have a tangency or fold point. 
In this paper we are going to deal with fold points.

\begin{notation}
	During this paper, given a function $h \in \mathfrak{X}^r(\U)$, we will denote its partial derivatives by $h_x= \frac{\partial h}{\partial x}$,
	$h_y= \frac{\partial h}{\partial y}$, $h_{xx}= \frac{\partial ^2 h}{\partial x^2}$, etc.
\end{notation}

\begin{defi} \label{visibility}
	$p \in \s$ is a fold point of $X$ if $Xf(p)=X^2(p)=0$ and $X(Xf)(p) = X^2_x(p) \cdot X^1 (p) \neq 0$.  
	The fold is visible if $X(Xf)(p)>0$ and it is invisible if $X(Xf)(p)<0$. 
	Analogously, a fold point $p \in \s$ of $Y$ satisfies $Yf(p)=Y^2(p)=0$, and it is visible if $Y(Yf)(p)<0$ and invisible if $Y(Yf)(p)>0$. 
\end{defi}

Our purpose is to study vector fields $Z \in \mathcal{Z}$  having a fold-fold singularity, which we assume, without loss of generality, 
that is at the origin $\0=(0,0) \in \s$. 
That is, using that $f(x,y)=y$:
\begin{eqnarray} 
&&\begin{cases} Xf(\0)= X^2(\0) = 0 \\
X(Xf)(\0) = X^2_x(\0) \cdot X^1 (\0) \neq 0 
\end{cases} \label{eq:foldX} 
\\
&&\begin{cases} 
Yf(\0)=Y^2(\0)=0  \\
Y(Yf)(\0) =Y^2_x(\0) \cdot Y^1 (\0) \neq 0
\end{cases} \label{eq:foldY}
\end{eqnarray}

The fold-fold singularity has been studied in \cite{Kuznetsov} and \cite{mst} by con\-si\-de\-ring some normal forms for the Filippov vector fields and 
their unfoldings. 
In this section we present a detailed study of the bifurcation diagrams of these singularities, 
proving that the set of the fold-fold singularities, under some generic conditions, is a codimension one embedded submanifold of $\mathcal{Z}$.

Let $\Xi_0 \subset \mathcal{Z}$  the set of all locally $\s-$structurally stable Filippov systems defined on $\U$, that is, 
given  $Z \in \Xi_0 \subset \mathcal{Z}$ there exists 
a neighborhood $\U \subset \mathcal{Z}$ such that for all $\tilde{Z} \in \U$, $\tilde{Z} $ is topologically equivalent to $Z$, equivalently,   
there exists a homeomorphism $h$ which maps trajectories of 
$Z$ in trajectories of $\tilde{Z}$, preserving the regions of $\s$ and the sliding vector field (see \cite{mst}). 

\begin{defi}
	Consider $\mathcal{Z}_1 = \mathcal{Z} \setminus \Xi_0$. Let $Z, \tilde{Z} \in \mathcal{Z}_1$. We say that two unfoldings  $Z_\delta$ and $\tilde{Z}_{\tilde{\delta}}$, of $Z$ and $ \tilde{Z}$ respectively, are weak equivalent if there exists a homeomorphic 
	change of parameters $\tilde \delta= \mu(\delta)$, such that, for each $\delta$ the vector fields $Z_\delta$ and $\tilde{Z}_{\mu(\delta)}$ are locally $\s-$equivalent. 
	Moreover, given an unfolding $Z_\delta$ of $Z$ it is said to be a versal unfolding if every other unfolding $Z_{\tilde \delta}$ of 
	$Z$ is weak equivalent to $Z_\delta$.
\end{defi}

\begin{defi}\label{LambdaF}
	We define $\Lambda^F \subset \mathcal{Z}_1$ as the set of  Filippov systems which have a locally 
	$\s-$struc\-tu\-ral\-ly stable fold-fold.  
	More precisely, given $Z \in \Lambda^F$ there exists a neighborhood $\mathcal{V}_Z$ such that given $\tilde{Z} \in \mathcal{V}_Z \cap\mathcal{Z}_1$  then $Z$ 
	is locally $\s-$equivalent to $\tilde{Z}$ and their versal unfoldings are  weak equivalent.
\end{defi}

This section is devoted to prove the following theorem:

\begin{theo}
	\label{thm:generic}  
	Consider $\Lambda^F \subset \mathcal{Z}_1$ the set of all Filippov systems $Z$ which have a $\s-$structurally stable fold-fold singularity in the induced topology on 
	$ \mathcal{Z}_1$. 
	Then $Z \in \mathcal{Z}_1$ belongs to $\Lambda^F$ if and only if satisfies one of the following conditions: 
	\begin{enumerate}[(A)]
		\item 
		it is a visible-visible fold; \label{itm:A}
		\item 
		it is an invisible-invisible  fold which is a non degenerated fixed point for the ge\-ne\-ra\-li\-zed Poincar\'{e} return map. 
		See \ref{gencondii} for a precise definition; \label{itm:B} 
		\item  
		it is a visible-invisible fold and,  in the case where the sliding vector field  $Z^s(x)$ is defined, it must satisfy \label{itm:C} 
	\end{enumerate} 
	\begin{equation} 
	\label{bsliding} \gamma := Z^s(0) \neq 0.
	\end{equation}
	
	In addition, $\Lambda^F$ is a codimension one embedded submanifold of $\mathcal{Z}$.
\end{theo}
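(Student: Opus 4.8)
The plan is to separate the statement into two parts that can be handled almost independently: the \emph{characterization} (the ``if and only if''), which I would prove case by case according to the three fold types, and the \emph{manifold structure}, which I would obtain by exhibiting $\Lambda^F$ locally as the regular zero set of a single scalar functional on $\mathcal{Z}$.

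For the manifold structure I would first localize the two fold points. Near a system $Z_0=(X_0,Y_0)$ having a fold-fold at $\0$, the fold equation $X^2(x,0)=0$ together with the nondegeneracy $X^2_x(\0)\neq 0$ from \eqref{eq:foldX} lets me apply the implicit function theorem to obtain a $\mathcal{C}^{r-1}$ map $Z\mapsto x_X(Z)$ giving the persistent fold point of $X$ near $\0$; by \eqref{eq:foldY} the same construction yields $Z\mapsto x_Y(Z)$ for $Y$. Being a fold point of both $X$ and $Y$ is then the single scalar equation $x_X(Z)=x_Y(Z)$, so I set
\[
\Psi(Z):=x_X(Z)-x_Y(Z),
\]
and claim that, locally, $\Lambda^F=\Psi^{-1}(0)$ intersected with the open set $G$ where both folds persist and the relevant genericity condition (A), (B) or (C) holds. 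The essential point is the submersion property $D\Psi(Z_0)\neq 0$: perturbing $X$ so as to add a small constant to $X^2$ displaces $x_X$ while leaving $x_Y$ fixed, so $D\Psi(Z_0)$ is onto $\R$. The implicit function theorem in the Banach space $\mathfrak{X}^r\times\mathfrak{X}^r$ then shows $\Psi^{-1}(0)$ is a codimension one embedded submanifold, and intersecting with the open set $G$ gives $\Lambda^F$ the same structure.

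For the characterization I would study each type in turn, describing the local phase portrait of $Z$ together with that of a versal unfolding $Z_\delta$, and checking robustness. In the visible-visible case (A) both tangencies are visible, so off the two fold points the discontinuity set is purely crossing; the unfolding merely slides the two fold points past one another and produces no new invariant sets, so structural stability holds with no extra condition. In the visible-invisible case (C), whenever $Z^s$ is defined the quantity $\gamma=Z^s(0)$, computed from \eqref{slidingdef}, governs whether a pseudo-equilibrium collides with the singularity; the hypothesis $\gamma\neq 0$ guarantees via \eqref{pstability} that $Z^s$ has no zero at $\0$, so the adjacent sliding segment carries a regular flow and the picture persists. The invisible-invisible case (B) is the delicate one: the fold-fold is a pseudo-focus, and I would build the generalized Poincar\'e return map $\pi$ on a transversal through $\0$ by composing the two local return maps associated with the invisible folds of $X$ and $Y$; ``$\0$ is a nondegenerate fixed point of $\pi$'' then means $\pi'(0)\neq 1$, which makes $\0$ an attracting or repelling focus and excludes a continuum of crossing cycles.

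The ``only if'' direction would be proved by showing that violating each condition breaks structural stability: if $\pi'(0)=1$ in case (B) a suitable perturbation unfolds a one-parameter family of crossing cycles, changing the topological type; if $\gamma=0$ in case (C) a pseudo-equilibrium sits exactly at the fold-fold and can be pushed into or out of the sliding region. I expect the main obstacle to be precisely the invisible-invisible analysis: deriving $\pi$ and $\pi'(0)$ from the original, non-normalized fields, isolating the intrinsic nondegeneracy quantity, and then assembling the full bifurcation diagram of the versal unfolding together with the equivalence homeomorphisms required to verify versality in the sense of \autoref{LambdaF}. The visible-visible case and the saddle subcase of the visible-invisible fold are comparatively routine once the crossing and sliding regions are tracked.
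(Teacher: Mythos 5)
Your manifold-structure argument is essentially the paper's own (its \autoref{thm:manifold} localizes the two folds by the implicit function theorem applied to $\xi(Z,(p,q))=(X^2(p,0),Y^2(q,0))$, sets $\lambda(Z)=T_X-T_Y$, and verifies $D\lambda_{Z^0}\neq 0$ exactly as your constant-perturbation computation does, cf.\ \eqref{derivativelambda}), so that half is fine. The genuine gap is in your treatment of case (B): you take ``nondegenerate fixed point of the generalized return map'' to mean $\pi'(0)\neq 1$, and you build both the stability claim and the ``only if'' direction on it. That condition is \emph{never} satisfiable. Near an invisible fold each one-sided Poincar\'e map has the form $\phi(x)=2x_0-x+\beta(x-x_0)^2+\mathcal{O}((x-x_0)^3)$ (\autoref{prop:frX}), i.e.\ derivative $-1$ at the tangency, so the composition $\phi_Z=\phi_Y\circ\phi_X$ always satisfies $\phi_Z'(0)=1$; the paper states this explicitly in \autoref{rem:pseudocycle}. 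Hence the fold-fold is a \emph{linear center} for $\phi_Z$, and the correct nondegeneracy is the second-order condition $\G_Z=\beta_Y-\beta_X\neq 0$ of \eqref{gencondii}, read off from $\phi_Z(x)=x+\G_Z x^2+\mathcal{O}(x^3)$ in \eqref{eq:phi}. With your formulation the hypothesis of (B) defines the empty set, the attracting/repelling dichotomy cannot be derived, and the unfolding analysis collapses: the paper instead studies $\Phi(\alpha,x)=\phi^\alpha(x)-x$, which has $\Phi(0,0)=\partial_x\Phi(0,0)=0$ and $\partial^2_x\Phi(0,0)=2\G_Z\neq 0$, and locates the crossing cycle $\Gamma^\alpha$ through the extremum $C(\alpha)$ of $\Phi(\alpha,\cdot)$ (\autoref{prop:IICunfolding}) --- a degenerate (quadratic-tangency) fixed-point argument, not a hyperbolicity argument.

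Two secondary inaccuracies in the same direction: you treat the invisible-invisible fold only as a pseudo-focus, but that is the subcase $X^1\cdot Y^1(\0)<0$ ($\s=\overline{\s^c}$); when $X^1\cdot Y^1(\0)>0$ the return map has no dynamical meaning, yet the same condition $\G_Z\neq 0$ must still be imposed because pseudo-cycles (fixed points of $\phi_Z$) are preserved by $\s$-equivalence (\autoref{prop:IISunfolding} and the remark preceding it). Likewise your claim that in case (A) the unfolding ``produces no new invariant sets'' misses the subcase $X^1\cdot Y^1(\0)<0$, where a sliding or escaping segment with a pseudo-saddle $Q(\alpha)$ appears between the folds (\autoref{prop:VVSunfolding}); this does not break stability, but it is part of the phase portrait one must match to establish weak equivalence of versal unfoldings.
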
 

\begin{rem}
	Theorem \ref{thm:generic} says that given a vector filed $Z$ satisfying the conditions of the Theorem if we consider unfoldings of the form:
	$$
	Z^\alpha = Z+\alpha \tilde Z+ \mathcal{O}(\alpha ^2)
	$$
	they all are equivalent if they are versal. 
	The condition for this unfoldings to be versal, roughly speaking, is that for $\alpha \ne 0$, $Z^\alpha$ has not a fold-fold singularity. 
	As we will see during the proof of this theorem this is equivalent to satisfy:
	\begin{equation}\label{versal}
	\frac{\tilde{Y}^{2}(\0)}{Y^2_x(\0)}-\frac{\tilde{X}^{2}(\0)}{X^2_x(\0)} \ne 0 .
	\end{equation}
\end{rem}

The rest of this section is devoted to prove this theorem.
As we are going to deal with local singularities, we will always work in a neighborhood of the origin without explicit mention. 

Next lemma, whose proof is straightforward, characterizes the cases where there is a region of sliding around the fold-fold point.

\begin{lemma} \label{lem:sdec} Suppose that the origin is a fold-fold point for $Z \in \mathcal{Z}$,  then:
	\begin{itemize}
		\item If the folds have the same visibility, $\s=\overline{\s^c}$ if $X^1 \cdot Y^1 (\0)<0$ and $\s=\overline{\s^e \cup \s^s}$ if $X^1 \cdot Y^1 (\0) >0$.
		\item If the folds have opposite visibility, $\s=\overline{\s^c}$ if $X^1 \cdot Y^1 (\0) >0$ and $\s=\overline{\s^e \cup \s^s}$ if $X^1 \cdot Y^1 (\0) <0$.
	\end{itemize} 
\end{lemma}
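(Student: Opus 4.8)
The plan is to reduce everything to a sign computation at the origin, exploiting that the fold conditions force $X^2$ and $Y^2$ to change sign linearly along $\s$. Since $f(x,y)=y$ and $X^2(\0)=Y^2(\0)=0$, a first order Taylor expansion along $\s$ gives $X^2(x,0)=X^2_x(\0)\,x+\mathcal{O}(x^2)$ and $Y^2(x,0)=Y^2_x(\0)\,x+\mathcal{O}(x^2)$, where by the fold hypotheses \ref{eq:foldX}--\ref{eq:foldY} both $X^2_x(\0)$ and $Y^2_x(\0)$ are nonzero. Hence, for $x\ne 0$ small, $\sgn{X^2(x,0)}=\sgn{X^2_x(\0)\,x}$ and $\sgn{Y^2(x,0)}=\sgn{Y^2_x(\0)\,x}$.

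First I would settle the crossing versus sliding/escaping dichotomy. By the definitions of $\s^c,\s^s,\s^e$, the classification of a point $(x,0)$ is governed by the sign of the product $X^2\cdot Y^2(x,0)$. From the expansions above, $X^2\cdot Y^2(x,0)=(X^2_x\,Y^2_x)(\0)\,x^2+\mathcal{O}(x^3)$, and since $x^2>0$ the sign is the constant $S:=\sgn{(X^2_x Y^2_x)(\0)}$ on \emph{both} sides of the origin. If $S>0$ then $X^2\cdot Y^2>0$ on both sides and $\s=\overline{\s^c}$; if $S<0$ then $X^2\cdot Y^2<0$ on both sides, so each side lies in $\s^s$ or $\s^e$ and $\s=\overline{\s^e\cup\s^s}$. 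Thus the only quantity that decides crossing versus sliding/escaping is $S$.

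Next I would translate the visibility hypotheses into signs and combine. By \autoref{visibility}, the $X$-fold is visible iff $(X^2_x X^1)(\0)>0$ while the $Y$-fold is visible iff $(Y^2_x Y^1)(\0)<0$; because of this opposite convention, the folds have the \emph{same} visibility exactly when $(X^2_x X^1)(\0)\,(Y^2_x Y^1)(\0)<0$ and \emph{opposite} visibility exactly when this product is positive. Writing $T:=\sgn{(X^1 Y^1)(\0)}$, this product equals $(X^2_x Y^2_x)(\0)\,(X^1 Y^1)(\0)$, whose sign is $S\,T$. Hence same visibility is equivalent to $ST<0$ and opposite visibility to $ST>0$. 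Combining with the dichotomy of the previous paragraph yields the four cases: in the same-visibility case $ST<0$, so $X^1 Y^1(\0)<0$ (i.e. $T<0$) forces $S>0$ and $\s=\overline{\s^c}$, while $X^1 Y^1(\0)>0$ forces $S<0$ and $\s=\overline{\s^e\cup\s^s}$; in the opposite-visibility case $ST>0$, so the two subcases are interchanged, exactly as stated.

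There is essentially no genuine obstacle here: the argument is purely a bookkeeping of signs, and the only point demanding care is the opposite sign convention between the visibility definitions of the $X$- and $Y$-folds, which is precisely what produces the interchange of the two subcases between the same- and opposite-visibility rows. This is why the author calls the proof straightforward.
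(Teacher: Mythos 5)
Your proof is correct and is exactly the straightforward sign computation the paper has in mind (the paper omits the proof, declaring it straightforward): the dichotomy crossing versus sliding/escaping is governed by $\sgn{(X^2_x\,Y^2_x)(\0)}$ via the first-order expansions of $X^2$ and $Y^2$ along $\s$, and the visibility hypotheses convert this into the stated conditions on $\sgn{X^1\cdot Y^1(\0)}$. You also correctly identify the one delicate point, namely the opposite sign conventions in \autoref{visibility} for folds of $X$ and of $Y$, which is what interchanges the two subcases between the same- and opposite-visibility rows.
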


In the case that the sliding vector field \ref{slidingdef} is defined around the fold-fold $(0,0)$, by  \autoref{visibility} and \autoref{lem:sdec}, 
we have $(Y^2_x-X^2_x)(\0) \neq 0$. Therefore in this case, even if $Z^s$ is not defined at $x=0$, 
one can extend it by  the L'H\^{o}pital's rule: 
\begin{align} \label{eq:sliding}
	\gamma := Z^s(0) = \lim_{x \rightarrow 0} Z^s(x) = \frac{(\det{Z)_x}}{(Y^2_x - X^2_x)}(\0).  
\end{align} 
Thus the sliding vector field $Z^s$ is well defined at the origin.

\begin{lemma} \label{lem:neq0} 
	Suppose that the origin is a fold-fold point of $Z=(X,Y)$ then
	\begin{enumerate}[(a)]
		\item if both folds are visible, we have $(\det{Z)_x(\0)}>0$ when $X^1 \cdot Y^1 (\0)<0$ and $(\det{Z)_x(\0)}<0$ when $X^1 \cdot Y^1 (\0)>0$;
		\item if both folds are invisible, we have $(\det{Z)_x(\0)}<0$ when $X^1 \cdot Y^1 (\0)<0$ and $(\det{Z)_x(\0)} >0$ when $X^1 \cdot Y^1 (\0)>0;$
		\item if the folds have opposite visibility and $X^1 \cdot Y^1 (\0)<0$ and $Z^s$ satisfies hypothesis \ref{bsliding}, then $(\det{Z)_x(\0)} \neq 0.$ However, one can not decide, a priori, its sign. 
	\end{enumerate} 
\end{lemma}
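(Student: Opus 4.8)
The plan is to reduce the entire statement to an elementary sign analysis of the single quantity $(\det Z)_x(\0)$. Since the origin is a fold-fold point, $X^2(\0)=Y^2(\0)=0$; differentiating $\det Z = X^1 Y^2 - X^2 Y^1$ and evaluating at $\0$ therefore annihilates the two terms carrying an undifferentiated $X^2$ or $Y^2$, leaving
\begin{equation*}
(\det Z)_x(\0) = X^1(\0)\,Y^2_x(\0) - X^2_x(\0)\,Y^1(\0).
\end{equation*}
Abbreviating $P = X^1 Y^2_x$ and $Q = X^2_x Y^1$ (all evaluated at $\0$), the lemma becomes a statement about the sign, or the vanishing, of $P-Q$.

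The key algebraic observation I would exploit is the identity
\begin{equation*}
P\cdot Q = \big(X^1 X^2_x\big)\big(Y^1 Y^2_x\big) = X(Xf)(\0)\cdot Y(Yf)(\0),
\end{equation*}
which ties the relative sign of the two terms directly to the visibility invariants of \autoref{visibility}. Because the convention makes ``visible'' mean $X(Xf)>0$ for $X$ but $Y(Yf)<0$ for $Y$, equal visibility forces $X(Xf)$ and $Y(Yf)$ to have opposite signs, so $PQ<0$, while opposite visibility forces $PQ>0$. This dichotomy is the crux: when $PQ<0$ the terms $P$ and $-Q$ reinforce, so $P-Q\neq 0$ and $\operatorname{sgn}(P-Q)=\operatorname{sgn}(P)$; when $PQ>0$ cancellation is possible and no sign can be extracted from the fold data alone.

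For cases (a) and (b) I would then rewrite $\operatorname{sgn}(P)=\operatorname{sgn}\big(X^1 Y^2_x(\0)\big)$ in terms of $X^1 Y^1$ using the fold condition on $Y$: visibility of $Y$ gives $\operatorname{sgn}(Y^2_x)=-\operatorname{sgn}(Y^1)$ and invisibility gives $\operatorname{sgn}(Y^2_x)=\operatorname{sgn}(Y^1)$, both at $\0$. Substituting produces $\operatorname{sgn}\big((\det Z)_x(\0)\big)=-\operatorname{sgn}\big(X^1 Y^1(\0)\big)$ in the visible-visible case and $+\operatorname{sgn}\big(X^1 Y^1(\0)\big)$ in the invisible-invisible case, which is exactly the content of (a) and (b). For case (c) the analysis only yields $PQ>0$, so I cannot conclude from the fold conditions and must invoke the extra hypothesis instead: by \autoref{lem:sdec}, opposite visibility together with $X^1 Y^1(\0)<0$ produces a sliding region, so the denominator $(Y^2_x-X^2_x)(\0)$ is nonzero, as already recorded, and the L'H\^{o}pital extension \ref{eq:sliding} gives $(\det Z)_x(\0)=\gamma\,(Y^2_x-X^2_x)(\0)$. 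Since $\gamma\neq 0$ by \ref{bsliding} and the factor is nonzero, $(\det Z)_x(\0)\neq 0$; but $\gamma$ carries no prescribed sign, so the sign indeed remains undetermined, matching the statement.

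The only delicate point, and the one I would watch carefully, is the bookkeeping of sign conventions: the asymmetry between the visibility definitions for $X$ and for $Y$ is exactly what drives the same/opposite-visibility dichotomy, and a single sign slip there would interchange the roles of cases (a)--(b) and (c). Beyond this I expect no genuine obstacle, since every step is a finite sign count once the identity $PQ=X(Xf)\cdot Y(Yf)$ at $\0$ is in hand.
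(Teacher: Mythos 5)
Your proposal is correct and follows essentially the same route as the paper, whose proof is the one-line observation that $(\det Z)_x(\0) = (X^1\, Y^2_x - Y^1\, X^2_x)(\0)$ combined with the fold sign conditions \ref{eq:foldX}--\ref{eq:foldY} (and, for case (c), the sliding hypothesis \ref{bsliding} via \ref{eq:sliding}). Your identity $PQ = X(Xf)(\0)\cdot Y(Yf)(\0)$ is merely a compact bookkeeping device for the same sign count, and all your sign conclusions, including the undetermined sign in case (c), match the lemma.
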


\begin{proof} 
	It follows from \ref{eq:foldX}, \ref{eq:foldY} and the fact that $(\det{Z)_x (\0)} = (X^1 \cdot Y^2_x - Y^1 \cdot X_x^2)(\0).$
\end{proof}

\begin{corol} \label{corol:signZs} 
	If
	the sliding vector field $Z^s$ is defined around the fold-fold point, we have:
	\begin{itemize}
		\item 
		If the folds have the same visibility, then $\sgn{\gamma}=\sgn{X^1(\0)}$;
		\item 
		If the folds have opposite visibility, then $ \sgn{\gamma}=-\sgn{X^1 (\0) \cdot (\det{Z)_x}(\0)},$ 
	\end{itemize}
	where $\gamma$ is given in \ref{eq:sliding}.
\end{corol}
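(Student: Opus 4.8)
The proof is a direct sign computation organised around formula \ref{eq:sliding}. Since the sliding vector field is assumed defined around the fold-fold, I would start from
\[
\gamma = \frac{(\det Z)_x}{Y^2_x - X^2_x}(\0),
\]
so that $\sgn{\gamma} = \sgn{(\det Z)_x(\0)}\cdot \sgn{(Y^2_x-X^2_x)(\0)}$. The sign of the numerator is supplied by \autoref{lem:neq0}, so the entire argument reduces to fixing the sign of the \emph{denominator} $(Y^2_x - X^2_x)(\0)$ and then multiplying.

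The key step is therefore to determine $\sgn{(Y^2_x - X^2_x)(\0)}$. Because $Z^s$ is defined, \autoref{lem:sdec} places us in the case $\s = \overline{\s^e \cup \s^s}$, so on a punctured neighbourhood of the origin in $\s$ the normal components satisfy $X^2 \cdot Y^2 < 0$ (this is the defining inequality of $\s^{s,e}$, as opposed to $\s^c$). Expanding along $\s$ gives $X^2(x,0) = X^2_x(\0)\,x + \mathcal{O}(x^2)$ and $Y^2(x,0) = Y^2_x(\0)\,x + \mathcal{O}(x^2)$, whence $X^2_x(\0)\cdot Y^2_x(\0) < 0$; the two derivatives have opposite sign and
\[
\sgn{(Y^2_x - X^2_x)(\0)} = \sgn{Y^2_x(\0)} = -\sgn{X^2_x(\0)}.
\]
It then remains to translate $\sgn{X^2_x(\0)}$ into a statement about $\sgn{X^1(\0)}$ through \autoref{visibility}: a visible $X$-fold gives $\sgn{X^2_x(\0)} = \sgn{X^1(\0)}$, an invisible one the opposite.

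Now I split according to visibility, using \autoref{lem:sdec} to fix $\sgn{X^1\cdot Y^1(\0)}$. If the folds share visibility, $Z^s$ being defined forces $X^1\cdot Y^1(\0) > 0$. For two visible folds $\sgn{X^2_x(\0)} = \sgn{X^1(\0)}$, so the denominator has sign $-\sgn{X^1(\0)}$, while \autoref{lem:neq0}(a) gives $(\det Z)_x(\0) < 0$; the product is $\sgn{\gamma} = \sgn{X^1(\0)}$. For two invisible folds $\sgn{X^2_x(\0)} = -\sgn{X^1(\0)}$, the denominator has sign $+\sgn{X^1(\0)}$, and \autoref{lem:neq0}(b) gives $(\det Z)_x(\0) > 0$; again $\sgn{\gamma} = \sgn{X^1(\0)}$, proving the first bullet. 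If the folds have opposite visibility, $Z^s$ being defined forces $X^1\cdot Y^1(\0) < 0$; taking $X$ as the visible fold (the visible–invisible convention) gives $\sgn{X^2_x(\0)} = \sgn{X^1(\0)}$, so the denominator has sign $-\sgn{X^1(\0)}$, whereas \autoref{lem:neq0}(c) guarantees only that $(\det Z)_x(\0) \neq 0$. Multiplying yields $\sgn{\gamma} = -\sgn{X^1(\0)}\cdot\sgn{(\det Z)_x(\0)} = -\sgn{X^1(\0)\cdot(\det Z)_x(\0)}$, which is the second bullet.

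I expect the only genuine subtlety to be the denominator step, namely the claim $X^2_x(\0)\cdot Y^2_x(\0) < 0$: that the two folds bend their normal components in opposite directions precisely when a sliding/escaping region surrounds the singularity. Once this is secured through \autoref{lem:sdec}, the rest is pure bookkeeping of signs via \autoref{visibility} and \autoref{lem:neq0}, with no analytic difficulty; one only needs to keep track of the fact that, in the same-visibility cases, the sign contributed by the denominator and the sign contributed by $(\det Z)_x$ conspire to cancel the dependence on the fold type and leave $\sgn{X^1(\0)}$.
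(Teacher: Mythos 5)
Your proof is correct and is essentially the computation the paper leaves implicit: the corollary follows from the L'H\^{o}pital formula \ref{eq:sliding}, with the numerator sign supplied by \autoref{lem:neq0} and the denominator sign $\sgn{(Y^2_x-X^2_x)(\0)}=-\sgn{X^2_x(\0)}$ pinned down exactly as you do, via $X^2_x(\0)\cdot Y^2_x(\0)<0$ on a sliding/escaping neighbourhood together with \autoref{visibility} and \autoref{lem:sdec}. You were also right to flag the labeling convention in the second bullet: the formula $\sgn{\gamma}=-\sgn{X^1(\0)\cdot(\det{Z})_x(\0)}$ holds with $X$ the visible fold and $Y$ the invisible one (as the paper implicitly assumes throughout, e.g.\ in \autoref{sec:VIunfreg}), and would flip sign under the opposite labeling.
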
 

\begin{corol} \label{corol:psequi} 
	Let $Z_0=(X_0,Y_0) \in \mathcal{Z}$ having a fold-fold at the origin satisfying the same hypotheses of \autoref{lem:neq0}. 
	Then there exist neighborhoods $Z_0 \in \U_0 \subset \mathcal{Z}$ and $\0 \in \mathcal{I}_0 \subset \s$ such that for each $Z \in \U_0$ 
	there exists a unique $P(Z) \in \mathcal{I}_0$ such that $\det{Z(P(Z),0)}=0$ and $\sgn{(\det{Z)_x(P(Z),0)}}=\sgn{(\det{Z_0})_x(\0)}.$  \end{corol}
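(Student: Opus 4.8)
The plan is to reduce the statement to the intermediate value theorem combined with strict monotonicity, all localized on the discontinuity line $\s$. The key preliminary observation is that a fold-fold point is automatically a zero of $\det{Z}$: from the fold conditions \ref{eq:foldX}--\ref{eq:foldY} we have $X_0^2(\0)=Y_0^2(\0)=0$, so that $\det{Z_0(\0)}=(X_0^1\cdot Y_0^2-X_0^2\cdot Y_0^1)(\0)=0$. Moreover, under the hypotheses of \autoref{lem:neq0} we know $(\det{Z_0})_x(\0)\neq 0$ (in cases (a),(b) the sign is even pinned down, and in case (c) it is merely nonzero, which is all we need). Thus the scalar function $x\mapsto \det{Z_0(x,0)}$ vanishes at $x=0$ with a nonzero derivative there; without loss of generality I assume $(\det{Z_0})_x(\0)>0$, the opposite sign being entirely symmetric.

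First I would fix a compact interval $\mathcal{I}_0=\{(x,0):|x|\leq \delta\}\subset\s$ so small that $(\det{Z_0})_x(x,0)>0$ for every $(x,0)\in\mathcal{I}_0$, which is possible by continuity of $(\det{Z_0})_x$. On this interval $x\mapsto\det{Z_0(x,0)}$ is then strictly increasing, whence $\det{Z_0(-\delta,0)}<0<\det{Z_0(\delta,0)}$.

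Next I would exploit that, in the product $C^r$ topology with $r\geq 1$, both $Z\mapsto\det{Z(x,0)}$ and $Z\mapsto(\det{Z})_x(x,0)$ are continuous, \emph{uniformly} for $(x,0)$ ranging over the compact set $\mathcal{I}_0$. Indeed $\det{Z(x,0)}=(X^1Y^2-X^2Y^1)(x,0)$ and $(\det{Z})_x(x,0)=(X^1_xY^2+X^1Y^2_x-X^2_xY^1-X^2Y^1_x)(x,0)$ are fixed polynomial expressions in the values and first $x$-derivatives of $X,Y$ along $y=0$, all of which are controlled by the $C^1$ (hence $C^r$) norm uniformly over $\U$. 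Therefore there is a neighborhood $\mathcal{U}_0\ni Z_0$ such that for every $Z\in\mathcal{U}_0$ one has $(\det{Z})_x(x,0)>0$ on all of $\mathcal{I}_0$ and the endpoint signs persist, $\det{Z(-\delta,0)}<0<\det{Z(\delta,0)}$. For such $Z$, strict monotonicity together with the intermediate value theorem yields a \emph{unique} $x_P(Z)\in(-\delta,\delta)$ with $\det{Z(x_P(Z),0)}=0$; setting $P(Z)=(x_P(Z),0)\in\mathcal{I}_0$ and observing that $(\det{Z})_x(P(Z))>0$, so that $\sgn{(\det{Z})_x(P(Z))}=\sgn{(\det{Z_0})_x(\0)}$, finishes the argument.

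The main obstacle is bookkeeping rather than conceptual: one must guarantee that the continuity of these evaluation maps is uniform over the compact interval $\mathcal{I}_0$, so that a single neighborhood $\mathcal{U}_0$ serves simultaneously for the monotonicity, for the existence of the zero, and for the sign of the derivative. I would prefer this elementary monotonicity route to the alternative of invoking the implicit function theorem for $G(Z,x)=\det{Z(x,0)}$ on the Banach manifold $\mathcal{Z}\times\R$ at $(Z_0,0)$ with $\partial_x G(Z_0,0)\neq 0$, precisely because it is self-contained and sidesteps any discussion of differentiability of $G$ with respect to the infinite-dimensional variable $Z$.
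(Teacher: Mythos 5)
Your proposal is correct, and it proves exactly what the corollary asserts, but it takes a genuinely different route from the paper. The paper's proof applies the Implicit Function Theorem in the infinite-dimensional setting, to the Fr\'echet differentiable map $\xi(Z,x)=\det{Z}(x,0)$ at $(Z_0,0)$, using $\xi_x(Z_0,0)=(\det{Z_0})_x(\0)\neq 0$; you instead localize everything on the compact segment $\mathcal{I}_0$, propagate the strict sign of $(\det{Z})_x$ and the opposite endpoint signs of $\det{Z}$ to a whole $C^r$-neighborhood by uniform continuity of the (polynomial-in-jet) evaluation maps, and conclude by strict monotonicity plus the intermediate value theorem. What each approach buys: yours is elementary and self-contained, sidesteps any discussion of Fr\'echet differentiability of $\xi$, and yields the conclusion uniformly on a fixed interval (continuity of $Z\mapsto P(Z)$ then also follows cheaply from uniqueness and compactness). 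The paper's IFT route is shorter and delivers something your argument does not: Fr\'echet differentiability of the map $Z\mapsto P(Z)$, which the paper exploits downstream — e.g.\ in \autoref{eqlemma}, where the expansion $x(\alpha)=\bar{x}\,\alpha+\mathcal{O}(\alpha^2)$ for $x(\alpha)=P(Z^\alpha)$ is obtained by the chain rule along the curve $\alpha\mapsto Z^\alpha$. If one adopted your proof verbatim, that later smoothness would require a separate (but easy) finite-dimensional IFT argument applied to $(\alpha,x)\mapsto\det{Z^\alpha}(x,0)$. As a minor notational point, the statement uses $P(Z)$ as the $x$-coordinate of the zero (it writes $\det{Z}(P(Z),0)$), whereas you set $P(Z)=(x_P(Z),0)$; this is cosmetic and does not affect the argument.
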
 
\begin{proof} 
	Let $Z_0 \in \mathcal{Z}$ satisfying the hypothesis of \autoref{lem:neq0}.  Let $\xi$ be the Frechet differentiable map 
	\begin{equation*} \begin{array}{cccc} \xi: & \mathcal{Z}  \times  \R & \rightarrow & \R \\
			& (Z,x) & \mapsto & \det{Z}(x,0)
		\end{array}
	\end{equation*}
	
	As $Z_0$ has a fold-fold at the origin, then $\xi(Z_0,0)=\det{Z_0}(\0)=0$ and by the \autoref{lem:neq0} we have $\xi_x(Z_0,0)=(\det{Z_0)_x(\0)} \neq 0$. 
	By the Implicit Function Theorem we obtain neighborhoods $Z_0 \in \U_0 \subset \mathcal{Z}$ and $0 \in \mathcal{I}_0 \subset \s$ and a Frechet 
	differentiable map $P:\U_0 \rightarrow \mathcal{I}_0$ satisfying $\xi(Z,x)=0$ if, and only if, $x=P(Z)$. 
	That is $\xi(Z,P(Z))=\det{Z}(P(Z),0)=0$ for all $Z \in \U_0$. 
	Moreover, we can assume without loss of generality that in this neighborhood we have $\sgn{(\det{Z)_x(P(Z),0)}}=\sgn{(\det{Z_0)_x(\0)}}.$
\end{proof}

In the case of the invisible fold-fold it is natural to consider the first return map (\cite{MR637373}). 
Next proposition, whose proof is straightforward and can be found in \cite{larrosa}, gives the main term of the Taylor expansion of the 
Poincar\'{e} map near a tangency point. 

\begin{prop}[Poincaré map for $X$ at a point $(x_0,y_0) \in \s_{y_0}$] \label{prop:frX} 
	Let $X$ be a smooth vector field having a fold point at 
	$p_0= (x_0,y_0) \in \s_{y_0}= \{ (x,y_0) : \, x \in \mathcal{I} \}$, where $\mathcal{I}=\mathcal{I}(y_0)$ is a neighborhood of $x_0$. 
	
	Then the Poincaré map $\phi_X^{p_0}$ is given by 
	\begin{equation}
	\begin{array}{llll}
	\phi_X^{p_0}: & \s_{y_0} & \rightarrow & \s_{y_0} \\
	&x & \mapsto & \phi_X^{p_0}(x) =2x_0 -x + \beta_X^{p_0} (x-x_0)^2 + \mathcal{O}(x-x_0)^3
	\end{array}
	\end{equation}
	where
	\begin{equation} \label{betap0}
	\beta_X^{p_0} = \frac{1}{3} \left[ -\frac{X^{2}_{xx}}{X^2_x} + 2\frac{X_x^1}{X^1} +2 \frac{X_y^2}{X^1} \right](p_0).
	\end{equation}
\end{prop}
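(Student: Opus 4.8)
We first reduce to the case $p_0 = \0$ by the translation $(x,y) \mapsto (x - x_0, y - y_0)$, under which $X^1, X^2$ and all their partial derivatives transform as scalars evaluated at the base point; so it suffices to prove, for the section $\{y=0\}$, that
\[
\phi_X^{\0}(x) = -x + \beta_X^{\0}\, x^2 + \mathcal{O}(x^3), \qquad
\beta_X^{\0} = \frac{1}{3}\left[ -\frac{X^2_{xx}}{X^2_x} + 2\frac{X^1_x}{X^1} + 2\frac{X^2_y}{X^1} \right]\!(\0),
\]
and then translate back. Writing the flow of $X$ with initial point $(x,0)$ as $\varphi(t;x)=(\varphi^1(t;x),\varphi^2(t;x))$, so that $\varphi^1(0;x)=x$, $\varphi^2(0;x)=0$ and $\dot\varphi = X(\varphi)$, the return map is $\phi_X^{\0}(x)=\varphi^1(\tau(x);x)$, where $\tau(x)\neq 0$ is the nonzero return time determined by $\varphi^2(\tau(x);x)=0$. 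The plan is a direct Taylor computation of $\tau(x)$ and of $\varphi^1$ along it.

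First I would expand $\varphi^2(t;x)$ in powers of $t$ by repeatedly differentiating $\dot\varphi^2 = X^2(\varphi)$ and evaluating at $t=0$, obtaining
\[
\varphi^2(t;x) = t\left( a_1(x) + a_2(x)\, t + a_3(x)\, t^2 + \cdots \right),
\]
with $a_1(x) = X^2(x,0)$ and $a_2(x) = \tfrac{1}{2}(X^2_x X^1 + X^2_y X^2)(x,0)$, and $a_3(\0)$ a combination of $X^2_{xx}(X^1)^2$ and $X^2_x X^1_x X^1$. The fold conditions \ref{eq:foldX} give $a_1(x)=\mathcal{O}(x)$ and $a_2(\0)=\tfrac12 X^2_x X^1(\0)\neq 0$, so the nontrivial return time solves $a_1(x)+a_2(x)\tau+a_3(x)\tau^2+\cdots = 0$ with $\tau=\mathcal{O}(x)$. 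Series inversion (equivalently, the implicit function theorem applied to the reduced equation) yields $\tau(x)=\tau_1 x + \tau_2 x^2 + \mathcal{O}(x^3)$ with $\tau_1 = -2/X^1(\0)$ and $\tau_2$ expressible through $a_1'(\0), a_2'(\0), a_3(\0)$.

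Next I would expand
\[
\varphi^1(t;x) = x + X^1(x,0)\, t + \tfrac{1}{2}\left(X^1_x X^1 + X^1_y X^2\right)(x,0)\, t^2 + \mathcal{O}(t^3),
\]
substitute $t=\tau(x)$, and collect terms up to order $x^2$. The $\mathcal{O}(x)$ part gives $x + X^1(\0)\tau_1 x = x - 2x = -x$, confirming the reflection $2x_0 - x$. Reading off the $\mathcal{O}(x^2)$ coefficient — which combines $X^1_x(\0)\tau_1$, $X^1(\0)\tau_2$ and $\tfrac12\big(X^1_x X^1\big)(\0)\tau_1^2$ — and simplifying using the explicit $a_2(\0), a_3(\0)$ and the fold relations produces $\beta_X^{\0}$.

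The main obstacle is exactly this second-order bookkeeping: the leading involution forces a cancellation at order $x$, so $\beta_X^{\0}$ depends delicately on the next-order data, namely $\tau_2$ (hence on $a_3(\0)$, the third $t$-derivative of $\varphi^2$, and on $a_2'(\0)$) together with the $x$-dependence of $X^1$. Verifying that the many contributions reorganize into the compact three-term form with overall factor $\tfrac13$ — in particular that the stray $X^2_{xy}$- and $X^1_y$-type terms drop out by virtue of $X^2(\0)=0$ — is where the care lies. As an independent consistency check one can use the fact that the fold return map is an involution, $\phi_X^{\0}\circ\phi_X^{\0}=\mathrm{id}$, which forces the form $-x+\beta x^2+\mathcal{O}(x^3)$ and fixes the cubic coefficient in terms of $\beta$, so that only the single coefficient $\beta_X^{\0}$ needs to be computed.
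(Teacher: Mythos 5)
Your proposal is correct and follows the same route the paper intends: the paper defers this ``straightforward'' proof to the thesis \cite{larrosa}, and the standard argument is exactly your direct computation --- Taylor-expand the flow in $t$, invert the series $a_1(x)+a_2(x)\tau+a_3(x)\tau^2+\cdots=0$ for the nonzero return time $\tau(x)=-\tfrac{2}{X^1(p_0)}x+\tau_2x^2+\mathcal{O}(x^3)$, and substitute into $\varphi^1$; carrying this out does reproduce $\beta_X^{p_0}$, with the $\mathcal{O}(x^2)$ coefficient collapsing to $X^1(p_0)\,\tau_2$ after the $X^1_x$-contributions cancel. One caution on your sketch: $a_3(\0)=\tfrac{1}{6}\bigl[X^2_{xx}(X^1)^2+X^2_xX^1_xX^1+X^2_yX^2_xX^1\bigr](\0)$ contains a third term, $X^2_yX^2_xX^1$, beyond the two you list, and it is essential --- dropping it would change the $X^2_y$-contribution to $\beta$ from $\tfrac{2}{3}\tfrac{X^2_y}{X^1}$ to $2\tfrac{X^2_y}{X^1}$, so the ``stray'' terms killed by $X^2(\0)=0$ are only the $X^2_{xy}$- and $X^1_y$-type ones, not this one.
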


Suppose that the vector field $Z$ has an invisible fold-fold point at $\0 \in \s$ with   
$\s=\overline{\s^c}$. Then,  it has sense to consider the first return map that, for 
convenience, we define on $\s^- = \{ (x,0), \ x \in \mathcal{I} : \, x<0 \}$  
\begin{equation} \label{gfirstreturn} 
\phi_Z: \s^- \rightarrow \s^-,
\end{equation}
by setting $p_0=\0$ and composing appropriately the Poincaré maps for $X$ and $Y$. Using Proposition \ref{prop:frX} we obtain
\begin{eqnarray}
\phi_Z(x) = \phi_Y \circ \phi_X(x)  &=& x + (\beta_Y - \beta_X) x^2 + \mathcal{O}(x^3), \mbox{ if } X_1(0)>0 \vspace{0.1cm}\label{eq:phi}\\
\phi_Z(x) = \phi_X \circ \phi_Y(x) &=& x + (\beta_X - \beta_Y) x^2 + \mathcal{O}(x^3), \mbox{ if } X_1(0)<0
\end{eqnarray}

The generic condition stated in  item (\ref{itm:B}) of  \autoref{thm:generic} for the invisible fold-fold is that 
\begin{equation} \label{gencondii} 
\G_Z = \beta_Y - \beta_X \neq 0.
\end{equation}

\begin{rem} \label{rem:pseudocycle} 
	If $X^1(\0)>0$, $\phi_Z$ is defined in $\s^-$ and is given by \eqref{eq:phi}. 
	The origin is an attractor fixed point for $\phi_Z$ if $\G_Z=\beta_{Y} - \beta_{X} >0$ 
	and it is a repellor in case  $\G_Z=\beta_{Y} - \beta_{X} <0$. Analogously for the case $X^1(\0)<0$.
	Nevertheless, it is important to stress that, as $\phi'_Z (0)=1$, the origin is never a hyperbolic fixed point of the first return map $\phi_Z$ 
	even in the generic case. 
	This will have consequences latter in \autoref{sec:regularization} when we study the regularization of the vector field $Z$.
\end{rem}

\begin{rem}
	An important detail that had not been observed in \cite{mst} and \cite{Kuznetsov} is that, even in the case $\s=\overline{\s^s \cup \s^e}$, 
	one needs to consider the first return map and impose the same generic condition \eqref{gencondii}. 
	Even though the first return map has no dynamical meaning in this case, the pseudo-cycles, which correspond to fixed points of $\phi_Z$, 
	must be preserved by $\s-$equivalences. 
	This map will be used in Section \ref{sec:IIunf} when we study the unfolding of an invisible fold-fold satisfying $X^1\cdot Y^1(\0)>0$ 
	and in this case we consider $\phi_Z = \phi_Y \circ \phi_X$ independently of the sign of $X^1(\0).$ 
\end{rem}

Now, we are able to state and prove  that conditions (\ref{itm:A}) to (C) in \autoref{thm:generic} which
characterize a codimension one embedded submanifold in $\mathcal{Z}$.

\begin{prop} \label{thm:manifold}
	The set 
	$\tilde \Lambda^F \subset \mathcal{Z}$ 
	of all Filippov systems which have a fold-fold at the origin satisfying the hypothesis 
	(\ref{itm:A}), (\ref{itm:B}) or (C) in \autoref{thm:generic} is an embedded co-dimension one subma\-ni\-fold of $\mathcal{Z}$. 
	That is, for each $Z^0 \in \tilde \Lambda^F$ there exist a map $\lambda:\mathcal{V}_0 \rightarrow \R$ where $\mathcal{V}_0  \subset \mathcal{Z}$ 
	is a neighborhood of $Z^0$ and $Z^0 \in \lambda^{-1}(0)=\mathcal{V}_0  \cap  \tilde \Lambda^F$ and $D \lambda_{Z^0} \neq 0.$
\end{prop}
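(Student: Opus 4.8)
The plan is to exhibit $\tilde\Lambda^F$ locally as the regular zero set of a real-valued submersion $\lambda$ on $\mathcal Z$, and then to apply the submersion theorem in the Banach space $\mathcal Z=\mathfrak{X}^r\times\mathfrak{X}^r$. Fix a base point $Z^0=(X,Y)\in\tilde\Lambda^F$; since its fold-fold is at the origin, \ref{eq:foldX} and \ref{eq:foldY} give $X^2(\0)=Y^2(\0)=0$ together with $X^2_x(\0)\neq 0$ and $Y^2_x(\0)\neq 0$. First I would track the two folds separately. The evaluation maps $(Z,x)\mapsto X^2(x,0)$ and $(Z,x)\mapsto Y^2(x,0)$ are Fréchet-$C^1$ on $\mathcal Z\times\R$, and their partial $x$-derivatives at $(Z^0,0)$ are the nonzero reals $X^2_x(\0)$ and $Y^2_x(\0)$. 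Exactly as in the proof of \autoref{corol:psequi}, the Implicit Function Theorem yields a neighborhood $\mathcal V_0\ni Z^0$ and Fréchet-differentiable maps $x_X,x_Y:\mathcal V_0\to\R$ with $x_X(Z^0)=x_Y(Z^0)=0$, such that $x_X(Z)$ and $x_Y(Z)$ are the unique zeros near $0$ of $X^2(\cdot,0)$ and $Y^2(\cdot,0)$ respectively. Shrinking $\mathcal V_0$ so that the open conditions $X^2_x\cdot X^1\neq 0$ and $Y^2_x\cdot Y^1\neq 0$ persist at these points, $x_X(Z)$ and $x_Y(Z)$ remain genuine folds of the visibility prescribed by $Z^0$.

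Next I would set
\[ \lambda(Z):=x_X(Z)-x_Y(Z),\qquad \lambda:\mathcal V_0\to\R, \]
and verify the set equality $\mathcal V_0\cap\tilde\Lambda^F=\lambda^{-1}(0)$. For $Z\in\mathcal V_0$ both folds persist, so $Z$ carries a fold-fold near the origin precisely when they collide, that is when $x_X(Z)=x_Y(Z)$, i.e.\ $\lambda(Z)=0$. It remains to see that the collision is the only constraint: the data distinguishing cases (\ref{itm:A}), (\ref{itm:B}) and (C) --- namely the two visibility signs, the quantity $\mu_Z=\beta_Y-\beta_X\neq 0$ of \eqref{gencondii}, and $\gamma\neq 0$ of \ref{bsliding} --- are all strict inequalities of functionals depending continuously on $Z$, hence remain valid on a possibly smaller $\mathcal V_0$. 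Thus every $Z\in\lambda^{-1}(0)$ is a fold-fold of the same type as $Z^0$ and satisfies the same one of (\ref{itm:A}), (\ref{itm:B}) or (C), which gives the desired local identification of $\tilde\Lambda^F$ with $\lambda^{-1}(0)$.

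Finally I would show that $\lambda$ is a submersion at $Z^0$. Differentiating the identity $X^2(x_X(Z),0)\equiv 0$ along a direction $\tilde Z=(\tilde X,\tilde Y)$ gives $X^2_x\,Dx_X[\tilde Z]+\tilde X^2=0$ at the fold, so at $Z^0$ (where $x_X=0$) one finds $Dx_X(Z^0)[\tilde Z]=-\tilde X^2(\0)/X^2_x(\0)$, and likewise $Dx_Y(Z^0)[\tilde Z]=-\tilde Y^2(\0)/Y^2_x(\0)$. Hence
\[ D\lambda_{Z^0}[\tilde Z]=\frac{\tilde Y^2(\0)}{Y^2_x(\0)}-\frac{\tilde X^2(\0)}{X^2_x(\0)}, \]
which is precisely the versality functional \eqref{versal}. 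Because $X^2_x(\0)\neq 0$, taking for instance $\tilde X^2(\0)\neq 0$ and $\tilde Y^2(\0)=0$ makes this expression nonzero, so $D\lambda_{Z^0}$ is a nonzero continuous linear functional; in particular $D\lambda_{Z^0}\neq 0$ and $\lambda$ is a submersion at $Z^0$. Its kernel is then a closed hyperplane, which is automatically complemented in $\mathcal Z$, so the submersion form of the Implicit Function Theorem exhibits $\lambda^{-1}(0)=\mathcal V_0\cap\tilde\Lambda^F$ as an embedded codimension-one $C^1$ submanifold through $Z^0$, as claimed.

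The main obstacle is the infinite-dimensional bookkeeping rather than any single computation. One must verify that the fold-tracking maps are genuinely Fréchet differentiable with invertible (scalar, nonzero) partial $x$-derivatives so that the Banach Implicit Function Theorem applies, and that the resulting $\lambda$ is a submersion with complemented kernel; the latter is painless here only because the codomain is $\R$, so $\ker D\lambda_{Z^0}$ is a closed hyperplane. The second delicate point is the set equality $\mathcal V_0\cap\tilde\Lambda^F=\lambda^{-1}(0)$, which rests on the openness of every type-distinguishing and genericity condition --- this is exactly what guarantees that the collision of the two folds is the sole closed constraint cutting out $\tilde\Lambda^F$ near $Z^0$.
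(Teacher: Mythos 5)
Your proposal is correct and follows essentially the same route as the paper: an Implicit Function Theorem argument tracking the two fold points (the paper does this in one two-dimensional application via $\xi(Z,(p,q))=(X^2(p,0),Y^2(q,0))$, you via two scalar applications), the same defining function $\lambda = x_X - x_Y$, the same openness argument to handle the type and genericity conditions in cases (\ref{itm:A})--(C), and the same derivative $D\lambda_{Z^0}[\tilde Z]=\frac{\tilde Y^2(\0)}{Y^2_x(\0)}-\frac{\tilde X^2(\0)}{X^2_x(\0)}$, which matches \eqref{fXexpression} and \eqref{versal}. Your explicit implicit-differentiation of the identity $X^2(x_X(Z),0)\equiv 0$ and the remark on the complemented kernel merely make precise steps the paper leaves implicit.
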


\begin{proof} 
	Consider $Z^0 \in  \tilde \Lambda^F$. 
	Let $\U_0 \subset \mathcal{Z}$ be a neighborhood of $Z^0$ sufficiently small such that in this neighborhood the sign of 
	$X^1(x,0)$, $Y^1(x,0)$, $X^2_x(x,0)$ and $Y^2_x(x,0)$ is constant for $x \in \mathcal{I}_0\subset \R$. 
	Moreover, if $Z^0$ satisfies the hypothesis of \autoref{lem:neq0}, 
	suppose that $\sgn{(\det{Z)_x}(x,0)}=\sgn{(\det{Z^0)_x(\0)})}$ for all $Z \in \U_0$ and $x \in \mathcal{I}_0$.

	Consider the following Frechet differentiable map 
	\begin{equation*}
		\begin{array}{llll} 
			\xi:& \U_0 \times \R^2 & \rightarrow & \R^2 \\ & (Z,(p,q)) & \mapsto & (X^2(p,0),Y^2(q,0)) 
		\end{array}.
	\end{equation*} 
	
	Since $\0 \in \s$ is a fold-fold point $\xi(Z^0,\0)=\0$ and by \ref{eq:foldX} and \ref{eq:foldY}, 
	$$
	\det{D_{(p,q)}\xi(Z^0,\0)}= (X^0)^2_x \cdot (Y^0)^2_x (\0) \neq 0.
	$$
	Using the Implicit Function Theorem for $\xi$ there exist $\mathcal{V}_0^* \subset \U_0$ and a 
	map 
	\begin{equation} 
	\label{Tmap} T: Z=(X,Y) \in \mathcal{V}_0^* \subset \U_0 \mapsto (T_X,T_Y) \in \mathcal{I}_0 \times \mathcal{I}_0  \subset \R^2 
	\end{equation} 
	defined in a path connected open set such that $\xi(Z,(p,q))=(0,0)$ if, and only if, $(p,q)=(T_X,T_Y).$ 
	In other words, $\xi(Z,T(Z))=\0$ for every $Z \in \mathcal{V}_0^*$. 
	That is, $X$ and $Y$ have a fold point $(T_X,0)$ and $(T_Y,0)$ near the origin with the same visibility as the origin has for $X^0$ and $Y^0$.

	To show that $ \tilde \Lambda^F$ is a submanifold let consider the Frechet differentiable map 
	$$
	\begin{array}{llll} 
	\lambda_*:	& \mathcal{V}_0^* & \rightarrow 	& \R \\
	& Z 			& \mapsto 		& T_X - T_Y 
	\end{array}.
	$$
	It is clear that $Z \in \mathcal{V}_0^*$ has a fold-fold point near the origin if, and only if, $Z \in \lambda_*^{-1}(0)$. 
	Moreover, it is easy to see that when $Z^0$ satisfies items (A) or (C) then every 
	$Z \in \lambda_*^{-1}(0)$ also belongs to $ \tilde \Lambda^F$ and the fold-fold type is preserved. 
	In this case, set $\mathcal{V}_0=\mathcal{V}_0^*$. 
	
	When $Z^0$ satisfies (B) with $\G_{Z^0} \neq 0$ \ref{gencondii}, then there exists a neighborhood 
	$\tilde{\mathcal{V}}_0^1=\mathcal{V}_0^1 \cap \mathcal{Z}_1$, with $\mathcal{V}_0^1 \subset \mathcal{Z}$, 
	such that $\sgn{\G_Z}=\sgn{\G_{Z^0}}$, for all $Z \in \mathcal{V}_0^1$. 
	Therefore the fold-fold point has the same attractivity to $\phi_Z$ as the origin has to $\phi_{Z^0}$. 
	In this case set $\mathcal{V}_0=\mathcal{V}_0^* \cap \mathcal{V}_0^1$. 
	
	Consider then the map $\lambda = \lambda_* \big|_{\mathcal{V}_0}$. 
	It follows that $\lambda^{-1}(\0)=\mathcal{V}_0 \cap  \tilde \Lambda^F$. 
	To finish our proof, observe that 
	\begin{equation}\label{derivativelambda}
	D \lambda_{Z^0}(Z) = D(T_X - T_Y)_{Z^0}(Z) = \frac{X^2(\0)}{(X^0)^2_x(\0)} - \frac{Y^2(\0)}{(Y^0)^2_x(\0)},
	\end{equation}
	and therefore the resulting map is a non-vanishing linear map, what proves the desired result.
\end{proof}

The next step to finish the proof of Theorem \ref{thm:generic} is to prove that any versal unfoldings of $Z^0 \in \lambda^{-1}(0)$ are weak equivalent and consequently
$ \tilde \Lambda ^F=\Lambda ^F$. 
As we will see the behavior of the unfolding depends on the sign of $X^1 \cdot Y^1 (\0)$, but the study is completely analogous for $X^1(\0)$ positive or negative. 
Therefore, in what follows, we assume that $X^1(\0)>0$. 

Consider $\gamma: \alpha \in (-\alpha_0,\alpha_0) \mapsto Z^\alpha \in \mathcal{V}_0$ with $\alpha_0 \ll 1$, a versal unfolding of $Z^0$, 
where $\mathcal{V}_0$ is the neighborhood given in \autoref{thm:manifold}. 
Since $\gamma$ is transverse to $\lambda^{-1}(0)$ at $Z^0$, and the derivative of $\lambda$ is given in  \eqref{derivativelambda},
one can write $Z^\alpha = Z^0 + \tilde{Z} \alpha+\mathcal{O}(\alpha)^2$ with 
$$
\frac{\tilde{Y}^{2}(\0)}{(Y^0)^2_x(\0)}-\frac{\tilde{X}^{2}(\0)}{(X^0)^2_x(\0)} \ne 0 .
$$
This condition ensures that for $\alpha \ne 0$ small, the vector field $Z^\alpha$ has not a fold-fold.

Moreover, since $\lambda^{-1}(0) \subset \mathcal{V}_0$ is a codimension one embedded submanifold and $\mathcal{V}_0$ is path connected the set 
$\lambda^{-1}(0)$ splits  $\mathcal{V}_0$ in two connected components, namely, $\mathcal{V}_0^\pm = \lambda^{-1}(\R^\pm)$. 
Therefore in the sequel we suppose that $\gamma(-\alpha_0,0) \subset \mathcal{V}^-_0$ and $\gamma(0,\alpha_0) \subset \mathcal{V}^+_0$. 

By \autoref{thm:manifold}, applied to the particular curve $\gamma(\alpha)$, for each $\alpha \in (-\alpha_0,\alpha_0)$ 
there exist $T_X^{\alpha},T_Y^{\alpha} \in \s$ near the origin given by 
\begin{equation} \label{fXexpression}
T_X^{\alpha} = -\frac{\tilde{X}^2(\0)}{(X^0)^2_x(\0)} \alpha + \mathcal{O}(\alpha^2)  \ \text{and} \
T_Y^{\alpha} = -\frac{\tilde{Y}^2(\0)}{(Y^0)^2_x(\0)} \alpha + \mathcal{O}(\alpha^2).
\end{equation}

Then to assume $ T_X^{\alpha} - T_Y^{\alpha}>0$ for $\alpha>0$ is equivalent to 
\begin{equation}\label{foldsbe}
\frac{\tilde{Y}^{2}(\0)}{(Y^0)^2_x(\0)}-\frac{\tilde{X}^{2}(\0)}{(X^0)^2_x(\0)} >0 .
\end{equation}
Note that the points $(T_X^\alpha,0)$ and $(T_Y^\alpha,0)$ are the tangency points of the vector field $Z^\alpha$. 
Therefore, assumption \eqref{foldsbe} ensures that the tangency of the vector field $X^\alpha$ is on the left of 
the tangency of $Y^\alpha$ when $\alpha <0$ and otherwise when $\alpha >0$.

Once we  show that any unfolding has the same phase portrait, a systematic construction of the homeomorphism giving the topological  equivalences 
between them can be easily done using the ideas of \cite{mst}. 
Then any two unfoldings are weak equivalent. 
In conclusion, joining the result of \autoref{thm:manifold} and the following propositions \ref{prop:VVCunfolding}, \ref{prop:VVSunfolding}, \ref{prop:IICunfolding},
\ref{prop:IISunfolding} and \ref{prop:VISunfolding}, we prove \autoref{thm:generic}. In what follows, in order to avoid a huge amount of cases, we fix $X^1(\0)>0$.


\subsection{The versal unfolding of a visible fold-fold singularity} \label{sec:VVunf}

\begin{prop} \label{prop:VVCunfolding} 
	Let $Z \in \Lambda^F$ satisfying condition $(A)$ of \autoref{thm:generic} and $X^1 \cdot Y^1 (\0)>0$. 
	Let $\mathcal{V}_0$ be the neighborhood given by \autoref{thm:manifold}. 
	Then any smooth curve 
	$$
	\gamma: \alpha \in (-\alpha_0,\alpha_0) \mapsto Z^{\alpha} \in \mathcal{V}_0
	$$
	which is transverse to $\Lambda^F$ at $\gamma(0)=Z$ leads to the same topological behaviors in $\mathcal{V}^+_0$ and in $\mathcal{V}^-_0$. 
	Any  vector field $Z^{\alpha}$ has two visible fold points with a crossing region between them. 
	In the sliding and escaping regions there are no pseudo-equilibrium
	Therefore, there exists a weak equivalence between any two unfoldings of $Z$.
\end{prop}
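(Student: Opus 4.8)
The plan is to show that for every small $\alpha\ne0$ the Filippov dynamics of $Z^\alpha$ on $\s$ is the same picture on both sides of $\lambda^{-1}(0)$ --- two visible folds separated by a crossing interval, flanked by a sliding and an escaping interval containing no pseudo-equilibrium --- whence the weak equivalence follows by the construction of \cite{mst}. First I would fix the signs. From $X^1(\0)>0$ and $X^1\cdot Y^1(\0)>0$ we get $Y^1(\0)>0$; by \autoref{visibility}, visibility of the two folds then forces $X^2_x(\0)>0$ and $Y^2_x(\0)<0$, and \autoref{lem:sdec} gives $\s=\overline{\s^s\cup\s^e}$, so a sliding--escaping zone is indeed present. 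For $\alpha\ne0$ the tangencies $T_X^\alpha,T_Y^\alpha$ of \eqref{fXexpression} are distinct by the transversality (versality) hypothesis, and \eqref{foldsbe} orders them: $T_Y^\alpha<T_X^\alpha$ for $\alpha>0$ and $T_X^\alpha<T_Y^\alpha$ for $\alpha<0$.

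Next I would read off the three regions from the monotonicity of $X^2(\cdot,0)$ and $Y^2(\cdot,0)$: since $X^2_x(\0)>0$, $X^2(x,0)$ passes from negative to positive across $T_X^\alpha$, and since $Y^2_x(\0)<0$, $Y^2(x,0)$ passes from positive to negative across $T_Y^\alpha$. Matching these sign patterns against the defining inequalities of $\s^{c,s,e}$ shows that for $\alpha>0$ one has $\s^s$ to the left of $T_Y^\alpha$, $\s^c$ occupying the interval $(T_Y^\alpha,T_X^\alpha)$ between the folds, and $\s^e$ to the right of $T_X^\alpha$; for $\alpha<0$ the same sliding--crossing--escaping ordering appears with the roles of the two folds exchanged. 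This proves that the crossing region lies between the two visible folds.

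The crux is to locate the unique zero of $\det{Z}$. By \eqref{det}, at $x=T_X^\alpha$ (where $X^2=0$) we have $\det{Z}(T_X^\alpha,0)=X^1\cdot Y^2(T_X^\alpha,0)$, and at $x=T_Y^\alpha$ (where $Y^2=0$) we have $\det{Z}(T_Y^\alpha,0)=-X^2\cdot Y^1(T_Y^\alpha,0)$. Inserting the signs just found --- for $\alpha>0$, $Y^2(T_X^\alpha,0)<0$ and $X^2(T_Y^\alpha,0)<0$ --- gives $\det{Z}(T_X^\alpha,0)<0$ and $\det{Z}(T_Y^\alpha,0)>0$, with the opposite pair for $\alpha<0$; either way $\det{Z}$ takes opposite signs at the two folds. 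Hence its zero lies strictly between them, inside $\s^c$, and \autoref{corol:psequi} (using $(\det{Z})_x(\0)<0$ from \autoref{lem:neq0}, case (a)) makes this zero unique near the origin. Therefore $\det{Z}$, and with it $Z^s$, never vanishes on $\s^s\cup\s^e$, so there is no pseudo-equilibrium there.

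Since the resulting phase portrait --- two visible folds, a central crossing segment, and a pseudo-equilibrium-free sliding and escaping segment in the fixed order --- does not depend on the sign of $\alpha$, the portraits over $\mathcal{V}_0^+$ and $\mathcal{V}_0^-$ agree. A trajectory-matching homeomorphism preserving the regions of $\s$ and the sliding field can then be assembled as in \cite{mst}, giving, after a homeomorphic reparametrization of the parameter, the weak equivalence of any two versal unfoldings. I expect the only genuine difficulty to be the sign bookkeeping of the third paragraph; once the zero of $\det{Z}$ is trapped in the crossing region, the equivalence construction itself is routine because the phase portrait never changes type.
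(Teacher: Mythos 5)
Your proof is correct, and its skeleton coincides with the paper's: the same sign analysis of $X^{\alpha 2}$ and $Y^{\alpha 2}$ around the tangencies $T_X^{\alpha},T_Y^{\alpha}$ of \ref{fXexpression}, the same decomposition of $\s$ as in \ref{vvdec1}, and the same evaluation of $\det{Z^{\alpha}}$ at the two folds. The one place you diverge is how pseudo-equilibria are excluded. The paper evaluates the sliding vector field itself at the folds --- at $T_X^{\alpha}$ one gets $(Z^{\alpha})^s(T_X^{\alpha})=X^{\alpha 1}(T_X^{\alpha},0)>0$ and at $T_Y^{\alpha}$ one gets $Y^{\alpha 1}(T_Y^{\alpha},0)>0$ --- and concludes $\sgn{(Z^{\alpha})^s(x)}=\sgn{X^1(\0)}$ on all of $\s^s \cup \s^e$ (indeed on $\s^s$ both the numerator $\det{Z^{\alpha}}$ and the denominator $Y^{\alpha 2}-X^{\alpha 2}$ are positive, and on $\s^e$ both are negative, so the sign is visible directly from \ref{slidingdef}). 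You instead trap the zero of $\det{Z^{\alpha}}$: opposite signs at the two folds give, by Bolzano, a zero strictly between them, hence in $\s^c$, and \autoref{corol:psequi} together with \autoref{lem:neq0} item (a) (which here yields $(\det{Z})_x(\0)<0$) makes that zero unique near the origin, so $\det{Z^{\alpha}}$, and with it $(Z^{\alpha})^s$, never vanishes on $\s^{s,e}$. Your variant is slightly more explicit --- it locates where the zero of $\det{Z^{\alpha}}$ actually sits and makes fully rigorous the step the paper leaves terse (passing from the sign of $(Z^{\alpha})^s$ at the two endpoint folds to its sign on the whole sliding and escaping regions) --- at the mild cost of importing \autoref{corol:psequi}, which the paper only deploys in the cases where a pseudo-equilibrium genuinely appears (Propositions \ref{prop:VVSunfolding}, \ref{prop:IICunfolding} and \ref{prop:VISunfolding}). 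The remaining ingredients (the normalization $X^1(\0)>0$, the ordering of the folds via \ref{foldsbe}, and the concluding appeal to the equivalence construction of \cite{mst}) match the paper's proof exactly.
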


\begin{proof} Since $X^1 \cdot Y^1(\0) >0$, $\s=\overline{\s^s \cup \s^e}$ for $\alpha=0$. 
	For $\alpha \neq 0$, we know the existence of the folds $T_X^{\alpha}$ and $T_Y^{\alpha}$ given in \ref{fXexpression}. 
	Observe that for $\alpha \ne 0$ we have $X^{\alpha 2}(x,0)<0$ if $x< T^{\alpha}_X$ and $Y^{\alpha 2}(x,0)<0$ if $x> T^{\alpha}_Y$. 
	Analogously, $X^{\alpha 2}(x,0)>0$ if $x> T^{\alpha}_X$ and $Y^{\alpha 2}(x,0)>0$ if $x< T^{\alpha}_Y$, see \autoref{VVSunf}. 
	Therefore, a crossing region appears between the folds and the discontinuity curve is decomposed as $\s = \s^s \cup \s^c \cup \s^e$, as follows: 
	\begin{equation} \label{vvdec1}
	\begin{aligned}
	\s^s &= \{ x \in \s: x < \min \{T_X^{\alpha},T_Y^{\alpha} \} \}, \\
	\s^c &= \{ x \in \s: x \in ( \min \{T_X^{\alpha},T_Y^{\alpha} \}, \max \{T_X^{\alpha},T_Y^{\alpha} \}) \}, \\
	\s^e &= \{ x \in \s : x > \max \{T_X^{\alpha} ,T_Y^{\alpha} \} \}.
	\end{aligned}
	\end{equation}
	
	Observe that
	\begin{equation}
	\det{Z^{\alpha}(T^{\alpha}_X,0)}=X^{\alpha 1} \cdot Y^{\alpha 2} (T^{\alpha}_X,0) \ \text{and} \
	\det{Z^{\alpha}(T^{\alpha}_Y,0)}=-X^{\alpha 2} \cdot Y^{\alpha 1} (T^{\alpha}_Y,0).
	\end{equation}
	Moreover, by definition  \ref{slidingdef} of the sliding vector field and using the fact of $T^\alpha_{X,Y}$ are fold points and 
	$\sgn{X^{\alpha1}(x,0)}=\sgn{Y^{\alpha1}(x,0)}>0$, we have 
	$$
	\sgn{(Z^{\alpha})^s(T^{\alpha}_X)}=\sgn{(Z^{\alpha})^s(T^{\alpha}_Y)}=\sgn{X^{\alpha 1}(\0)}>0.
	$$
	Then the sliding vector field of $Z^{\alpha}$ near $Z^0$ satisfies $\sgn{(Z^{\alpha})^s(x)}=\sgn{X^1(\0)}$.
	In particular there are no pseudo-equilibrium 
	
	By \ref{vvdec1}, for $\alpha<0$, the sliding vector field is defined for $x< T^{\alpha}_X$ and for $x>T^{\alpha}_Y$. 
	In addition, between the folds we have that $X^{\alpha2},Y^{\alpha2}>0$. 
	For $\alpha>0$, the sliding vector field is defined for $x< T^{\alpha}_Y$ and $x>T^{\alpha}_X$ and between the folds $X^{\alpha2},Y^{\alpha2}<0$.

	This proves that any unfolding of $Z \in \Lambda^F$ satisfying $(A)$ leads to vector fields with exactly the same behavior, that is, 
	the same $\s-$regions and singularities. 
	A sketch of a versal unfolding of the visible fold-fold satisfying $X^1\cdot Y^1(\0)>0$ can be seen in \autoref{VVSunf}.
\end{proof}

\begin{figure}[htb!]
	\centering
	\begin{tiny}
		\def\svgscale{0.3}
		\input{./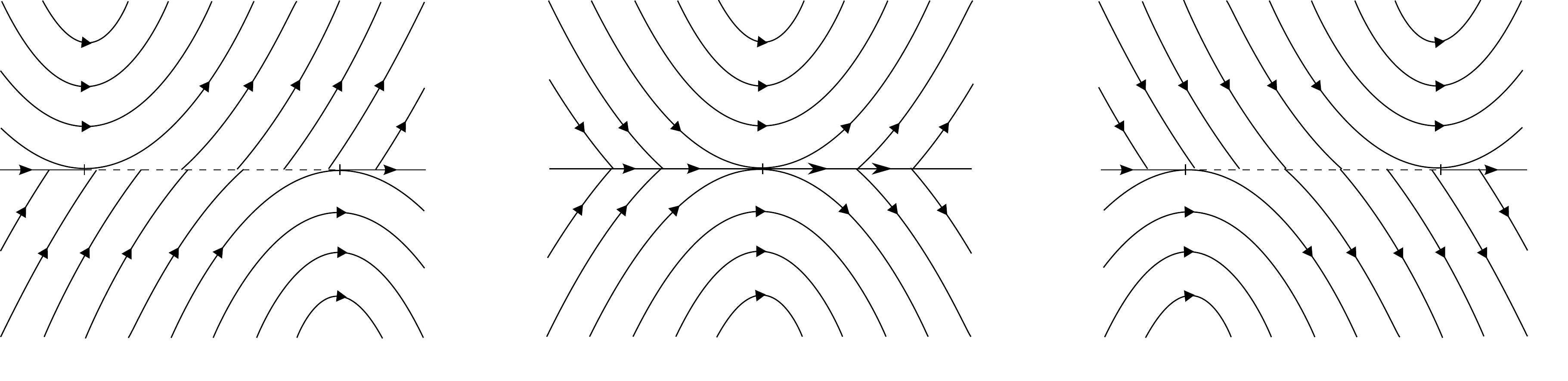_tex}
	\end{tiny}
	\caption{Versal unfolding for a visible fold-fold: $X^1 \cdot Y^1(\0)>0$}
	\label{VVSunf}
\end{figure}

\begin{prop} \label{prop:VVSunfolding} 
	Let $Z \in \Lambda^F$ satisfying condition $(A)$ of \autoref{thm:generic} and $X^1 \cdot Y^1 (\0)<0$. 
	Let $\mathcal{V}_0$ be the neighborhood given by \autoref{thm:manifold}. 
	Then any smooth curve 
	$$
	\gamma: \alpha \in (-\alpha_0,\alpha_0) \mapsto Z^{\alpha} \in \mathcal{V}_0
	$$
	which is transverse to $\Lambda^F$ at $\gamma(0)=Z$ leads to the same behaviors in $\mathcal{V}^+_0$ and in $\mathcal{V}^-_0$.  
	If $Z^\alpha \in \mathcal{V}_0^-$ ($Z^\alpha \in \mathcal{V}_0^+$), 
	it has two visible fold points with a escaping (sliding) region  between them, whose sliding vector field has a pseudo-saddle $Q(\alpha)=(x(\alpha),0)$. 
	Therefore, there exists a weak equivalence between any two unfoldings of $Z$.
\end{prop}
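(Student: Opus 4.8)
The plan is to mirror the sign analysis of \autoref{prop:VVCunfolding}, adapting it to the present case where a genuine sliding/escaping region is present at the fold-fold. First I would pin down the signs of the relevant quantities. Since $X^1(\0)>0$, visibility of the $X$-fold (condition $(A)$) through \eqref{eq:foldX} forces $X^2_x(\0)>0$; the hypothesis $X^1\cdot Y^1(\0)<0$ gives $Y^1(\0)<0$, and then visibility of the $Y$-fold in \autoref{visibility} yields $Y^2_x(\0)>0$. By \autoref{lem:sdec}, at $\alpha=0$ we are in the case $\s=\overline{\s^e\cup\s^s}$, and by \autoref{lem:neq0}\,(a) we have $(\det Z^0)_x(\0)>0$. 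Shrinking $\mathcal{V}_0$ if necessary, all these signs persist throughout $\mathcal{V}_0$.

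Next I would locate the folds and decompose $\s$. \autoref{thm:manifold}, applied to the curve $\gamma(\alpha)$, provides the two tangency points $T_X^\alpha,T_Y^\alpha$ of \eqref{fXexpression}, which remain \emph{visible} folds for small $\alpha$ because visibility is an open condition. The normalization \eqref{foldsbe} then fixes the ordering $T_X^\alpha<T_Y^\alpha$ for $\alpha<0$ and $T_X^\alpha>T_Y^\alpha$ for $\alpha>0$. Since $X^2_x,Y^2_x>0$, the functions $X^{\alpha 2}(\cdot,0)$ and $Y^{\alpha 2}(\cdot,0)$ are increasing and change sign precisely at their respective folds; reading off these signs on the three subintervals gives, for $\alpha<0$, crossing regions on $\{x<T_X^\alpha\}$ and $\{x>T_Y^\alpha\}$ and an escaping region $\s^e=(T_X^\alpha,T_Y^\alpha)$ in between, while for $\alpha>0$ the middle interval $(T_Y^\alpha,T_X^\alpha)$ is instead a sliding region $\s^s$, with crossing regions outside. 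This matches the escaping/sliding dichotomy asserted for $\mathcal{V}_0^-/\mathcal{V}_0^+$.

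Then I would produce and classify the pseudo-equilibrium. \autoref{corol:psequi} furnishes a unique $x(\alpha)$ near the origin with $\det Z^\alpha(x(\alpha),0)=0$ and $(\det Z^\alpha)_x(x(\alpha),0)>0$. Evaluating the determinant at the folds through $\det Z^\alpha(T_X^\alpha,0)=X^{\alpha 1}Y^{\alpha 2}(T_X^\alpha,0)$ and $\det Z^\alpha(T_Y^\alpha,0)=-X^{\alpha 2}Y^{\alpha 1}(T_Y^\alpha,0)$ and substituting the known signs, I would verify that $\det Z^\alpha$ passes from negative to positive across the middle interval, so that $x(\alpha)$ lies inside the escaping (resp.\ sliding) region. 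Finally, the classification follows from \eqref{pstability}: since $(\det Z^\alpha)_x(x(\alpha),0)>0$, and $Y^{\alpha 2}-X^{\alpha 2}<0$ on $\s^e$ while $Y^{\alpha 2}-X^{\alpha 2}>0$ on $\s^s$, one obtains $((Z^\alpha)^s)'(x(\alpha))<0$ with $x(\alpha)\in\s^e$ for $\alpha<0$ and $((Z^\alpha)^s)'(x(\alpha))>0$ with $x(\alpha)\in\s^s$ for $\alpha>0$; by the definition of pseudo-saddle, $Q(\alpha)=(x(\alpha),0)$ is a pseudo-saddle in both cases.

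Since every versal unfolding yields the same $\s$-decomposition and the same dynamics (two visible folds, outer crossing regions, and a single pseudo-saddle in the middle) on each of $\mathcal{V}_0^\pm$, the systematic construction of homeomorphisms following \cite{mst} delivers the weak equivalence between any two unfoldings. The argument is essentially sign bookkeeping and presents no conceptual obstacle; the only real care required is to keep the $\alpha<0$ and $\alpha>0$ cases strictly separate and to track which fold lies to the left, so that the escaping-versus-sliding identification and the pseudo-saddle classification come out correctly in each regime.
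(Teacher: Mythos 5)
Your proof is correct and follows essentially the same route as the paper's: locate the folds $T_X^{\alpha},T_Y^{\alpha}$ via \autoref{thm:manifold} and \ref{fXexpression}, read off the signs of $X^{\alpha 2}(\cdot,0)$ and $Y^{\alpha 2}(\cdot,0)$ on the three subintervals to obtain the decomposition \ref{vvdec2} (escaping between the folds for $\alpha<0$, sliding for $\alpha>0$, crossing outside), invoke \autoref{corol:psequi} together with $(\det{Z^0})_x(\0)>0$ from \autoref{lem:neq0} to get the unique pseudo-equilibrium in the middle region, and classify it with \ref{pstability}. Your explicit check that $\det{Z^{\alpha}}(x,0)$ passes from negative to positive across the middle interval is exactly the paper's observation that $\det{Z^{\alpha}(T_X^{\alpha},0)} \cdot \det{Z^{\alpha}(T_Y^{\alpha},0)}<0$, just spelled out sign by sign.

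One factual slip: you cite \autoref{lem:sdec} to assert that at $\alpha=0$ one has $\s=\overline{\s^e\cup\s^s}$, but for folds of the same visibility with $X^1\cdot Y^1(\0)<0$ that lemma gives $\s=\overline{\s^c}$ --- you swapped its two cases (the configuration you name belongs to \autoref{prop:VVCunfolding}, where $X^1\cdot Y^1(\0)>0$). The slip is harmless to the argument, since everything downstream is derived from your (correct) sign analysis for $\alpha\neq 0$, and that analysis is in fact consistent with $\s=\overline{\s^c}$ at $\alpha=0$: the sliding/escaping interval between the folds collapses onto the fold-fold point as $\alpha\to 0$, leaving pure crossing. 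But as written, your first paragraph contradicts the decomposition you then correctly derive, so the citation should be fixed.
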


\begin{proof} 
	Since $X^1 \cdot Y^1(\0) <0$, $\s=\overline{\s^c}$ for $\alpha=0$. 
	For $\alpha \neq 0$, we know the existence of the folds $T_X^{\alpha}$ and $T_Y^{\alpha}$ given in  \ref{fXexpression}.
	Observe that $X^{\alpha 2}(x,0)<0$ if $x< T^{\alpha}_X$ and $Y^{\alpha 2}(x,0)>0$ if $x> T^{\alpha}_Y$. 
	Analogously, $X^{\alpha 2}(x,0)>0$ if $x> T^{\alpha}_X$ and $Y^{\alpha 2}(x,0)<0$ if $x< T^{\alpha}_Y$, see \autoref{VVCunf}. 
	Therefore, a piece of sliding ($\alpha >0$) or escaping  ($\alpha <0$) region appear between the folds.
	\begin{eqnarray} \label{vvdec2}
	\s^c &=& \{ x \in \s: x < \min \{T_X^{\alpha},T_Y^{\alpha} \} \} \cup \{ x \in \s: x > \max \{T_X^{\alpha},T_Y^{\alpha} \} \}  \\
	\s \setminus \s^c, &=& \begin{cases}
	\s^e =& \{ x \in \s: x \in  (T_X^{\alpha},T_Y^{\alpha}) \} \}, \, \alpha<0, \\
	\s^s =& \{ x \in \s: x \in  (T_Y^{\alpha},T_X^{\alpha}) \} \}, \, \alpha>0 .
	\end{cases}
	\end{eqnarray}

	Observe that $\det{Z^{\alpha} (T_X^{\alpha},0)} \cdot \det{Z^{\alpha}(T_Y^{\alpha},0)}<0$ for $\alpha \neq 0$. 
	Then there exists a point $x(\alpha) \in \s^{e,s}$ such $\det{Z^{\alpha}}(x(\alpha),0)=0$. 
	Moreover, by \autoref{lem:neq0} we know that $(\det{Z^0})_x(\0)>0$ and therefore by \autoref{corol:psequi}, 
	for $\alpha$ small enough $x(\alpha)$ is unique. Therefore $Q(\alpha)=(x(\alpha),0)$  is a pseudo-equilibrium of $(Z^{\alpha})^s$. 
	Moreover, by \ref{pstability} 
	\begin{eqnarray*}
		((Z^{\alpha})^s)'(x(\alpha)) 	&=& \frac{(\det{Z^{\alpha})_x}(Q(\alpha))}{(Y^{\alpha 2} -X^{\alpha 2})(Q(\alpha))}.
	\end{eqnarray*}
	As
	$(\det{Z^{\alpha})_x}(Q(\alpha))>0$ for $|\alpha| \ll 1$,  
	$((Z^{\alpha})^s)'(x(\alpha))$ is positive if $\alpha>0$ and it is negative if $\alpha<0$.  
	This fact implies that the point $Q(\alpha)$ is a pseudo-saddle of the sliding vector field. 
	
	This proves that any unfolding of $Z \in \Lambda^F$ satisfying $(A)$ with $X^1\cdot Y^1(\0)<0$ leads to vector fields with exactly the same topological invariants, 
	see \autoref{VVCunf}.
\end{proof}

\begin{figure}[!htb]
	\centering
	\begin{tiny}
		\def\svgscale{0.3}
		\input{./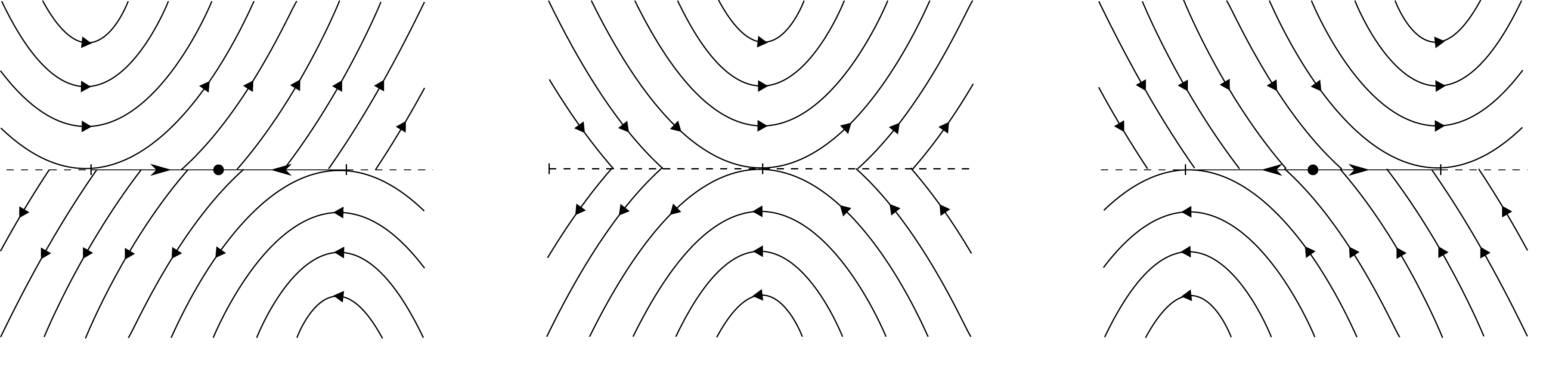_tex}
	\end{tiny}
	\caption{Versal unfolding for a visible fold-fold: $X^1 \cdot Y^1(\0)<0$}
	\label{VVCunf}
\end{figure}

\subsection{The versal unfolding of a invisible fold-fold singularity} \label{sec:IIunf}

To study the unfoldings of a Filippov vector field $Z$ having an invisible fold-fold, 
we need to consider the generalized first return map \ref{gfirstreturn} around the fold-fold point. 
We will have four different types of bifurcations depending on the sign of $X^1 \cdot Y^1 (\0)$ and the attracting or repelling character of the return 
map. The case where $\s=\overline{\s^c}$ is the so called pseudo-Hopf bifurcation and it was studied in \cite{Kuznetsov} and \cite{mst} and it is a 
generic codimension one bifurcation if $\G_Z \neq 0$ (see \ref{gencondii}).

\begin{prop} \label{prop:IICunfolding} 
	Let $Z \in \Lambda^F$ satisfying condition $(B)$ of \autoref{thm:generic}, $X^1 \cdot Y^1 (\0)<0$ and $\G_Z \neq 0$ (see \ref{gencondii}). 
	Let $\mathcal{V}_0$ be the neighborhood given in \autoref{thm:manifold}. 
	Then any smooth curve 
	$$
	\gamma: \alpha \in (-\alpha_0,\alpha_0) \mapsto Z^{\alpha} \in \mathcal{V}_0
	$$
	which is transverse to $\Lambda^F$ at $\gamma(0)=Z$ leads to the same topological behavior in $\mathcal{V}^+_0$ and in $\mathcal{V}^-_0$.
	This behavior depends of the sign of $\G_Z$:
	\begin{enumerate}
		\item 
		If $\G_Z>0$:
		\begin{itemize}
			\item 
			Every  $Z \in \mathcal{V}^-_0$ has two invisible fold points and there  exists a region of sliding between them. 
			The sliding vector field has a stable pseudo-node  $Q(\alpha)= (x(\alpha),0)\in \Sigma^s$ which is  a global attractor.
			\item 
			Every  $Z \in \mathcal{V}^+_0$ has two invisible fold points and there  exists a region of escaping between them. 
			The sliding vector field has a unstable pseudo-node  $Q(\alpha)= (x(\alpha),0)\in \Sigma^e$ and there exists a crossing 
			stable periodic orbit $\Gamma^\alpha$ which is a global attractor.
		\end{itemize}
		\item 
		If $\G_Z<0$:
		\begin{itemize}
			\item 
			Every  $Z \in \mathcal{V}^-_0$ has two invisible fold points and there  exists a region of sliding between them. 
			The sliding vector field has a stable pseudo-node $Q(\alpha)= (x(\alpha),0)\in \Sigma^s$ and there exists a crossing 
			unstable periodic $\Gamma^\alpha$ orbit which is a global repellor.
			\item 
			Every  $Z \in \mathcal{V}^+_0$ has two invisible fold points and there  exists a region of escaping between them. 
			The sliding vector field has a unstable pseudo-node  $Q(\alpha)= (x(\alpha),0)\in \Sigma^e$ which is a global repellor.
		\end{itemize}
	\end{enumerate} 
	Therefore, there exists a weak equivalence between any two unfoldings of $Z$.
\end{prop}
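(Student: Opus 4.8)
The plan is to establish, for $\alpha\neq 0$, four objects in turn — the two invisible fold points, the sliding/escaping region they bound, the pseudo-equilibrium inside it, and finally the crossing periodic orbit — and to observe that only the last depends on the sign of $\G_Z$. By \autoref{thm:manifold} applied to the curve $\gamma$, for each $\alpha$ there are fold points $T_X^\alpha,T_Y^\alpha$ given by \eqref{fXexpression}, both invisible since visibility is locally constant, and the normalization \eqref{foldsbe} fixes $T_Y^\alpha<T_X^\alpha$ for $\alpha>0$ and $T_X^\alpha<T_Y^\alpha$ for $\alpha<0$. Since $X^1(\0)>0$ and hence $Y^1(\0)<0$, invisibility and \autoref{visibility} force $X^2_x(\0)<0$ and $Y^2_x(\0)<0$, so $X^{\alpha 2}(\cdot,0)$ and $Y^{\alpha 2}(\cdot,0)$ are decreasing through their respective folds. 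Reading off their signs on each side of the folds gives the decomposition: between the folds one has $X^{\alpha 2}>0,\,Y^{\alpha 2}<0$ (escaping) when $\alpha>0$, and $X^{\alpha 2}<0,\,Y^{\alpha 2}>0$ (sliding) when $\alpha<0$, with crossing on the two outer pieces.

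Next I would locate the pseudo-equilibrium. Because $\det{Z^\alpha}(T_X^\alpha,0)=X^{\alpha1}Y^{\alpha2}(T_X^\alpha,0)$ and $\det{Z^\alpha}(T_Y^\alpha,0)=-X^{\alpha2}Y^{\alpha1}(T_Y^\alpha,0)$ have opposite signs for $\alpha\neq0$, there is a zero $Q(\alpha)=(x(\alpha),0)$ of $\det{Z^\alpha}$ in the sliding/escaping region; \autoref{lem:neq0}(b) gives $(\det{Z^0})_x(\0)<0$, and \autoref{corol:psequi} makes $Q(\alpha)$ unique with $(\det{Z^\alpha})_x(Q(\alpha))<0$. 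Inserting this into \eqref{pstability} and using the sign of $Y^{\alpha2}-X^{\alpha2}$ on each region gives $((Z^\alpha)^s)'(x(\alpha))<0$ in $\s^s$ for $\alpha<0$ (stable pseudo-node) and $((Z^\alpha)^s)'(x(\alpha))>0$ in $\s^e$ for $\alpha>0$ (unstable pseudo-node). This part is entirely independent of $\G_Z$.

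The heart of the argument, and the step I expect to be the main obstacle, is the crossing cycle. A crossing cycle must surround both folds, leaving $\s$ by $X$ on the left crossing piece (where $X^{\alpha2}>0$) and returning by $Y$, so its existence is detected by the first return map $\phi_{Z^\alpha}=\phi_Y^\alpha\circ\phi_X^\alpha$ of \eqref{gfirstreturn} and \eqref{eq:phi}, now assembled from the Poincaré maps of \autoref{prop:frX} centered at $T_X^\alpha$ and $T_Y^\alpha$ rather than at the origin. Writing $\delta=T_X^\alpha-T_Y^\alpha$ (so $\sgn{\delta}=\sgn{\alpha}$ by \eqref{foldsbe}) and $u=x-T_X^\alpha$, composing the two near-reflections and collecting terms yields a displacement of the form
\[
\phi_{Z^\alpha}(x)-x=-2\delta+\G_Z\,u^2+\mathcal{O}(\delta u,\,\delta^2,\,u^3),
\]
whose leading balance $\G_Z u^2=2\delta$ is exactly the pseudo-Hopf mechanism. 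The delicate points are to verify that the coefficient of $u^2$ is precisely $\G_Z=\beta_Y-\beta_X$ of \eqref{gencondii}, so that the degenerate $\alpha=0$ map of \eqref{eq:phi} is recovered, and that the relevant root lies in the crossing region: since $u^*=-\sqrt{2\delta/\G_Z}=\mathcal{O}(\sqrt{|\alpha|})$ dominates $\delta=\mathcal{O}(\alpha)$, one gets $x^*<\min\{T_X^\alpha,T_Y^\alpha\}$ for $\alpha$ small, so $Q(\alpha)$ never interferes with the cycle.

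From this displacement the dichotomy follows. A simple zero $u^*$ exists if and only if $\delta\,\G_Z>0$, that is for $\alpha>0$ when $\G_Z>0$ and for $\alpha<0$ when $\G_Z<0$; on the complementary side the displacement keeps a constant sign and there is no cycle, consistently with the pseudo-node being the only recurrent object. Differentiating gives $\phi_{Z^\alpha}'(x^*)-1=2\G_Z u^*+\mathcal{O}(\cdots)$, whose sign (recall $u^*<0$) shows the cycle is attracting when $\G_Z>0$ and repelling when $\G_Z<0$, matching \autoref{rem:pseudocycle}. Finally, since $\phi_{Z^\alpha}$ is an orientation-preserving monotone interval map with a single simple fixed point in the crossing region, its iterates converge to $x^*$ from both sides; gluing this with the pseudo-node dynamics on the sliding/escaping region promotes the cycle to a global attractor (resp. the escaping pseudo-node to a global repellor) when $\G_Z>0$, and symmetrically when $\G_Z<0$. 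As $\mathcal{V}_0^+$ and $\mathcal{V}_0^-$ each carry one fixed phase portrait, the systematic construction of topological equivalences following \cite{mst} yields the weak equivalence of any two unfoldings.
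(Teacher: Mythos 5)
Your proposal is correct and follows essentially the same route as the paper: the same $\s$-decomposition between the folds $T_X^\alpha,T_Y^\alpha$, the same localization and classification of $Q(\alpha)$ via $\det Z^\alpha$, \autoref{lem:neq0} and \eqref{pstability}, and the same return map $\phi_Y^\alpha\circ\phi_X^\alpha$ built from \autoref{prop:frX}, whose displacement $-2\delta+\G_Z u^2+\mathcal{O}(\delta u,\delta^2,u^3)$ is exactly the paper's \eqref{FRunfolding} and yields the fixed point $u^*=-\sqrt{2\delta/\G_Z}$ of \eqref{Falpha} with the matching stability and existence dichotomy in $\delta\,\G_Z$. The only difference is cosmetic: where you argue directly from the dominant balance and constant sign of the displacement, the paper introduces the auxiliary map $\Phi(\alpha,x)=\phi^\alpha(x)-x$ and uses the Implicit Function Theorem to track its critical point $C(\alpha)$, evaluating $\Phi(\alpha,C(\alpha))$ to decide existence of zeros — the same computation packaged differently.
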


\begin{proof}
	Since $X^1 \cdot Y^1(\0) <0$, $\s=\overline{\s^c}$ for $\alpha=0$. 
	For $\alpha \neq 0$, for all the  points $(x,0)$ between the folds $T_X^{\alpha}$ and $T^{\alpha}_Y$ the vector field $Z^{\alpha}$ satisfies 
	$X^{\alpha 2} \cdot Y^{\alpha 2}(x,0)<0$. 
	Therefore, a piece of sliding (for $\alpha <0$) or escaping  (for $\alpha >0$) region appears between the folds. 
	The discontinuity curve becomes $\s=\overline{\s^{e,s} \cup \s^c}$, where 
	\begin{eqnarray} \label{iidec}
	\s^c &=& \{ x \in \s: x < \min \{T_X^{\alpha},T_Y^{\alpha} \} \} \cup \{ x \in \s: x > \max \{T_X^{\alpha},T_Y^{\alpha} \} \}  \\
	\s \setminus \s^c, &=& \begin{cases}
	\s^s =& \{ x \in \s: x \in  (T_X^{\alpha},T_Y^{\alpha}) \} \}, \, \alpha<0, \\
	\s^e =& \{ x \in \s: x \in  (T_Y^{\alpha},T_X^{\alpha}) \} \}, \, \alpha>0 .
	\end{cases}
	\end{eqnarray}
	
	Since we have sliding motion defined on one side of the tangencies and crossing on the other, following \cite{mst}, the fold points are singular tangency points. 
	Using the same argument as in \autoref{prop:VVSunfolding}, there exists a unique $Q(\alpha) \in \s^{e,s}$ 
	such that $\det{Z^{\alpha}(Q(\alpha))}=0.$  
	By \autoref{lem:neq0}, we have $(\det{Z})_x(\0)<0$. Using the formulas for $(Z^s)'(x(\alpha))$  given in \ref{pstability} 
	the pseudo-equilibrium $Q(\alpha)$ is an stable pseudo-node when $\alpha<0$ and it is a unstable pseudo-node when $\alpha>0$.

	To give a complete description of the dynamics one needs to analyze the first return map around the fold-fold singularity: 
	\begin{equation*} \begin{array}{llll}
			\phi^{\alpha}: 	& \mathcal{D}^{\alpha}	& \rightarrow 	& \mathcal{I}^{\alpha} \\
			& x						& \mapsto		& \phi^{\alpha}(x) =\phi_Y^{\alpha} \circ \phi_{X}^{\alpha}(x)
		\end{array}.
	\end{equation*} 
	where 
	$$
	\begin{array}{rcl}
	\mathcal{D}^{\alpha}&=&\{ x \in \s: \, x < (\phi^{\alpha}_X)^{-1}(T^{\alpha}_Y) \}, \ 
	\mathcal{I}^{\alpha}= \{ x \in \s :\, x<T^{\alpha}_Y \}, \ \mbox{ if } \ \alpha<0 \ \text{and}\\ 
	\mathcal{D}^{\alpha}&=&\{ x \in \s: \, x < T^{\alpha}_X \}, \  
	\mathcal{I}^{\alpha}=\{ x \in \s: \, x < (\phi^{\alpha}_Y)(T^{\alpha}_X) \}, \ \mbox{ if }\ \alpha>0.
	\end{array}
	$$
	Using the expressions given for $\phi_X^\alpha$ and $\phi_Y^\alpha$ in \autoref{prop:frX} applied to $X^\alpha$ and $Y^\alpha$, the return map is given by 
	\begin{equation} \label{FRunfolding}
	\phi^{\alpha}(x)=2(T^{\alpha}_Y-T^{\alpha}_X) + x  
	-\beta_X (x-T_X^\alpha)^2+ 
	\beta_Y (2T_X^\alpha-T_Y^\alpha-x)^2 +O_3(x,T_X^\alpha,T_Y^\alpha)
	\end{equation}
	where $T_X^\alpha$ and $T_Y^\alpha$ are given  in \ref{fXexpression}. 
	\begin{figure}[h]
		\centering
		\begin{tiny}
			\def\svgscale{0.35}
			\input{./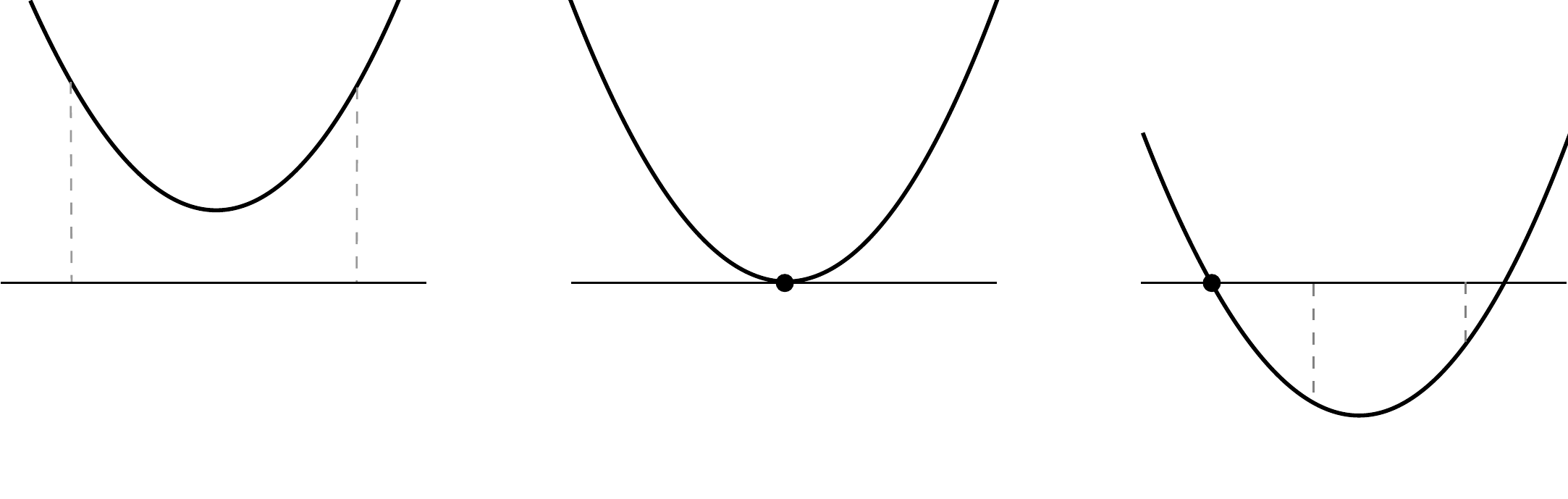_tex}
		\end{tiny}
		\caption{The graphic of $\Phi(\alpha,x)=\phi^{\alpha}(x) -x$ when $\G_Z>0$, for different values of the parameter $\alpha$.}
		\label{fig:IIreturnmap}
	\end{figure}
	
	To look for periodic orbits near the fold-fold point, we look for zeros of the
	the auxiliary map 
	\begin{equation} \label{Phiaux}
	\Phi: (\alpha,x) \in \mathcal{W} \subset (-\alpha_0,\alpha_0) \times \R \mapsto \Phi(\alpha,x)= \phi^{\alpha}(x) - x \in \R.
	\end{equation}
	
	The map $\Phi$ satisfies $\Phi(0,0)=0$, $\displaystyle{\frac{\partial}{\partial x} \Phi(0,0)=0}$ and  
	$\displaystyle{\frac{\partial^2}{\partial x^2} \Phi(0,0)=2\G_{Z} \neq 0}$. 
	Then, by the Implicit Function Theorem, for each $\alpha$ sufficiently small there exists a unique  $C(\alpha)$ near 
	$0 \in \s$ such that $\frac{\partial}{\partial x} \Phi(\alpha,C(\alpha))=0$. 
	Moreover, as $T_X^\alpha, T_Y^\alpha = O(\alpha)$ also $C(\alpha)= O(\alpha)$. 
	Thus the map $\Phi(\alpha, x)$ has a critical point at $C(\alpha)$ which is a maximum or minimum depending on the sign of $\G_Z$. 
	
	If $\G_Z>0$ (see \autoref{fig:IIreturnmap}) $C(\alpha)$ is a local minimum of $\Phi(\alpha,.)$. 
	If $\alpha<0$, then 
	$\Phi(\alpha,C(\alpha))=2(T^{\alpha}_Y-T^{\alpha}_X)+ O(\alpha ^2)>0$, being $C(\alpha)$ a minimum this means that $\Phi(\alpha,x)>0$, for $x \in \s$. 
	Therefore there are  no fixed points for $\phi^\alpha $ if $\alpha<0$. 
	If $\alpha>0$ we obtain $\Phi(\alpha,C(\alpha))<0$ and therefore $\Phi(\alpha,x)$ has two zeros. 
	Moreover, $\Phi(\alpha,T_Y ^\alpha)<0$, and we call $F(\alpha) \in \mathcal{D}^\alpha$ the zero of $\Phi$ satisfying $F(\alpha)< T^\alpha_Y$. 
	Therefore, the map $\phi^{\alpha}$ has a fixed point $F(\alpha)$ which corresponds to an attracting crossing cycle $\Gamma^\alpha$, 
	since 
	$$
	\frac{\partial}{\partial x} \phi^{\alpha}(F(\alpha))<1.
	$$
	
	Summarizing the case $\G_Z>0$: for $\alpha<0$ the vector field $Z^{\alpha}$ has an stable pseudo-node $Q(\alpha)$ and no crossing cycles exist. 
	When $\alpha>0$, the point $Q(\alpha)$ is a unstable pseudo-node and an attracting crossing cycle $\Gamma^\alpha$ through the point $(F(\alpha),0)$ appears. 
	Using that $T^\alpha_X$ and $T^\alpha_Y$ are $O(\alpha)$ one can compute 
	\begin{equation} \label{Falpha}
	F(\alpha) =\left(-\displaystyle{\sqrt{\frac{2(T^{\alpha}_X-T^{\alpha}_Y)}{\G_Z}} + \mathcal{O}(\alpha)},0\right).
	\end{equation}   
	
	When $\G_Z<0$ the point  $C(\alpha)$ is a local maximum of $\Phi(\alpha,x)$. 
	Thus an repelling crossing cycle exists for $\alpha<0$ and no crossing cycles appear for $\alpha>0$. 
	The nature of the pseudo-equilibrium $Q(\alpha)$ remains the same as in the case $\G_Z<0$, since its stability does not depend on $\G_Z$.
	
	Then any unfolding of $Z \in \Lambda^F$ satisfying $(B)$, $X^1 \cdot Y^1(\0)<0$ with $\G_Z \neq 0$ leads to vector fields with exactly the same topological invariants, 
	that is, the same $\s-$regions and singularities; see Figures \ref{fig:IIunfa} and \ref{fig:IIunfr}.
\end{proof}

\begin{figure}[h]
	\centering
	\begin{tiny}
		\def\svgscale{0.3}
		\input{./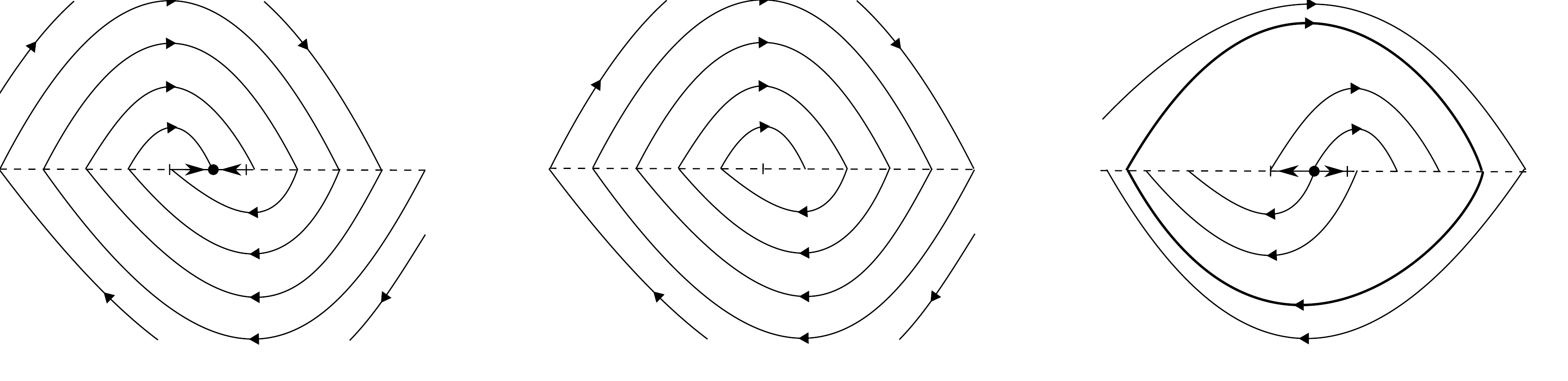_tex}
	\end{tiny}
	\caption{The unfolding of $Z \in \Lambda^F$ satisfying $(B)$, $X^1 \cdot Y^1(\0)<0$ and $\G_Z>0$.}
	\label{fig:IIunfa}
\end{figure}

\begin{figure}[h]
	\centering
	\begin{tiny}
		\def\svgscale{0.3}
		\input{./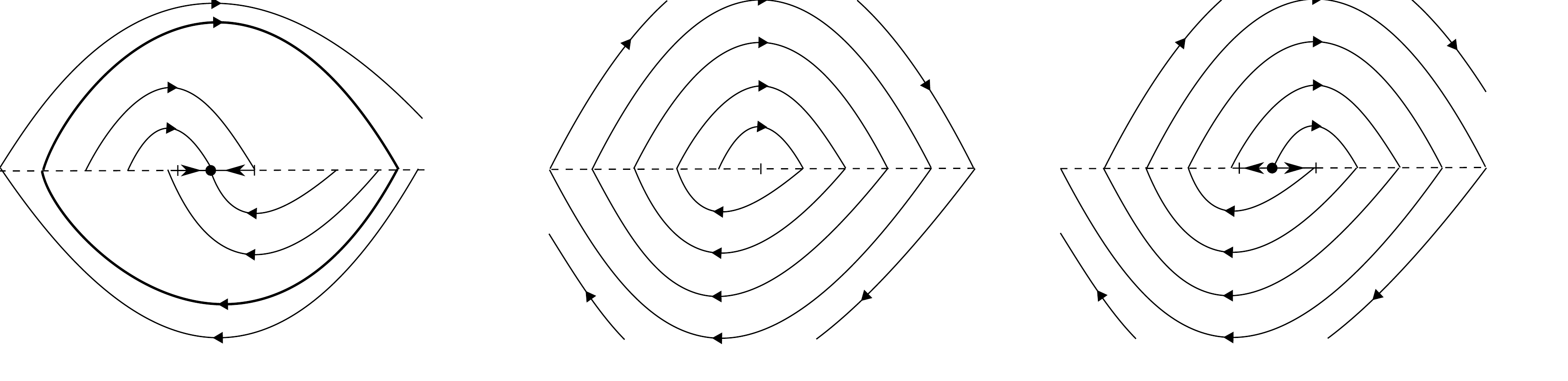_tex}
	\end{tiny}
	\caption{The unfolding of $Z \in \Lambda^F$ satisfying satisfying $(B)$, $X^1 \cdot Y^1(\0)<0$ and $\G_Z<0$.}
	\label{fig:IIunfr}
\end{figure}

\begin{prop} \label{prop:IISunfolding} 
	Let $Z \in \Lambda^F$ satisfying condition $(B)$ of \autoref{thm:generic}, $X^1 \cdot Y^1 (\0)>0$ and $\G_Z \neq 0$. 
	Let $\mathcal{V}_0$ be the neighborhood given in \autoref{thm:manifold}. 
	Then any smooth curve 
	$$
	\gamma: \alpha \in (-\alpha_0,\alpha_0) \mapsto Z^{\alpha} \in \mathcal{V}_0
	$$
	which is transverse to $\Lambda^F$ at $\gamma(0)=Z$ leads to the same behaviors in $\mathcal{V}^+_0$ and in $\mathcal{V}^-_0$. 
	For any $Z \in \mathcal{V}_0^\pm$ has two invisible folds with a crossing region between them. 
	In both cases, the sliding vector field has no pseudo-equilibrium Moreover, in the case $\G_Z>0$ then $Z \in \mathcal{V}^+_0$ has an pseudo-cycle and when $\G_Z<0$ then $Z \in \mathcal{V}^-_0$ has a pseudo-cycle. Therefore, there exists a weak equivalence between any two unfoldings of $Z$.
\end{prop}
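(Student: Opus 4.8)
The strategy is to follow the same scheme as the proof of \autoref{prop:IICunfolding}, the only structural differences being that here the crossing region opens \emph{between} the two folds (while the sliding and escaping regions sit outside), and that no pseudo-equilibrium survives. First I would fix the local signs: since $X^1(\0)>0$ and $X^1\cdot Y^1(\0)>0$ we have $Y^1(\0)>0$, and invisibility of the two folds (\autoref{visibility}) forces $X^2_x(\0)<0$ and $Y^2_x(\0)>0$. By \autoref{lem:sdec} this gives $\s=\overline{\s^e\cup\s^s}$ at $\alpha=0$ (the pseudo-focus). For $\alpha\ne 0$ the folds $T_X^\alpha,T_Y^\alpha$ of \eqref{fXexpression} separate and remain invisible, and a direct sign count of $X^{\alpha 2}(x,0)$ and $Y^{\alpha 2}(x,0)$ shows $X^{\alpha 2}\cdot Y^{\alpha 2}(x,0)>0$ for $x$ strictly between the folds; hence $\s$ decomposes as $\s^e\cup\s^c\cup\s^s$ with the crossing piece in the middle and the escaping (resp. sliding) piece on its left (resp. right), for both signs of $\alpha$, only the crossing direction flipping.

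Second, I would rule out pseudo-equilibria. Exactly as in \autoref{prop:VVSunfolding}, $\det Z^\alpha(T_X^\alpha,0)=X^{\alpha 1}\cdot Y^{\alpha 2}(T_X^\alpha,0)$ and $\det Z^\alpha(T_Y^\alpha,0)=-X^{\alpha 2}\cdot Y^{\alpha 1}(T_Y^\alpha,0)$ have opposite signs, so the unique zero of $\det Z^\alpha$ provided by \autoref{corol:psequi} (its uniqueness resting on \autoref{lem:neq0}(b), which gives $(\det Z^0)_x(\0)>0$ in this case) lies in the crossing region, not in $\s^{s,e}$. Equivalently, \autoref{corol:signZs} yields $\sgn{\gamma}=\sgn{X^1(\0)}>0$, and the monotonicity of $\det Z^\alpha$ then forces $Z^s$ to keep a constant nonzero sign on each of $\s^e$ and $\s^s$; so there is no pseudo-equilibrium in $\mathcal{V}_0^\pm$.

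Third, I would detect the pseudo-cycles through the formal return map $\phi^\alpha=\phi_Y^\alpha\circ\phi_X^\alpha$ (taken in this order regardless of the sign of $X^1$, as noted in the remarks after \eqref{gencondii}). Feeding the fold expansions of \autoref{prop:frX} into $\phi^\alpha$ reproduces \eqref{FRunfolding}, and the auxiliary map $\Phi(\alpha,x)=\phi^\alpha(x)-x$ of \eqref{Phiaux} again satisfies $\Phi(0,0)=0$, $\partial_x\Phi(0,0)=0$ and $\partial_{xx}\Phi(0,0)=2\G_Z\ne0$. The Implicit Function Theorem gives a unique critical point $C(\alpha)=\mathcal{O}(\alpha)$, a minimum if $\G_Z>0$ and a maximum if $\G_Z<0$, with critical value $\Phi(\alpha,C(\alpha))=2(T_Y^\alpha-T_X^\alpha)+\mathcal{O}(\alpha^2)$. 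By \eqref{foldsbe} the sign of $T_Y^\alpha-T_X^\alpha$ is that of $-\alpha$, so $\Phi(\alpha,\cdot)$ acquires two zeros precisely when $\alpha>0$ (if $\G_Z>0$) or when $\alpha<0$ (if $\G_Z<0$): this is the announced pseudo-cycle in $\mathcal{V}_0^+$, resp. $\mathcal{V}_0^-$. The computation is verbatim the one in \autoref{prop:IICunfolding}; only the interpretation of the fixed point changes.

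Finally, having shown that every unfolding in a given component $\mathcal{V}_0^\pm$ has the same $\s$-regions, the same absence of pseudo-equilibrium, and the same presence or absence of a pseudo-cycle, the explicit construction of the conjugating homeomorphism of \cite{mst} produces a weak equivalence between any two unfoldings. The hard part is conceptual rather than computational: at the pseudo-focus the map $\phi^\alpha$ carries no direct dynamical meaning (it mixes crossing arcs with the sliding and escaping regions), so one must verify that its fixed points still correspond to genuine closed invariant sets which $\s$-equivalences are obliged to preserve, and that the side on which they live is controlled by $\sgn{\G_Z}$. This is exactly the point where the genericity hypothesis $\G_Z\ne0$ of \eqref{gencondii} becomes indispensable.
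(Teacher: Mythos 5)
Your proposal is correct and follows essentially the same route as the paper's proof: the $\s$-decomposition with the crossing region between the two invisible folds, the exclusion of pseudo-equilibria (your localization of the unique zero of $\det{Z^\alpha}$ inside $\s^c$ is just an explicit rendering of the paper's appeal to the argument of \autoref{prop:VVCunfolding}), and the pseudo-cycle obtained from the auxiliary map $\Phi$ exactly as in \autoref{prop:IICunfolding}, which the paper invokes by reference. Your extra sign bookkeeping and the closing discussion of why fixed points of $\phi^\alpha_Z$ must be preserved by $\s$-equivalences despite the map's lack of dynamical meaning simply make explicit what the paper delegates to \autoref{rem:pseudocycle} and the remark following it.
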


\begin{proof} 
	Since $X^1 \cdot Y^1(\0)>0$, $\s=\overline{\s^s \cup \s^e}$ for $\alpha=0$. 
	For $\alpha \neq 0$, since for all points $(x,0)$ between the folds $T_X^{\alpha}$ and $T^{\alpha}_Y$ 
	the vector field $Z^{\alpha}$ satisfies $X^{\alpha 2} \cdot Y^{\alpha 2}(x,0)>0$. 
	Therefore, a crossing region appears between the folds. 
	The discontinuity curve becomes $\s=\overline{\s^{e} \cup \s^c \cup \s^s}$, where \begin{eqnarray} \label{iidec1}
	\s^e &=& \{ x \in \s: x < \min \{T_X^{\alpha},T_Y^{\alpha} \} \},  \\
	\s^c &=& \{ x \in \s: x \in  \left( \min \{T_X^{\alpha},T_Y^{\alpha} \},\max \{T_X^{\alpha},T_Y^{\alpha} \} \right) \}, \\
	\s^s &=& \{ x \in \s: x > \max \{T_X^{\alpha},T_Y^{\alpha} \} \}. 
	\end{eqnarray}
	Using the same argument as in \autoref{prop:VVCunfolding}, no pseudo-equilibrium appears for $\alpha \neq 0$. 
	As mentioned in \autoref{rem:pseudocycle}, one needs to consider the generalized first return map for this case. 
	
	For convenience, we set  
	$$
	\phi^{\alpha}_Z = \phi_Y^{\alpha} \circ \phi_X^{\alpha}=2(T^{\alpha}_Y-T^{\alpha}_X) + x 
	-\beta_X (x-T_X^\alpha)^2+ 
	\beta_Y (2T_X^\alpha-T_Y^\alpha-x)^2 +O_3(x,T_X^\alpha,T_Y^\alpha)
	$$ 
	By the same arguments of \autoref{prop:IICunfolding} a pseudo-cycle appears for $\alpha>0$ when $\G_Z>0$ and it appears for $\alpha<0$ when $\G_Z<0.$  
	
	Then any unfolding of $Z \in \Lambda^F$ satisfying $(B)$, $X^1 \cdot Y^1(\0)>0$ with $\G_Z \neq 0$ 
	leads to vector fields with exactly the same topological invariants, that is, the same $\s-$regions and singularities; 
	see Figures \ref{fig:IISunfa} and \ref{fig:IISunfr}.
\end{proof}

\begin{figure}[h]
	\centering
	\begin{tiny}
		\def\svgscale{0.3}
		\input{./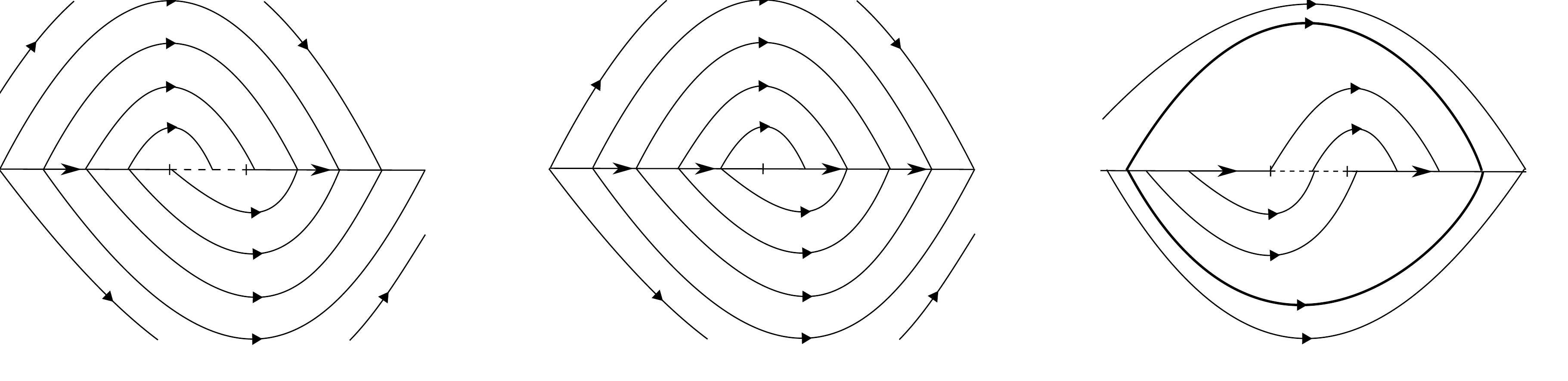_tex}
	\end{tiny}
	\caption{The unfolding of $Z \in \Lambda^F$ satisfying $(B)$, $X^1 \cdot Y^1(\0)>0$ and $\G_Z<0$.}
	\label{fig:IISunfa}
\end{figure}

\begin{figure}[h]
	\centering
	\begin{tiny}
		\def\svgscale{0.3}
		\input{./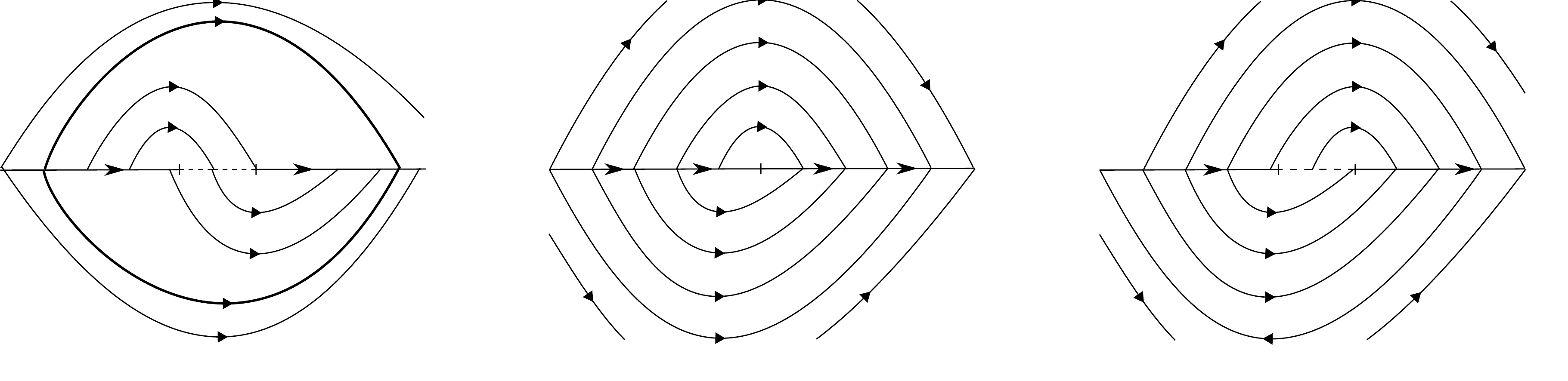_tex}
	\end{tiny}
	\caption{The unfolding of $Z \in \Lambda^F$ satisfying satisfying $(B)$, $X^1 \cdot Y^1(\0)>0$ and $\G_Z>0$.}
	\label{fig:IISunfr}
\end{figure}

\subsection{The versal unfolding of a visible-invisible fold-fold singularity} \label{sec:VIunf}

This section is devoted to the study of the unfoldings of a Filippov vector field having a visible-invisible fold point. 
We have essentially three different bifurcations, two of them occur when the vector fields $X$ and $Y$ point at opposite directions at the fold-fold point and differ in the sign of $(\det{Z})_x(\0)$. 
The third occurs  when both vector fields point to the same direction. 

\begin{prop} \label{prop:VISunfolding} 
	Let $Z \in \Lambda^F$ satisfying condition $(C)$ of \autoref{thm:generic} and $X^1 \cdot Y^1 (\0)<0$. 
	Let $\mathcal{V}_0$ be the neighborhood given in \autoref{thm:manifold}. 
	Then any smooth curve 
	$$
	\gamma: \alpha \in (-\alpha_0,\alpha_0) \mapsto Z^{\alpha} \in \mathcal{V}_0
	$$
	which is transverse to $\Lambda^F$ at $\gamma(0)=Z$ leads to the same behaviors in $\mathcal{V}^+_0$ and in $\mathcal{V}^-_0$.

	This behavior depends of the sign of $(\det{Z})_x(\0)$:
	\begin{enumerate}
		\item 
		If $(\det{Z})_x(\0)>0$:
		\begin{itemize}
			\item 
			Every  $Z \in \mathcal{V}^-_0$ has a visible and an invisible fold points and there  exists a crossing region between them. 
			The sliding vector field has a pseudo-saddle $Q(\alpha)= (x(\alpha),0)\in \s^s$  situated on ``left'' of both folds.
			\item 
			Every  $Z \in \mathcal{V}^+_0$ has a visible and an invisible fold points and there  exists a crossing region between them. 
			The sliding vector field has a pseudo-saddle  $Q(\alpha)= (x(\alpha),0)\in \s^e$ situated on the ``right'' of both folds.
		\end{itemize}
		\item 
		If $(\det{Z})_x(\0)<0$:
		\begin{itemize}
			\item 
			Every  $Z \in \mathcal{V}^-_0$ has a visible and an invisible fold points and there  exists a crossing region between them. 
			The sliding vector field has a unstable pseudo-node $Q(\alpha)= (x(\alpha),0)\in \s^e$ situated on the ``right'' of both folds.
			\item 
			Every  $Z \in \mathcal{V}^+_0$ has a visible and an invisible fold points and there  exists a crossing region between them. 
			The sliding vector field has a stable pseudo-node $Q(\alpha)= (x(\alpha),0)\in \s^s$ situated on ``left'' of both folds.
		\end{itemize}
	\end{enumerate} 
	Therefore, there exists a weak equivalence between any two unfoldings of $Z$. 
\end{prop}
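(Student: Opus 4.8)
The plan is to follow the same scheme as in \autoref{prop:VVSunfolding} and \autoref{prop:IICunfolding}: describe the $\s$-decomposition of the unfolding $Z^\alpha$, locate its unique pseudo-equilibrium, classify it, and then invoke the general construction of the topological equivalence. Throughout I keep the convention $X^1(\0)>0$, and since the folds have opposite visibility I assume without loss of generality that $X$ carries the visible fold and $Y$ the invisible one (the opposite labelling being entirely analogous after the reflection $x\mapsto -x$). With $X^1(\0)>0$ and $X^1\cdot Y^1(\0)<0$ this forces $Y^1(\0)<0$, and \autoref{visibility} then gives $X^2_x(\0)>0$ and $Y^2_x(\0)<0$, so that near each tangency $X^{\alpha 2}(x,0)$ changes sign from $-$ to $+$ and $Y^{\alpha 2}(x,0)$ from $+$ to $-$ as $x$ increases through $T_X^\alpha$ and $T_Y^\alpha$ respectively.

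First I would establish the region decomposition. Using the folds $T_X^\alpha,T_Y^\alpha$ from \eqref{fXexpression} and the sign analysis above, I check that in the open interval between the two tangencies one has $X^{\alpha 2}\cdot Y^{\alpha 2}(x,0)>0$, while to the left of both folds $X^{\alpha 2}<0<Y^{\alpha 2}$ and to the right $Y^{\alpha 2}<0<X^{\alpha 2}$. The key point, which I would verify in both subcases $\alpha<0$ (where $T_X^\alpha<T_Y^\alpha$) and $\alpha>0$ (where $T_Y^\alpha<T_X^\alpha$, by \eqref{foldsbe}), is that this ordering is independent of the sign of $\alpha$: the discontinuity set always splits as $\s^s$ on the left, a crossing region $\s^c$ between the folds, and $\s^e$ on the right.

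Next I would locate the pseudo-equilibrium. By \autoref{lem:neq0}(c), condition $(C)$ (equivalently $\gamma\neq 0$) gives $(\det Z^0)_x(\0)\neq 0$, so \autoref{corol:psequi} provides a unique $x(\alpha)$ near the origin with $\det Z^\alpha(x(\alpha),0)=0$, and moreover $(\det Z^\alpha)_x$ keeps a constant sign there, so $\det Z^\alpha$ is strictly monotone in $x$. Evaluating the determinant at the tangencies via $\det Z^\alpha(T_X^\alpha,0)=X^{\alpha 1}Y^{\alpha 2}(T_X^\alpha,0)$ and $\det Z^\alpha(T_Y^\alpha,0)=-X^{\alpha 2}Y^{\alpha 1}(T_Y^\alpha,0)$, the sign analysis gives $\det Z^\alpha>0$ at both folds when $\alpha<0$ and $\det Z^\alpha<0$ at both folds when $\alpha>0$; by monotonicity the determinant is then nonzero on the whole interval bounded by the folds, so $x(\alpha)$ must lie in $\s^s\cup\s^e$. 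Comparing the boundary sign with the direction of monotonicity pins down the side: when $(\det Z)_x(\0)>0$ the zero lands in $\s^s$ (left of both folds) for $\alpha<0$ and in $\s^e$ (right of both folds) for $\alpha>0$, while when $(\det Z)_x(\0)<0$ the two sides are exchanged, exactly as claimed.

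Finally I would read off the type from \eqref{pstability}, namely $(Z^s)'(x(\alpha))=(\det Z^\alpha)_x/(Y^{\alpha 2}-X^{\alpha 2})$ evaluated at $x(\alpha)$. Since $Y^{\alpha 2}-X^{\alpha 2}>0$ on $\s^s$ and $<0$ on $\s^e$, the four sign combinations yield $(Z^s)'>0$ in $\s^s$ and $(Z^s)'<0$ in $\s^e$ when $(\det Z)_x(\0)>0$ (a pseudo-saddle in both cases), and the reverse inequalities when $(\det Z)_x(\0)<0$ (an unstable pseudo-node in $\s^e$ for $\alpha<0$ and a stable pseudo-node in $\s^s$ for $\alpha>0$). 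Having shown that every unfolding in $\mathcal{V}_0^-$ and every one in $\mathcal{V}_0^+$ carries the same $\s$-regions and the same pseudo-equilibrium, the weak equivalence follows from the systematic construction of the homeomorphism along the lines of \cite{mst}, as in the preceding propositions. I expect the only delicate point to be the bookkeeping of signs: keeping track, simultaneously and consistently, of the visibility of each fold, the ordering of $T_X^\alpha$ versus $T_Y^\alpha$, the sign of $\det Z^\alpha$ at the folds, and the sign of $(\det Z)_x(\0)$, so that the location and stability of $x(\alpha)$ come out correctly in all four cases.
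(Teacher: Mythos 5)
Your proof is correct and follows essentially the same route as the paper's: the decomposition of $\s$ into sliding--crossing--escaping as in \ref{videc}, uniqueness of the zero of $\det Z^{\alpha}$ via \autoref{corol:psequi} (justified through \autoref{lem:neq0}), evaluation of $\det Z^{\alpha}$ at the two tangency points combined with its monotonicity to place $Q(\alpha)$ outside the folds on the correct side, and the stability classification from \ref{pstability}, concluding with the standard construction of the equivalence following \cite{mst}. Your version merely makes explicit some sign bookkeeping (the visibility labelling of $X$ and $Y$, and the determinant signs at both folds for each sign of $\alpha$) that the paper handles more tersely via the monotonicity of $\det Z^{\alpha}(x,0)$ and \autoref{fig:VIdetposition}.
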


\begin{proof}
	Since $X^1 \cdot Y^1(\0) <0$, $\s=\overline{\s^s \cup \s^e}$ for $\alpha=0$. 
	For $\alpha \neq 0$, for all points $(x,0)$ between the folds $T_X^{\alpha}$ and $T^{\alpha}_Y$ the vector field $Z^{\alpha}$ 
	satisfies $X^{\alpha 2} \cdot Y^{\alpha 2}(x,0)>0$. 
	Therefore, a piece of crossing region appears between the folds for $\alpha \neq 0$. 
	The discontinuity curve becomes $\s=\overline{\s^s \cup \s^c \cup \s^e}$, where 
	\begin{equation} \label{videc}
	\begin{aligned}
	\s^s &= \{ x \in \s: x < \min \{T_X^{\alpha},T_Y^{\alpha} \} \}, \\
	\s^c &= \{ x \in \s: x \in ( \min \{T_X^{\alpha},T_Y^{\alpha} \}, \max \{T_X^{\alpha},T_Y^{\alpha} \}) \}, \\
	\s^e &= \{ x \in \s : x > \max \{T_X^{\alpha} ,T_Y^{\alpha} \} \}.
	\end{aligned}
	\end{equation}
	
	By condition $(C)$, $\gamma \neq 0$ (see \ref{eq:sliding}) implying $(\det{Z})_x(\0) \neq 0$, 
	then \autoref{corol:psequi} guarantees the existence of a unique point $Q(\alpha) \in \s$ such that $\det{Z^{\alpha}}(Q(\alpha))=0.$ 
	To check if $Q(\alpha)$ belongs to $\s^{s,e}$ and therefore it is a pseudo-equilibrium one must analyze separately the cases $(\det{Z})_x(\0)>0$ and $(\det{Z})_x(\0)<0$.
	
	\begin{figure}[h]
		\centering
		\begin{tiny}
			\def\svgscale{0.4}
			\input{./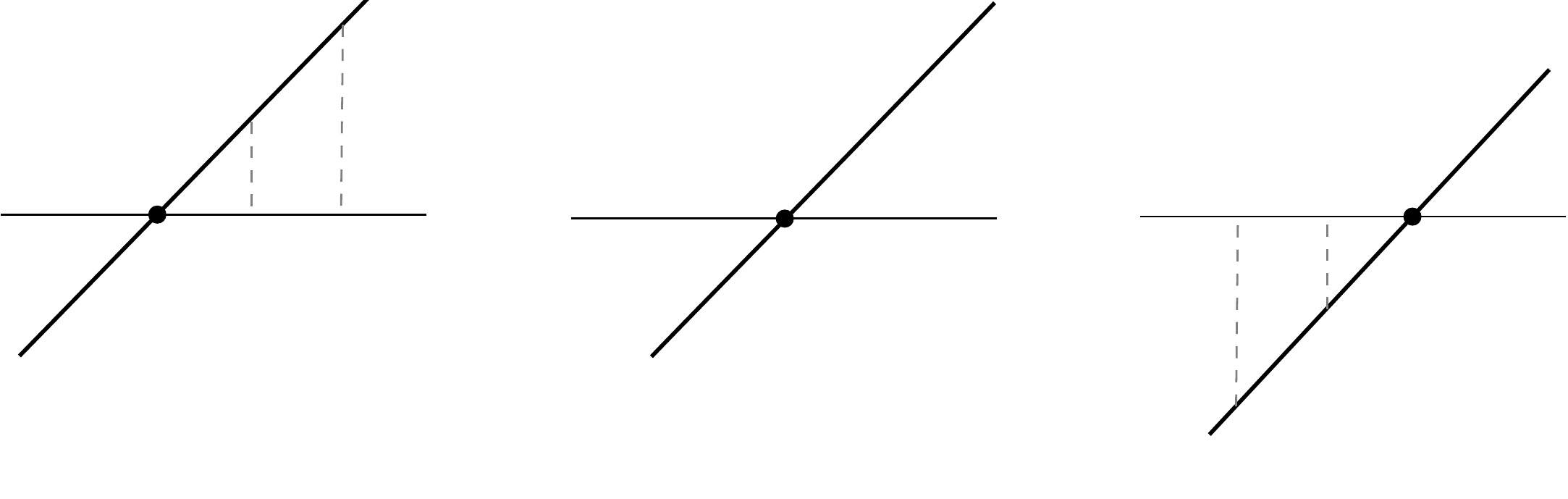_tex}
		\end{tiny}
		\caption{The curve $\det{Z^{\alpha}}(x,0)$ when $(\det{Z})_x(\0)$ for each $\alpha$.}
		\label{fig:VIdetposition}
	\end{figure}
	
	Suppose that $(\det{Z})_x(\0)>0$, so for $|\alpha| << 1$ we have $(\det{Z)_x^{\alpha}}(x,0)>0$ for $x \in \s$. 
	Therefore, the function $\det{Z^{\alpha}}: x \in \s \mapsto \det{Z^{\alpha}}(x,0) \in \R$ is increasing, see \autoref{fig:VIdetposition}. 
	Computing the values of $\det{Z^{\alpha}}(T^{\alpha}_X,0)$ and $\det{Z^{\alpha}}(T^{\alpha}_Y,0)$ we conclude that the pseudo-equilibrium 
	$Q(\alpha)$ belongs to $\s^e$ for $\alpha>0$ and it belongs to $\s^s$ when $\alpha<0$.  
	In addition, by \ref{pstability} we have that $(Z^s)'(x(\alpha))>0$ if $\alpha<0$ and $(Z^s)'(x(\alpha))<0$ if $\alpha>0$ and therefore   the point $Q(\alpha)$ is a pseudo-saddle. 
	
	When $(\det{Z)_x}(\0)<0$, the map $\det{Z(x,0)}$ is decreasing. Therefore, by the same argument the pseudo-equilibrium belongs to $\s^s$ if $\alpha>0$ and to $\s^e$ if $\alpha<0$. In this case, computing $(Z^s)'(x(\alpha))$ the point $Q(\alpha)$ is a pseudo-node.
	
	Then any unfolding of $Z \in \Lambda^F$ satisfying $(C)$ with $X^1 \cdot Y^1(\0)<0$ leads to vector fields with exactly the same topological invariants, 
	that is, the same $\s-$regions and singularities, depending on the sign of $(\det{Z})_x(\0)$, see Figures \ref{fig:VISunf+} and \ref{fig:VISunf-}.
	
\end{proof}

\begin{figure}[h]
	\centering
	\begin{tiny}
		\def\svgscale{0.3}
		\input{./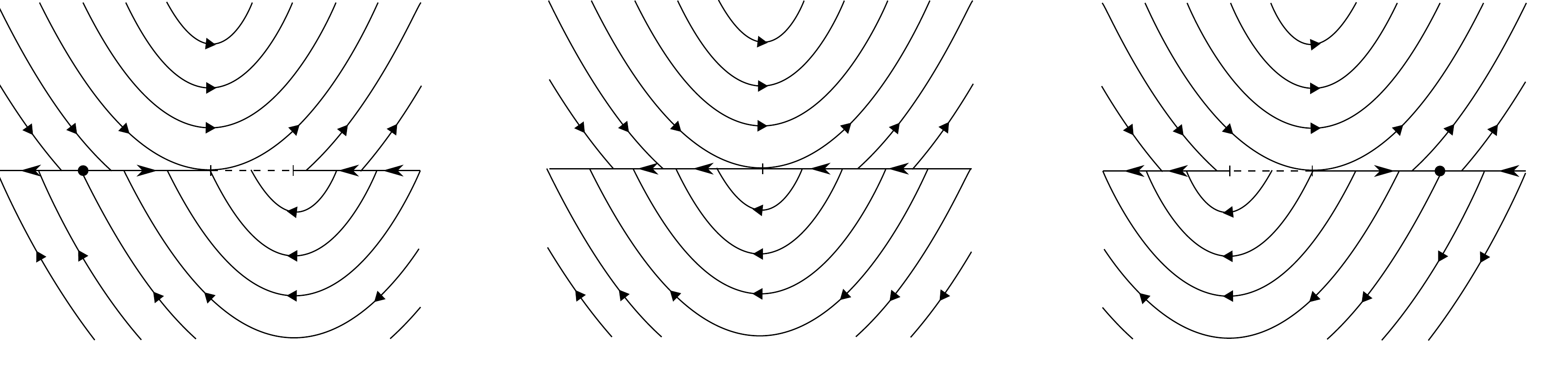_tex}
	\end{tiny}
	\caption{The unfolding of $Z \in \Lambda^F$ satisfying $(C)$ , $X^1 \cdot Y^1(\0)<0$ and $(\det{Z})_x(\0)>0$.}
	\label{fig:VISunf+}
\end{figure}

\begin{figure}[h]
	\centering
	\begin{tiny}
		\def\svgscale{0.3}
		\input{./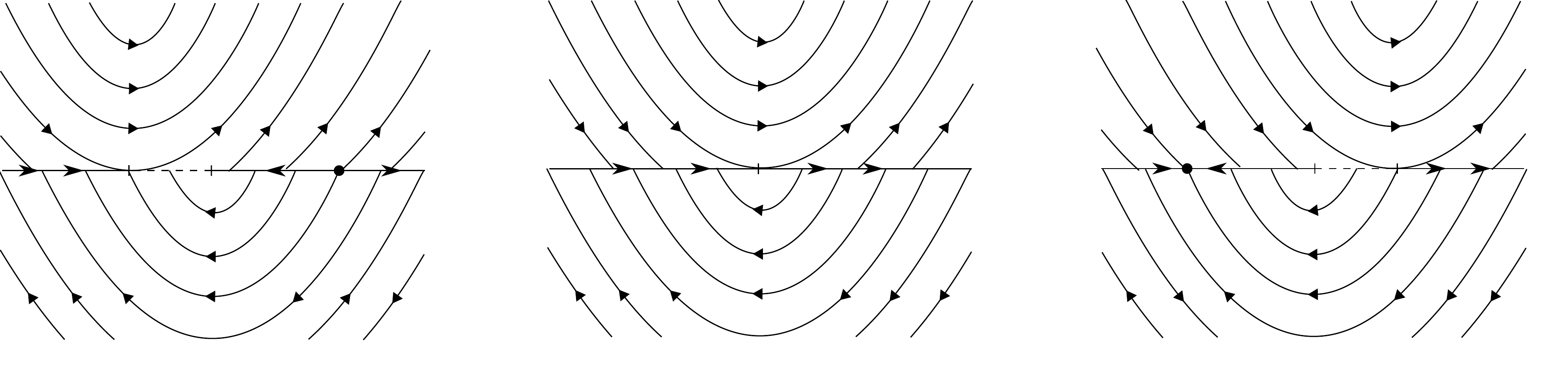_tex}
	\end{tiny}
	\caption{The unfolding of $Z \in \Lambda^F$ satisfying $(C)$, $X^1 \cdot Y^1(\0)<0$ and $(\det{Z})_x(\0)<0$.}
	\label{fig:VISunf-}
\end{figure}

\begin{prop} \label{prop:VICunfolding} 
	Let $Z \in \Lambda^F$ satisfying condition $(C)$ of \autoref{fig:IISunfa} and $X^1 \cdot Y^1 (\0)>0$. 
	Let $\mathcal{V}_0$ be the neighborhood given in \autoref{thm:manifold}. 
	Then any smooth curve 
	$$
	\gamma: \alpha \in (-\alpha_0,\alpha_0) \mapsto Z^{\alpha} \in \mathcal{V}_0
	$$
	which is transverse to $\Lambda^F$ at $\gamma(0)=Z$ leads to the same behaviors in $\mathcal{V}^+_0$ and in $\mathcal{V}^-_0$.

	Every  $Z \in \mathcal{V}^-_0$ has one visible and one invisible fold points and there  exists a region of escaping between them. 
	The sliding vector field has no pseudo equilibrium.
	
	Every  $Z \in \mathcal{V}^+_0$ has one visible and one invisible fold points and there  exists a region of sliding between them. 
	The sliding vector field has no pseudo equilibrium.
	
	Therefore, there exists a weak equivalence between any two unfoldings of $Z$.
\end{prop}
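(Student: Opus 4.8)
The argument follows the template of Propositions \ref{prop:VVCunfolding}--\ref{prop:VISunfolding}; the genuinely new point, which I would isolate as the main step, is the \emph{absence} of a pseudo-equilibrium.

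I would first fix the region structure. Since the two folds have opposite visibility and $X^1\cdot Y^1(\0)>0$, \autoref{lem:sdec} gives $\s=\overline{\s^c}$ at $\alpha=0$. For $\alpha\neq 0$, \autoref{thm:manifold} produces the two fold points $T^{\alpha}_X,T^{\alpha}_Y$ of \ref{fXexpression}, each retaining the visibility that the origin had for $X^0$ and $Y^0$; thus exactly one of them is visible and one invisible, and $T^{\alpha}_X<T^{\alpha}_Y$ on $\mathcal{V}^-_0$ while $T^{\alpha}_X>T^{\alpha}_Y$ on $\mathcal{V}^+_0$. Shrinking $\mathcal{V}_0$ so that $X^1,Y^1,X^2_x,Y^2_x$ have constant sign (with $X^1,Y^1>0$), the signs of $X^2_x$ and $Y^2_x$ are prescribed by the visibilities, and evaluating $X^{\alpha 2}$ and $Y^{\alpha 2}$ on either side of their respective folds shows that $X^{\alpha 2}\cdot Y^{\alpha 2}>0$ for $x$ outside the interval bounded by the two folds and $X^{\alpha 2}\cdot Y^{\alpha 2}<0$ for $x$ strictly between them. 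Hence $\s=\overline{\s^s\cup\s^c\cup\s^e}$, with $\s^c$ on the two unbounded sides and a single segment of $\s^{s,e}$ between the folds.

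The crux is that this segment carries no pseudo-equilibrium. By \ref{slidingdef} a pseudo-equilibrium would force $\det{Z^{\alpha}}(x,0)=0$ at some $(x,0)\in\s^{s,e}$. But on $\s^s$ one has $X^{\alpha 2}<0<Y^{\alpha 2}$, so, since $X^{\alpha 1},Y^{\alpha 1}>0$,
\[
\det{Z^{\alpha}}=X^{\alpha 1}\,Y^{\alpha 2}-X^{\alpha 2}\,Y^{\alpha 1}>0,
\]
whereas on $\s^e$ one has $X^{\alpha 2}>0>Y^{\alpha 2}$ and hence $\det{Z^{\alpha}}<0$. In either case $\det{Z^{\alpha}}$ keeps a constant nonzero sign along the segment, so $(Z^{\alpha})^s$ never vanishes and the sliding flow is monotone. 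This is precisely where the hypothesis $X^1\cdot Y^1(\0)>0$ enters, and it is what separates this case from \autoref{prop:VISunfolding}: when $X^1\cdot Y^1(\0)<0$ the terms $X^1Y^2$ and $X^2Y^1$ may cancel and create a zero of $\det{Z^{\alpha}}$, while here they cannot. Note that at $\alpha=0$ the sliding field is undefined, so the requirement $\gamma\neq0$ of condition $(C)$ is vacuous in this configuration; the non-existence of pseudo-equilibria is obtained directly from the sign of $\det{Z^{\alpha}}$ rather than from $\gamma$.

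Combining the decomposition with the monotone sliding flow, every $Z\in\mathcal{V}^-_0$ displays one visible fold, one invisible fold, a single escaping segment between them free of pseudo-equilibria, and crossing elsewhere, and symmetrically every $Z\in\mathcal{V}^+_0$ has the same picture with a sliding segment (taking, as we may up to the reflection $y\mapsto-y$, $X$ to be the visible fold; the opposite labelling simply interchanges the roles of $\s^s$ and $\s^e$ but yields the same topological invariants). Since the phase portrait is thus constant on each connected component $\mathcal{V}^{\pm}_0$, a homeomorphism realizing the $\s$-equivalence between two such fields is assembled exactly as in \cite{mst}, so any two versal unfoldings of $Z$ are weak equivalent. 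I expect the only delicate bookkeeping to be the sign tabulation that pins down the constant sign of $\det{Z^{\alpha}}$ on $\s^{s,e}$; the remaining steps are the routine sign analysis already carried out in the preceding propositions.
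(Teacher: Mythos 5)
Your proof is correct and takes essentially the same route as the paper's: the same decomposition of $\s$ for $\alpha\neq 0$ (crossing outside the folds, a single sliding segment for $\alpha>0$ or escaping segment for $\alpha<0$ between them), followed by the sign analysis showing $\det Z^{\alpha}>0$ on $\s^s$ and $\det Z^{\alpha}<0$ on $\s^e$, which is precisely the ``simple computation'' the paper invokes to rule out pseudo-equilibria before assembling the weak equivalence as in the preceding propositions. Your additional remarks --- that the hypothesis $\gamma\neq 0$ of condition $(C)$ is vacuous here since $\s=\overline{\s^c}$ at $\alpha=0$, and the reduction via reflection to the case where $X$ carries the visible fold --- merely make explicit conventions the paper leaves implicit.
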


\begin{proof}
	Since $X^1 \cdot Y^1(\0)>0$, $\s=\overline{\s^c}$ for $\alpha=0$. 
	For $\alpha \neq 0$,  for all points $(x,0)$ between the folds $T_X^{\alpha}$ and $T^{\alpha}_Y$ the vector field $Z^{\alpha}$ satisfies 
	$X^{\alpha 2} \cdot Y^{\alpha 2}(x,0)<0$. 
	Therefore, a piece of sliding ($\alpha>0$) or escaping $(\alpha<0)$ region appears between the folds for $\alpha \neq 0$. 
	The discontinuity curve becomes $\s=\overline{\s^c \cup \s^{e,s}}$, where  \begin{eqnarray} \label{videc1}
	\s^c &=& \{ x \in \s: x < \min \{T_X^{\alpha},T_Y^{\alpha} \} \} \cup \{ x \in \s: x > \max \{T_X^{\alpha},T_Y^{\alpha} \} \}  \\
	\s \setminus \s^c &=& \begin{cases}
	\s^s =& \{ x \in \s: x \in  (T_Y^{\alpha},T_X^{\alpha}) \} \}, \, \alpha>0 \\
	\s^e =& \{ x \in \s: x \in  (T_X^{\alpha},T_Y^{\alpha}) \} \}, \, \alpha<0 
	\end{cases}
	\end{eqnarray}
	
	A simple computation shows that if $\alpha>0$ then $\det{Z^{\alpha}(x,0)}>0$ for all $x \in [T_Y^{\alpha},T_X^{\alpha}]$, 
	therefore $(Z^{\alpha})^s(x)>0$ in the sliding region. 
	Analogously, for $\alpha<0$ we have $\det{Z^{\alpha}(x,0)}<0$ for all $x \in [T_X^{\alpha},T_Y^{\alpha}]$, therefore $(Z^{\alpha})^s(x)>0$ in the escaping region.
	
	\begin{figure}[h]
		\centering
		\begin{tiny}
			\def\svgscale{0.3}
			\input{./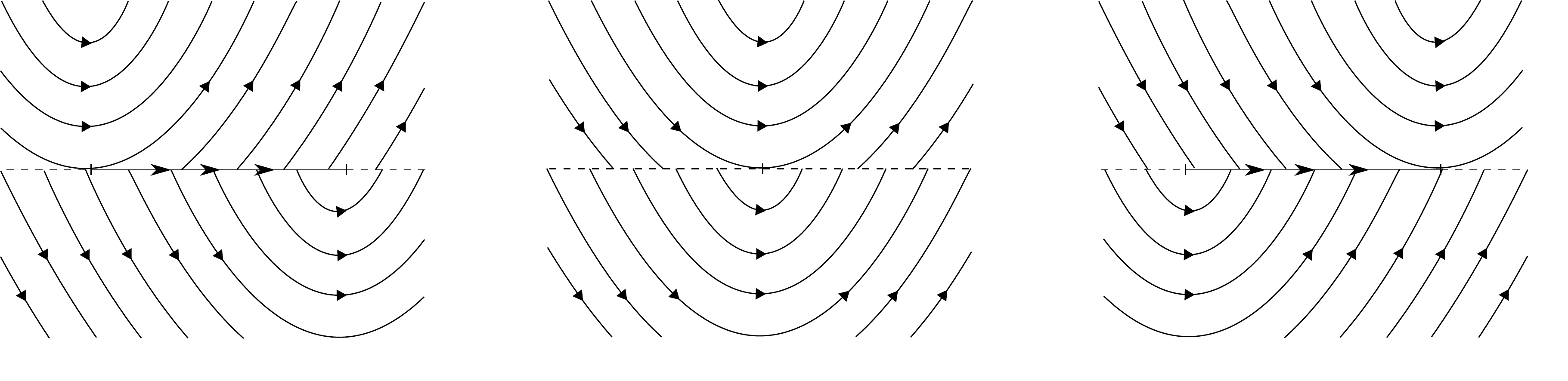_tex}
		\end{tiny}
		\caption{The unfolding of $Z \in \Lambda^F$ satisfying $(C)$ and $X^1 \cdot Y^1(\0)>0$.}
		\label{fig:VICunf}
	\end{figure}

	Then any unfolding of $Z \in \Lambda^F$ satisfying $(C)$ and $X^1 \cdot Y^1 (\0)>0$ leads to vector fields with exactly the behavior. 
	Therefore, there exist a weak equivalence between the unfoldings of $Z_0.$
\end{proof}

Joining the results obtained in \autoref{thm:manifold} and in Propositions \ref{prop:VVCunfolding} to \ref{prop:VICunfolding} we prove 
\autoref{thm:generic} stated at the beginning of this section. 
	
	
	\section{The regularization near some generic codimension one fold-fold singularity} \label{sec:regularization}
	
	In this section we study the regularization of the versal unfolding $Z^{\alpha}$ of $Z \in \Lambda^F$ 
	studied in  \autoref{sec:revisited}. 
	We will work with  the Sotomayor-Teixeira regularization (see \cite{SotoTei}) 
	which is the vector field $Z^\alpha_{\e}$ given by 
	\begin{eqnarray}\label{streg}
	Z^\alpha_{\e}(x,y) &=& \frac{1}{2} \left[ (X^\alpha+Y^\alpha)(x,y)+ \varphi \left( \frac{y}{\e} \right) (X^\alpha-Y^\alpha)(x,y) \right] \label{eq:regularized}
	\end{eqnarray} 
	\noindent where $\varphi$ is any sufficiently smooth transition function satisfying 
	\begin{equation} \label{transfunc}
	\varphi(v)= \begin{cases}
	1, & v \geq 1 \\
	-1, & v \leq -1
	\end{cases} \ 
	\mathrm{ and } \ \varphi'(v) > 0 \ \mathrm{for } \ v \in (-1,1).
	\end{equation}
	Observe that, rescaling time $t \rightarrow 2 t$, the vector field  \ref{eq:regularized} gives rise to the differential equations 
	\begin{equation} \label{system}
	\begin{cases}
	\dot{x}= G^1(x,y;\alpha,\e) \\
	\dot{y}= G^2(x,y;\alpha,\e)
	\end{cases} , 
	\end{equation} where 
	\begin{equation} \label{Gdef}
	G^i(x,y;\alpha,\e)= (X^{\alpha i}+Y^{\alpha i})(x,y)+ \varphi \left( \frac{y}{\e} \right) (X^{\alpha i}-Y^{\alpha i})(x,y), \, i=1,2.
	\end{equation}
	Performing the change $y= \e \cdot v$ in $\ref{system}$ we obtain the so called {\it slow system} $\bar Z^\alpha_\e$:
	\begin{equation} \label{vsystem}
	\begin{cases} 
	\dot{x} =& F^1(x,v;\alpha,\e)  \\
	\e \dot{v} =& F^2(x,v;\alpha,\e)
	\end{cases}
	\end{equation} 
	\noindent where 
	\begin{equation} \label{Fdef}
	F^i(x,v;\alpha,\e)= (X^{\alpha i}+Y^{\alpha i})(x,\e v)+ \varphi \left( v \right) (X^{\alpha i}-Y^{\alpha i})(x,\e v), \, i=1,2.
	\end{equation}
	After the change of time $\tau=\frac{t}{\e}$, system \ref{vsystem} becomes the so called {\it fast system}, 
	which is a  smooth vector field $\tilde{Z}^\alpha_\e$ depending on two parameters $(\alpha, \e)$: 
	\begin{equation} \label{fsystem}
	\displaystyle{
		\begin{cases} 
		x' =& \e F^1(x,v;\alpha,\e)  \\
		v' =& F^2(x,v;\alpha,\e)
		\end{cases} }
	\end{equation}
	\begin{rem}
		Even if systems \ref{vsystem} and \ref{fsystem} are formally slow-fast systems, when $|v|\ge 1$ these systems are the original smooth vector fields $X$ and $Y$ 
		written in variables $(x,v)=(x,\frac{y}{\e})$. 
		In particular, the existence of ``big periodic orbits'' in section \ref{sec:VIunfreg} will be a consequence of the slow-fast nature of these systems for 
		$|v|\le 1$ combined with the behavior of the original systems $X$ and $Y$ for $v\ge 1$ and $v\le -1$ respectively.
	\end{rem}

	\begin{rem}
		Since one can write 
		\begin{equation}\label{notaciounfolding}
		Z^\alpha = Z + \tilde Z \alpha + \mathcal{O}(\alpha^2), \quad Z=(X,Y), \quad \tilde Z=(\tilde X, \tilde Y),
		\end{equation}
		the regularized system can be written as 
		$Z^\alpha_\e = Z_\e + \tilde Z_\e \alpha + \mathcal{O}(\alpha^2)$, where $Z_\e$ and $\tilde Z_\e$ are the $\varphi-$regularization of $Z$ and $\tilde Z$, respectively. 
	\end{rem}
	
	\subsection{Critical points of the regularized system \texorpdfstring{$Z^\alpha_\e$}{Zae}} \label{critpoint}
	
	To understand the dynamics of $Z_\e ^\alpha$ we begin by studying its equilibrium points.
	
	\begin{lemma} \label{eqlemma} 
		There exist $\alpha_0,\e_0>0$ such that for every $-\alpha_0 < \alpha < \alpha_0$ and $0< \e < \e_0$ one has:
		\begin{enumerate}[(a)]
			\item 
			If $X^1 \cdot Y^1 (\0) >0$,  $Z^\alpha_\e$ has no critical points;
			\item 
			If $X^1 \cdot Y^1 (\0) <0$, $Z^\alpha_{\e}$ has a unique critical point:
			\begin{equation} \label{criticalpoint}
			P(\alpha,\e)=(x(\alpha,\e),\e v(\alpha,\e)) = Q(\alpha) + \mathcal{O}(\e) = (\bar x,0) \alpha + (x^*, v^*) \e + \mathcal{O}_2(\alpha,\e),
			\end{equation}
			where $Q(\alpha)$ is the pseudo-equilibrium of $Z^\alpha$, and $v^*$, $x^*$ and $\bar x$ are given in \ref{eq:criticalpoint}, \ref{eq:xline} and \ref{eq:xbar}.
		\end{enumerate}
	\end{lemma}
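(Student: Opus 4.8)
The plan is to work in the slow variables $(x,v)$ of \ref{vsystem}, where the critical points of $Z^\alpha_\e$ are exactly the common zeros of $F^1,F^2$ in \ref{Fdef}. First I would localize the search. For $|v|\ge 1$ one has $\varphi(v)=\pm1$, so $(F^1,F^2)$ equals $2(X^{\alpha1},X^{\alpha2})$ or $2(Y^{\alpha1},Y^{\alpha2})$; since $X^1(\0),Y^1(\0)\neq 0$ by \ref{eq:foldX}--\ref{eq:foldY}, neither $X^\alpha$ nor $Y^\alpha$ has an equilibrium near $\0$ for $\alpha$ small, so every critical point must satisfy $|v|<1$. The key algebraic step is the pair of identities, obtained by expanding \ref{Fdef},
\[
X^{\alpha2}F^1-X^{\alpha1}F^2=(\varphi(v)-1)\det Z^\alpha(x,\e v),\qquad
Y^{\alpha2}F^1-Y^{\alpha1}F^2=(\varphi(v)+1)\det Z^\alpha(x,\e v).
\]
Subtracting shows any critical point obeys $\det Z^\alpha(x,\e v)=0$; conversely, on $\{\det Z^\alpha=0\}$ the equation $F^2=0$ reads $\varphi(v)=-\tfrac{(X^{\alpha2}+Y^{\alpha2})}{(X^{\alpha2}-Y^{\alpha2})}(x,\e v)=:R$, and since $|R|<1\iff X^{\alpha2}Y^{\alpha2}(x,\e v)<0$, at such a point both $X^{\alpha2},Y^{\alpha2}$ are nonzero and the identities then force $F^1=0$ as well. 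Thus, in the strip $|v|<1$, the critical points are precisely the solutions of $\det Z^\alpha(x,\e v)=0$ and $\varphi(v)=R(x,\e v;\alpha)$ lying in the region $X^{\alpha2}Y^{\alpha2}<0$.

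For part (a), $X^1\cdot Y^1(\0)>0$, I would use \autoref{lem:sdec} together with Propositions \ref{prop:VVCunfolding}, \ref{prop:IISunfolding} and \ref{prop:VICunfolding}, which guarantee that $Z^\alpha$ has no pseudo-equilibrium, i.e.\ $\det Z^\alpha$ does not vanish on the sliding/escaping part of $\s$. The task is to promote this to the thin strip $|y|<\e$: along the branch of $\{\det Z^\alpha=0\}$ through the origin one evaluates $X^{\alpha2}Y^{\alpha2}$ and checks that its leading part keeps a constant positive sign for $|\alpha|,\e$ small, so that $|R|>1$ there and $\varphi(v)=R$ has no root with $|v|<1$. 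Hence $Z^\alpha_\e$ has no critical point.

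For part (b), $X^1\cdot Y^1(\0)<0$, all three fold types fall under \autoref{lem:neq0}, so $(\det Z)_x(\0)\neq0$. I would first solve $\det Z^\alpha(x,\e v)=0$ for $x=X(v,\alpha,\e)$ by the Implicit Function Theorem, with $X(v,0,0)=0$ and $X=\bar x\,\alpha+\mathcal{O}(\e,\alpha^2)$, where $\bar x$ is the slope of the pseudo-equilibrium branch $Q(\alpha)$ of \autoref{corol:psequi}. Substituting into $F^2=0$ gives a scalar equation $g(v,\alpha,\e)=0$ whose linear part is $\alpha\,(a_1+\varphi(v)a_2)+\e\,v\,(b_1+\varphi(v)b_2)=0$, the constants $a_i,b_i$ being the $\alpha$- and $\e$-directional derivatives of $X^{\alpha2}\pm Y^{\alpha2}$ along $\{\det Z^\alpha=0\}$ at $\0$. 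Setting $\alpha=0$ selects $v^*$ through $\varphi(v^*)=-b_1/b_2$, the admissibility $|b_1/b_2|<1$ being equivalent to the branch entering the sliding region; monotonicity of $\varphi$ yields a unique admissible root, the spurious root $v=0$ being discarded since it corresponds to the origin, where $X^{\alpha2}=Y^{\alpha2}=0$ and the equivalence above fails. This produces the unique $P(\alpha,\e)=Q(\alpha)+\mathcal{O}(\e)$, and back-substitution yields $x^*,v^*$ as in \ref{criticalpoint}.

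The hard part is the degeneracy at $\alpha=\e=0$: there $g(\cdot,0,0)\equiv0$ and $\partial_v g(\cdot,0,0)\equiv0$, so a naive Implicit Function Theorem in $v$ is unavailable. Geometrically this is the loss of normal hyperbolicity of the critical manifold at the fold-fold point, where the Fenichel argument valid for fixed $\alpha\neq0$ breaks down as $\alpha\to0$. I expect the direction of approach to matter: along $\alpha=0$ the $v$-coordinate tends to $v^*$, whereas for fixed $\alpha\neq0$ and $\e\to0$ it tends to $\varphi^{-1}(-a_1/a_2)$, so the $\mathcal{O}(\e)$ correction in \ref{criticalpoint} is genuinely direction-sensitive. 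The clean way to control both regimes at once is to blow up the parameter corner, rescaling so that $\alpha$ and $\e$ are comparable; in the blown-up chart $\partial_v g$ becomes $v^*\varphi'(v^*)b_2\neq0$ at the root, restoring a nondegenerate Implicit Function Theorem and giving existence, uniqueness and smooth dependence of $P(\alpha,\e)$ throughout the full range $-\alpha_0<\alpha<\alpha_0$, $0<\e<\e_0$. This is where the technical computations referred to \cite{larrosa} are concentrated.
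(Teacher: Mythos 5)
Your reduction order is what creates the degeneracy that dominates your last paragraph, and the blow-up you propose to cure it is both unnecessary and motivated by a false claim. The corner $(\alpha,\e)=(0,0)$ is \emph{not} degenerate for the full system $F=(F^1,F^2)=0$: the paper first solves $F^1=0$ for $v=v(x)$, which is nondegenerate because $\varphi'>0$ and $(X^{\alpha 1}-Y^{\alpha 1})(x,0)\neq 0$ near $\0$ (this is precisely the equation your scheme never exploits); along that curve $F^2(x,v(x);\alpha,0)=-\frac{2\det Z^{\alpha}}{(Y^{\alpha 1}-X^{\alpha 1})}(x,0)$, whose unique zero $x(\alpha)$ is supplied by \autoref{corol:psequi}, and the Jacobian $\det D_{(x,v)}F(x(\alpha),v(\alpha);\alpha,0)=-2\varphi'(v(\alpha))(\det Z^{\alpha})_x(x(\alpha),0)$ is bounded away from zero uniformly in $\alpha$, including at $\alpha=0$ where it equals $-2\varphi'(v^*)(\det Z)_x(\0)\neq 0$. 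So the plain Implicit Function Theorem already yields a single smooth $P(\alpha,\e)$ jointly in $(\alpha,\e)$ with uniform $\alpha_0,\e_0$. Moreover, your asserted direction-sensitivity is wrong: on $\{\det Z^{\alpha}=0\}$ the fields $X^{\alpha}$ and $Y^{\alpha}$ are parallel, so the ratio of second components equals the ratio of first components. Concretely, writing $X^{\alpha 2}=\lambda(\alpha)X^{\alpha 1}$, $Y^{\alpha 2}=\lambda(\alpha)Y^{\alpha 1}$ at $(x(\alpha),0)$ with $\lambda(0)=0$, one gets $a_1+\varphi(v)a_2=\lambda'(0)\left[(X^1+Y^1)+\varphi(v)(X^1-Y^1)\right](\0)$, while $b_1+\varphi(v^*)b_2$ is proportional to $(\det Z)_x(\0)\chi'(0)+(\det Z)_y(\0)=0$ for $\chi'(0)=-(\det Z)_y/(\det Z)_x(\0)$; hence $\varphi^{-1}(-a_1/a_2)=\varphi^{-1}(-b_1/b_2)=v^*$ and the two ``directional limits'' coincide. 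Since \emph{both} coefficients of your linear part vanish at $v=v^*$, your nondegeneracy claim in the blown-up chart is also incorrect as stated: $\partial_v g$ there is $\varphi'(v^*)(\delta a_2+v^* b_2)$, not $v^*\varphi'(v^*)b_2$, and this can vanish at an isolated $\delta$ — so even on its own terms your sketch does not deliver the uniform statement of the lemma.

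Part (a) has a genuine hole as well. Your route through ``the branch of $\{\det Z^{\alpha}=0\}$ through the origin'' presumes structure that is not available: in the visible-invisible case with $X^1\cdot Y^1(\0)>0$ the sliding vector field is not defined near the fold-fold, condition \ref{bsliding} is vacuous, \autoref{lem:neq0} gives nothing, and $(\det Z)_x(\0)$ may vanish, so the zero set of $\det Z^{\alpha}$ need not be a curve solvable for $x$. Furthermore $R$ is of the form $0/0$ exactly where your sign argument must operate: on $\{\det Z^{\alpha}=0\}$, $X^{\alpha 2}=0$ forces $Y^{\alpha 2}=0$ (since $X^{\alpha 1}\neq 0$), and such common zeros $q(\alpha)$ persist for $\alpha\neq 0$ and can lie inside the strip $|y|<\e$; at them $F^2=0$ holds automatically and your equivalence breaks. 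The paper's argument for (a) avoids $\det Z^{\alpha}$ entirely: for $|v|\le 1$, since $X^{\alpha 1}$ and $Y^{\alpha 1}$ have the same sign near $\0$, one has $|F^1|=|(X^{\alpha 1}+Y^{\alpha 1})+\varphi(v)(X^{\alpha 1}-Y^{\alpha 1})|\ge 2\min\{|X^{\alpha 1}|,|Y^{\alpha 1}|\}>0$, and your own (correct) localization disposes of $|v|\ge 1$. Your algebraic identities and the uniqueness bookkeeping they give in case (b) are fine; but both (a) and the nondegenerate reduction in (b) should be driven by the equation $F^1=0$, as in the paper's proof.
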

	
	\begin{proof}
		Using the change $y = \e v$, we look for zeros of the map 
		\begin{equation}
		F(x,v;\alpha,\e) =(F^1(x,v;\alpha,\e),F^2(x,v;\alpha,\e)).
		\end{equation} 
		
		At first we consider $F^1(x,v;\alpha,0)=0$, which is solvable if and only if for each $x$ there exists $v(x) \in (-1,1)$ satisfying  
		\begin{equation} \label{criticalpoint1}
		\varphi(v(x)) = - \frac{X^{\alpha 1}+Y^{\alpha 1}}{X^{\alpha 1}-Y^{\alpha 1}}(x,0)
		\end{equation}  
		
		If $X^1 \cdot Y^1 (\0) >0$ then \autoref{criticalpoint1} has no solution for $\alpha$ small enough, since the absolute value of the right-hand side of equation 
		\ref{eq:criticalpoint} is greater than one. 
		Therefore, by continuity the vector field $Z^\alpha_{\e}$ has no critical points near the origin, for $\e>0$ sufficiently small.
		
		If $X^1 \cdot Y^1 (\0) <0$ for $\alpha$ small enough the absolute value of the right-hand side of \autoref{criticalpoint1} is smaller than one, 
		then for each $x$ it admits a solution $v(x) \in (-1,1)$. Moreover, 
		$$
		F^2(x,v(x);\alpha,0) = -\frac{2 \det{Z^\alpha}}{Y^{\alpha 1} - X^{\alpha 1}}(x,0).
		$$
		
		By \autoref{corol:psequi},  applied to the vector field $Z^\alpha$, for small $\alpha$ there exists a unique solution $x(\alpha)=P(Z^\alpha)$ 
		near the origin such that $\det{Z^{\alpha}}({x(\alpha),0)})=0$ and 
		$\sgn{(\det{Z^\alpha})_x(x(\alpha),0)}=\sgn{(\det{Z)_x}(\0)}.$ 
		Therefore, setting $v(\alpha)=v(x(\alpha))$ we have $F(x(\alpha),v(\alpha);\alpha,0)=0$. Moreover, a straightforward computation shows that 
		$$
		\det{D_{(x,v)} F(x(\alpha),v(\alpha);\alpha,0)}= - 2 \varphi'(v(\alpha)) (\det{Z^\alpha)_x(x(\alpha),0)} \neq 0
		$$
		
		By applying the Implicit Function Theorem we obtain that $Z^\alpha_\e$ has a critical point 
		$P(\alpha,\e) = (x(\alpha),0) + \mathcal{O}(\e)$ for $\e>0$ sufficiently small. 
		Moreover, by the Chain's Rule we get the expressions for $v^*$, $x^*$ and $\bar x:$ using the notation given in \eqref{notaciounfolding}
		\begin{align}
		v^* &=\varphi^{-1} \left( - \frac{X^1+Y^1}{X^1-Y^1}(\0) \right) \label{eq:criticalpoint} \\
		x^* &= -\frac{(\det{Z})_y}{(\det{Z})_x} (\0) v^* \label{eq:xline} \\
		\bar x &= \frac{Y^1 \tilde X^2 - X^1 \tilde Y^2}{(\det{Z})_x}(\0) \label{eq:xbar}.
		\end{align}
		
		Observe that for $\alpha \neq 0$, the point $Q(\alpha)=(x(\alpha),0)$ is the pseudo-equilibrium of the sliding vector field which appears 
		in the unfolding $Z^\alpha$. That is, the critical point $P(\alpha,\e)$ that arises after the regularization derives from the pseudo-equilibrium of 
		$Z^{\alpha}$. Moreover $x(\alpha)=\overline{x} \alpha + \mathcal{O}(\alpha^2)$.
		
	\end{proof}
	\begin{rem}
		Observe that when $\alpha$ tends to zero, even though the pseudo-equilibrium $Q(\alpha)$ for $Z^s_\alpha$ disappears and becomes the 
		fold-fold point of $Z$, 
		the critical point $P(\alpha,\e)$ of $Z^\alpha_\e$ tends to $P(0,\e)$ which is the critical point of $Z_\e$.
	\end{rem}
	
	To obtain the topological character of the critical point $P(\alpha,\e)$ we need information about the determinant and the trace of 
	$DZ_{\e}^{\alpha}(P(\alpha,\e))$.
	
	\begin{prop} \label{corol:regnumbers}  
		Consider $Z \in \Lambda^F$ and $X^1\cdot Y^1 (\0)<0$. 
		Then at the critical point $P(\alpha,\e)$ we have
		\begin{enumerate}[(a)]
			\item \label{itm:unfdet1} 
			$\displaystyle{\det{DZ^{\alpha}_{\e}}(P(\alpha,\e))
				= -\frac{1}{\e} \left( 2 \varphi'(v(\alpha)) (\det{Z^\alpha)_x(x(\alpha),0)} + \mathcal{O} \left(\e \right) \right);}$ 
			\item \label{itm:unftr1} 
			$\displaystyle{\tr DZ^{\alpha}_{\e}(P(\alpha,\e))
				=\frac{1}{\e} \left( \varphi'(v(\alpha))(X^{\alpha 2}-Y^{\alpha 2})(x(\alpha),0) + \mathcal{O}(\e) \right),}$ for $\alpha \neq 0.$ 
			
			\noindent {\it Moreover}
			
			\item \label{itm:unfdet}   
			$\displaystyle{\det{DZ^{\alpha}_{\e}}(P(\alpha,\e)) 
				= -\frac{1}{\e} \left( 2 \varphi'(v^*) (\det{Z)_x(\0)} + \mathcal{O} \left( \alpha,\e \right) \right);}$ 
			\item \label{itm:unftr} 
			$\displaystyle{\tr {DZ^{\alpha}_{\e} (P(\alpha,\e))} 
				=\frac{1}{\e} \left( N(Z,\tilde{Z}) \alpha + M(Z) \e  + \mathcal{O}_2\left( \alpha,\e \right) \right)},$ 
		\end{enumerate} 
		
		\noindent {\it where $M(Z)$ and $N(Z,\tilde{Z})$ are constants given by} \begin{eqnarray*}
			M(Z) &=&  \left[ (X^1_x + Y^1_x) + \varphi(v^*)(X^1_x - Y^1_x) + (X^2_y + Y^2_y) + \varphi(v^*)(X^2_y - Y^2_y) \right. \\
			&+& \left. \varphi'(v^*) ((X^2_x-Y^2_x) x^* + (X^2_y-Y^2_y) v^*)  \right] (\0) \\
			N(Z,\tilde{Z})  &=& \frac{1}{(\det{Z})_x(\0)} \varphi'(v^*) (X^1 - Y^1) (\0) (Y^2_x \tilde{X}^2 - X^2_x \tilde{Y}^2)(\0)
		\end{eqnarray*}
		\noindent with $v^*, x^*$ 
		given in \ref{eq:criticalpoint} and  \ref{eq:xline} respectively.
	\end{prop}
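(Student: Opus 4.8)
The plan is to linearize the time-rescaled system \eqref{system}, whose right-hand side is $G=(G^1,G^2)$ with $G^i$ as in \eqref{Gdef}, at the critical point $P(\alpha,\e)$ produced in \autoref{eqlemma}. The essential structural observation is that only the $y$-derivatives carry a singular contribution: differentiating $\varphi(y/\e)$ in $y$ produces a factor $\e^{-1}\varphi'(y/\e)$, so that
\begin{equation*}
G^i_y=\tfrac{1}{\e}\varphi'(y/\e)(X^{\alpha i}-Y^{\alpha i})+(X^{\alpha i}_y+Y^{\alpha i}_y)+\varphi(y/\e)(X^{\alpha i}_y-Y^{\alpha i}_y),
\end{equation*}
while $G^i_x=(X^{\alpha i}_x+Y^{\alpha i}_x)+\varphi(y/\e)(X^{\alpha i}_x-Y^{\alpha i}_x)$ is $\mathcal{O}(1)$. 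At $P=(x(\alpha,\e),\e v(\alpha,\e))$ the argument of $\varphi$ equals $v(\alpha,\e)\in(-1,1)$, hence $\varphi(y/\e)=\varphi(v(\alpha))+\mathcal{O}(\e)$ and $\varphi'(y/\e)=\varphi'(v(\alpha))+\mathcal{O}(\e)$. Thus $DG(P)$ has a single $\e^{-1}$ singular piece, living in its second column, plus a bounded remainder.

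For the determinant (items (\ref{itm:unfdet1}) and (\ref{itm:unfdet})) I would collect the $\e^{-1}$ part of $G^1_xG^2_y-G^1_yG^2_x$, namely $\tfrac{1}{\e}\varphi'(v(\alpha))\,B$ with
\begin{equation*}
B=G^1_x\,(X^{\alpha 2}-Y^{\alpha 2})-(X^{\alpha 1}-Y^{\alpha 1})\,G^2_x .
\end{equation*}
Writing $G^i_x=(1+\varphi^*)X^{\alpha i}_x+(1-\varphi^*)Y^{\alpha i}_x$ with $\varphi^*=\varphi(v(\alpha))$, and eliminating $\varphi^*$ through the defining relation \eqref{criticalpoint1} (equivalently $1+\varphi^*=-2Y^{\alpha1}/(X^{\alpha1}-Y^{\alpha1})$ and $1-\varphi^*=2X^{\alpha1}/(X^{\alpha1}-Y^{\alpha1})$), the expression for $B$ reduces, after using the pseudo-equilibrium identity $\det Z^\alpha(x(\alpha),0)=0$ to cancel the residual terms, to $B=-2(\det Z^\alpha)_x(x(\alpha),0)$. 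This yields (\ref{itm:unfdet1}); item (\ref{itm:unfdet}) follows by expanding $\varphi'(v(\alpha))=\varphi'(v^*)+\mathcal{O}(\alpha)$ and $(\det Z^\alpha)_x(x(\alpha),0)=(\det Z)_x(\0)+\mathcal{O}(\alpha)$, legitimate because $x(\alpha)=\mathcal{O}(\alpha)$ and $Z^\alpha=Z+\mathcal{O}(\alpha)$.

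For the trace I would use $\tr DG(P)=G^1_x+G^2_y$; since only $G^2_y$ is singular, its $\e^{-1}$ part is $\tfrac{1}{\e}\varphi'(v(\alpha))(X^{\alpha2}-Y^{\alpha2})(x(\alpha),0)$, which is exactly (\ref{itm:unftr1}). The delicate item is (\ref{itm:unftr}). Evaluating the bounded part $G^1_x+(X^{\alpha2}_y+Y^{\alpha2}_y)+\varphi(X^{\alpha2}_y-Y^{\alpha2}_y)$ at $\0$ with $\varphi=\varphi(v^*)$ reproduces every term of $M(Z)$ except the last; both the remaining $\varphi'(v^*)\big((X^2_x-Y^2_x)x^*+(X^2_y-Y^2_y)v^*\big)(\0)$ and the full $N(Z,\tilde Z)\alpha$ contribution must come from the singular term. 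Since $X^2(\0)=Y^2(\0)=0$, I would Taylor expand $(X^{\alpha2}-Y^{\alpha2})(P)$ along $P=(\bar x,0)\alpha+(x^*,v^*)\e+\mathcal{O}_2$ from \eqref{criticalpoint}, and in $\alpha$ through $\tilde X^2,\tilde Y^2$:
\begin{equation*}
(X^{\alpha2}-Y^{\alpha2})(P)=\big[(X^2_x-Y^2_x)(\bar x\,\alpha+x^*\e)+(X^2_y-Y^2_y)v^*\e+(\tilde X^2-\tilde Y^2)\alpha\big](\0)+\mathcal{O}_2(\alpha,\e).
\end{equation*}
Multiplying by $\tfrac{1}{\e}\varphi'(v^*)$, the $\e$-linear part supplies precisely the missing term of $M(Z)$, while the $\alpha$-linear part yields $\tfrac{\alpha}{\e}\varphi'(v^*)\big((X^2_x-Y^2_x)\bar x+(\tilde X^2-\tilde Y^2)\big)(\0)$. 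Inserting $\bar x$ from \eqref{eq:xbar} and using the fold--fold relations $X^2(\0)=Y^2(\0)=0$ (so that $(\det Z)_x(\0)=(X^1Y^2_x-Y^1X^2_x)(\0)$), an algebraic identity collapses this bracket to $\tfrac{1}{(\det Z)_x}\varphi'(v^*)(X^1-Y^1)(Y^2_x\tilde X^2-X^2_x\tilde Y^2)(\0)=N(Z,\tilde Z)$, completing (\ref{itm:unftr}).

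The main obstacle is this last step: tracking how the single $\e^{-1}$ term, once the critical point is expanded to first order in $(\alpha,\e)$, splits into an $\mathcal{O}(1)$ piece feeding $M(Z)$ and an $\mathcal{O}(\alpha/\e)$ piece feeding $N(Z,\tilde Z)$, and then checking the algebraic identity (relying on $\det Z(\0)=0$ and \eqref{eq:xbar}) that recasts $N$ in the stated intrinsic form. The determinant reduction $B=-2(\det Z^\alpha)_x$ is similar in spirit but lighter, since it only uses the pseudo-equilibrium condition at $P$.
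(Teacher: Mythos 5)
Your proposal is correct and is essentially the intended argument: the paper itself defers the proof to \cite{larrosa}, and your direct linearization of the time-rescaled field $G=(G^1,G^2)$ (the normalization the paper actually identifies with $Z^\alpha_\e$, cf.\ the computation of $\det D_{(x,v)}F$ in \autoref{eqlemma} and \autoref{ex1}) reproduces items (a)--(d) with the stated constants. I checked the two key algebraic steps and both hold: eliminating $\varphi(v(\alpha))$ via $1+\varphi^*=-2Y^{\alpha1}/(X^{\alpha1}-Y^{\alpha1})$, $1-\varphi^*=2X^{\alpha1}/(X^{\alpha1}-Y^{\alpha1})$ and using $\det Z^\alpha(x(\alpha),0)=0$ indeed gives $G^1_x(X^{\alpha2}-Y^{\alpha2})-(X^{\alpha1}-Y^{\alpha1})G^2_x=-2(\det Z^\alpha)_x(x(\alpha),0)$, and with $X^2(\0)=Y^2(\0)=0$, so $(\det Z)_x(\0)=(X^1Y^2_x-Y^1X^2_x)(\0)$, the bracket $\varphi'(v^*)\bigl[(X^2_x-Y^2_x)\bar x+(\tilde X^2-\tilde Y^2)\bigr](\0)$ with $\bar x$ from \ref{eq:xbar} collapses exactly to $N(Z,\tilde Z)$, confirming your splitting of the singular term into the $M(Z)$ and $N(Z,\tilde Z)$ contributions.
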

	
	\begin{proof}
		The proof can be seen in \cite{larrosa}.
	\end{proof}
	
	\begin{rem}
		Since $\varphi'(v^*), (X^1-Y^1)(\0) \neq 0$ and the transversality condition \ref{foldsbe} guarantees that 
		$(Y^2_x \tilde{X}^2 - X^2_x \tilde{Y}^2)(\0) \neq 0$, the constant $N(Z,\tilde Z) \neq 0$ for any versal unfolding of $Z \in \Lambda^F$.
	\end{rem}

	\begin{prop} \label{prop:toptype}
		Let $Z \in \Lambda^F$ satisfying $X^1 \cdot Y^1 (\0)<0$ and
		$P(\alpha,\e)$ be the critical point of $Z^{\alpha}_\e$ given in \autoref{eqlemma}. 
		It follows that for $\alpha$ and $\e>0$ small enough:
		
		\begin{enumerate}[(a)]
			\item 
			If $(\det{Z})_x(\0)>0$ the critical point $P(\alpha,\e)$ is a saddle; \label{itm:g0}
			\item 
			If $(\det{Z})_x(\0)<0$ the critical point $P(\alpha,\e)$ is a node or a focus. 
			Moreover,: \label{itm:l0}
			\begin{enumerate}[({b}1)]
				\item  
				There exist a curve  $\mathcal{D}$ in the parameter plane $(\alpha,\e)$, given by 
				\begin{equation} \label{dcurve}
				\mathcal{D}= \left\{ (\alpha,\e) : \, \e = -\frac{N(Z,\tilde{Z})^2}{8\varphi'(v^*)(\det{Z)_x(\0)})} \alpha^2 + \mathcal{O}(\alpha^3) \right\},
				\end{equation}
				such that:
				\begin{enumerate}[(i)]
					\item 
					For $(\alpha,\e)$ bellow the curve $\mathcal{D}$ 
					the critical point $P(\alpha,\e)$ is a node;
					\item 
					For $(\alpha,\e)$ on the curve $\mathcal{D}$ 
					the critical point $P(\alpha,\e)$ is a degenerate node;
					\item 
					For $(\alpha,\e)$ above the curve $\mathcal{D}$ 
					the critical point $P(\alpha,\e)$ is a focus;
				\end{enumerate}
				\item 
				There exists a curve $\mathcal{H}$ in the parameter plane $(\alpha,\e)$, given by 
				\begin{equation} \label{hcurve}
				\mathcal{H} =\left\{ (\alpha,\e): \, \alpha =  \delta_\mathcal{H} \ \e + \mathcal{O}(\e^2) \right\}, \quad \delta_\mathcal{H}=-\frac{M(Z)}{N(Z,\tilde{Z})}.
				\end{equation}
				such that 
				the critical point $P(\alpha,\e)$ 
				undergoes a Hopf Bifurcation.
			\end{enumerate} 
		\end{enumerate}
	\end{prop}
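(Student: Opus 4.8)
The plan is to determine the topological type of the hyperbolic critical point $P(\alpha,\e)$ from the sign of the determinant $D:=\det DZ^\alpha_\e(P(\alpha,\e))$ and, when $D>0$, from the sign of the discriminant $\Delta:=T^2-4D$ of the linearization, where $T:=\tr DZ^\alpha_\e(P(\alpha,\e))$; the expansions of $T$ and $D$ needed here are exactly those furnished by \autoref{corol:regnumbers}. I write $N:=N(Z,\tilde Z)$ and $M:=M(Z)$, and I use that $\varphi'(v^*)>0$ (as $v^*\in(-1,1)$) and, by the remark following \autoref{corol:regnumbers}, that $N\neq0$ for every versal unfolding. For part (\ref{itm:g0}), item (\ref{itm:unfdet}) of \autoref{corol:regnumbers} gives $D=-\frac1\e(2\varphi'(v^*)(\det{Z})_x(\0)+\mathcal{O}(\alpha,\e))$; when $(\det{Z})_x(\0)>0$ the parenthesis is positive for $\alpha,\e$ small, so $D<0$ and $P(\alpha,\e)$ is a saddle. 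When $(\det{Z})_x(\0)<0$ the same formula yields $D>0$, so $P(\alpha,\e)$ has either real eigenvalues of equal sign or a complex conjugate pair, i.e.\ it is a node or a focus, which is the first claim of part (\ref{itm:l0}).

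To separate node from focus I would work with $\e^2\Delta$, which by \autoref{corol:regnumbers} extends smoothly to $\e=0$: setting $A:=-2\varphi'(v^*)(\det{Z})_x(\0)>0$ one has $\e^2\Delta=(\e T)^2-4\e^2 D=(N\alpha+M\e+\mathcal{O}_2(\alpha,\e))^2-4\e(A+\mathcal{O}(\alpha,\e))$, whose dominant terms in the regime $\e\sim\alpha^2$ are $N^2\alpha^2-4A\e$. Since $\partial_\e(\e^2\Delta)(\0)=-4A\neq0$, the Implicit Function Theorem produces a unique curve $\mathcal{D}:\e=\psi(\alpha)$ with $\psi(\alpha)=\frac{N^2}{4A}\alpha^2+\mathcal{O}(\alpha^3)$; inserting the value of $A$ reproduces the coefficient in \eqref{dcurve}. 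Writing $\e^2\Delta=4A(\psi(\alpha)-\e)+\mathcal{O}(\alpha^3)$ and using $A>0$, the sign of $\Delta$ coincides with that of $\psi(\alpha)-\e$, so $P(\alpha,\e)$ is a node below $\mathcal{D}$, a degenerate node on $\mathcal{D}$, and a focus above $\mathcal{D}$, which is (b1).

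For (b2) I would analyze the trace directly. By item (\ref{itm:unftr}), $\e T=N\alpha+M\e+\mathcal{O}_2(\alpha,\e)$, and since $\partial_\alpha(\e T)(\0)=N\neq0$ the Implicit Function Theorem gives a unique curve $\mathcal{H}:\alpha=\delta_\mathcal{H}\e+\mathcal{O}(\e^2)$ with $\delta_\mathcal{H}=-M/N$, as in \eqref{hcurve}. On $\mathcal{H}$ one has $\alpha=\mathcal{O}(\e)$, hence $\e$ is of order $\alpha$ and thus much larger than $\psi(\alpha)\sim\alpha^2$; therefore $\mathcal{H}$ lies above $\mathcal{D}$, in the focus region, and there the eigenvalues are the nonzero purely imaginary pair $\pm i\sqrt{D}$. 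Finally $\partial_\alpha(\mathrm{Re}\,\lambda)=\tfrac12\partial_\alpha T=\frac{N}{2\e}+\mathcal{O}(1)\neq0$ for $\e$ small, so the eigenvalues cross the imaginary axis transversally and the Hopf Bifurcation Theorem applies along $\mathcal{H}$.

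The main obstacle is the order bookkeeping in (b1): both $T$ and $D$ blow up like $1/\e$, so one must pass to the smooth quantity $\e^2\Delta$, isolate the correct dominant balance $N^2\alpha^2\sim4A\e$ (which forces the parabolic scaling $\e\sim\alpha^2$ of $\mathcal{D}$, in contrast to the linear scaling $\e\sim\alpha$ of $\mathcal{H}$), and check that the higher-order remainders do not alter the sign of $\Delta$ away from the curve; the cleanest control is to parametrize $\mathcal{D}$ by the Implicit Function Theorem and exploit that the transverse derivative $\partial_\e(\e^2\Delta)=-4A+\mathcal{O}(\alpha,\e)$ never vanishes. Verifying that $\mathcal{H}$ sits strictly above $\mathcal{D}$, so that the Hopf bifurcation genuinely takes place at a focus, is the remaining point and follows from comparing these two scalings.
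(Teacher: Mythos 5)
Your proposal is correct, and it is essentially the intended argument: the paper itself defers this proof to \cite{larrosa}, but the ingredients it supplies in \autoref{corol:regnumbers} (the $\frac{1}{\e}$-expansions of $\det DZ^\alpha_\e$ and $\tr DZ^\alpha_\e$ at $P(\alpha,\e)$, together with the remark that $N(Z,\tilde Z)\neq 0$ for versal unfoldings) are exactly what you use, and your classification via the sign of the determinant, the rescaled discriminant $\e^2\Delta$ with the Implicit Function Theorem for $\mathcal{D}$, and the trace-zero curve with transversal eigenvalue crossing for $\mathcal{H}$ reproduces the coefficients in \eqref{dcurve} and \eqref{hcurve}. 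The scaling comparison showing $\mathcal{H}$ lies above $\mathcal{D}$ (since $\psi(\alpha)=\mathcal{O}(\e^2)\ll\e$ along $\mathcal{H}$) correctly settles the remaining point that the Hopf bifurcation occurs in the focus region.
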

	\begin{proof}
		The proof can be seen in \cite{larrosa}.
	\end{proof}
	
	\begin{figure}[H]
		\centering
		\begin{scriptsize}
			\def\svgscale{0.55}
			\input{./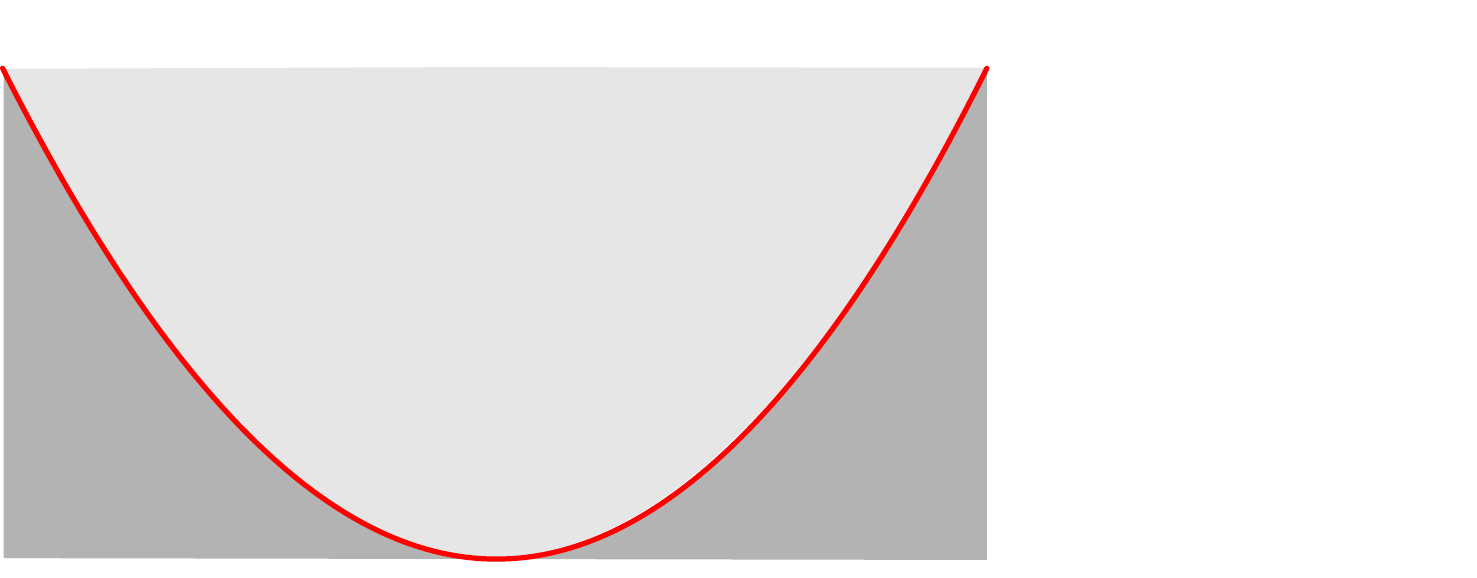_tex}
		\end{scriptsize}
		\caption{Topological type of the critical point $P(\alpha,\e)$ depending on each region of the $(\alpha,\e)-$parameter semi-plane.}
		\label{fig:toptype}
	\end{figure}
	
	\begin{corol} \label{corol:toptype}
		Let $Z \in \Lambda^F$ satisfying $X^1\cdot Y^1(\0)<0$. One has that 
		\begin{enumerate}[(a)]
			\item 
			if both folds are visible the critical point $P(\alpha,\e)$ is a saddle for $|\alpha|$ and $\e>0$ small enough;
			\item 
			if both folds are invisible, the topological type of the critical point $P(\alpha,\e)$ changes as described in item $(b)$ of \autoref{prop:toptype}. 
			Moreover, the critical point $P(\alpha,\e)$ is stable for $(\alpha,\e)$ on the left of the curve $\mathcal{H}$ and it is unstable for 
			$(\alpha,\e)$ on the right of the curve $\mathcal{H}$; 
			\item 
			if both folds have opposite visibility, then
			\begin{enumerate}[(c1)]
				\item 
				if $(\det{Z})_x(\0)>0$, $P(\alpha,\e)$ is a saddle for every $|\alpha|$ and $\e>0$ small enough;
				\item 
				if $(\det{Z})_x(\0)<0$, the topological type of the critical point $P(\alpha,\e)$ changes as described in item $(b)$ of \autoref{prop:toptype}. 
				Moreover, the critical point $P(\alpha,\e)$ is unstable for $(\alpha,\e)$ on the left of the curve $\mathcal{H}$ and it is stable for $(\alpha,\e)$ 
				on the right of the curve $\mathcal{H}$; 
			\end{enumerate}
		\end{enumerate}
	\end{corol}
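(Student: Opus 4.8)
The plan is to assemble the statement from three ingredients already at our disposal: \autoref{lem:neq0}, which fixes the sign of $(\det Z)_x(\0)$ from the visibility configuration together with $X^1\cdot Y^1(\0)<0$; \autoref{prop:toptype}, which reads off the topological type of $P(\alpha,\e)$ from that sign; and \autoref{corol:regnumbers}, which controls the trace and hence the stability. First I would invoke \autoref{lem:neq0}: under $X^1\cdot Y^1(\0)<0$ it gives $(\det Z)_x(\0)>0$ when both folds are visible, $(\det Z)_x(\0)<0$ when both are invisible, and $(\det Z)_x(\0)\neq 0$ of undetermined sign when the visibilities are opposite (here condition \eqref{bsliding} is exactly what guarantees nonvanishing).

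Feeding these signs into \autoref{prop:toptype} settles the topological type at once. When $(\det Z)_x(\0)>0$, part (a) of that proposition yields a saddle; this covers case (a) (both folds visible) and case (c1) (opposite visibility with $(\det Z)_x(\0)>0$). When $(\det Z)_x(\0)<0$, part (b) of \autoref{prop:toptype} gives that $P(\alpha,\e)$ is a node below the parabola $\mathcal{D}$, a degenerate node on $\mathcal{D}$ and a focus above it, together with a Hopf bifurcation along $\mathcal{H}$; this is precisely the ``changes as described in item $(b)$'' assertion for cases (b) and (c2).

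It then remains to pin down the stability on each side of $\mathcal{H}$. By item (d) of \autoref{corol:regnumbers} the trace is $\tr DZ^\alpha_\e(P(\alpha,\e))=\frac{1}{\e}\big(N(Z,\tilde Z)\,\alpha+M(Z)\,\e+\mathcal{O}_2(\alpha,\e)\big)$, and the remark following that proposition guarantees $N(Z,\tilde Z)\neq 0$ for a versal unfolding. Hence the trace vanishes to leading order exactly on $\mathcal{H}$ (where $\alpha=\delta_\mathcal{H}\e=-\frac{M(Z)}{N(Z,\tilde Z)}\e$), and the linear form $N(Z,\tilde Z)\alpha+M(Z)\e$ keeps a definite sign on each of its two sides, so $P(\alpha,\e)$ is asymptotically stable where $\tr<0$ and unstable where $\tr>0$, whether it is a node or a focus. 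To fix which side is which, I would take $\alpha\neq 0$ fixed and let $\e\to 0^+$: then $(\alpha,\e)$ lies to the left of $\mathcal{H}$ when $\alpha<0$ and to the right when $\alpha>0$, while $P(\alpha,\e)$ is a node whose stability coincides with that of the pseudo-node $Q(\alpha)$ of $Z^\alpha$. In case (b) that pseudo-node is stable for $\alpha<0$ and unstable for $\alpha>0$ by \autoref{prop:IICunfolding}, which gives ``stable on the left, unstable on the right''; in case (c2) the stabilities are reversed by \autoref{prop:VISunfolding}, which gives the opposite assignment. Equivalently, this forces $N(Z,\tilde Z)>0$ in case (b) and $N(Z,\tilde Z)<0$ in case (c2), consistent with $\tr\sim N(Z,\tilde Z)\alpha/\e$ for $\e\ll|\alpha|$.

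The main obstacle is this last step: rigorously matching the stability of the regularized equilibrium $P(\alpha,\e)$ for fixed $\alpha\neq0$ as $\e\to0^+$ to that of the non-smooth pseudo-node $Q(\alpha)$, and checking that the node regime connects to the Hopf curve with no spurious sign change of the trace in between. This amounts to confirming that for $\e\ll|\alpha|$ the leading term $N(Z,\tilde Z)\alpha/\e$ dominates $M(Z)$ (so the eigenvalues are real with the node's sign) and that the only zero of the trace in the relevant neighborhood is the one producing $\mathcal{H}$; both follow from the expansions in \autoref{corol:regnumbers} together with $N(Z,\tilde Z)\neq0$, but they are exactly what makes the ``left/right'' bookkeeping unambiguous.
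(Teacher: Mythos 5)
Your proposal is correct and follows essentially the same route as the paper, whose proof of this corollary is a one-line citation of exactly the ingredients you use: \autoref{lem:neq0} for the sign of $(\det Z)_x(\0)$, \autoref{prop:toptype} for the topological type, and \autoref{corol:regnumbers} for the trace. The left/right stability bookkeeping you spell out --- checking via \eqref{foldsbe} and the visibility signs that $N(Z,\tilde Z)>0$ in case (b) and $N(Z,\tilde Z)<0$ in case (c2), consistently with the pseudo-node stabilities of Propositions \ref{prop:IICunfolding} and \ref{prop:VISunfolding} --- is precisely what the paper leaves implicit, and your sign determinations are right (one could also read the node-regime trace sign directly from item (b) of \autoref{corol:regnumbers}, since $\sgn{(X^{\alpha 2}-Y^{\alpha 2})(x(\alpha),0)}$ is negative on $\s^s$ and positive on $\s^e$).
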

	\begin{proof}
		The proof is a consequence of  \autoref{lem:neq0} and Propositions \ref{corol:regnumbers} and \ref{prop:toptype}.
	\end{proof}
	
	\begin{rem}
	Over the curve $\mathcal{H}$ the character of the Hopf bifurcation is determined by the first Lyapunov coefficient $\ell_1$, see \cite{holmes}. 
		If $\ell_1>0 $ the bifurcation is subcritical and gives rise to the existence of an unstable periodic orbit. If $\ell_1<0$ a 
		stable periodic orbit arises at the Hopf bifurcation.
		However, the computation of $\ell_1$ leads to a cumbersome expression which does not add any relevant information. 
		The only important thing is that it sign depends on the vector field $Z$ but also of the regularization function $\varphi$ as 
		was already observed in \cite{KristiansenH15}, 
		where a formula for this coefficient for some suitable normal forms of $Z^\alpha$ was given (see formula (7.15) of that paper).
		In  \autoref{sec:melnikov} we will relate this coefficient with the derivative of a suitable Melnikov function at the point $(0,v^*)$ 
		(see Proposition \ref{prop:propiedadesM}). 
	\end{rem}

	\begin{rem} 
		It is worth to mention that, when the non-smooth vector field $Z$ has an invisible-invisible fold, the stability of the critical point $P(0,\e)$ of $Z_\e$ 
		is not related with the stability of the fold-fold given by the first return map  \ref{gfirstreturn}. 
		Let us recall that the stability of the fold-fold point depends of $\G_Z$ given in \ref{gencondii} and the stability of the focus depends of the sign of 
		$\tr{DZ_{\e}(P(0, \e))}$ given in \ref{corol:regnumbers}. 
		Due to the cumbersome expression of these coefficients one could think that it is possible to relate the sign of both quantities but we will see 
		in \autoref{ex1} that the signs of these coefficients are totally independent.
		
		In fact, this is not surprising because the fold-fold point is a linear center for the return map $\phi_Z$ and its stability can be changed by a small perturbation. 
	\end{rem}
	
	\begin{exmp}[From an attractive invisible fold-fold in the non-smooth vector field to a focus and a ``linear'' center in its regularization] \label{ex1}
		Consider the one parameter family $Z_{\eta} = (X_{\eta}, Y)$ where  
		\begin{equation} \label{Ex1unfolding}
		\begin{cases}
		X_{\eta}(x,y) = (-1 + \eta x, x) \\
		Y (x,y) = (1, 2x+x^2)
		\end{cases}
		\end{equation}
		Observe that $Z_{\eta}$ is not a versal unfolding of $Z_0$, since we have an invisible fold-fold at the origin for all values of $\eta$. 
		By Proposition \ref{prop:frX}, the return map  associated to this family is given by 
		\begin{equation} \label{ex1returnmap}
		\phi_{\eta}(x) = x + \frac{1}{3} (1-2\eta) x^2 -\frac{5}{9} x^3 + \mathcal{O}(x^4).
		\end{equation}
		For every $\eta<\frac{1}{2}$, $\G_Z =\frac{1}{3}(1-2\eta)>0$ and therefore the origin is an stable fixed point for the 
		return map $\phi_{\eta}$ in \ref{gfirstreturn}.
		
		Using a smooth transition function $\varphi$  as in \autoref{transfunc}, the regularized system reads 
		$$
		Z_{\e}^{\eta}(x,y)= \begin{cases}
		\dot{x}=  \left[ \eta x + \varphi(\frac{y}{\e})(-2 +\eta x) \right], \\
		\dot{y}= \left[3x +x^2 + \varphi(\frac{y}{\e})(-x-x^2) \right].
		\end{cases}. 
		$$
		The critical point for this system is  point $P(\e)=(0,\e v*)$ where $\varphi(v^*)=0$. 
		At $P(\e)$ we have 
		$$
		DZ_{\e}^{\alpha}(P(\e)) =
		\left(
		\begin{array}{cc}
		\eta & -2\frac{1}{\e }\varphi'(v^*) \\
		3 & 0 \\
		\end{array}
		\right) 
		\Rightarrow  \,
		\begin{cases} 
		\det DZ_{\e}^{\eta}(P(\e)) = \displaystyle{\frac{6}{\e}\varphi'\left( v^* \right)} > 0 \\
		\tr DZ_{\e}^{\eta}(P(\e)) = \displaystyle{ \eta }
		\end{cases}.
		$$
		It directly follows that the origin is an stable focus for $\eta<0$, a linear center for $\eta= 0$ and an unstable focus for $\eta >0$.
		
	\end{exmp}
	
	Observe that 
	$Z_{\eta}$ suffers a codimension two bifurcation when $\eta=\frac{1}{2}$, without moving the tangencies apart. 
	By varying $\eta$ around $\eta=\frac{1}{2}$, the fold-fold changes its stability but $P(\e)$ remains an unstable focus.
	
	\subsection{Critical manifolds of the regularized system \texorpdfstring{$Z^{\alpha}_\e$}{Zae}} \label{subsec:fenichelunf}
	
	In this section we will study the critical manifolds the slow-fast systems \ref{vsystem} and \ref{fsystem}.
	Setting $\e=0$ in \ref{vsystem}, the critical manifold $\Lambda^{\alpha}_0$
	is given by
	$$
	\Lambda^{\alpha}_0 = \left\{ (x,v): \, F^2(x,v;\alpha,0)=(X^{\alpha 2}+ Y^{\alpha 2})(x,0) + \varphi(v)(X^{\alpha 2}- Y^{\alpha 2})(x,0) =0 \right\}. 
	$$
	If $(x,0) \in \s^c$, $F^2(x,v;\alpha,0) \neq 0$ and therefore the critical manifold is not  defined.
	If $(x,0) \in \s^{e,s}$ the equality 
	\begin{equation} \label{phialpha}
	\varphi(v)=\frac{(X^\m{\alpha 2} + Y^\m{\alpha 2})(x,0)}{(Y^\m{\alpha 2} - X^\m{\alpha 2})(x,0)},
	\end{equation}
	is well defined and it is solvable. Therefore the critical manifold is given by 
	\begin{equation} \label{unfcrit}
	\Lambda_0^{\alpha} = \left\{ (x,v) : v = m_0^\alpha(x), \ x \in \s^e \cup \s^s \right\}. 
	\end{equation}
	where \begin{equation}\label{eq:m0}
	m_0^\alpha(x)= \varphi^{-1} \left( \frac{X^\m{\alpha 2} + Y^\m{\alpha 2}}{Y^\m{\alpha 2} - X^\m{\alpha 2}}(x,0) \right).
	\end{equation}
	Observe that, for $\alpha \neq 0$, we have:
	$m_0^\alpha(T_X^\alpha)=1$, 
	$m_0^\alpha(T_Y^\alpha)=-1$,  
	where $(T_X^\alpha,0)$ and $(T_Y^\alpha,0)$ are the fold points of the vector fields $X^\alpha$ and $Y^\alpha$ given in \ref{fXexpression}.
	Moreover, 
	\begin{equation}\label{vderivative}
	\frac{d}{dx} m_0^\alpha(x) = \frac{d}{dx} \left( \varphi^{-1} \left(\frac{X^\m{\alpha 2} + Y^\m{\alpha 2}}{Y^\m{\alpha 2} - X^\m{\alpha 2}}(x,0) \right) \right)  
	= K^\alpha(x,0) \cdot \left( (X^\m{\alpha 2})_x Y^\m{\alpha 2} - (Y^\m{\alpha 2})_x X^\m{\alpha 2}  \right)(x,0) 
	\end{equation}
	where $K^\alpha(x,0)=\displaystyle{\frac{1}{\varphi'(m_0^\alpha(x)) ((X^\alpha-Y^\alpha)(x,0))^2}}>0$ 
	for $\alpha \neq 0$. 
	As $\varphi'(\pm 1)=0$,  when $x$ tends to the tangency points we have  
	$\displaystyle{\lim \frac{d}{dx} m_0^\alpha(x) = \pm \infty},$ 
	therefore $\Lambda^{\alpha}_0$ reaches  $v= \pm 1$ at these points vertically. 
	
	The stability  of the critical manifold $\Lambda^\alpha_0$ for system \ref{fsystem} is given by 
	\begin{equation} \label{Fstability}
	\frac{\partial}{\partial v} F^{2}(x,v;\alpha ,\e) \bigg|_{\e=0} = \varphi'(v)(X^{\alpha 2}-Y^{\alpha 2}) (x,0),
	\end{equation}
	thus the critical manifold is repelling if it is defined over  a escaping region and attracting if it is defined over a 
	sliding region, see \cite{SotoTei}. 
	
	As it was seen in \cite{SotoTei}, the dynamics induced over the critical manifold $\Lambda^\alpha_0$ is equivalent to the dynamics of the sliding vector field 
	$(Z^\alpha)^s$,  defined in $\s^e \cup \s^s$, 
	since by a simple computation and \ref{slidingdef} we obtain 
	\begin{equation} \label{induceddyn}
	\dot{x} = F^1(x,v;\alpha,0) \big|_{\Lambda^{\alpha}_0}  = 2 \left( Z^\m{\alpha} \right)^\m{s}(x).
	\end{equation}
	Therefore, if the sliding vector field $(Z^\alpha)^s$ has a pseudo-equilibrium $Q(\alpha)=(x(\alpha),0)$, 
	the induced dynamics in $\Lambda _0 ^\alpha$  has a critical point at $(x(\alpha),m_0^\alpha(x(\alpha))$ 
	which has the same stability as the pseudo-equilibrium $Q(\alpha)$.
	
	For $\alpha=0$, since the origin is a fold-fold point, one can write $F^2(x,v;0,0) = x \cdot \A(x,v)$ where
	\begin{equation} \label{eq:R1}
	\A(x,v) = (1+\varphi(v)) X^2_x(\0) +(1-\varphi(v)) Y^2_x(\0) + \mathcal{O}(x)\ .
	\end{equation} 
	Therefore, in this case, the critical manifold $\Lambda^0_0$ decomposes as $\Lambda^0_0 = C_0 \cup \Lambda_0$, where 
	$$
	\ C_0 = \{ (x,v) : \ x=0 \} \ \text{and} \ \Lambda_0 = \{ (x,v) : \ \A(x,v)=0 \}.
	$$
	Moreover, at $C_0$, \eqref{Fstability} is identically zero,
	therefore $C_0$ is not a hyperbolic critical manifold of system \ref{fsystem}. 
	We will see in sections \ref{sssec:SVcritical} and \ref{ssec:OVcritical} that the critical manifold $\Lambda_0$ can be empty depending on the folds visibility. 
	
	During the rest of this paper we will restrict ourselves to the study of the regularization of 
	$Z^\alpha$ in the case that $(X^1\cdot Y^1) (\0)<0$.
	The dynamics of the other case is straightforward and can  be found in \cite{KristiansenH15}.
	
	\subsubsection{Folds with the same visibility} \label{sssec:SVcritical}
	
	When  the folds have the same visibility, for $\alpha = 0$ \autoref{visibility} implies that $X^2_x \cdot Y^2_x (\0)>0$ and 
	hence $\A(x,v) \neq 0$ for $(x,v)$ in a neighborhood of the origin, therefore $\Lambda_0=\{ (x,v): \, \A(x,v)=0 \} = \varnothing$.
	The critical manifold is $\Lambda^0_0 = C_0$ and it is not hyperbolic, see \autoref{fig:SVcriticalVV}. 
	
	We saw in Propositions \ref{prop:VVSunfolding} and \ref{prop:IICunfolding} that, for $\alpha \neq 0$, an sliding 
	or escaping  region appears between the two fold points. 
	Therefore the critical manifold $\Lambda_0^{\alpha}$, given in \ref{unfcrit}, is a smooth curve connecting the points $(T_X^{\alpha},1)$ and $(T_Y^{\alpha},-1)$. 
	In addition, using \autoref{visibility} and \ref{vderivative}, we obtain that $\Lambda_0^{\alpha}$ is an increasing curve if $\alpha>0$ and decreasing if $\alpha<0$. 
	Adding the results about the sliding and escaping regions of sections \ref{sec:VVunf} and \ref{sec:IIunf} we obtain:
	\begin{itemize}
		\item 
		In the visible-visible case, see \autoref{fig:SVcriticalVV}
		\begin{itemize}
			\item 
			If $\alpha <0$, $\Lambda _0 ^\alpha=\Lambda _0 ^{\alpha,u}$ is a decreasing curve connecting the points $(T_X^{\alpha},1)$ and $(T_Y^{\alpha},-1)$ and is repelling. 
			The point $(x(\alpha), m_0^\alpha(x(\alpha))\in \Lambda _0 ^{\alpha,u}$ is stable for the induced dynamics.
			\item 
			If $\alpha >0$, $\Lambda _0 ^\alpha=\Lambda _0 ^{\alpha,s}$ is an increasing curve connecting the points $(T_Y^{\alpha},-1)$ and $(T_X^{\alpha},1)$ and  is attracting.
			The point $(x(\alpha), m_0^\alpha(x(\alpha))\in \Lambda _0 ^{\alpha,s}$ is unstable for the induced dynamics. 
		\end{itemize}
		\item 
		In the invisible-invisible case, see \autoref{fig:SVcriticalII}
		\begin{itemize}
			\item 
			If $\alpha <0$, $\Lambda _0 ^\alpha=\Lambda _0 ^{\alpha,s}$ is a decreasing curve connecting the points $(T_X^{\alpha},1)$ and $(T_Y^{\alpha},-1)$ and is attracting.
			The point $(x(\alpha),m_0^\alpha(x(\alpha))\in \Lambda _0 ^{\alpha,s}$ is stable for the induced dynamics.
			\item 
			If $\alpha >0$, $\Lambda _0 ^\alpha=\Lambda _0 ^{\alpha,u}$ is an increasing curve connecting the points $(T_Y^{\alpha},-1)$ and $(T_X^{\alpha},1)$ and  is repelling.
			The point $(x(\alpha), m_0^\alpha(x(\alpha))\in \Lambda _0 ^{\alpha,u}$ is unstable for the induced dynamics.
		\end{itemize}
	\end{itemize}
	
	\begin{figure}[htb]
		\centering
		\begin{tiny}
			\subfigure[\label{fig:SVcriticalVV} The visible-visible fold]{ \def\svgscale{0.5} \input{./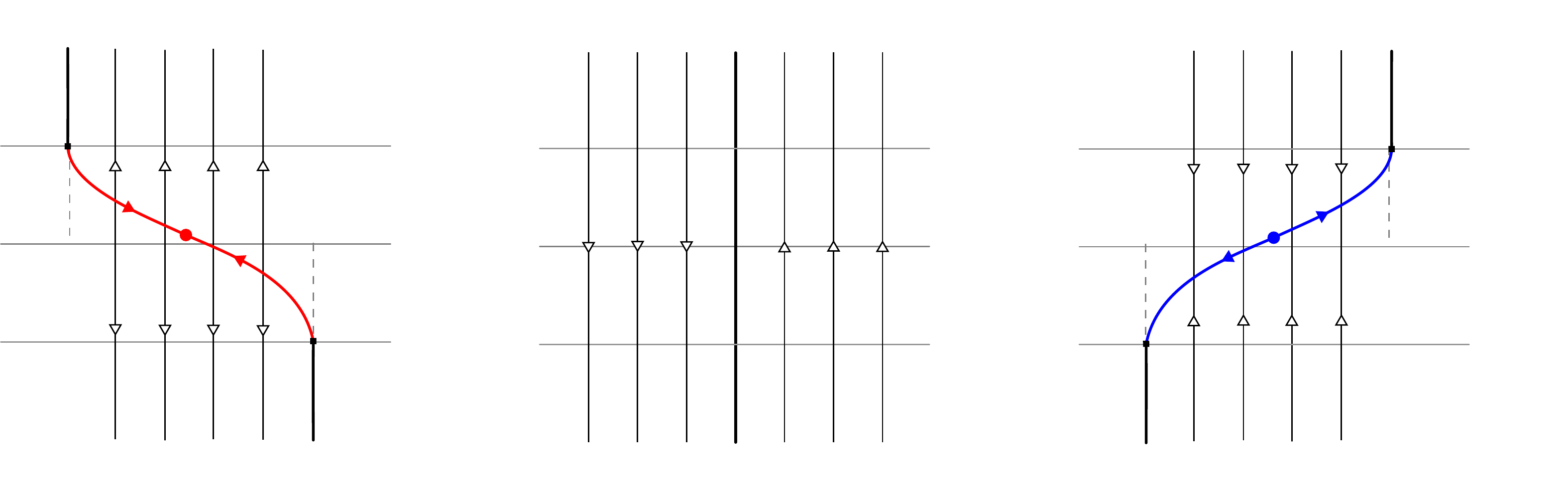_tex}}
			\subfigure[\label{fig:SVcriticalII} The invisible-invisible fold]{\def\svgscale{0.5} \input{./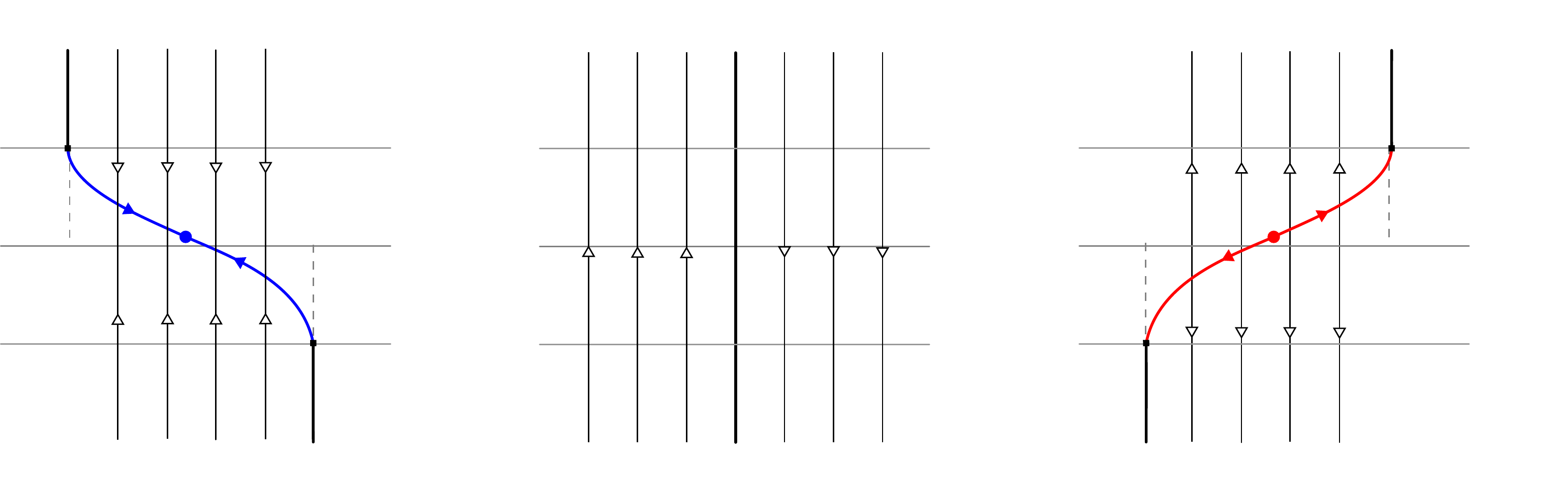_tex}}
		\end{tiny}
		\caption{The critical manifold when the folds have the same visibility for different values of $\alpha$.}
		\label{fig:SVcritical}
	\end{figure}
	
	\subsubsection{Folds with opposite visibility} \label{ssec:OVcritical}
	
	When  the folds have opposite visibility, for $\alpha=0$ there exists a curve $v=m_0(x)$ defined in a neighborhood of $x=0$ such that $\A(x,m_0(x))=0$, where
	\begin{equation} \label{IVslowmanifold}
	\begin{array}{rcl}
	m_0(x)&=& \bar v + \mathcal{O}(x)\\
	\bar v &=& \varphi^{-1} \left( \frac{(X^2_x + Y^2_x)}{(Y^2_x - X^2_x)}(\0) \right)
	\end{array}
	\end{equation}
	which is transverse to the line $x=0$ at the point  $(0,\bar v)$. 
	
	Using \ref{Fstability}, for $\alpha = 0$, we have two hyperbolic critical manifolds 
	$$
	\Lambda_0^s = \{ (x,v) : \ v = m_0(x), \ x<0 \}, \quad \Lambda_0^u = \{ (x,v): v = m_0(x), \  x>0 \}
	$$ 
	which are attracting and repelling, respectively. 
	Therefore, $\Lambda_0 = \Lambda_0^s \cup \Lambda_0^u,$  see Figures \ref{fig:OVcritical+} and \ref{fig:OVcritical-}, for $\alpha=0$.

	For $\alpha \neq 0$, we have seen in  \autoref{prop:VISunfolding} that   a crossing region appears between 
	the fold points $(T^\m{\alpha}_\m{X},0)$ and $(T^\m{\alpha}_\m{Y},0)$. 
	Therefore there exist two critical manifolds: $\Lambda^{\alpha,s}_0$, which is defined for $x<\min\{T^\m{\alpha}_\m{X},T^\m{\alpha}_\m{Y} \}$ and is attracting, and 
	$\Lambda^{\alpha,u}_0$ which is defined for $x>\max\{T^\m{\alpha}_\m{X},T^\m{\alpha}_\m{Y} \}$ and is repelling. 
	
	Adding the results about the sliding and escaping regions of section \ref{sec:VIunf} we obtain:
	\begin{itemize}
		\item 
		$(\det{Z)_x(\0)}>0$, see \autoref{fig:OVcritical+} 
		\begin{itemize}
			\item 
			If $\alpha <0$, $\Lambda _0 ^{\alpha,s}$ is a increasing and attracting curve containing the point $(T_X^{\alpha},1)$. 
			$\Lambda _0 ^{\alpha,u}$ is a increasing and repelling curve containing the point $(T_Y^{\alpha},-1)$. 
			The point $(x(\alpha), m_0^\alpha(x(\alpha)) \linebreak \in \Lambda _0 ^{\alpha,s}$ is unstable for the induced dynamics.
			\item 
			If $\alpha >0$, $\Lambda _0 ^{\alpha,s}$ is a decreasing and attracting curve containing the point $(T_Y^{\alpha},-1)$. 
			$\Lambda _0 ^{\alpha,u}$ is a decreasing and repelling curve containing the point $(T_X^{\alpha},1)$. 
			The point $(x(\alpha), m_0^\alpha(x(\alpha)) \linebreak \in \Lambda _0 ^{\alpha,u}$ is stable for the induced dynamics.
		\end{itemize}
		\item 
		$(\det{Z)_x(\0)}<0$,  see \autoref{fig:OVcritical-}
		\begin{itemize}
			\item 
			If $\alpha <0$, $\Lambda _0 ^{\alpha,s}$ is a increasing and attracting curve containing the point $(T_X^{\alpha},1)$. 
			$\Lambda _0 ^{\alpha,u}$ is a increasing and repelling curve containing the point $(T_Y^{\alpha},-1)$. 
			The point $(x(\alpha), m_0^\alpha(x(\alpha)) \linebreak \in \Lambda _0 ^{\alpha,u}$ is unstable for the induced dynamics.
			\item 
			If $\alpha >0$, $\Lambda _0 ^{\alpha,s}$ is a decreasing and attracting curve containing the point $(T_X^{\alpha},1)$.
			$\Lambda _0 ^{\alpha,u}$ is a decreasing and repelling curve containing the point $(T_Y^{\alpha},-1)$. 
			The point $(x(\alpha), m_0^\alpha(x(\alpha)) \linebreak \in \Lambda _0 ^{\alpha,s}$ is stable for the induced dynamics.
		\end{itemize}
	\end{itemize}
	
	\begin{figure}[htb]
		\centering
		\begin{tiny}
			\subfigure[\label{fig:OVcritical+}The visible-invisible fold satisfying $(\det{Z)_x(\0)>0}$]{ \def\svgscale{0.5} \input{./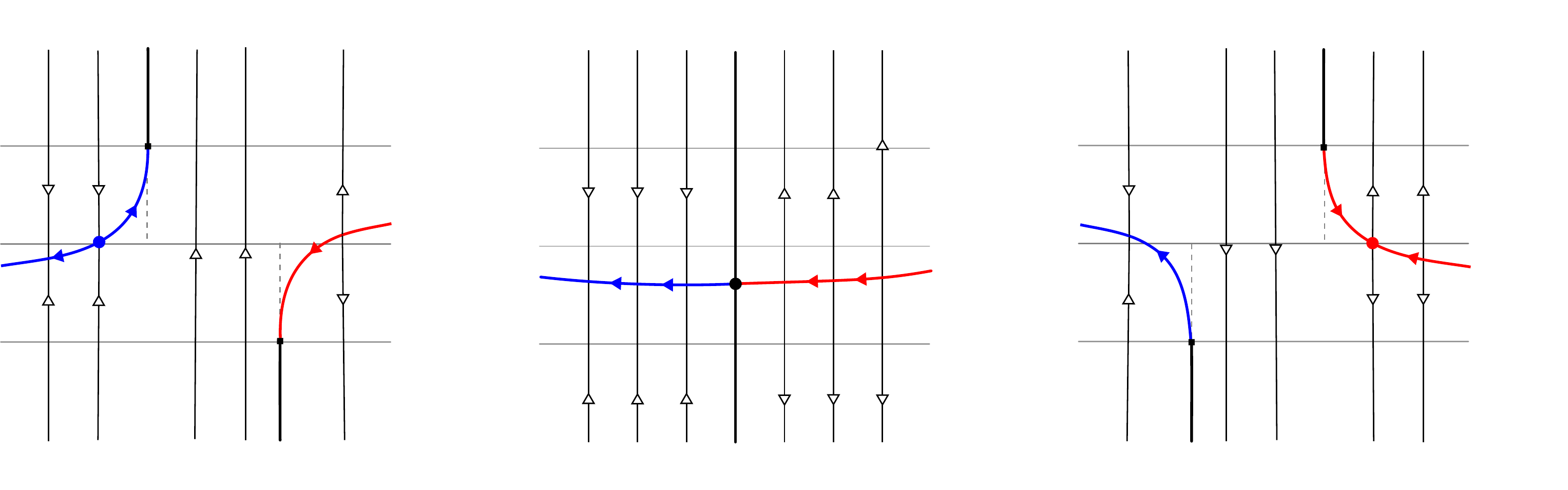_tex}}
			\subfigure[\label{fig:OVcritical-}The visible-invisible fold satisfying $(\det{Z)_x(\0)<0}$]{\def\svgscale{0.5} \input{./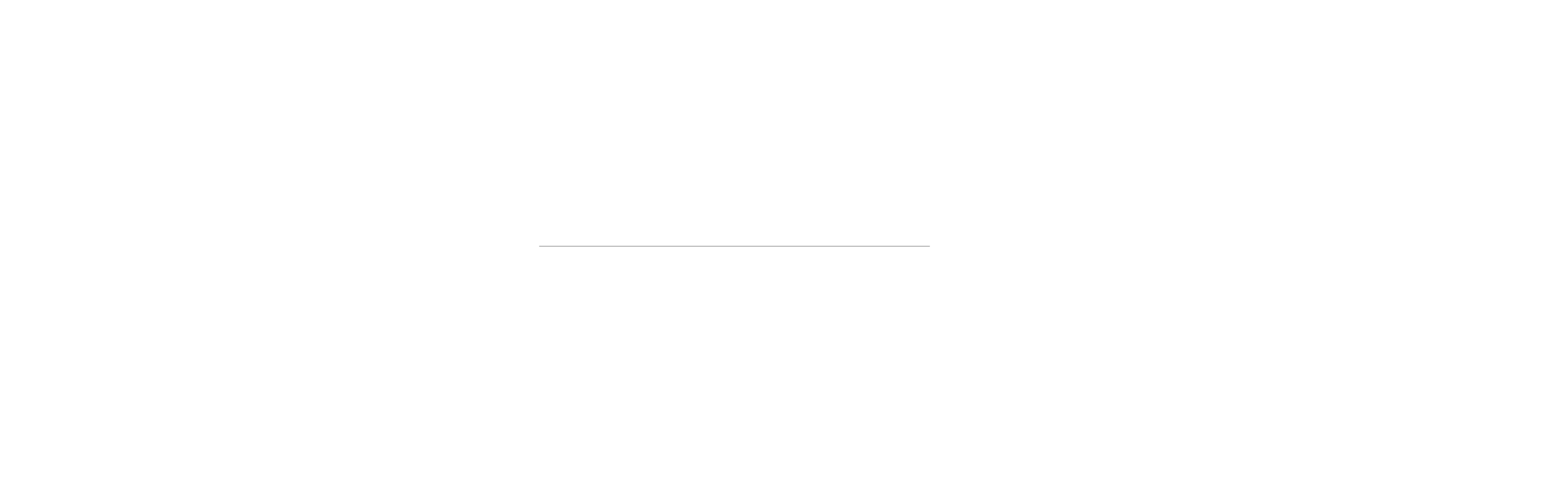_tex}}
		\end{tiny}
		\caption{The critical manifold when the folds have the opposite visibility for different values of $\alpha$.}
		\label{fig:OVcritical}
	\end{figure}

	\begin{rem} 
		When $\alpha$ tends to zero the tangency points $T_{X}^\alpha$ and $T_{Y}^\alpha$ meet in the fold-fold singularity. 
		Therefore, when the folds have the same visibility, the critical manifold $\Lambda^{\alpha}_0$ tends to the vertical line $C_0=\{ x=0 \}$ which is not hyperbolic. 
		When the folds have opposite visibility, the two critical manifolds $\Lambda^{\alpha,s}_0$ and  $\Lambda^{\alpha,u}_0$ join in the degenerated hyperbola 
		$C_0 \cup \Lambda_0^s \cup \Lambda^u_0 $ (see \autoref{fig:OVcritical+}, \autoref{fig:OVcritical-}).
	\end{rem}
	
	\begin{rem} 
		In all cases, the dynamics over the critical manifold for $\alpha \neq 0$ is equivalent to the dynamics of the sliding vector field 
		$(Z^\alpha)^s$ studied in \autoref{sec:revisited}. 
		Therefore, for each fixed $\alpha \neq 0$ and $\e>0$ sufficiently small, the dynamics of the regularized vector field is faithful 
		to the dynamics of the unfoldings of $Z$ studied in that section.
	\end{rem}
	\begin{rem}   \label{rem:vsbvposition} 
		The points $v^*$, given in \ref{eq:criticalpoint},  and $\bar v$ given in \ref{IVslowmanifold} satisfy the following relation
		$$ 
		\varphi(v^*) - \varphi(\bv)  = C (\det{Z})_x(\0), 
		$$
		where $C= \displaystyle{\frac{2}{(X^1-Y^1)(\0) (X^2_x-Y^2_x)(\0)}}>0$.  
		
		Since $\varphi$ is an increasing map, we have that $-1 < \bar v < v^* <1 $, if $(\det{Z})_x(\0)>0$ and $-1 < v^* < \bar v <1 $, if $(\det{Z})_x(\0)<0$.
		The relative positions of the points $(0,v^ *)$ and $(0,\bar v)$ will be important to describe the global dynamics of the regularized vector field in \autoref{sec:VIunfreg}.
	\end{rem}

	
	\section{The bifurcation diagram of the regularized vector field \texorpdfstring{$Z^\alpha_\e$}{Zae}}\label{regularizationunf}
	
	The aim of this section is to understand the relation between the bifurcation diagram of the versal unfolding $Z^\alpha$ of
	$Z \in \Lambda^F$ and its regularization $Z^\alpha_\e$. 
	
	As we will see in section \ref{sec:VVunfreg} the dynamics of the regularized vector field $Z^\alpha_\e$ is very similar to the dynamics of
	the unfolding $Z^\alpha$ in the case of the
	visible-visible fold. When we study the invisible-invisible fold  in section  \ref{sec:IIunfreg} and the visible-invisible one in section
	\ref{sec:VIunfreg} we will see that the regularization may create new periodic orbits and bifurcations which were not present in the unfolding $Z^\alpha$.
	
	\subsection{Visible-visible case} \label{sec:VVunfreg}
	
	When both folds are visible, by \autoref{corol:toptype} the critical point $P(\alpha,\e)$ is a saddle which is $\e-$close to the pseudo-equilibrium $Q(\alpha)$.
	Using the results in \autoref{sssec:SVcritical} and applying the Fenichel Theorem, for each fixed $\alpha \neq 0$ and any compact set between the fold points
	$(T^\alpha_X,1)$ and $(T^\alpha_Y,-1)$, for $0<\e<\e_0(\alpha)$, there exists a normally hyperbolic invariant manifold
	$\Lambda^\alpha_\e$ which is $\e-$close to $\Lambda^\alpha_0$ (see \autoref{fig:SVcriticalVV}).
	Moreover, for $\alpha<0$, $\Lambda^\alpha_\e=\Lambda^{\alpha,u}_\e$ is repelling and is the stable manifold of the saddle point $P(\alpha,\e)$ and for
	$\alpha>0$, $\Lambda^\alpha_\e=\Lambda^{\alpha,s}_\e$ is attracting and is its the unstable manifold.
	
	A simple computation shows that for each fixed $\alpha \neq 0$ and for $\e > 0$  the vector field $X^{\alpha}(x,\e v)$ has a unique visible fold point at
	$v=1$ at $T^{\alpha, \e}_{X}=T^\alpha_{X} + \mathcal{O}(\e)$ (see \eqref{fXexpression}).
	Analogously, the vector field $Y^\alpha(x,\e v)$ has a unique visible fold at $v=-1$ at the point $T^{\alpha, \e}_{Y}=T^\alpha_{Y} + \mathcal{O}(\e)$.
	Moreover
	\begin{align}
	T^{\alpha, \e}_{X} =& - \left(\frac{\tilde{X}^2}{X^2_x}(\0) \right) \alpha - \left( \frac{X^2_y}{X^2_x}(\0) \right) \e
	+ \mathcal{O}_2(\alpha,\e), \label{epsXtangency} \\
	T^{\alpha, \e}_{Y} =& - \left(\frac{\tilde{Y}^2}{Y^2_x}(\0) \right) \alpha - \left( \frac{Y^2_y}{Y^2_x}(\0) \right) \e
	+ \mathcal{O}_2(\alpha,\e). \label{epsYtangency}
	\end{align}
	
	Observe that for $x < T^{\alpha, \e}_X$ the vector $X^\alpha(x,1)$ points inward to the regularization zone and points outwards to the regularization zone for
	$x>T^{\alpha, \e}_X$.
	Analogously, for $x < T^{\alpha, \e}_Y$ the vector $Y^\alpha(x,-1)$ points outwards to the regularization zone for $x<T^{\alpha, \e}_Y$
	and inwards to the regularization zone for $x>T^{\alpha,\e}_Y$.
	
	\begin{figure}[!htb]
		\centering
		\begin{scriptsize}
			\def\svgwidth{0.9\textwidth}
			\input{./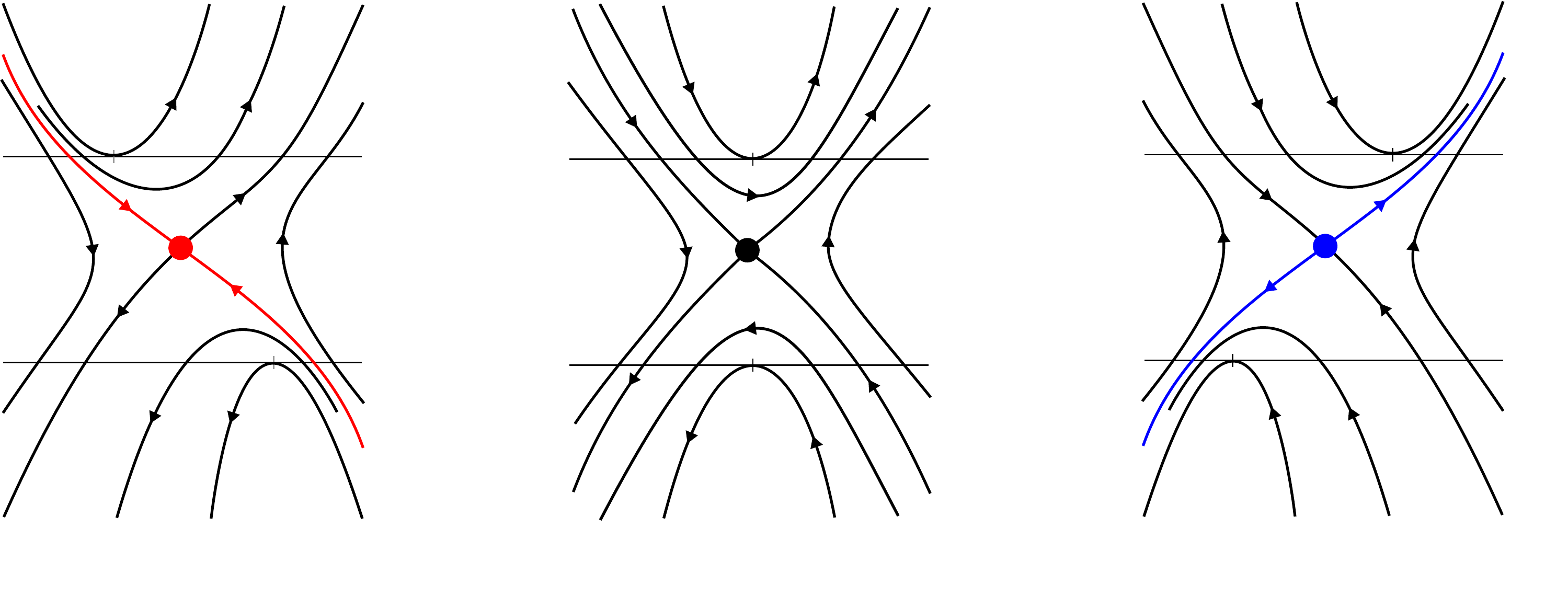_tex}
		\end{scriptsize}
		\caption{The phase portrait of $Z^{\alpha}_{\e}$.}
		\label{fig:VVUR2a}
	\end{figure}
	
	The above information and the fact that the dynamics over the Fenichel Ma\-ni\-fold $\Lambda^\alpha_\e$ is equivalent to the one over
	the critical manifold $\Lambda^\alpha_0$, gives:
	\begin{itemize}
		\item
		for $\alpha<0$ and $\e$ small enough, the Fenichel manifold $\Lambda_{\e}^{\alpha}$, which is the stable manifold of the saddle point $P(\alpha,\e)$,
		intersects the section $\{ v=1 \}$ on the left of the tangency point $T^{\alpha, \e}_{X}$ and it intersects the section $\{ v=-1 \}$
		on the right of the tangency point $T^{\alpha, \e}_{Y}$,
		\item
		for $\alpha>0$ and $\e$ small enough, the Fenichel manifold $\Lambda_{\e}^{\alpha}$, which is the unstable manifold of the saddle point $P(\alpha,\e)$,
		intersects the section $\{ v=1 \}$ on the right of the tangency point $T^{\alpha, \e}_{X}$ and it intersects the section $\{ v=-1 \}$
		on the left of the tangency point $T^{\alpha, \e}_{Y}$.
	\end{itemize}
	Observe that, for $\alpha =0$ and $\e$ small enough, even if one can not apply Fenichel theorem, we know that $P(0,\e)$ is a saddle with stable and unstable manifolds.
	By the exposed above,  the phase portrait of $Z^{\alpha}_{\e}$ must look as in \autoref{fig:VVUR2a}.
	
	\begin{rem} Over the curve $\mathcal{H}$, given in \ref{hcurve} the matrix $DZ^\alpha_\e(P(\alpha,\e))$ has two real eigenvalues with same absolute value.
		Therefore, the critical point $P(\alpha,\e)$ is a neutral saddle and the qualitative behavior of the system reminds the behavior of the Filippov system
		$Z^\alpha$ for $\alpha=0$. Then in some sense, the dynamics of $Z$ is ``continued'' over the curve $\mathcal{H}$.
	\end{rem}
	
	\subsection{The invisible-invisible case} \label{sec:IIunfreg}
	
	When both folds are invisible, for each $\alpha \neq 0$ and $\e>0$ small enough, by \autoref{corol:toptype} and in agreement with \cite{SotoTei}, 
	the  point $P(\alpha,\e)$ is a node
	with the same character that the pseudo-node $Q(\alpha)$ of $Z^\alpha$.
	
	Using the results about the critical manifold given in \autoref{sssec:SVcritical}, we can apply the Fenichel Theorem in any compact set between the points
	$(T^\alpha_X,1)$ and $(T^\alpha_Y,-1)$, obtaining  that for $0<\e<\e_0(\alpha)$, there exists a normally hyperbolic invariant manifold
	$\Lambda^\alpha_\e$ which is $\e-$close to $\Lambda^\alpha_0$ (see \autoref{fig:SVcriticalII}).
	Moreover, for $\alpha<0$, $\Lambda^\alpha_\e=\Lambda^{\alpha,s}_\e$ is attracting and for $\alpha>0$, $\Lambda^\alpha_\e=\Lambda^{\alpha,u}_\e$ is repelling.
	In both cases, these manifolds contain the node $P(\alpha,\e)$ and they are its weak manifold.
	
	We now consider the tangency points $T^{\alpha, \e}_{X,Y}$ given in \ref{epsXtangency} and \ref{epsYtangency}, see \autoref{fig:IIUR2}.
	For $x < T^{\alpha, \e}_X$ the vector $X^\alpha(x,1)$ points outward to the regula\-ri\-za\-tion zone and points inwards to the regularization zone for
	$x>T^{\alpha, \e}_X$.
	Analogously, for $x < T^{\alpha, \e}_Y$ the vector $Y^\alpha(x,-1)$ points inwards to the regularization zone and outwards to the
	regularization zone for $x>T^{\alpha,\e}_Y$.
	
	The above information and the fact that the dynamics over the Fenichel Ma\-ni\-fold $\Lambda^\alpha_\e$ is the same of the critical manifold
	$\Lambda^\alpha_0$  gives:
	\begin{itemize}
		\item
		for $\alpha<0$ and $\e$ small enough, the Fenichel manifold $\Lambda_{\e}^{\alpha,s}$, which is the weak manifold of the stable  node $P(\alpha,\e)$,
		intersects the section $\{ v=1 \}$ on the right of the tangency point $T^{\alpha, \e}_{X}$ and it intersects the section $\{ v=-1 \}$
		on the left of the tangency point $T^{\alpha, \e}_{Y}$,
		\item
		for $\alpha>0$ and $\e$ small enough, the Fenichel manifold $\Lambda_{\e}^{\alpha,u}$, which is the weak manifold of the  unstable node $P(\alpha,\e)$,
		intersects the section $\{ v=1 \}$ on the left of the tangency point $T^{\alpha, \e}_{X}$ and it intersects the section $\{ v=-1 \}$
		on the right of the tangency point $T^{\alpha, \e}_{Y}$.
	\end{itemize}
	The phase portrait of the vector field $Z^\alpha_\e$ for $(\alpha,\e)$ below the parabola $\mathcal{D}$ is given in \autoref{fig:IIUR2}.

	\begin{figure}[!htb]
		\centering
		\begin{scriptsize}
			\def\svgscale{0.4}
			\input{./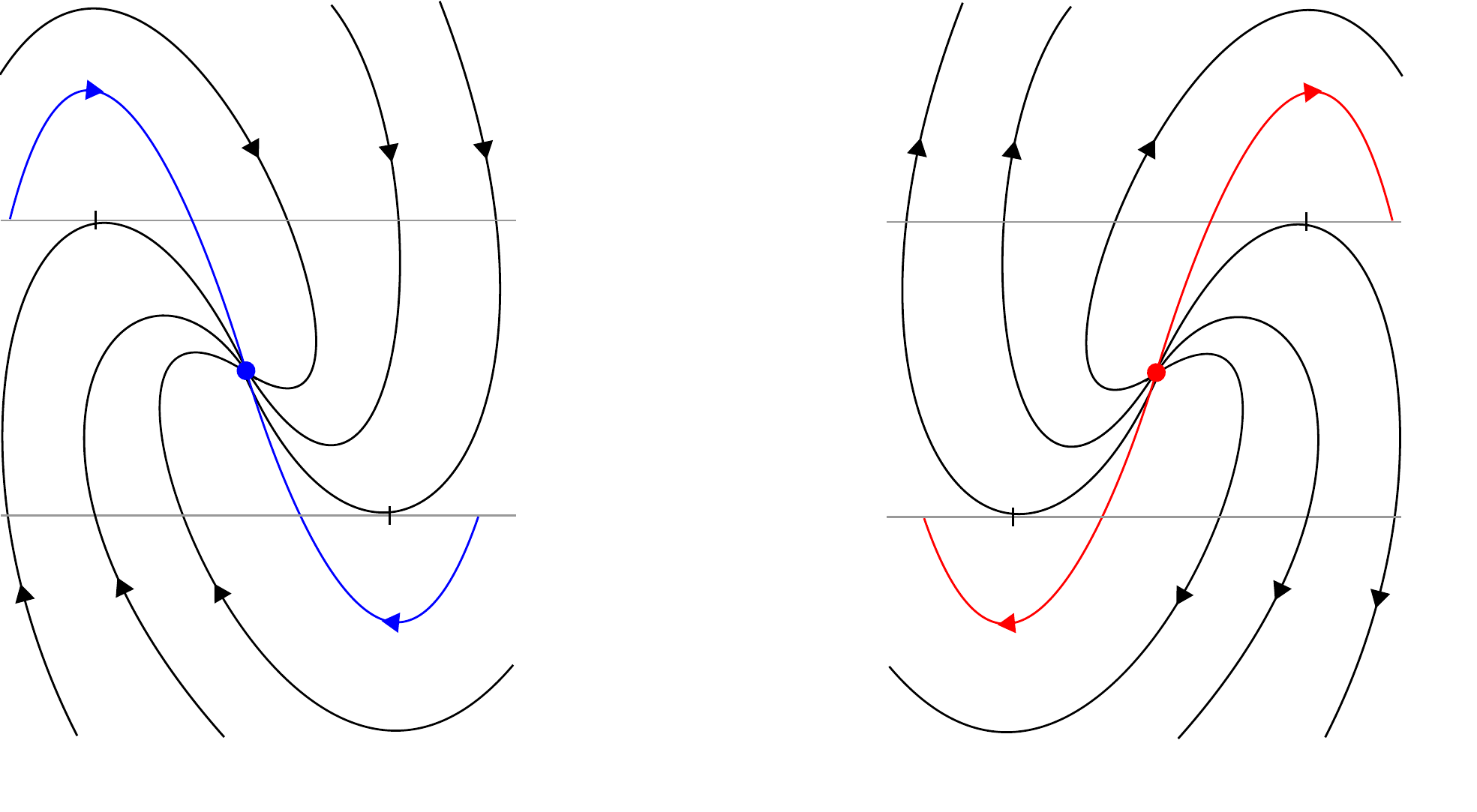_tex}
		\end{scriptsize}
		\caption{Phase portrait of $Z^{\alpha}_\e$ in the invisible-invisible case for  $(\alpha,\e)$ below the curve $\mathcal{D}$ for $\alpha \neq 0$.}
		\label{fig:IIUR2}
	\end{figure}

	When $(\alpha,\e)$ are above the parabola $\mathcal{D}$ the point  $P(\alpha,\e)$ is a focus.
	Since it undergoes a Hopf bifurcation, a periodic orbit arises at one side of the curve $\mathcal{H}$.
	The nature of the Hopf bifurcation depends on the first Lyapunov coefficient $\ell_1(\e,\alpha(\e))$ (see \cite{holmes}, p. 152).

	Now we are going to investigate the persistence of the crossing cycle $\Gamma^\alpha$ which appears in the unfolding $Z^\alpha$ when $\alpha \cdot \G_Z >0$ as seen in
	\autoref{prop:IICunfolding}.
	We will also study the relation between the limit cycle which raises from the Hopf bifurcation and the periodic orbit $\Gamma^{\alpha, s}_{\e}$ which 
	raises from
	the crossing cycle $\Gamma^\alpha$.
	
	\begin{prop}  \label{prop:phiaeZ}
		Consider $Z \in \Lambda^F$ having an invisible fold-fold  satisfying $X^1\cdot Y^1(\0)<0$. Consider the coefficient $\G_Z$ in \eqref{gencondii}.
		If $\G_Z>0$, one has that:
		\begin{enumerate}[(a)]
			\item
			For $\alpha, \e>0$ sufficiently small and such that $(\alpha,\e)$ are on the right of the curve $\mathcal{H}$ (see \ref{hcurve}), there exists a stable
			periodic orbit $\Gamma^{\alpha,s}_{\e}$ of the vector field $Z^{\alpha}_{\e}$;
		\end{enumerate}
		Moreover,
		\begin{enumerate}[(b1)]
			\item
			If $\alpha >0$, there exists $\e_{0}(\alpha)$ such that for $0<\e<\e(\alpha)$, $\Gamma^{\alpha, s}_{\e}$ is the unique stable hyperbolic periodic orbit of
			$Z^\alpha_\e$.
			Furthermore, $\Gamma^{\alpha, s}_{\e} = \Gamma^\alpha + \mathcal{O}(\e)$, where $\Gamma^\alpha$ is given in \autoref{prop:IICunfolding}.
			\item
			If $\alpha<0$, the system $Z^\alpha_\e$ has no periodic orbits for $0<\e<\e(\alpha)$ .
			
		\end{enumerate}
		\begin{enumerate}[(c1)]
			\item 
			There exists a Melnikov  function $M(v,\delta)$  given in \eqref{eq:melnikov}, whose properties are given in \autoref{prop:propiedadesM}, such that 
			for  $\alpha = \delta \e + O(\e^2)$, such that  $\delta>\delta_{\mathcal{H}}$   and $\e$ small enough,
			calling
			$(v_\e^s,0) = \Gamma^{\alpha, s}_{\e}\cap \{x=0\}= (v^s,0)+ O(\e)$, the value $v^s$
			satisfies:
			$$
			M(v^s,\delta)=0, \ \frac{\partial M}{\partial v} (v^s,\delta) \le 0,
			$$
			and $\Gamma^{\alpha, s}_{\e}$ is locally unique if $\displaystyle{\frac{\partial M}{\partial v} (v^s,\delta)<0}$. 
			\item
			Moreover, if $M(v,\delta)$ is strictly concave ($\frac{\partial ^2M}{\partial v^2} (v,\delta) < 0$) then the periodic orbit 
			$\Gamma^{\alpha,s}_\e$ is unique and disappears at $\alpha_\mathcal{H}$.
		\end{enumerate}
		If $\G_Z<0$ one has an analogous results changing signs of the parameters.
	\end{prop}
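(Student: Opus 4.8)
The plan is to prove existence by a Poincar\'e--Bendixson argument, to obtain persistence and uniqueness by combining the hyperbolicity of the non-smooth crossing cycle with Fenichel theory, and to establish the quantitative statements of part (c) by transferring the problem to the Melnikov function of \autoref{sec:melnikov}. Throughout I assume, as in the rest of the section, that $X^1(\0)>0$, so $Y^1(\0)<0$, and I treat the case $\G_Z>0$; the case $\G_Z<0$ is identical after reversing time and the roles of the two half-strips. For part (a), fix $(\alpha,\e)$ with $\alpha,\e>0$ small lying to the right of the curve $\mathcal{H}$ of \eqref{hcurve}. By \autoref{corol:toptype} the critical point $P(\alpha,\e)$ is then unstable, so on a sufficiently small circle $\gamma_{in}$ centred at $P(\alpha,\e)$ the flow of $Z^\alpha_\e$ points strictly outward; this is the inner boundary of the trapping region. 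The heart of the argument is the construction of an outer simple closed curve $\gamma_{out}$, enclosing $\gamma_{in}$, along which the flow points strictly inward. Once both curves are in place, the closed annular region $A$ bounded by $\gamma_{in}$ and $\gamma_{out}$ is positively invariant and, by \autoref{eqlemma}, contains no equilibria of $Z^\alpha_\e$; hence Poincar\'e--Bendixson yields a periodic orbit in $A$, whose position furnishes a stable periodic orbit $\Gamma^{\alpha,s}_\e$.

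The construction of $\gamma_{out}$ is where the slow--fast nature of $Z^\alpha_\e$ is essential, and I expect it to be the main obstacle. For $|v|\le 1$ one uses the attracting character of the relevant branch of the critical manifold, together with the position of the tangency points $T^{\alpha,\e}_{X,Y}$ of \eqref{epsXtangency}--\eqref{epsYtangency}, so that orbits are funnelled towards $\s$; for $|v|\ge 1$ the system coincides with the smooth fields $X^\alpha$ and $Y^\alpha$ written in the variables $(x,v)=(x,y/\e)$, and the \emph{invisibility} of both folds guarantees that a trajectory leaving the strip through $\{v=1\}$ (resp.\ through $\{v=-1\}$) turns around and re-enters it. Concatenating arcs of these returning trajectories with suitable transversal segments produces $\gamma_{out}$; the delicate point is to verify that the flow is inward along the whole curve uniformly in $(\alpha,\e)$ in the prescribed region, and in particular that $\gamma_{out}$ can be taken large enough to enclose the returning excursions. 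This is exactly the mechanism by which the non-smooth crossing cycle $\Gamma^\alpha$ of \autoref{prop:IICunfolding} survives the regularization.

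For part (b) I would separate the two signs of $\alpha$. If $\alpha>0$ is fixed and $\e\to 0$, the pair $(\alpha,\e)$ lies below the parabola $\mathcal{D}$, so $P(\alpha,\e)$ is an unstable node, while the crossing cycle $\Gamma^\alpha$ is hyperbolic and attracting, since $\tfrac{\partial}{\partial x}\phi^\alpha(F(\alpha))<1$ in \autoref{prop:IICunfolding}. Away from the fold points the regularized field is an $\mathcal{O}(\e)$ perturbation of the Filippov field and the crossing is transversal, so the normally hyperbolic cycle persists as $\Gamma^{\alpha,s}_\e=\Gamma^\alpha+\mathcal{O}(\e)$; the Poincar\'e return map on a transversal to $\Gamma^\alpha$ is then a contraction, which gives local uniqueness, and the slow--fast structure for $\e$ small confines all recurrent dynamics to a neighbourhood of $\Gamma^\alpha$ (any cycle staying in the strip would have to live on the monotone sliding dynamics carrying the single node $P$), upgrading this to uniqueness of the stable hyperbolic cycle and proving (b1). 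For $\alpha<0$, \autoref{corol:toptype} makes $P(\alpha,\e)$ a stable node lying on the attracting Fenichel manifold $\Lambda^{\alpha,s}_\e$; every orbit is captured by this manifold and then converges to $P(\alpha,\e)$ along the induced sliding dynamics of \eqref{induceddyn}, so no periodic orbit can exist, which is (b2).

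Finally, for part (c) I pass to the scaling $x=\sqrt{\e}\,u$, $\alpha=\delta\e+\mathcal{O}(\e^2)$ of \autoref{sec:melnikov}, which turns the search for moderate-size cycles into the study of the zeros of the Melnikov function $M(v,\delta)$ of \eqref{eq:melnikov}, whose properties are those recorded in \autoref{prop:propiedadesM}. A periodic orbit crossing $\{x=0\}$ at height $v$ corresponds to a zero $M(v,\delta)=0$, its stability is governed by the sign of $\partial_v M(v,\delta)$, and a simple zero gives a locally unique hyperbolic cycle; applying this to $\Gamma^{\alpha,s}_\e$ yields $M(v^s,\delta)=0$ together with $\partial_v M(v^s,\delta)\le 0$, with local uniqueness when the inequality is strict, which is (c1). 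If in addition $M(\cdot,\delta)$ is strictly concave, then for each $\delta>\delta_{\mathcal{H}}$ it has at most one admissible zero, so $\Gamma^{\alpha,s}_\e$ is globally unique, and as $\delta\downarrow\delta_{\mathcal{H}}$ this zero merges with $(0,v^*)$, i.e.\ the orbit shrinks onto $P(\alpha,\e)$ and disappears at $\alpha_{\mathcal{H}}$, proving (c2).
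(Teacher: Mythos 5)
Your overall strategy coincides with the paper's: Poincar\'{e}--Bendixson for (a) with the unstable point as inner boundary and an attracting global return as outer boundary, a perturbation/IFT argument for (b1), continuity for (b2), and the Melnikov function of \autoref{prop:propiedadesM} for (c). The one substantive difference is in how the outer boundary for (a) is certified. You propose to build an explicit inward-pointing curve $\gamma_{out}$ and you openly defer its verification as ``the main obstacle''; the paper never constructs such a curve. Instead it proves two quantitative lemmas: \autoref{phi+expression}, which shows the transition maps across the strip satisfy $\phi^{\alpha,\e}_{\pm}(x)=x+g^{\pm}(x;\alpha)\e+\mathcal{O}(\e^2)$ and are well defined for $x$ beyond explicit thresholds $x^{\pm}(\alpha,\e)$ (using that both folds are invisible, so $X^{\alpha 2}$ and $Y^{\alpha 2}$ have a definite sign there and every orbit crosses the strip), and \autoref{lem:phiZae}, which composes them with the two fold Poincar\'{e} maps to obtain a generalized return map $\phi^{\alpha}_{\e}=\phi^{\alpha,\e}_{-}\circ\phi^{\alpha,\e}_{Y}\circ\phi^{\alpha,\e}_{+}\circ\phi^{\alpha,\e}_{X}=\phi_Z+\mathcal{O}(\alpha,\e)$ on a domain $\Theta^{\alpha}_{\e}$. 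Since $\G_Z>0$ makes $\phi_Z$ attracting, the contraction of $\phi^{\alpha}_{\e}$ on $a\le x\le b<0$ is what replaces your $\gamma_{out}$, and Poincar\'{e}--Bendixson then applies as you intend. So your mechanism is the right one, but the step you flag is precisely the content you would need to prove; as written, part (a) of your argument is a plan rather than a proof.

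Two smaller points. In (b2) your claim that ``every orbit is captured by $\Lambda^{\alpha,s}_{\e}$ and then converges to $P(\alpha,\e)$'' is too strong as stated: the attracting Fenichel manifold lives only over the sliding region between the tangencies, and orbits passing through the crossing regions traverse the strip without being absorbed on that pass; to rule out cycles you need the uniform drift of the return map (for $\alpha<0$ one has $\Phi(\alpha,x)=\phi^{\alpha}(x)-x>0$ from the proof of \autoref{prop:IICunfolding}), which is exactly how the paper argues --- $\phi^{\alpha}$ has no fixed points, hence by continuity neither does $\phi^{\alpha}_{\e}$ for $\e$ small. In (b1) your normal-hyperbolicity phrasing and the paper's application of the Implicit Function Theorem to the hyperbolic fixed point $F(\alpha)$ of $\phi^{\alpha}$ are the same argument in different clothes, and your treatment of (c1)--(c2) matches the paper's, which combines $M(v^*,\delta)=0$, the sign of $\partial^2 M/\partial v\,\partial\delta$ at $(v^*,\delta_{\mathcal{H}})$, the negativity of $M(v,\delta)$ for $v>V_0$, and Bolzano's theorem to locate $v^s$ with $\partial_v M(v^s,\delta)\le 0$, with strict concavity yielding global uniqueness and the collapse onto $v^*$ at $\delta_{\mathcal{H}}$.
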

	
	\begin{proof}
		The proof of this proposition can be found in \autoref{ssec:propphiaZproof}.
	\end{proof}

	Next we analyze the relation between the stable periodic orbit $\Gamma^{\alpha,s}_{\e}$ and a periodic orbit which arises from the
	Hopf bifurcation.
	
	The following theorems give us the bifurcation diagram of $Z^\alpha_\e$ in each case, depending on the signs of
	$\G_Z$ and the first Lyapunov coefficient $\ell_1(\alpha(\e),\e)$.

	\begin{theo}[Invisible fold-fold: \texorpdfstring{$\G_Z>0$}{GZ>0} and \texorpdfstring{$\ell_1(\alpha(\e),\e)<0$}{l<0}] \label{thm:bdIIsuper}
		Let $Z \in \Lambda^F$ having an invisible fold-fold satisfying $X^1\cdot Y^1(\0)<0$ and  $\G_Z>0$.
		Suppose that the first Lyapunov coefficient $\ell_1(\alpha(\e),\e)$ at the Hopf bifurcation of $Z^\alpha_\e$ is negative.
		Let $\alpha_\mathcal{D}^\pm(\e_0)$ and $\alpha_\mathcal{H}(\e_0)$ be the intersections between the line $\e=\e_0$ and the curves
		$\mathcal{D}^\pm$ (the negative and positive parts of $\mathcal{D}$ given in \ref{dcurve}) and $\mathcal{H}$ (\ref{hcurve}), respectively.
		One has that
		\begin{itemize}
			\item
			For $\alpha<\alpha_\mathcal{D}^-(\e_0)$ sufficiently small the critical point $P(\alpha,\e)$ is an stable node;
			\item
			For $\alpha_\mathcal{D}^-(\e_0)<\alpha<\alpha_\mathcal{H}(\e_0)$ the critical point $P(\alpha,\e)$ is an stable focus;
			\item
			When $\alpha=\alpha_\mathcal{H}(\e_0)$ a supercritical Hopf bifurcation takes place;
			\item
			For $\alpha$ values such that $\alpha_\mathcal{H}(\e_0)<\alpha<\alpha_\mathcal{D}^+(\e_0)$,
			the critical point $P(\alpha,\e)$ is an unstable focus and there exist a stable limit cycle $\Gamma^{\alpha, s}_{\e}$.
			\item
			When  $\alpha>\alpha_\mathcal{D}^+(\e_0)$ the critical point $P(\alpha,\e)$ is a unstable node and the limit cycle $\Gamma^{\alpha, s}_{\e}$
			persists for $\alpha>\alpha_\mathcal{D}^+(\e_0)$.
			The cycle $\Gamma^{\alpha, s}_{\e}$ tends to $\Gamma^\alpha$ when $\e$ goes to zero.
			\item
			If the Melnikov function $M(v,\delta)$ is strictly concave for $\delta$ close enough to $\delta_\mathcal{H}$ the stable periodic 
			$\Gamma^{\alpha, s}_{\e}$ disappears at the Hopf bifurcation.
			Therefore, the periodic orbit which rises from the Hopf bifurcation and the one which is given by \autoref{prop:phiaeZ} are the same.\end{itemize}
	\end{theo}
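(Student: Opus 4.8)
The plan is to assemble the statement from the local results already in hand rather than to do new dynamics: the topological type of $P(\alpha,\e)$ comes from Proposition \ref{prop:toptype} and Corollary \ref{corol:toptype}, the character of the Hopf bifurcation from the sign of $\ell_1$, and the existence, persistence and uniqueness of the stable cycle from Proposition \ref{prop:phiaeZ}. The only genuinely new bookkeeping is to verify the ordering of the three intersection abscissae along the line $\e=\e_0$ and then to read off the phase portrait interval by interval.

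First I would fix $\e_0>0$ small and locate the intersections of $\{\e=\e_0\}$ with $\mathcal{D}$ and $\mathcal{H}$. Since $Z$ is invisible--invisible with $X^1\cdot Y^1(\0)<0$, Lemma \ref{lem:neq0}(b) gives $(\det{Z})_x(\0)<0$, so the coefficient of $\alpha^2$ in \eqref{dcurve} is positive and $\mathcal{D}$ is an upward parabola; hence its two branches meet $\{\e=\e_0\}$ at $\alpha_\mathcal{D}^\pm(\e_0)=\mathcal{O}(\sqrt{\e_0})$ with opposite signs, while $\mathcal{H}$ in \eqref{hcurve} gives $\alpha_\mathcal{H}(\e_0)=\delta_\mathcal{H}\e_0+\mathcal{O}(\e_0^2)=\mathcal{O}(\e_0)$. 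Because $\sqrt{\e_0}\gg\e_0$ as $\e_0\to 0$, I obtain $\alpha_\mathcal{D}^-(\e_0)<\alpha_\mathcal{H}(\e_0)<\alpha_\mathcal{D}^+(\e_0)$ for $\e_0$ small enough, which is the ordering that organizes the six bullets.

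Next I would read off the type of $P(\alpha,\e)$ on each interval. By Proposition \ref{prop:toptype}(b), $P$ is a node when $(\alpha,\e)$ lies below $\mathcal{D}$ (that is, $\alpha<\alpha_\mathcal{D}^-(\e_0)$ or $\alpha>\alpha_\mathcal{D}^+(\e_0)$) and a focus when it lies above $\mathcal{D}$ (between the two abscissae). The stability is supplied by Corollary \ref{corol:toptype}(b): $P$ is stable to the left of $\mathcal{H}$ and unstable to the right. Combining the two dichotomies with the ordering above produces exactly the node/focus and stable/unstable labels of the first, second, fourth and last bullets, and places the transition at $\alpha_\mathcal{H}(\e_0)$. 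Since the hypothesis $\ell_1(\alpha(\e),\e)<0$ makes the bifurcation supercritical, a stable limit cycle is created on the side where $P$ has become unstable, i.e.\ for $\alpha>\alpha_\mathcal{H}(\e_0)$; this is precisely the orbit $\Gamma^{\alpha,s}_\e$ whose existence for all $(\alpha,\e)$ to the right of $\mathcal{H}$ is guaranteed by Proposition \ref{prop:phiaeZ}(a).

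For persistence I would invoke Proposition \ref{prop:phiaeZ}(b1): for each fixed $\alpha>0$ and $\e$ small, $\Gamma^{\alpha,s}_\e$ is the unique stable hyperbolic periodic orbit and $\Gamma^{\alpha,s}_\e=\Gamma^\alpha+\mathcal{O}(\e)$, so the cycle survives past $\alpha_\mathcal{D}^+(\e_0)$, where $P$ reverts to an unstable node, and limits on the crossing cycle $\Gamma^\alpha$ of Proposition \ref{prop:IICunfolding} as $\e\to 0$. Finally, the last bullet is the uniqueness claim: Proposition \ref{prop:phiaeZ}(c2) shows that strict concavity of the Melnikov function $M(v,\delta)$ near $\delta_\mathcal{H}$ forces $\Gamma^{\alpha,s}_\e$ to be globally unique and to shrink onto the critical point exactly at $\alpha_\mathcal{H}(\e_0)$; hence the orbit born at the supercritical Hopf and the continuation of the crossing cycle are the same family, with no intervening saddle--node. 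The main obstacle is exactly this last point: without the concavity of $M$ one cannot exclude additional saddle--node pairs of cycles, so the identification of the Hopf-born orbit with $\Gamma^{\alpha,s}_\e$ rests on the Melnikov analysis of Proposition \ref{prop:phiaeZ}(c) rather than on the local and topological arguments that suffice for the rest of the diagram.
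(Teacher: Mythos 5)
Your proposal is correct and takes essentially the same route as the paper's proof: the node/focus type and stability of $P(\alpha,\e)$ are read off from Proposition \ref{prop:toptype} and Corollary \ref{corol:toptype}, the existence, persistence and $\e\to 0$ limit of $\Gamma^{\alpha,s}_{\e}$ come from Proposition \ref{prop:phiaeZ}, and the identification of the Hopf-born cycle with $\Gamma^{\alpha,s}_{\e}$ rests, exactly as in the paper, on the strict concavity of the Melnikov function forcing the two families of zeros to coincide. Your explicit verification of the ordering $\alpha_\mathcal{D}^-(\e_0)<\alpha_\mathcal{H}(\e_0)<\alpha_\mathcal{D}^+(\e_0)$ — using Lemma \ref{lem:neq0}(b) to get $(\det{Z})_x(\0)<0$, hence an upward parabola $\mathcal{D}$ with $\alpha_\mathcal{D}^\pm(\e_0)=\mathcal{O}(\sqrt{\e_0})$ against $\alpha_\mathcal{H}(\e_0)=\mathcal{O}(\e_0)$ — is a detail the paper leaves implicit, but it does not alter the argument.
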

	
	\begin{proof}
		The character of the critical point $P(\alpha,\e)$ is given by \autoref{corol:toptype}.
		The existence of the periodic orbit $\Gamma^{\alpha, s}_{\e}$ is given by \autoref{prop:phiaeZ}.
		Moreover, when the first Lyapunov coefficient $\ell_1(\e,\alpha(\e))$ is negative, a supercritical Hopf bifurcation occurs,
		creating a stable periodic orbit $\tilde \Gamma^{\alpha,s}_{\e}$ near the critical point $P(\alpha,\e)$ for $(\alpha,\e)$ to the right of the curve $\mathcal{H}$.
		As both periodic orbits are given, in first order, by the zeros of $M(v,\delta)$, when this function is strictly concave both periodic orbits have to coincide.
	\end{proof}
	
	\begin{figure}[!htb]
		\centering
		\begin{tiny}
			\def\svgwidth{0.9\textwidth}
			\input{./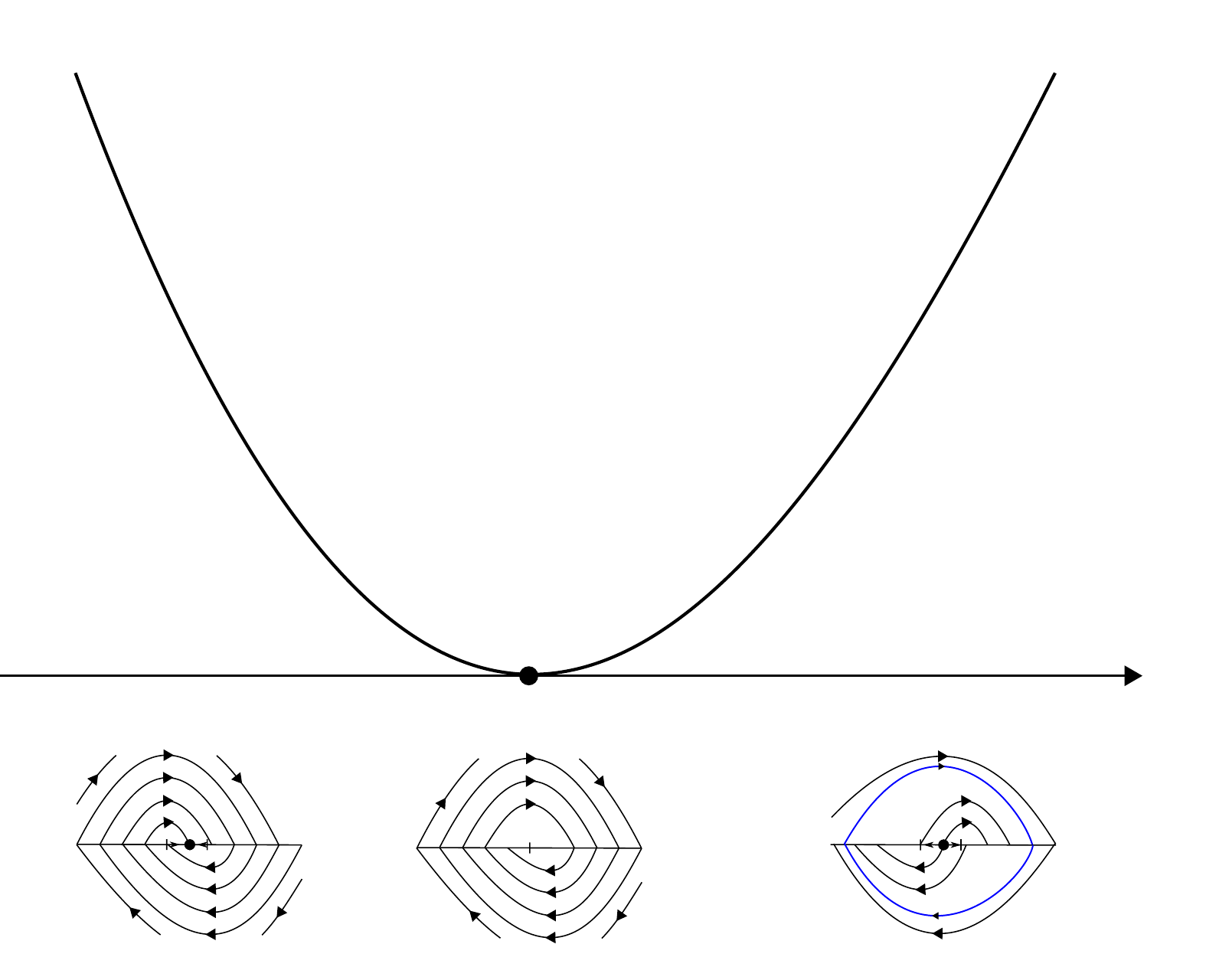_tex}
		\end{tiny}
		\caption{Bifurcation diagram of $Z^\alpha_\e$ when $\ell_1(\e,\alpha(\e))<0$ and $Z$ has an invisible fold-fold  with $\G_Z>0$. A stable limit cycle exists for 
			$\alpha>\alpha_\mathcal{H}$.}
		\label{fig:IIUR4}
	\end{figure}
	
	The  cycle $\Gamma^{\alpha, s}_{\e}$, given by \autoref{thm:bdIIsuper}, tends to the non smooth crossing cycle
	$\Gamma^\alpha$ when $\e$ tends to zero.
	Summarizing, the small orbit arising from the Hopf bifurcation and the regularized periodic orbit coming from the pseudo-Hopf
	bifurcation of the non smooth system are, generically, continuation one of the other.
	
	The bifurcation diagram of $Z^{\alpha}_\e$ in the two parameter space is sketched in \autoref{fig:IIUR4}.

	\begin{theo} [$\G_Z>0$ and $\ell_1(\alpha(\e),\e)>0$] \label{thm:bdIIsub}
		Assume the same hypothesis of \autoref{thm:bdIIsuper} but $\ell_1(\alpha(\e),\e)$ positive. Then one has that:
		\begin{itemize}
			\item
			For $\alpha<\alpha_\mathcal{D}^-(\e_0)$ sufficiently small the critical point $P(\alpha,\e)$ is a stable node;
			\item
			For $\alpha_\mathcal{D}^-(\e_0)< \alpha<\alpha_\mathcal{H}(\e_0)$ the critical point $P(\alpha,\e)$ is a stable focus;
			\item
			There exists a curve $\mathcal{S}$ in the parameter plane such that for $\alpha<\alpha_\mathcal{S}(\e_0)$ the generalized Poincar\'{e} return map $\phi^{\alpha}_{\e}$
			has no fixed points.
			\item
			For $\alpha_\mathcal{S}(\e_0)<\alpha<\alpha_\mathcal{H}(\e_0)$, the critical point $P(\alpha,\e)$, which is a stable focus,
			coexists with a pair of periodic orbits  $\Gamma^{\alpha, s}_{\e}$ and $\Delta^{\alpha,u}_{\e}$, which are stable and unstable, respectively.
			For $\alpha=\alpha_\mathcal{H}(\e_0)$ a subcritical Hopf bifurcation takes place.
			This implies the disappearance of the unstable periodic orbit $\Delta^{\alpha,u}_{\e}$ for $\alpha>\alpha_\mathcal{H}$;
			\item
			For $\alpha>\alpha_\mathcal{H}(\e_0)$, the stable limit cycle $\Gamma^{\alpha, s}_{\e}$ persists.
			Moreover, the critical point $P(\alpha,\e)$ is a unstable focus $\alpha_\mathcal{H}(\e_0)<\alpha<\alpha_\mathcal{D}^+(\e0)$
			and becomes a unstable node when $\alpha>\alpha_\mathcal{D}^+(\e_0)$. 
			The cycle $\Gamma^{\alpha, s}_{\e}$ tends to $\Gamma^\alpha$ when $\e$ goes to zero.
		\end{itemize}
	\end{theo}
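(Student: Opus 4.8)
The plan is to assemble the bifurcation diagram from three essentially independent pieces that are already available, and then to glue them together with a Poincar\'{e}--Bendixson argument and a Melnikov double--zero criterion. First I would read off the topological type of the critical point $P(\alpha,\e)$ directly from \autoref{corol:toptype}: for a fixed small $\e_0$ it is a stable node for $\alpha<\alpha_\mathcal{D}^-(\e_0)$, a stable focus for $\alpha_\mathcal{D}^-(\e_0)<\alpha<\alpha_\mathcal{H}(\e_0)$, an unstable focus for $\alpha_\mathcal{H}(\e_0)<\alpha<\alpha_\mathcal{D}^+(\e_0)$ and an unstable node beyond $\alpha_\mathcal{D}^+(\e_0)$, the transitions being the crossings of the curves $\mathcal{D}$ and $\mathcal{H}$ of \autoref{prop:toptype}. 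This settles every statement about $P(\alpha,\e)$ with no further work.

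The second ingredient is the small orbit. Since here $\ell_1(\alpha(\e),\e)>0$, the Hopf bifurcation on $\mathcal{H}$ is subcritical, so by the classical Hopf Bifurcation Theorem (see \cite{holmes}) an \emph{unstable} limit cycle $\Delta^{\alpha,u}_{\e}$ branches off on the side where $P(\alpha,\e)$ is still a stable focus, that is for $\alpha<\alpha_\mathcal{H}(\e_0)$, shrinking to the critical point as $\alpha\to\alpha_\mathcal{H}(\e_0)^-$ and disappearing for $\alpha>\alpha_\mathcal{H}(\e_0)$. This is the only local step that distinguishes this theorem from \autoref{thm:bdIIsuper}. The third ingredient is the big orbit, which I would obtain exactly as in \autoref{prop:phiaeZ}: for $(\alpha,\e)$ to the right of $\mathcal{H}$ there is a stable cycle $\Gamma^{\alpha,s}_{\e}$ with $\Gamma^{\alpha,s}_{\e}=\Gamma^\alpha+\mathcal{O}(\e)$ for fixed $\alpha>0$, its existence being a topological consequence of the non-smooth crossing cycle $\Gamma^\alpha$ of \autoref{prop:IICunfolding} rather than of the Hopf bifurcation.

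To push $\Gamma^{\alpha,s}_{\e}$ to the \emph{left} of $\mathcal{H}$ I would build a positively invariant annulus whose outer no--escape boundary is a curve $\mathcal{O}(\e)$-close to $\Gamma^\alpha$ (along which the flow of $Z^\alpha_\e$ enters) and whose inner boundary is the unstable cycle $\Delta^{\alpha,u}_{\e}$ (along which the flow leaves); Poincar\'{e}--Bendixson then produces a stable periodic orbit inside, which is the continuation of $\Gamma^{\alpha,s}_{\e}$. This construction is valid precisely while $\Delta^{\alpha,u}_{\e}$ survives, and yields on the range $\alpha_\mathcal{S}(\e_0)<\alpha<\alpha_\mathcal{H}(\e_0)$ the nested configuration in which the stable focus is enclosed by $\Delta^{\alpha,u}_{\e}$, which in turn is enclosed by $\Gamma^{\alpha,s}_{\e}$. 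Finally I would locate $\mathcal{S}$ and prove the annihilation: both cycles meet the section $\{x=0\}$, to leading order in $\e$, at simple zeros of the Melnikov function $M(v,\delta)$ of \autoref{prop:phiaeZ} (with $\alpha=\delta\e+\mathcal{O}(\e^2)$), so a saddle--node of periodic orbits occurs exactly where $M(\cdot,\delta)$ acquires a double zero, i.e. where $M(v,\delta)=0$ and $\frac{\partial M}{\partial v}(v,\delta)=0$ hold together. This fixes $\delta_\mathcal{S}$ and hence $\mathcal{S}=\{\alpha=\delta_\mathcal{S}\e+\mathcal{O}(\e^2)\}$, as will be made explicit in \autoref{prop:melnikov}; for $\alpha<\alpha_\mathcal{S}(\e_0)$ the function $M(\cdot,\delta)$ has no zero, so $\phi^{\alpha}_{\e}$ has no fixed point and no periodic orbit exists, while as $\alpha$ crosses $\alpha_\mathcal{S}(\e_0)$ the double zero splits into two simple zeros, creating the pair $\Gamma^{\alpha,s}_{\e},\Delta^{\alpha,u}_{\e}$.

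The hard part will be this last step: turning the mere coexistence produced by Poincar\'{e}--Bendixson into the \emph{exactly two} cycles required for a generic saddle--node, and keeping the outer trapping region uniformly controlled down to $\mathcal{S}$. The purely topological argument guarantees at least one stable cycle in the annulus but does not by itself preclude further cycles, so this is precisely where the global, non-local information carried by the Melnikov function---its monotonicity and concavity properties from \autoref{prop:propiedadesM}---becomes indispensable to pin down the count of periodic orbits and to certify that their collision at $\mathcal{S}$ is a nondegenerate saddle--node. The persistence of $\Gamma^{\alpha,s}_{\e}$ for $\alpha>\alpha_\mathcal{H}(\e_0)$ and its convergence to $\Gamma^\alpha$ as $\e\to 0$ then follow again from \autoref{prop:phiaeZ}.
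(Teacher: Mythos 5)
Your proposal is correct and follows essentially the same route as the paper's proof: the type of $P(\alpha,\e)$ from \autoref{corol:toptype}, the stable cycle $\Gamma^{\alpha,s}_\e$ to the right of $\mathcal{H}$ from \autoref{prop:phiaeZ}, the unstable cycle $\Delta^{\alpha,u}_\e$ from the subcritical Hopf bifurcation, persistence of the stable cycle to the left of $\mathcal{H}$ via a Poincar\'{e}--Bendixson trapping region bounded outside by the attracting generalized return map near $\Gamma^\alpha$ and inside by the repelling cycle, and the existence of $\mathcal{S}$ forced by the fact that for fixed $\alpha<0$ no periodic orbits survive for small $\e$. The only organizational difference is that you fold the Melnikov double-zero location of $\mathcal{S}$ (with the nondegeneracy caveat you correctly flag) into the proof itself, whereas the paper keeps the theorem's proof purely topological and defers that quantitative step, including the hypothesis $\frac{\partial^3 M}{\partial v^3}\neq 0$ needed to certify a genuine saddle-node and the exact count of cycles, to \autoref{prop:melnikov}.
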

	\begin{proof}
		The character of the critical point $P(\alpha,\e)$ is given by \autoref{corol:toptype}.
		The existence of the stable periodic orbit $\Gamma^{\alpha, s}_{\e}$ for $\alpha>\alpha_\mathcal{H}(\e_0)$,  is given by \autoref{prop:phiaeZ}.
		When the parameter values reach the curve $\mathcal{H}$, a subcritical Hopf bifurcation occurs and an unstable periodic orbit
		$\Delta^{\alpha,u}_{\e}$ appears for $\alpha<\alpha_\mathcal{H}(\e_0)$.
		Observe that, for these parameter values, because of the attracting character of the generalized first return map $\phi^{\alpha}_\e$ far from the origin,
		and the presence of the unstable periodic orbit $\Delta^{\alpha,u}_{\e}$, the Poincar\'{e}-Bendixson theorem guarantees
		the persistence of the periodic orbit $\Gamma^{\alpha, s}_{\e}$ for $\alpha<\alpha_\mathcal{H}(\e_0)$ small enough.
		On the other hand, if we fix $\alpha<0$, \autoref{prop:phiaeZ} says that system $Z^\alpha_\e$ has no periodic orbit for $\e<\e(\alpha)$.
		Therefore, must exist a curve $\mathcal{S}$ in the parameter space where the ``total'' first return map $\phi^{\alpha}_\e$ has a 
		bifurcation.
		Therefore there exists a value $\alpha_\mathcal{S}(\e_0)$, where these two periodic orbits collide for $\alpha=\alpha_\mathcal{S}(\e_0)$
		and then disappear for $\alpha< \alpha_\mathcal{S}(\e_0)$.
	\end{proof}
	
	The next proposition, whose proof is given in \autoref{sec:melnikov}, 
	provides quantitative information about the periodic orbits given in theorem \ref{thm:bdIIsub} in terms of the Melnikov function  $M(v,\delta)$,.

	\begin{prop}[The Saddle-node bifurcation] \label{prop:melnikov}
		Consider the  Melnikov  function $M(v,\delta)$  given in \eqref{eq:melnikov}. Then, under the hypotheses of \autoref{thm:bdIIsub}:
		\begin{itemize}
			\item
			For  $\alpha = \delta \e + O(\e^2)$, such that  $\delta<\delta_{\mathcal{H}}$ sufficiently close, and  $\e$ small enough, calling
			$(v_\e^u,0) = \Delta^{\alpha, u}_{\e}\cap \{x=0\}= (v^u,0)+ O(\e)$, the value $v^u$
			satisfies:
			$$
			M(v^u,\delta)=0, \ \frac{\partial M}{\partial v} (v^u,\delta) >0,
			$$
			\item
			If moreover we assume that $\frac{\partial^3 M}{\partial v^3} (v,\delta) \ne 0$ for $\delta$ near $\delta_\mathcal{H}$, 
			for $ \delta_\mathcal{S}<\delta<\delta_\mathcal{H}$, the function $M$ has  two zeros corresponding to the periodic orbits
			$\Delta^{\alpha,u}_{\e}$ and $\Gamma^{\alpha, s}_{\e}$ given in \autoref{thm:bdIIsub}.
			When $\alpha=\alpha_\mathcal{S}(\e_0)$ a Saddle-Node bifurcation  takes place.
			\item
			The curve $\mathcal{S}$ can be found as  $\alpha=\alpha_\mathcal{S}= \delta_\mathcal{S}\e + O(\e^2)$, where $\delta_\mathcal{S}$ is the solution of the
			(linear in $\delta$) equations:
			$$
			M(v_\mathcal{S},\delta_\mathcal{S})=0, \ \frac{\partial M}{\partial v} (v_\mathcal{S},\delta_\mathcal{S})=0
			$$
		\end{itemize}
	\end{prop}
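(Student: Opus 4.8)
The plan is to translate each statement about periodic orbits of $Z^\alpha_\e$ into a statement about zeros of the Melnikov function $M(v,\delta)$, and then to analyze how those zeros move as $\delta$ varies. The key device, constructed in \autoref{sec:melnikov}, is the scaling $x=\sqrt{\e}\,u$ together with $\alpha=\delta\e+\mathcal{O}(\e^2)$, after which the slow system \ref{vsystem} becomes a regular perturbation of an integrable planar field. Classical perturbation theory then gives that a moderate-size periodic orbit crossing $\{x=0\}$ at height $v$ persists if and only if $M(v,\delta)=0$, such a zero being simple exactly when $\partial_v M(v,\delta)\neq 0$. As recorded for the stable branch in \autoref{prop:phiaeZ}(c1), the \emph{sign} of $\partial_v M$ encodes stability: a simple zero with $\partial_v M<0$ produces an attracting hyperbolic cycle, one with $\partial_v M>0$ a repelling one. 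I would take this correspondence, established via the implicit function theorem applied to the displacement map of the return map, as the common tool for all three items.

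For the first bullet, the unstable orbit $\Delta^{\alpha,u}_\e$ already exists by \autoref{thm:bdIIsub}. Writing its crossing as $(v_\e^u,0)=(v^u,0)+\mathcal{O}(\e)$ and applying the correspondence above, $v^u$ must be a zero of $M(\cdot,\delta)$; since $\Delta^{\alpha,u}_\e$ is a hyperbolic repelling cycle, the zero is simple with $\partial_v M(v^u,\delta)>0$. This is the mirror image of \autoref{prop:phiaeZ}(c1) and requires no new computation. For the second bullet I would study the local shape of $M(\cdot,\delta)$ near the Hopf point. At $(v^*,\delta_\mathcal{H})$ the cycle born at the Hopf bifurcation has collapsed onto the focus, which forces $M(v^*,\delta_\mathcal{H})=0$ and $\partial_v M(v^*,\delta_\mathcal{H})=0$; the nondegenerate Hopf normal form is reflected, via \autoref{prop:propiedadesM}, in $\partial_{vv}M(v^*,\delta_\mathcal{H})=0$ together with $\partial_{vvv}M(v^*,\delta_\mathcal{H})\neq 0$, a cubic zero whose third-derivative sign carries that of $\ell_1>0$. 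The hypothesis $\partial_{vvv}M\neq 0$ on a $v$-neighborhood for $\delta$ close to $\delta_\mathcal{H}$ makes $\partial_{vv}M(\cdot,\delta)$ strictly monotone, so $M(\cdot,\delta)$ has at most three zeros and the count can change only through double zeros. Since $M$ is, to leading order, affine in $\delta$ (the parameter enters through $Z^\alpha=Z+\alpha\tilde Z+\mathcal{O}(\alpha^2)$ linearly), the one-parameter unfolding of this cubic zero is standard: for $\delta$ slightly below $\delta_\mathcal{H}$ the degenerate zero splits into two simple zeros $v^u$ and $v^s$ with $\partial_v M(v^u,\delta)>0$ and $\partial_v M(v^s,\delta)<0$, matching $\Delta^{\alpha,u}_\e$ and $\Gamma^{\alpha,s}_\e$. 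As $\delta$ decreases the two zeros approach and coalesce at some $\delta_\mathcal{S}<\delta_\mathcal{H}$ into a double zero $v_\mathcal{S}$ with $\partial_{vv}M(v_\mathcal{S},\delta_\mathcal{S})\neq 0$, and no zero survives for $\delta<\delta_\mathcal{S}$; at $\delta_\mathcal{S}$ the displacement map has a quadratic tangency, i.e. a saddle-node of periodic orbits at $\alpha=\alpha_\mathcal{S}(\e_0)$.

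For the third bullet, the saddle-node locus is characterised by the simultaneous vanishing $M(v_\mathcal{S},\delta_\mathcal{S})=0$ and $\partial_v M(v_\mathcal{S},\delta_\mathcal{S})=0$. Because $M$ is affine in $\delta$, these two equations are linear in $\delta_\mathcal{S}$, and I would solve the pair for $(v_\mathcal{S},\delta_\mathcal{S})$ by the implicit function theorem, the fold condition $\partial_{vv}M(v_\mathcal{S},\delta_\mathcal{S})\neq 0$ guaranteeing that the Jacobian in $(v,\delta)$ is invertible near the solution. Undoing the scaling then yields $\alpha_\mathcal{S}=\delta_\mathcal{S}\,\e+\mathcal{O}(\e^2)$, as claimed.

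The main obstacle is the transfer from zeros of $M$ to genuine periodic orbits of $Z^\alpha_\e$ \emph{precisely at} $\delta=\delta_\mathcal{S}$, where $\partial_v M=0$ and the simple-zero implicit function argument underlying the correspondence fails. There one must push the expansion of the displacement map one order further and show that the $\mathcal{O}(\e)$ remainder does not destroy the quadratic tangency, that is, that the fold of the full map is governed by $\partial_{vv}M$ uniformly in $\e$. Controlling this remainder uniformly near the fold, while simultaneously checking that the moderate-size orbit $\Gamma^{\alpha,s}_\e$ remains inside the range of $v$ on which the scaled Melnikov analysis is valid, is the delicate technical point of the proof.
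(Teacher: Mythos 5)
There is a genuine gap, and it sits at the heart of your second bullet: your local model of the Hopf degeneracy is wrong. You claim that at $(v^*,\delta_\mathcal{H})$ one has $\partial^2_v M(v^*,\delta_\mathcal{H})=0$ with $\partial^3_v M(v^*,\delta_\mathcal{H})\neq 0$ carrying the sign of $\ell_1$, and you then unfold a cubic zero. But \autoref{prop:propiedadesM} --- which you yourself cite --- ties the Lyapunov coefficient to the \emph{second} $v$-derivative: under the hypotheses of \autoref{thm:bdIIsub} the bifurcation is subcritical, $\ell_1>0$, hence $\partial^2_v M(v^*,\delta_\mathcal{H})>0$, and $v^*$ is a nondegenerate minimum of $M(\cdot,\delta_\mathcal{H})$, not a cubic inflection. (The reason is that the section is $\Theta=\{(0,v):\ v\ge v^*\}$, so $v-v^*$ plays the role of an amplitude variable; the odd, cubic radial normal form of a smooth Hopf bifurcation does not transfer, since the potential $V$ is not even.) Consequently the picture in which a degenerate zero splits into a local pair $v^u,v^s$ near $v^*$ is false: only $v^u$ is local to the Hopf point, bifurcating off the persistent trivial zero $M(v^*,\delta)\equiv 0$, and to place it on the side $\delta<\delta_\mathcal{H}$ one needs the sign $\partial^2_{v\partial\delta}M(v^*,\delta_\mathcal{H})>0$ specific to the invisible-invisible case, which you never invoke --- linearity of $M$ in $\delta$ alone does not determine the direction in which the zero moves.

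The second missing ingredient is global. In the paper's proof the stable zero $v^s$, corresponding to $\Gamma^{\alpha,s}_\e$ (the continuation of the nonsmooth crossing cycle, not a Hopf-local orbit), is produced by the hypothesis $\G_Z>0$, which forces $M(v,\delta)<0$ for all $v>V_0$ with $V_0$ independent of $\delta$; Bolzano then yields $v^s$ bounded away from $v^*$, and the saddle-node at $\delta_\mathcal{S}$ is the merger of the \emph{local} zero $v^u$ with this \emph{global} zero at the interior maximum $v_M(\delta)\in(v^u,v^s)$, the assumption $\partial^3_v M\neq 0$ serving only to cap the zero set at $\{v^*,v^u,v^s\}$. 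Without the large-$v$ sign information your argument cannot produce $v^s$ at all, hence no merger and no curve $\mathcal{S}$. Two smaller points: in your first bullet, hyperbolicity of $\Delta^{\alpha,u}_\e$ at each fixed $\e$ only gives $\partial_v M(v^u,\delta)\ge 0$ in the limit $\e\to 0$, since the return-map derivative could degenerate as $\e\to 0$; the paper obtains the strict inequality from the sign change of $M$ across $v^u$ (negative just above $v^*$, positive in the middle region). And in your third bullet, the Jacobian of $(M,\partial_v M)$ at a fold point has determinant $-\partial_\delta M\cdot\partial^2_v M$, so your implicit-function argument needs $\partial_\delta M(v_\mathcal{S},\delta_\mathcal{S})\neq 0$ in addition to the fold condition $\partial^2_v M(v_\mathcal{S},\delta_\mathcal{S})\neq 0$.
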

	
	\begin{figure}[!htb]
		\centering
		\begin{tiny}
			\def\svgwidth{0.9\textwidth}
			\input{./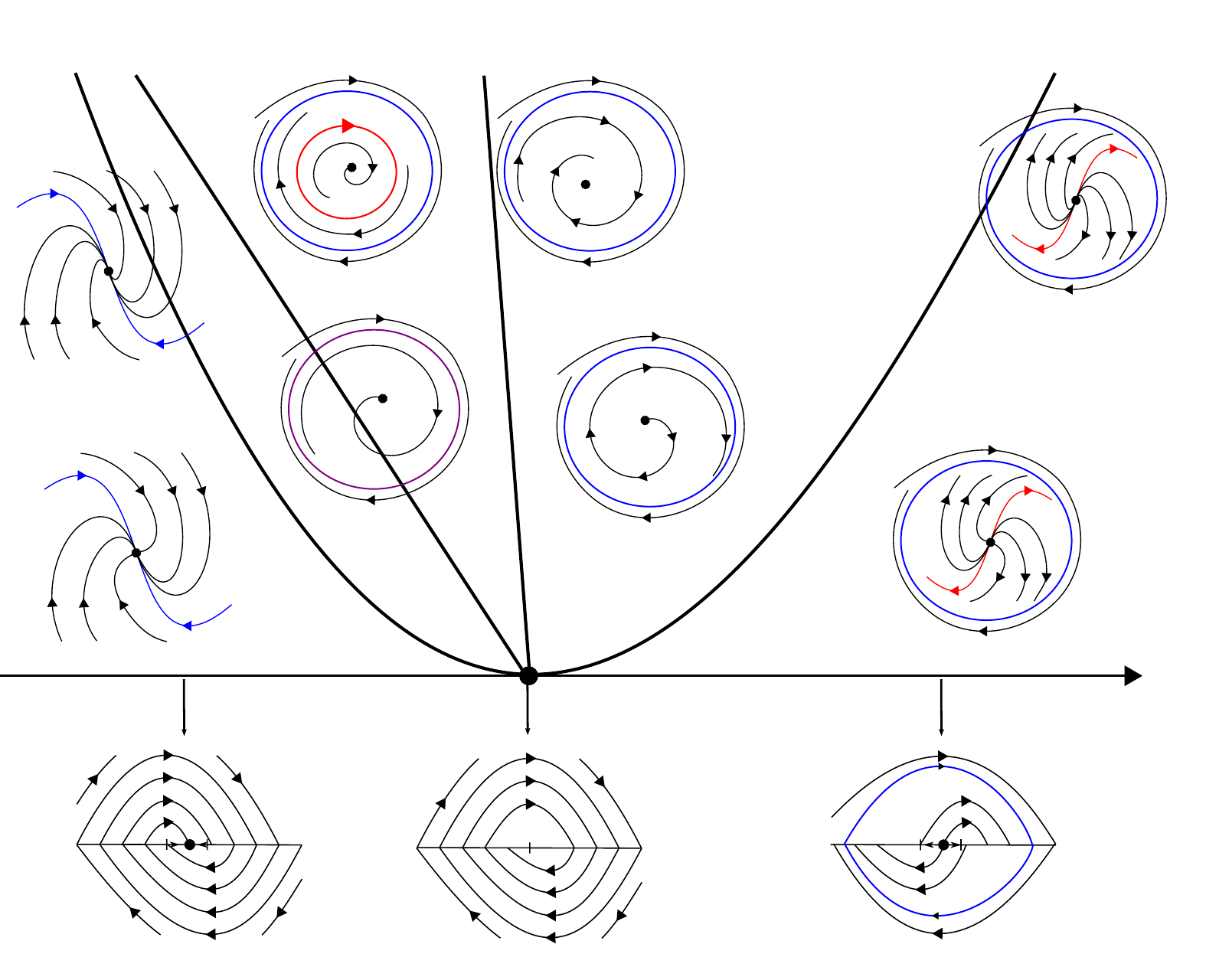_tex}
		\end{tiny}
		\caption{Bifurcation diagram of $Z^\alpha_\e$ when $\ell_1(\e,\alpha(\e))>0$ and $Z$ has an invisible-invisible fold  with $\G_Z>0$.
			Two periodic orbits arise at a saddle-node bifurcation, the unstable one disappears at the Hopf bifurcation.}
		\label{fig:IIUR6}
	\end{figure}

	In the next two examples, which satisfy \autoref{versal} and therefore are versal unfoldings of the fold-fold singularity,
	we illustrate the behaviors stated in Theorems \ref{thm:bdIIsuper} and \ref{thm:bdIIsub}.
	We also show that both behaviors can be achieved by the same piecewise vector field considering different transition maps $\varphi$.
	For this purpose, let  $Z^\alpha=(X,Y)$ be the piecewise vector field given by
	\begin{equation} \label{ex:IIns}
	Z^{\alpha}(x,y) = \begin{cases}
	X^\alpha(x,y) = \left(1-2 x, -x + \alpha \right) \\
	Y(x,y)= \left(-1,-x \right)
	\end{cases}
	\end{equation}
	which has an invisible-invisible fold at the origin for $\alpha=0$.
	The coefficient $\G_Z=\frac{4}{3}$, therefore, the origin is an stable fixed point for the Poincar\'{e} map $\phi_Z$.
	
	The $\varphi-$regularization $Z^\alpha_\e(x,y)$ in coordinates $y=\e v$, is given by
	\begin{equation} \label{ex:IIreg}
	\begin{cases}
	\dot{x}= -2x + 2\varphi(v)(1-x) \\
	\e \dot{v} = -2x+ \alpha(1+ \varphi(v))
	\end{cases} \ |v| \leq 1.
	\end{equation}
	
	\begin{exmp}[Supercritical Hopf bifurcation for the invisible-invisible fold] \label{ex:IIsuper} Consider  $\varphi$ as in \ref{transfunc} with
		\begin{equation} \label{phi3}
		\varphi(v)=- \frac{1}{2}v^3+\frac{3}{2}v, \, \textrm{for} \, v \in (-1,1).
		\end{equation}
		The critical point is
		$
		P(\alpha,\e)=\left( \frac{1}{2}\alpha,0 \right) + \mathcal{O}_2(\alpha,\e)
		$, and the curves $\mathcal{D}$ and $\mathcal{H}$ are given by:
		\begin{eqnarray*}
			\mathcal{D}&=&\left\{ (\alpha,\e) : \e = \frac{3}{32}\alpha^2 + \mathcal{O}(\alpha^3) \right\}  \\
			\mathcal{H}&=&\left\{ (\alpha,\e) : \alpha = \frac{4}{3}\e + \mathcal{O}(\e^2) \right\}
		\end{eqnarray*}
		The first Lyapunov coefficient is 
		$
		\ell_1(\alpha(\e),\e)=\frac{1}{\sqrt{\e}} \left( -\frac{1}{3 \sqrt{6}} + \mathcal{O}(\e) \right)
		$, therefore  the Hopf bifurcation is supercritical.
		
		\begin{figure}[htb!]
			\centering
			\subfigure[\label{fig:IISuper-a-03}]{\includegraphics[scale=0.55]{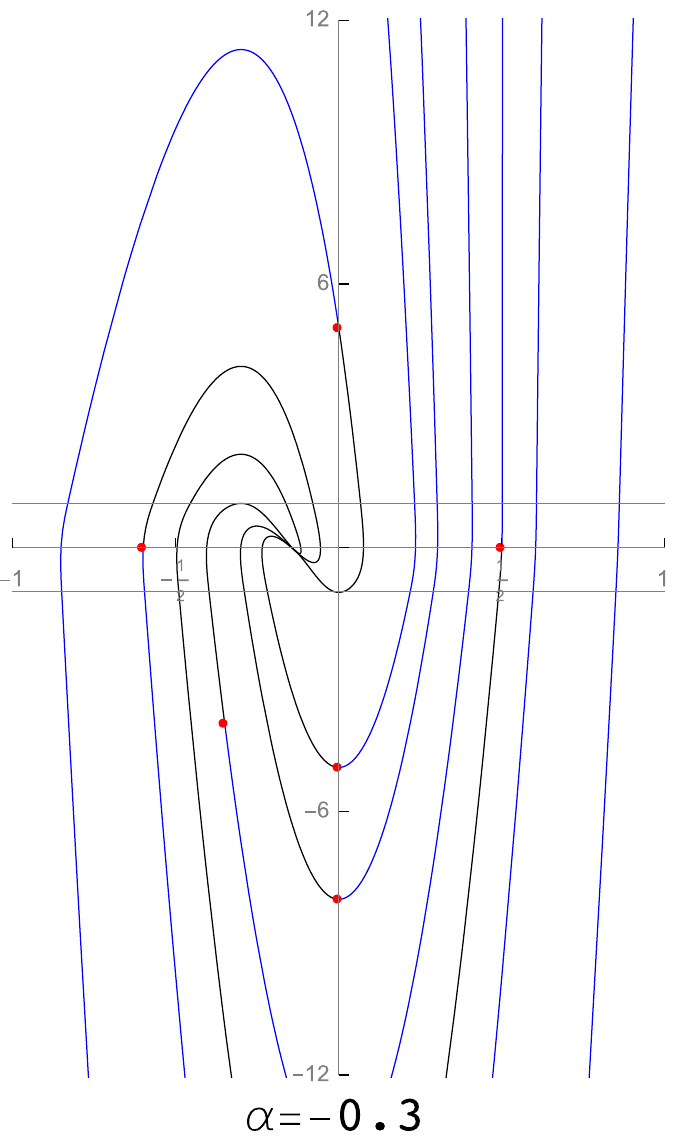}}  \hfill
			\subfigure[\label{fig:IISuper-a-01}]{\includegraphics[scale=0.55]{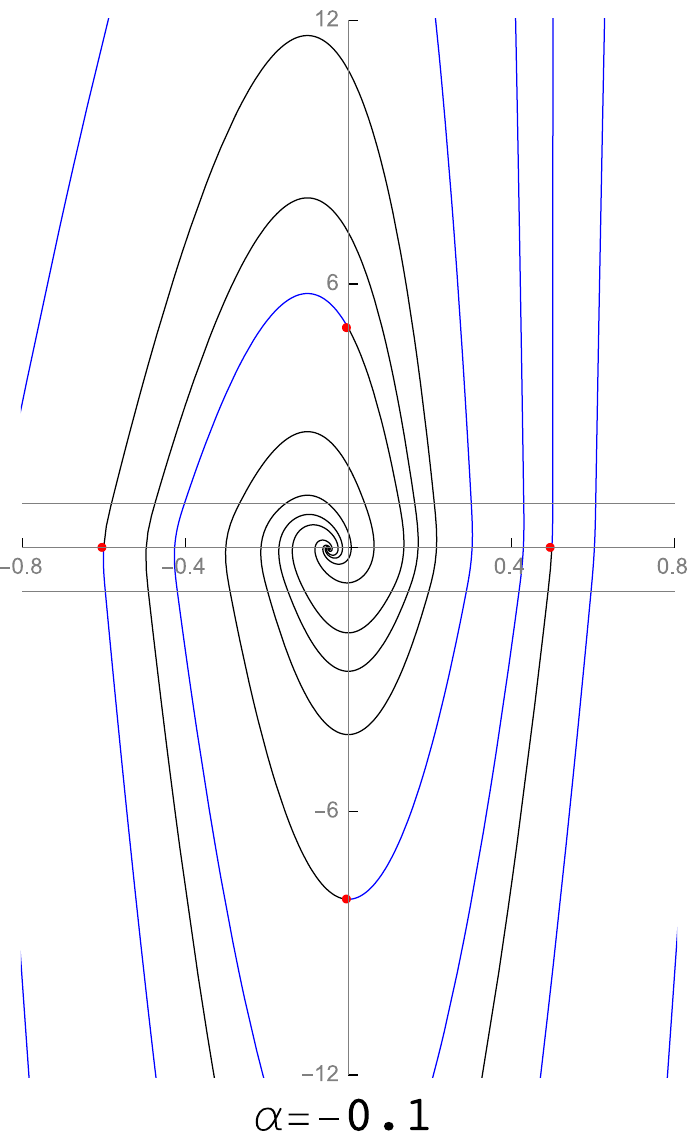}}  \hfill
			\subfigure[\label{fig:IISuper-a0006}]{\includegraphics[scale=0.55]{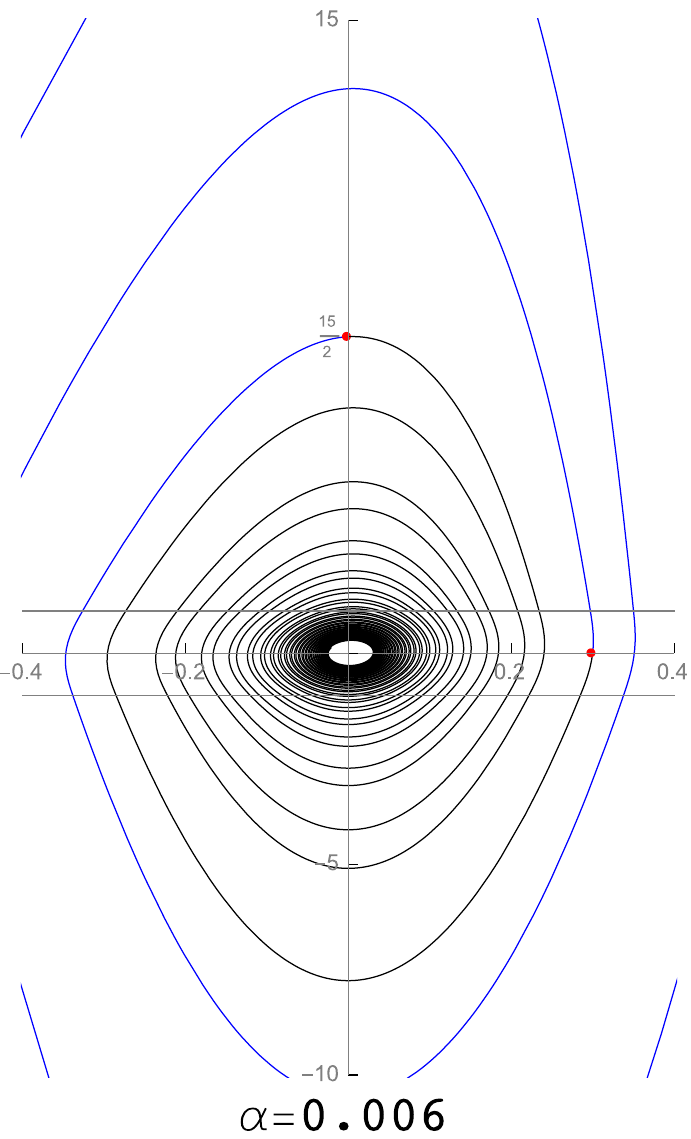}} \hfill \\
			\subfigure[\label{fig:IISuper-a001}]{\includegraphics[scale=0.55]{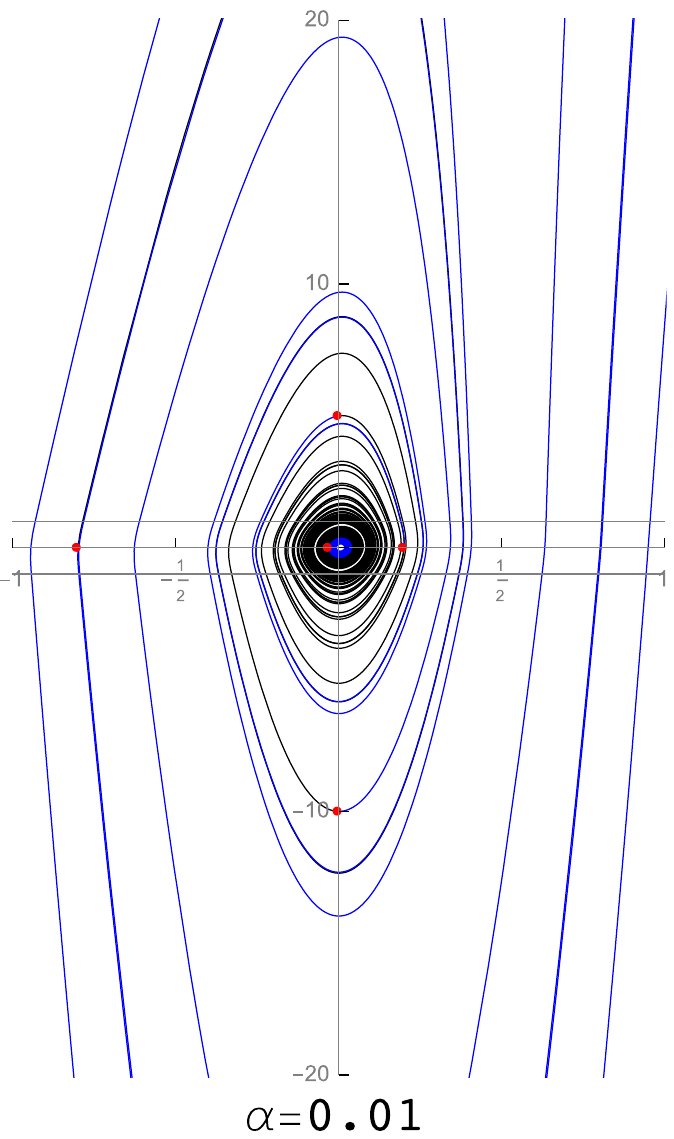}} \hfill
			\subfigure[\label{fig:IISuper-a01}]{\includegraphics[scale=0.55]{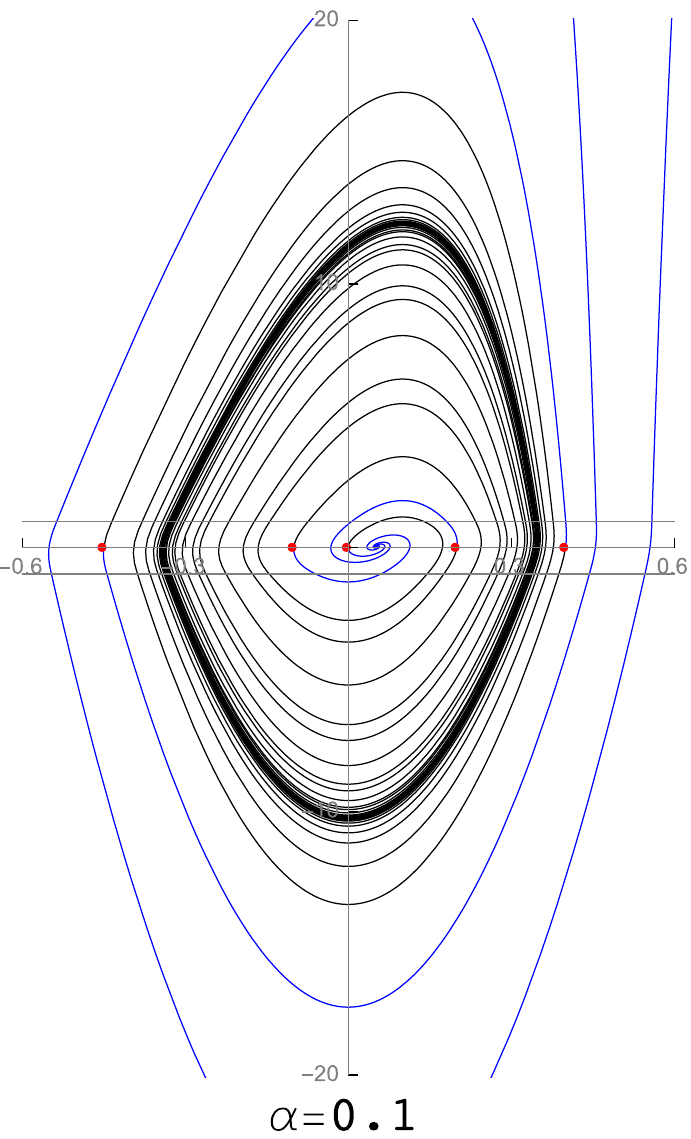}} \hfill
			\subfigure[\label{fig:IISuper-a03}]{\includegraphics[scale=0.55]{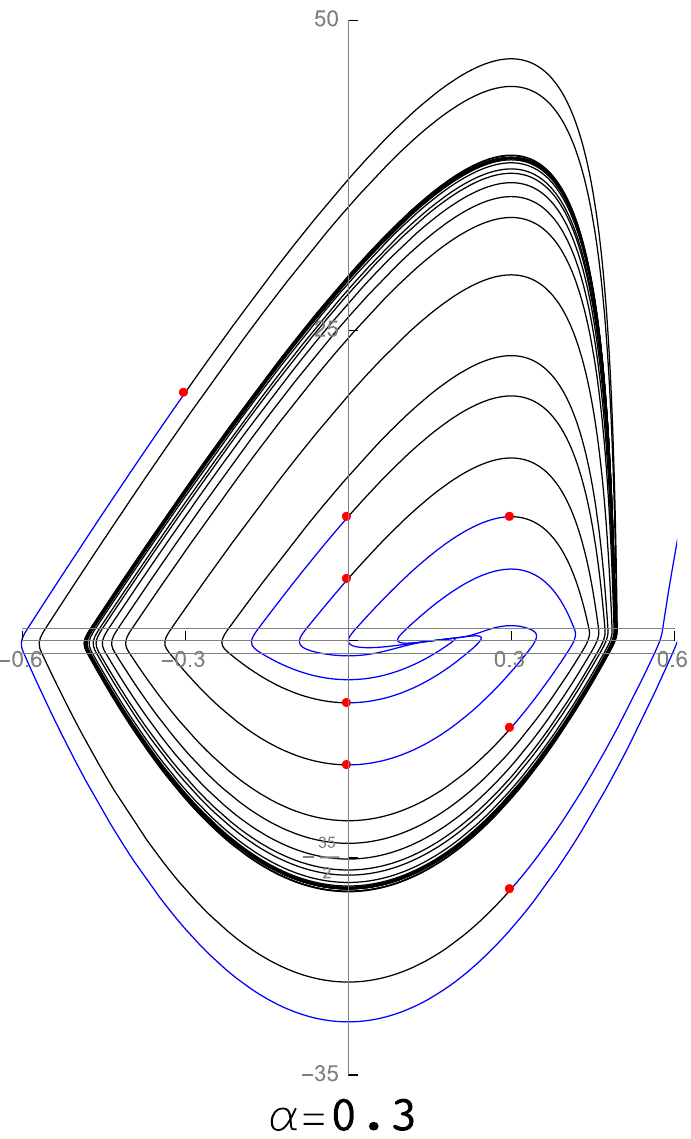}} \\
			\begin{scriptsize}
				\textcolor{red}{$\blacksquare$}  Initial condition \quad  \textcolor{blue}{$\blacksquare$} Negative time \quad  \textcolor{black}{$\blacksquare$} Positive Time
			\end{scriptsize}
			\caption{\autoref{ex:IIsuper} - The evolution of the dynamics while increasing the value of $\alpha$ for  $\e=0.006$.}
			\label{fig:IISuper}
		\end{figure}

		We fix $\e=0.006$ and vary the parameter $\alpha$.
		The intersection of the line $\e=0.006$ with the parabola $\mathcal{D}$   occurs at the points
		$ \alpha^\pm_{\mathcal{D}} \approx \pm 0.25$ and the intersection with $\mathcal{H}$  at $\alpha_{\mathcal{H}}\approx 0.008.$

		In \autoref{fig:IISuper} we can see the changes on the phase portrait of $Z^\alpha_\e$ when we vary the parameter $\alpha.$
		In \autoref{fig:IISuper-a-01}, $\alpha>\alpha^-_\mathcal{D}$: the node becomes a stable focus.
		In \autoref{fig:IISuper-a0006}, $\alpha_\mathcal{D}^-<\alpha < \alpha_\mathcal{H}$: the stable focus begins to loose strength.
		In \autoref{fig:IISuper-a001}, $\alpha>\alpha_{\mathcal{H}}$: the critical point is an unstable focus and a small stable limit cycle $\Gamma^{\alpha, s}_{\e}$
		inside the regularization zone appears.
		In \autoref{fig:IISuper-a01} the stable limit cycle $\Gamma^{\alpha, s}_{\e}$ is no more located inside the regularization zone.
		In \autoref{fig:IISuper-a03}, $\alpha>\alpha^+_{\mathcal{D}}$: the critical point becomes an unstable node and the limit cycle still $\Gamma^{\alpha, s}_{\e}$
		persists outside the regularization zone.
		In \autoref{fig:IISupermelnikov} we show  the behavior of the Melnikov function $M(v;\delta)$, which is strictly concave, 
		for different values of $\delta$ and which has a zero
		for $\delta >\delta _\mathcal{H}$ corresponding to $\Gamma ^{\alpha,s}_\e$.
	\end{exmp}

	\begin{exmp}[Subcritical Hopf bifurcation for the invisible-invisible fold]  \label{ex:IIsub} 
		Consider the transition map
		\begin{equation} \label{phi5}
		\varphi(v)=-v^5+\frac{3}{2}v^3+\frac{v}{2} \, \textrm{for} \, v \in (-1,1).
		\end{equation}
		
		The critical is point $P(\alpha,\e)=\left(\frac{1}{2}\alpha, 0 \right) + \mathcal{O}_2(\alpha,\e)$, and the bifurcation curves are given by
		\begin{eqnarray}
		\mathcal{D} &=& \left\{ (\alpha,\e) : \, \e = \frac{1}{32} \alpha^2 + \mathcal{O}(\alpha^3)   \right\} \\
		\mathcal{H} &=& \left\{ (\alpha,\e) : \, \alpha = 4 \e + \mathcal{O}(\e^2) \right\}
		\end{eqnarray}
		
		\begin{figure}[htb!]
			\centering
			\subfigure[\label{fig:ExIISuba0}]{\includegraphics[scale=0.6]{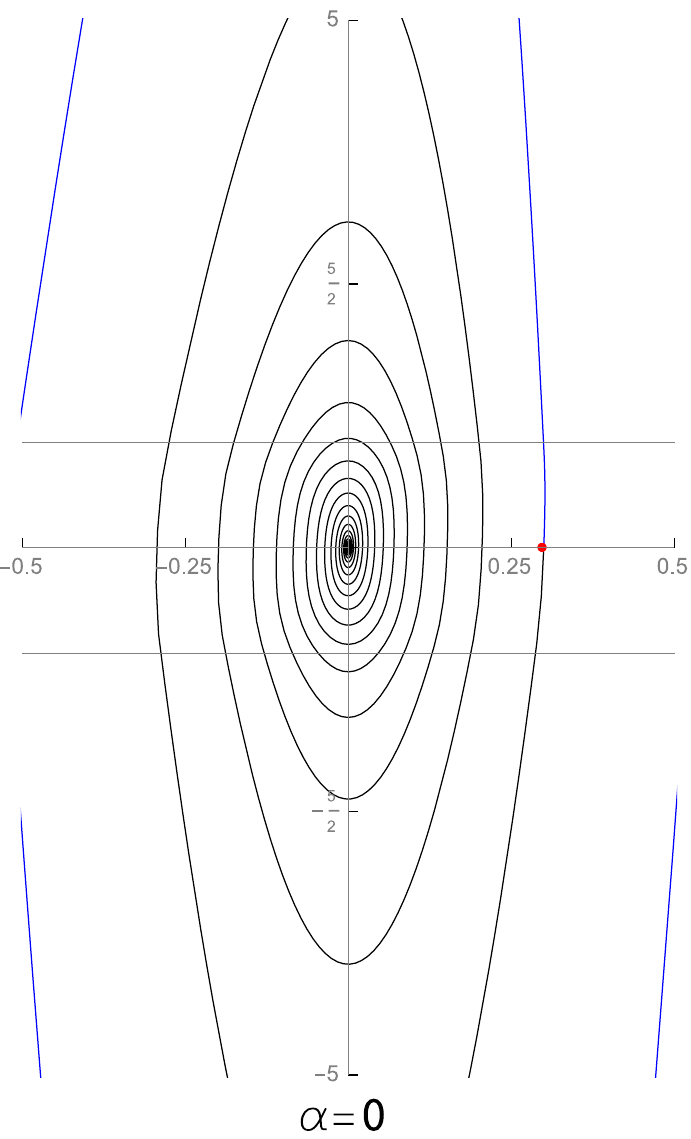}} \hspace{1cm}
			\subfigure[\label{fig:ExIISuba0012}]{\includegraphics[scale=0.6]{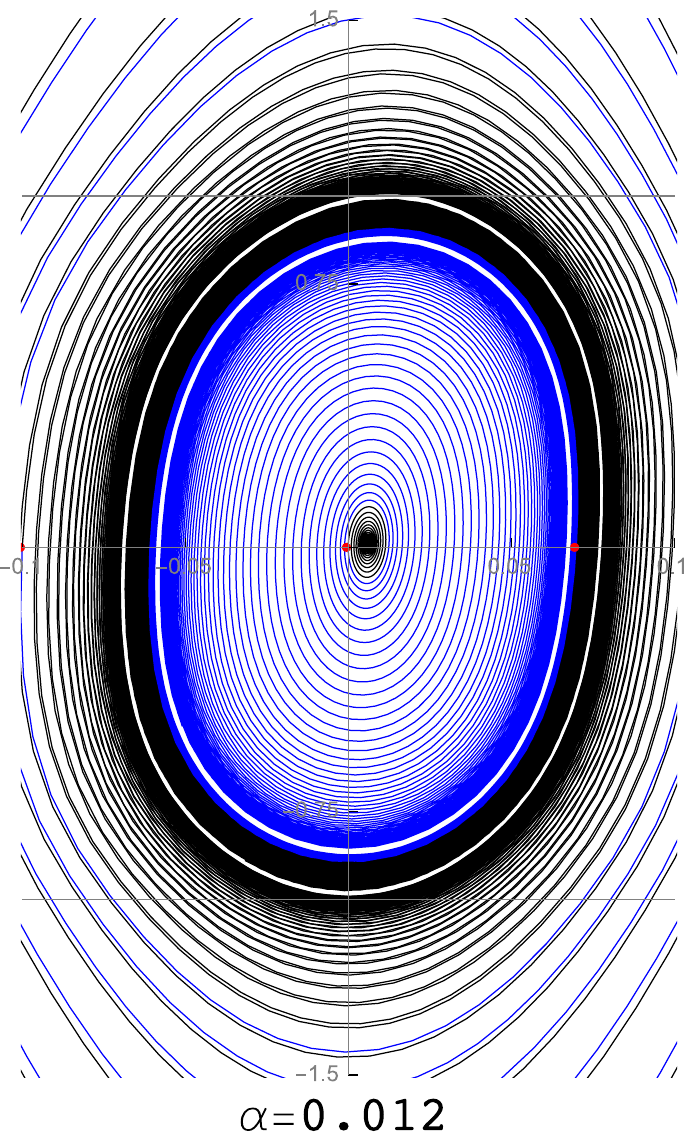}} \hspace{1cm}
			\subfigure[\label{fig:ExIISuba0014}]{\includegraphics[scale=0.6]{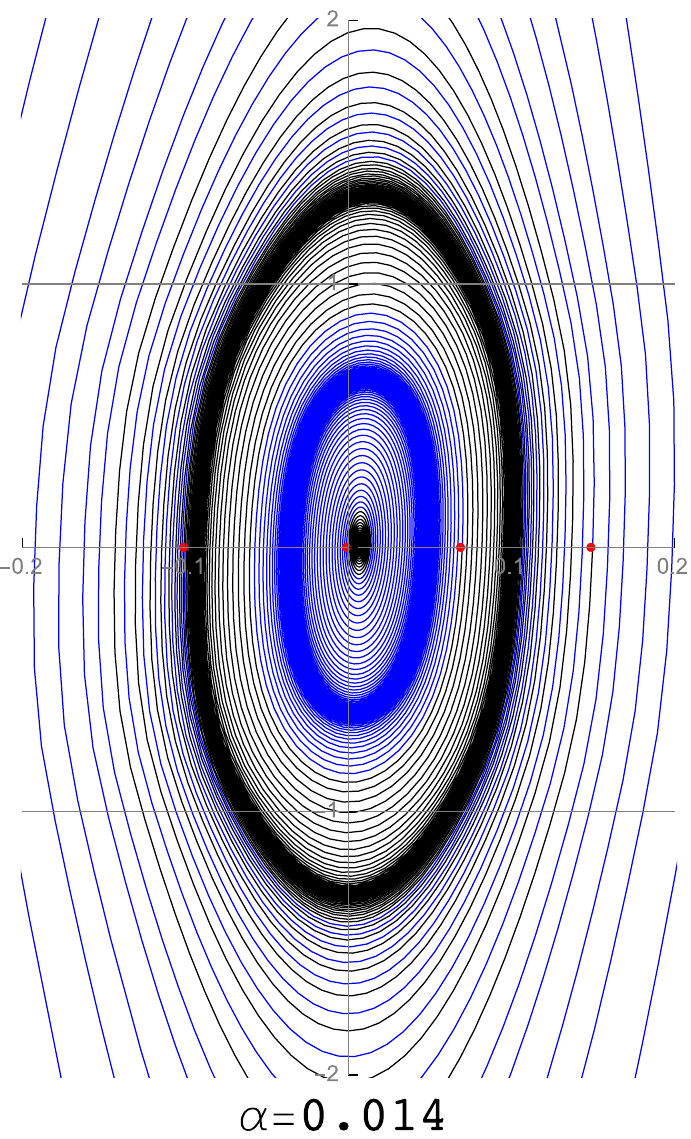}} \hspace{1cm}
			\subfigure[\label{fig:ExIISuba0015}]{\includegraphics[scale=0.6]{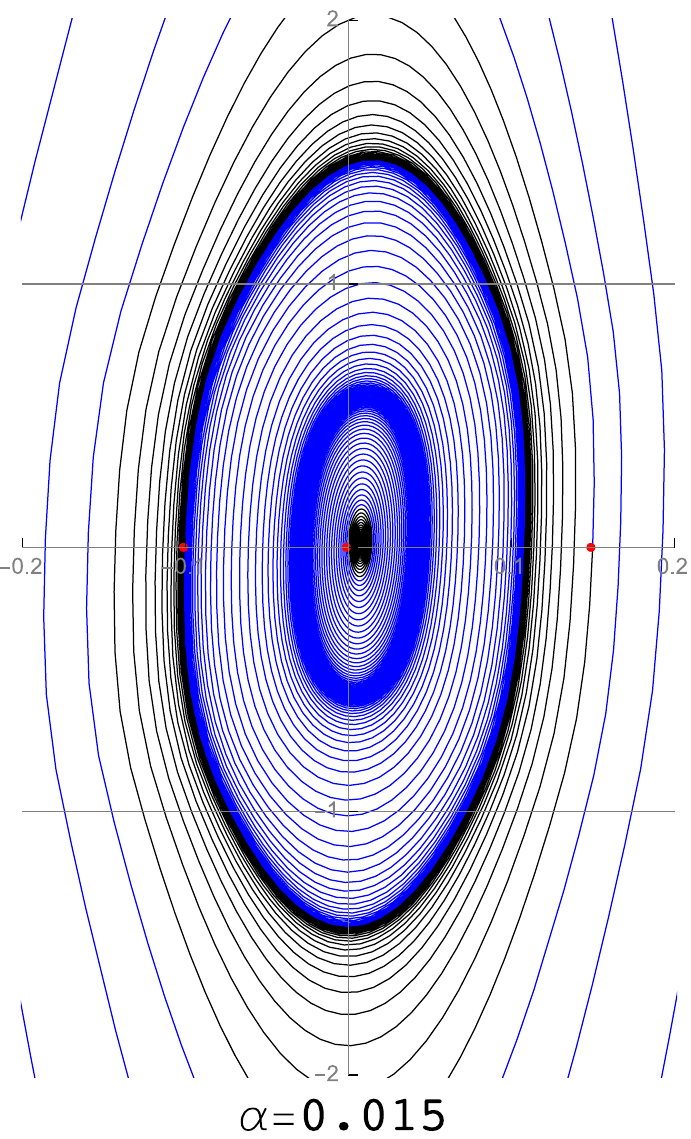}} \hspace{1cm}
			\subfigure[\label{fig:ExIISuba0021}]{\includegraphics[scale=0.6]{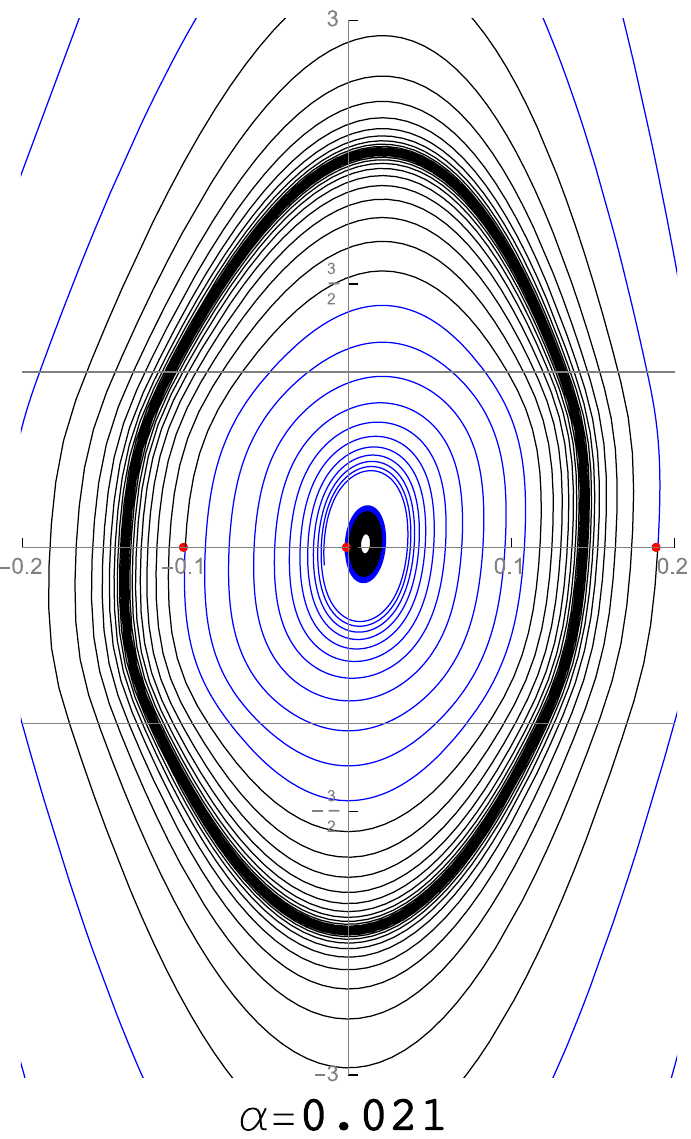}} \hspace{1cm}
			\subfigure[\label{fig:ExIISuba01}]{\includegraphics[scale=0.6]{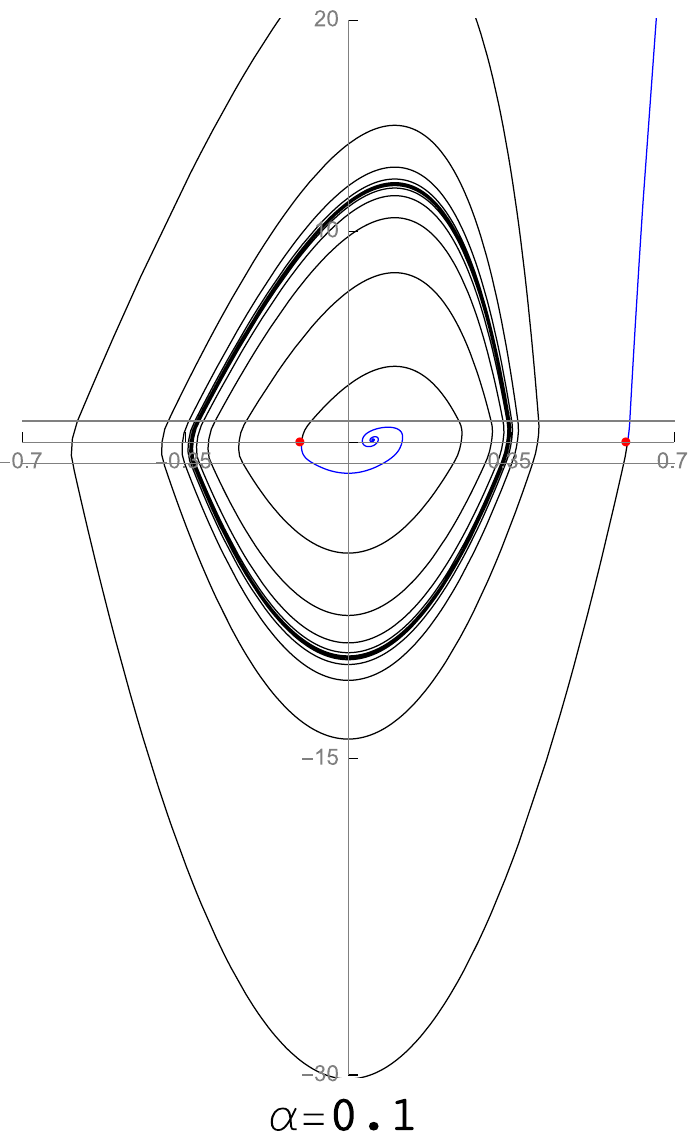}} \hspace{1cm}
			\begin{scriptsize}
				\textcolor{red}{$\blacksquare$}  Initial condition \quad  \textcolor{blue}{$\blacksquare$} Negative time \quad  \textcolor{black}{$\blacksquare$} Positive Time
			\end{scriptsize}
			\caption{ \autoref{ex:IIsub}: The evolution of the dynamics while increasing the value of $\alpha$ for $\e=0.006.$}
			\label{fig:ExIISub}
		\end{figure}
		
		The Lyapunov coefficient is $\ell_1(\e,\alpha(\e))=\frac{1}{\sqrt{\e}} \left( \frac{9}{\sqrt{2}}+ \mathcal{O(\e)} \right)
		$, therefore the Hopf bifurcation is subcritical.
		
		Fix $\e=0.006$. In \autoref{fig:ExIISub}, we present the simulations for different values of $\alpha$.
		In \autoref{fig:ExIISuba0}, $\alpha=0$: the stable focus $P(\alpha,\e)$ is a global attractor.
		In Figure \ref{fig:ExIISuba0012}, $\alpha=0.012$: we detect the presence of two periodic orbits, an smaller one $\Delta^{\alpha,u}_\e$
		which is unstable and a bigger one  $\Gamma^{\alpha, s}_{\e}$ which is stable.
		This means that the saddle-node value $\alpha_\mathcal{S}$ belongs to the interval $I_\mathcal{S}=(0.011,0.012)$.	
		In Figures \ref{fig:ExIISuba0012}, \ref{fig:ExIISuba0014} and \ref{fig:ExIISuba0015},
		we can see that the amplitude of the stable periodic orbit
		$\Gamma^{\alpha, s}_{\e}$ increases whereas the  unstable one $\Delta^{\alpha,u}_\e$ decreases while we increase the value of $\alpha.$
		In \autoref{fig:ExIISuba0021} and \autoref{fig:ExIISuba01}, for $\alpha$ greater than $\alpha_\mathcal{H}$, the critical point
		$P(\alpha,\e)$ is an unstable focus and only the stable periodic orbit
		$\Gamma^{\alpha, s}_{\e}$ persists. 
		We also show, in \autoref{fig:IISubmelnikov} the behavior of the Melnikov function $M(v;\delta)$ for different values of $\delta$.
		In yellow we show $M(v,\delta_\mathcal{S})$ which has a zero and is also a maximum. 
		For $\delta_\mathcal{S}<\delta <\delta _\mathcal{H}$ the function has two zeros.
		At $\delta = \delta_\mathcal{H}$, in black,  the zero with positive slope  disappears and only the big one with negative slope persists for 
		$\delta>\delta_\mathcal{H}$,
		corresponding to $\Gamma^{\alpha, s}_\e$.
	\end{exmp}
	
	\begin{figure}[htb!]
		\centering
		\subfigure[\label{fig:IISupermelnikov}]{\includegraphics[width=0.421\textwidth]{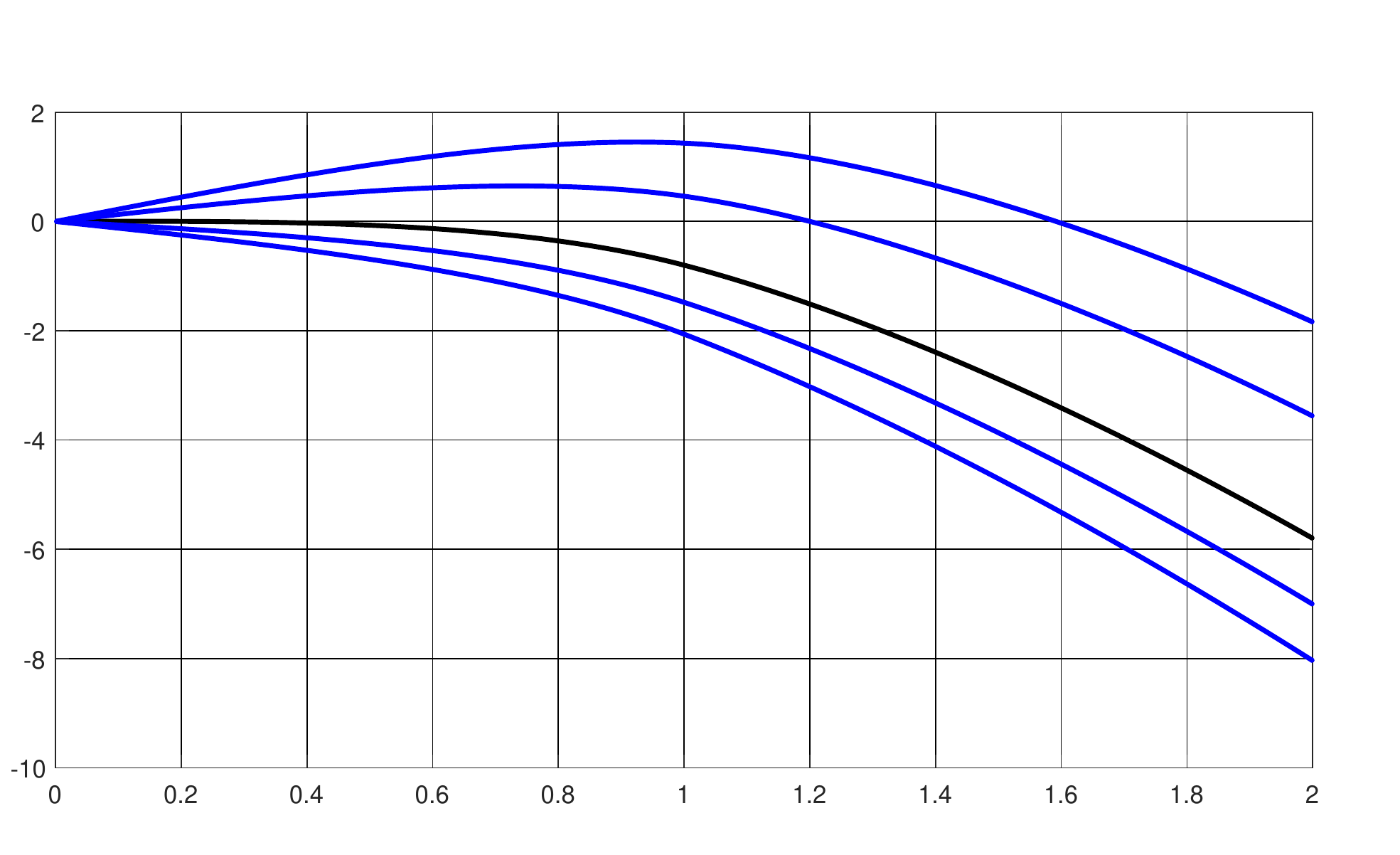}} \hspace{0.3cm}
		\subfigure[\label{fig:IISubmelnikov}]{\includegraphics[width=0.45\textwidth]{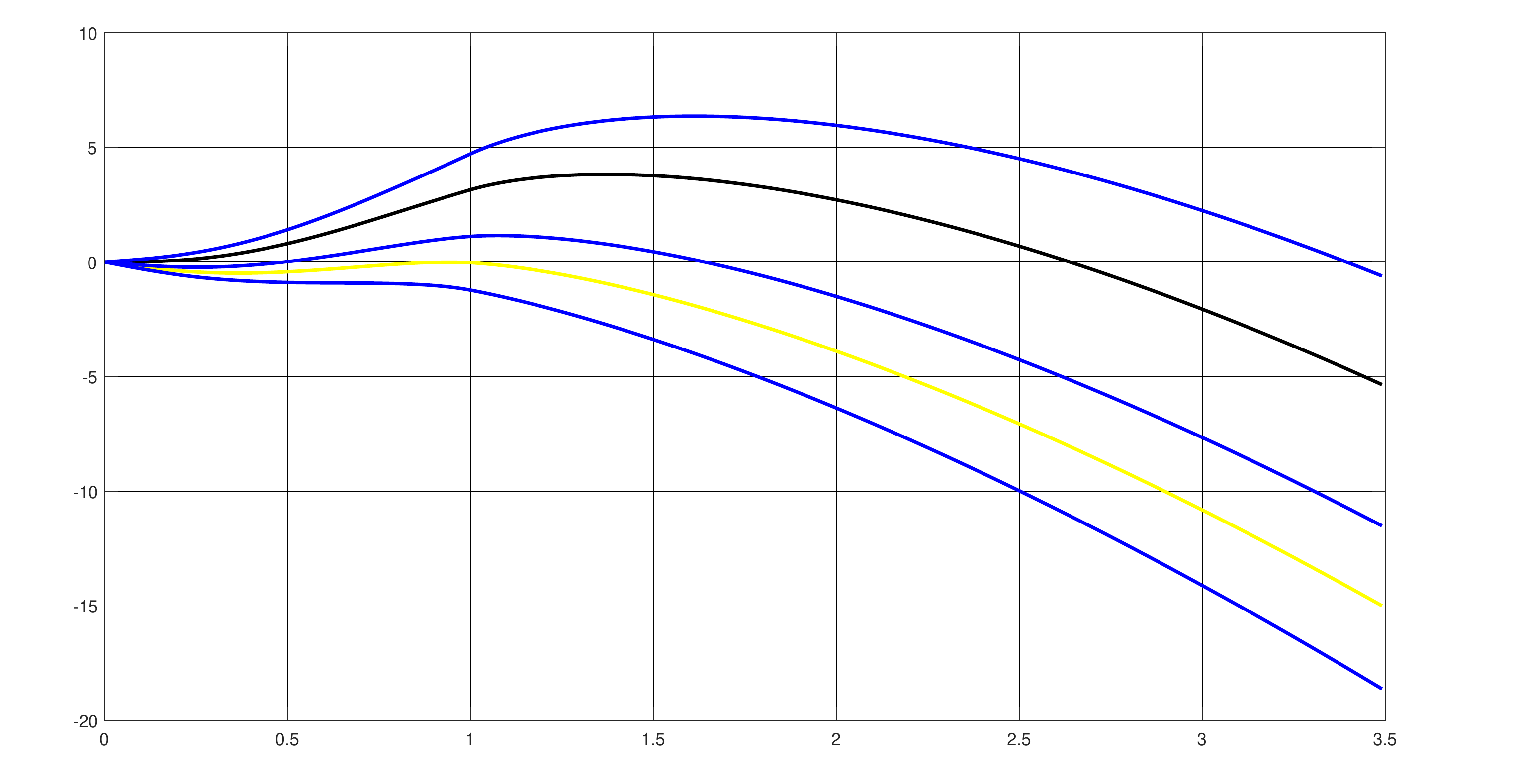}}
		\caption{(a)  The Melnikov function for example \ref{ex:IIsuper}:   $\delta = \delta _\mathcal{H}$ in black is supercritical,
			$M(v,\delta)$ has a zero with negative slope for $\delta <\delta_\mathcal{H}$. (b) The Melnikov function for example \ref{ex:IIsub}:   $\delta = \delta _\mathcal{H}$ in black is subcritical,
			$\delta = \delta _{S}$ in yellow. For $\delta _{S}<\delta <\delta _\mathcal{H}$ $M$ has two zeros, only one survives for  $\delta _\mathcal{H}<\delta$.}
	\end{figure}
	
	\subsection{The visible-invisible case} \label{sec:VIunfreg}
	
	In this section we study the case where the folds have opposite visibility,  the vector field $Z$ satisfies $X^1 \cdot Y^1(\0)<0$  
	and  $(\det{Z})_x(\0) \neq 0,$  (see \ref{eq:sliding}).
	The dynamics of the system $Z^\alpha_\e$ is more involved when $(\det{Z})_x(\0) < 0$, and it is studied in next subsection.
	
	\subsubsection{The focus case: \texorpdfstring{$(\det{Z)}_x(\0)<0$}{det<0}} \label{ssec:VIunfregL}
	
	When $(\det{Z)}_x(\0)<0$, for each $\alpha \neq 0$ and $\e$ small enough, by \autoref{corol:toptype}  the critical point $P(\alpha,\e)$ is a node with the same
	character that the pseudo-node $Q(\alpha)$ of $Z^\alpha$.
	This behavior persists for all values of $(\alpha,\e)$ below the parabola $\mathcal{D}$ given in \ref{dcurve}.

	Using the results about the critical manifolds given in \autoref{ssec:OVcritical} and applying Fenichel theory,
	for each fixed $\alpha \neq 0$ and any compact set of the critical manifolds $\Lambda^{\alpha,s}_0$ and
	$\Lambda^{\alpha,u}_0$ excluding the tangency points $(T^\alpha_X,1)$ and $(T^\alpha_Y,-1)$, for $0<\e<\e_0(\alpha)$,
	there exist two normally hyperbolic invariant manifold $\Lambda^{\alpha,s}_\e$ and $\Lambda^{\alpha,u}_\e$ which are $\e-$close to
	$\Lambda^{\alpha,s}_0$ and $\Lambda^{\alpha,u}_0$, respectively (see \autoref{fig:OVcritical-}).

	Moreover, in this case, the Fenichel manifold $\Lambda^{\alpha,u}_\e$ is the weak manifold of the unstable node $P(\alpha,\e)$ for
	$\alpha<0$ and  $\Lambda^{\alpha,s}_\e$ is the weak manifold of the stable node $P(\alpha,\e)$ for $\alpha>0$.
	
	As we have seen in \autoref{sec:VVunfreg}, the vector field $X^{\alpha}(x,\e v)$ has a unique visible fold point at
	$(T^{\alpha, \e}_{X},1)$
	and $Y^{\alpha}(x,\e v)$ has a unique invisible fold point at
	$(T^{\alpha, \e}_{Y},-1)$
	(see \autoref{epsXtangency} and \autoref{epsYtangency}).
	
	Observe that for $x < T^{\alpha, \e}_X$ the vector $X^\alpha(x,1)$ points inward to the regularization zone and points outwards to the regularization zone for
	$x>T^{\alpha, \e}_X$.
	Analogously, for $x < T^{\alpha, \e}_Y$ the vector $Y^\alpha(x,-1)$ points inwards to the regularization zone for $x<T^{\alpha, \e}_Y$
	and outwards to the regularization zone for $x>T^{\alpha,\e}_Y$.
	
	The above information and the fact that the dynamics over the Fenichel Ma\-ni\-folds $\Lambda^{\alpha, s/u}_\e$ is equivalent to the one  over  the critical manifolds
	$\Lambda^{\alpha, s/u}_0$, it follows that (see \autoref{fig:VILUR2}):
	\begin{itemize}
		\item
		for $\alpha<0$, the stable Fenichel manifold $\Lambda_{\e}^{\alpha,s}$ intersects the section $\{ v=1 \}$ on the right of the tangency point $T^{\alpha, \e}_{X}$.
		The unstable Fenichel manifold $\Lambda_{\e}^{\alpha,u}$, which is the weak manifold of the unstable  node $P(\alpha,\e)$,
		can intersect or not the section $v=-1$.
		If this intersection occurs it is located to the right of the tangency point $T^{\alpha, \e}_{Y}$,
		\item
		for $\alpha>0$, the unstable Fenichel manifold $\Lambda_{\e}^{\alpha,u}$ intersects the section $\{ v= 1 \}$ on the left of the tangency point $T^{\alpha, \e}_{X}$.
		The stable Fenichel manifold $\Lambda_{\e}^{\alpha,s}$, which is the weak manifold of the  stable node, can intersect or not the section $v=-1$.
		If this intersection occurs it is located to the left of the tangency point $T^{\alpha, \e}_{Y}$.
	\end{itemize}
	
	When $(\alpha,\e)$ is above the parabola $\mathcal{D}$ the critical point $P(\alpha,\e)$ becomes a focus, which is unstable for $\alpha<\alpha_\mathcal{H}(\e)$,
	undergoes  a Hopf bifurcation for $\alpha=\alpha_\mathcal{H}(\e)$ and is stable for $\alpha>\alpha_\mathcal{H}(\e)$.
	The main point here is
	that, since there are no periodic orbits in the bifurcation diagram of $Z^\alpha$, $\alpha \ne 0$, it must exist a curve in the parameter space such that, on
	this curve, the limit cycle which raises from the Hopf bifurcation disappears. It is at this point that the slow-fast nature of system \eqref{vsystem} plays a role,
	because the evolution of the periodic orbit will be influenced by the evolution of the Fenichel manifolds when the parameters vary.
	
	When $\alpha = \delta \varepsilon$, the critical manifolds of $Z^\alpha_\e$  are the same as the ones for $\alpha =0$.
	Then, as we saw in \autoref{ssec:OVcritical} there are two critical manifolds
	$\Lambda_0^{s,u}$ given by:
	$$
	\Lambda_0^s = \{ (x,v), \ v=m_0(x), x<0\}, \
	\Lambda_0^u = \{ (x,v), \ v=m_0(x), x>0\},
	$$
	which are normally hyperbolic (attracting and repelling respectively) and we restrict them to $|x|>\kappa$ for a small but fixed $\kappa>0$.
	Applying Fenichel theorem, for any compact subset $\mathcal{K}$ of the critical manifolds, and $\e$ small enough, we know the existence of two normally
	hyperbolic invariant manifolds
	$$
	\Lambda_\varepsilon^{s} = \{ (x,v)\in \mathcal{K}, \ v=m^s (x;\varepsilon), x<-\kappa\}, \
	\Lambda _\varepsilon^{u} = \{ (x,v)\in \mathcal{K}, \ v=m^u (x;\varepsilon), x>\kappa\},
	$$
	with $m^{s,u}(x;\e)=m_0(x)+ \mathcal{O}(\e)$.
	
	In \autoref{prop:canard} we prove the existence of a maximal Canard by 
	looking for $\alpha = \delta_\mathcal{C} \e + \mathcal{O}(\e^\frac{3}{2})$ such that the stable and unstable Fenichel manifolds
	can be extended up to $x=0$ and coincide.
	
	Moreover, we will see in \autoref{prop:linearcanard} that when $\varphi$ is linear,
	the regularized system $Z^\alpha_\e$ in the regularization zone $|v|\le 1$
	can be transformed, after a change of variables, into a normal form
	studied by Krupa-Szmolyan in \cite{KrupaS01}.
	This will completely determine the position of the curves where the Hopf bifurcation and the maximal Canard occur.
	Therefore, in this case, we provide a complete description of the bifurcation diagram of the regularized system.
	Later, in \autoref{ex:ch}, we see that this result is not true when $\varphi$ is not a linear map.
	Therefore, for non linear regularization the bifurcation diagram depends strongly of the transition map $\varphi$.
	
	\begin{figure}[!htb]
		\centering
		\begin{tiny}
			\def\svgscale{0.4}
			\input{./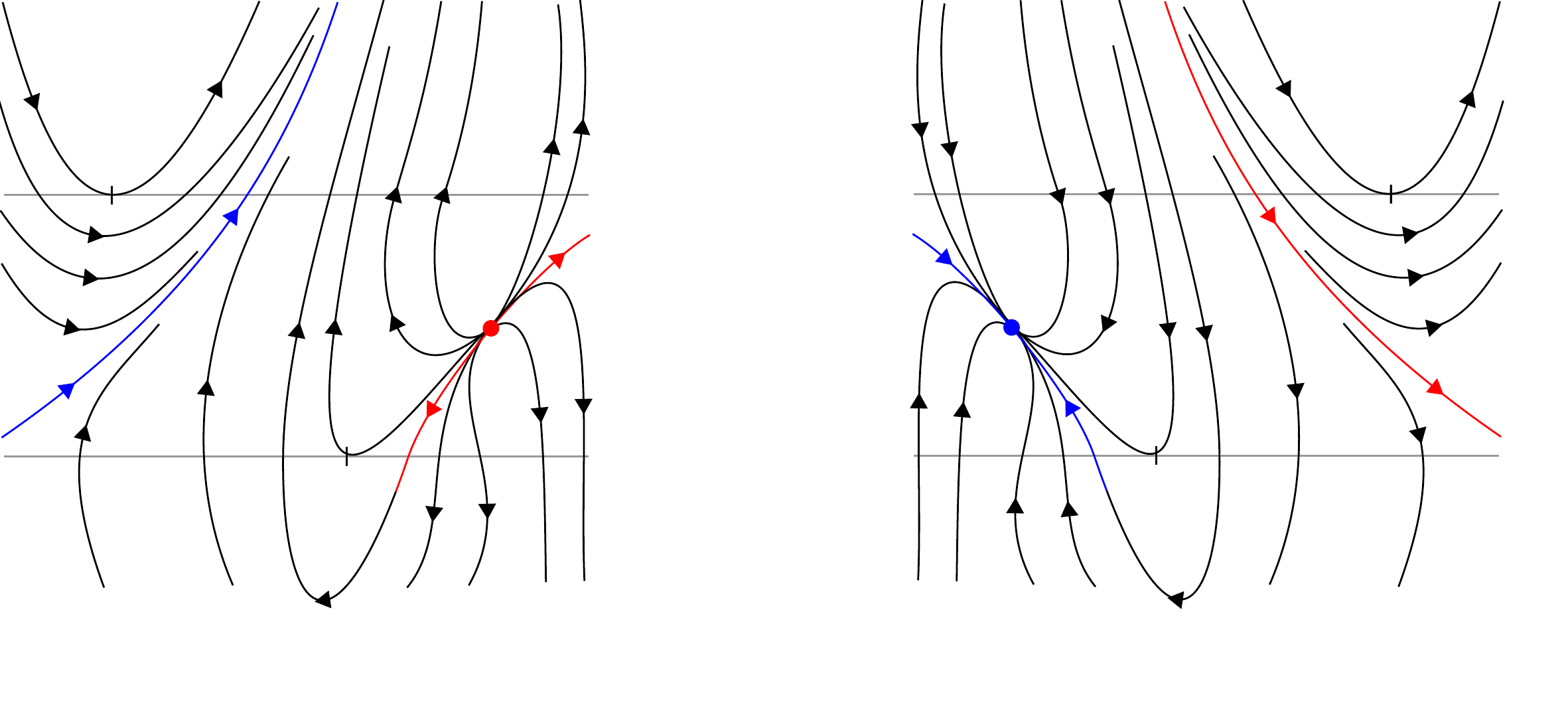_tex}
		\end{tiny}
		\caption{The phase portrait of $Z^{\alpha}_\e$ for $(\alpha,\e)$ below the curve $\mathcal{D}$.}
		\label{fig:VILUR2}
	\end{figure}

	\begin{prop} \label{prop:canard}
		Let $Z \in \Lambda^F$ having a visible-invisible fold, satisfying  $X^1\cdot Y^1(\0)<0$  and  $(\det{Z})_x(\0) < 0$.
		Then, for $\alpha = \delta \e$ the stable and unstable Fenichel manifolds $\Lambda^{\alpha,s}_\e$ and $\Lambda^{\alpha,u}_\e$ of the
		regularized system $Z^\alpha_\e$ can be extended up to $x=0$.
		Moreover:
		$$
		\Lambda^{\alpha,s,u}_\e\cap \{x=0\}= (0, \bar v+ \mathcal{O}(\e^{\frac{1}{2}})),
		$$
		where $\bar v=m_0(0)$ is given in \autoref{IVslowmanifold},
		and  the system has a maximal Canard for
		\begin{equation} \label{ccurve1}
		\mathcal{C} = \left\{ (\alpha,\e) : \, \alpha=\alpha_\mathcal{C}(\e) =  \delta_\mathcal{C} \e + \mathcal{O}\left( \e^{\frac{3}{2}} \right) \right\},
		\end{equation}
		and $\e$ small enough, where $\displaystyle{ \delta_C= - \frac{M_0 M_3 + M_1 M_4}{M_2 M_4}}$ and the constants $M_i$ are given by
		\begin{align} \label{mis1}
		\begin{aligned}
		M_0 &=\left( X^1 + Y^1 + \varphi(\bv) (X^1-Y^1) \right) (\0), \\
		M_1 &= \bv \left( X^2_{y} + Y^2_{y} + \varphi(\bv) \left( X^2_{y}-Y^2_{y} \right) \right) (\0), \\
		M_2 &= \left( \tilde X^2 + \tilde Y^2  + \varphi(\bar v) (\tilde X^2 - \tilde Y^2)  \right)(\0), \\
		M_3 &=\frac{1}{2} \left( X^2_{xx} + Y^2_{xx} + \varphi(\bv) \left( X^2_{xx}-Y^2_{xx} \right) \right)(\0), \\
		M_4	&= \varphi'\left(\bv\right) \left( X^2_{x} - Y^2_{x} \right)(\0).
		\end{aligned}
		\end{align}
		Moreover, for $\alpha= \alpha_\mathcal{C}(\e)$, one has that $
		\Lambda^{\alpha,s}_\e\cap \{x=0\}=\Lambda^{\alpha,u}_\e\cap \{x=0\}= (0, \bar v+ \mathcal{O}(\e)),
		$
	\end{prop}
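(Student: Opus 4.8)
The plan is to localize the analysis at the point $(0,\bv)$ where, by \autoref{ssec:OVcritical}, the attracting and repelling branches $\Lambda_0^s$ and $\Lambda_0^u$ of the critical manifold meet and normal hyperbolicity is lost, and to treat the passage of the Fenichel manifolds through this point as a canard problem for the slow-fast system \ref{vsystem}. First I would Taylor expand $F^1$ and $F^2$ (given in \ref{Fdef}) around $(x,v,\e,\alpha)=(0,\bv,0,0)$, using $X^2(\0)=Y^2(\0)=0$ and the unfolding notation \eqref{notaciounfolding}. Because $\bv$ is defined in \ref{IVslowmanifold} precisely so that the coefficient of $x$ in $F^2$ vanishes at $v=\bv$, the expansion acquires the fold normal form
\begin{equation}
F^2 = M_4\,(v-\bv)\,x + M_3\,x^2 + M_1\,\e + M_2\,\alpha + \text{h.o.t.}, \qquad F^1 = M_0 + \text{h.o.t.},
\end{equation}
where $M_0,\dots,M_4$ are exactly the constants in \ref{mis1}: $M_0=F^1(0,\bv;0,0)$ is the slow drift at the fold, $M_3$ and $M_4$ are the curvature and the $v$-derivative of the $x$-coefficient of $F^2$, and $M_1,M_2$ collect the linear $\e$- and $\alpha$-contributions. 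Here $M_4\neq 0$ by \autoref{visibility} (opposite visibility forces $(X^2_x-Y^2_x)(\0)\neq0$ and $\varphi'(\bv)>0$), while $M_0\neq0$ because, by \ref{induceddyn}, $M_0=2\gamma$ with $\gamma\neq0$ from \ref{bsliding}; the sign relation in \autoref{corol:signZs} moreover gives $M_4/M_0>0$ in the present case $(\det{Z})_x(\0)<0$, which will be needed for convergence below.

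Next I would introduce the canard rescaling $x=\sqrt{\e}\,\xi$, $v=\bv+\sqrt{\e}\,\eta$, $\alpha=\delta\e$, pass to the fast time of \ref{fsystem} and rescale it once more by $\sqrt{\e}$. At leading order this converts the system into $\frac{d\xi}{d\sigma}=M_0$, $\frac{d\eta}{d\sigma}=M_3\xi^2+M_4\xi\eta+M_1+M_2\delta$, i.e. the linear first-order (variational) equation
\begin{equation} \label{canardlin}
M_0\,\frac{d\eta}{d\xi} = M_4\,\xi\,\eta + M_3\,\xi^2 + (M_1+M_2\delta).
\end{equation}
In these coordinates the attracting and repelling Fenichel manifolds $\Lambda^{\alpha,s}_\e$ and $\Lambda^{\alpha,u}_\e$ correspond to the solutions of \eqref{canardlin} with algebraic growth as $\xi\to-\infty$ and $\xi\to+\infty$ respectively. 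Since the homogeneous solution of \eqref{canardlin} is $\exp\!\left(\frac{M_4}{2M_0}\xi^2\right)$, each such solution extends to every finite $\xi$, in particular to $\xi=0$; this yields the claimed extension of the manifolds up to $x=0$ and the intersection at $(0,\bv+\mathcal{O}(\e^{1/2}))$, the $\sqrt{\e}$ coming directly from $v-\bv=\sqrt{\e}\,\eta(0)$ with $\eta(0)=\mathcal{O}(1)$.

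The maximal Canard is then the solution of \eqref{canardlin} that is algebraic at \emph{both} ends simultaneously. Multiplying by the integrating factor $\mu(\xi)=\exp\!\left(-\frac{M_4}{2M_0}\xi^2\right)$ and imposing the solvability condition that the bounded-at-both-ends solution exist, namely $\int_{-\infty}^{\infty}\mu(s)\,(M_3 s^2 + M_1 + M_2\delta)\,ds = 0$, the Gaussian integrals give $M_1+M_2\delta_\mathcal{C}=-M_0 M_3/M_4$, that is $\delta_\mathcal{C}=-\frac{M_0 M_3+M_1 M_4}{M_2 M_4}$, matching the stated value (here $M_2\neq0$ by the versality condition \ref{versal}). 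Moreover, since both $\mu$ and $M_3 s^2+c$ are even in $s$, the canard solution satisfies $\eta(0)=0$, which upgrades the intersection estimate at $x=0$ from $\mathcal{O}(\e^{1/2})$ to $\mathcal{O}(\e)$ precisely on the curve $\mathcal{C}$, as asserted.

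The main obstacle is to justify all of this at the level of the full nonlinear system rather than the leading-order equation \eqref{canardlin}. One must show that the Fenichel manifolds, which by \autoref{ssec:OVcritical} and Fenichel theory exist only away from the non-hyperbolic fold, genuinely persist and extend through the rescaling chart up to $x=0$, and that the leading-order solvability condition survives with the stated error $\mathcal{O}(\e^{3/2})$. The plan is to carry this out by a blow-up of the fold point $(0,\bv)$ in the spirit of \cite{KrupaS01,KristiansenH15}: matching the Fenichel manifolds from the outer (hyperbolic) charts into the rescaling chart where \eqref{canardlin} governs the dynamics, and controlling the associated transition maps. This matching and the resulting error bookkeeping are the technical heart of the argument, and constitute the computation deferred to \autoref{proofcanard}.
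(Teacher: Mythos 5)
Your proposal follows essentially the same route as the paper's proof in \autoref{proofcanard}: the $\sqrt{\e}$-rescaling at $(0,\bv)$ with $\alpha=\delta\e$, the same linear inner equation (the paper's \ref{eq:inner} with $N_i=M_i/M_0$ and $N_4>0$), identification of the Fenichel manifolds with the solutions growing algebraically as $\xi\to\mp\infty$, and the Gaussian solvability condition — which is exactly the paper's evaluation of $s_0^-(0)-s_0^+(0)$ — yielding the same $\delta_\mathcal{C}$, the nondegeneracy $M_2\neq 0$, and the $\mathcal{O}(\e)$ refinement on $\mathcal{C}$ via evenness. The only difference is in how the matching is rigorized: where you propose blow-up charts in the spirit of \cite{KrupaS01}, the paper instead uses two explicit isolating-block estimates (Propositions \ref{block1} and \ref{block2}) to extend the Fenichel manifolds to $x=-\e^{\lambda}$ and then to $r=0$, followed by an Implicit Function Theorem step on $\xi(\delta,\gamma)=\frac{1}{\gamma}(v^s-v^u)$ to get the $\mathcal{O}(\e^{3/2})$ correction.
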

	\begin{proof}
		The proof of this proposition is done using asymptotic methods and it is deferred to subsection \ref{proofcanard} in the appendix.
	\end{proof}

	Next, we will see how  the maximal Canard obtained in \autoref{prop:canard}  plays an important role in the behavior of the periodic orbits of the system.
	
	During this section, we fix $\mathcal{C} < \mathcal{H}$, that is, the curve  $\mathcal{C}$ where the
	Canard trajectory takes place is located to the left of the curve  $\mathcal{H}$ where the Hopf bifurcation happens.
	The other case can be done analogously.
	In Theorems \ref{thm:bdVIsuper} and  \ref{thm:bdVIsub}, completed by \autoref{pocanard} we present the bifurcation diagram of the regularized system
	$Z^\alpha_\e$ depending on the sign of the first Lyapunov coefficient $\ell_1(\alpha(\e),\e)$ over the curve $\mathcal{H}$ and the sign of the
	``way-in,way-out'' function \eqref{eq:R}.
	
	\begin{theo} \label{thm:bdVIsuper}
		Let $Z \in \Lambda^F$ having an visible-invisible fold-fold singularity at the origin sa\-tis\-fying  $X^1\cdot Y^1(\0)<0$ and $(\det{Z})_x(\0)<0$.
		Suppose that on the curve $\mathcal{H}$ (given in \ref{hcurve}), the first Lyapunov coefficient $\ell_1(\alpha(\e),\e)<0$ and for each $\e$
		sufficiently small $\alpha_\mathcal{C}(\e)<\alpha_\mathcal{H}(\e)$.
		
		Let $\alpha_\mathcal{D}^\pm(\e_0)$, $\alpha_\mathcal{H}(\e_0)$ and $\alpha_\mathcal{C}(\e_0)$ be the intersection of the curves
		$\mathcal{D}^\pm$(\ref{dcurve}), $\mathcal{H}$(\ref{hcurve}) and $\mathcal{C}$(\ref{ccurve}) with the line $\e=\e_0$, respectively.
		We have the following:
		\begin{itemize}
			\item
			For $\alpha<\alpha_\mathcal{D}^-(\e_0)$ the critical point $P(\alpha,\e)$ is a unstable node;
			\item
			For $\alpha_\mathcal{D}^-(\e_0)<\alpha<\alpha_\mathcal{C}(\e_0)$ the critical point $P(\alpha,\e)$ is a unstable focus;
			\item
			For $\alpha=\alpha_\mathcal{C}(\e_0)$ the stable and unstable Fenichel manifolds of system
			$Z^\alpha_\e$ coincide along  a maximal Canard and there exists an stable periodic orbit $\Delta^{\alpha,s}_{\e}$
			for $\alpha>\alpha_\mathcal{C}(\e_0)$.
			\item
			The periodic orbit $\Delta^{\alpha,s}_{\e}$ persists for $\alpha_\mathcal{C}<\alpha<\alpha_\mathcal{H}(\e_0)$.
			\item
			For $\alpha=\alpha_\mathcal{H}(\e_0)$ a supercritical Hopf bifurcation takes place.
			The critical point $P(\alpha,\e)$ becomes an stable focus.
			\item
			Moreover, the critical point $P(\alpha,\e)$ is a stable focus for $\alpha_\mathcal{H}(\e_0)<\alpha<\alpha_\mathcal{D}^+(\e_0)$ and a stable node
			$\alpha>\alpha_\mathcal{D}^+(\e_0)$.
			\item  
			If the Melnikov function $M(v,\delta)$ is strictly concave for $\delta$ near $\delta_\mathcal{H}$, 
			then the periodic orbit $\Delta^{\alpha,s}_\e$ is unique and disappears at $\alpha=\alpha_\mathcal{H}(\e_0).$
		\end{itemize}
	\end{theo}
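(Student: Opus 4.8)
The proof assembles facts already established in the paper with a topological construction of the periodic orbit near the Canard, so I would organise it around four ingredients. First, the topological type of the critical point $P(\alpha,\e)$ throughout the diagram is read off directly from \autoref{corol:toptype} and \autoref{prop:toptype}. Since we are in the opposite-visibility case with $(\det{Z})_x(\0)<0$, item (c2) of \autoref{corol:toptype} gives that $P(\alpha,\e)$ is unstable for $(\alpha,\e)$ on the left of $\mathcal{H}$ and stable on its right, while the parabola $\mathcal{D}$ of \ref{dcurve} separates the node region (below) from the focus region (above). This settles the first two bullets and the two ``moreover'' bullets describing the focus/node character for $\alpha_\mathcal{H}(\e_0)<\alpha<\alpha_\mathcal{D}^+(\e_0)$ and for $\alpha>\alpha_\mathcal{D}^+(\e_0)$. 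The existence of the maximal Canard on the curve $\mathcal{C}$ is exactly \autoref{prop:canard}, which also yields that $\Lambda^{\alpha,s}_\e\cap\{x=0\}=\Lambda^{\alpha,u}_\e\cap\{x=0\}$ precisely when $\alpha=\alpha_\mathcal{C}(\e)$.

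The heart of the argument is the existence of the stable periodic orbit $\Delta^{\alpha,s}_\e$ for $\alpha>\alpha_\mathcal{C}(\e_0)$, which I would obtain by a Poincar\'{e}--Bendixson trapping argument rather than by perturbing the Hopf cycle. For $\alpha$ slightly larger than $\alpha_\mathcal{C}$, \autoref{prop:canard} shows that the attracting manifold $\Lambda^{\alpha,s}_\e$ and the repelling manifold $\Lambda^{\alpha,u}_\e$, both extended up to $x=0$, no longer coincide but split with a definite ordering of their intersections with $\{x=0\}$. Combining this local picture inside the strip $|v|\le 1$ with the behavior of the original smooth fields $X$ and $Y$ for $v\ge 1$ and $v\le -1$, which generate the large fast excursions (cf. the remark after \ref{fsystem}), one closes up a trajectory and traps an annulus whose outer boundary is crossed inward and whose inner boundary excludes the unstable focus $P(\alpha,\e)$. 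The absence of any other critical point there (\autoref{eqlemma}) then forces a periodic orbit, and its stable character follows from the entry--exit (``way-in, way-out'') balance controlled by the sign of \eqref{eq:R}, as quantified in \autoref{pocanard} via formula \eqref{Rsign}.

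Having produced $\Delta^{\alpha,s}_\e$ just beyond the Canard, I would extend it by continuation: as $\alpha$ increases from $\alpha_\mathcal{C}(\e_0)$ toward $\alpha_\mathcal{H}(\e_0)$ the focus stays unstable, the trapping annulus persists, and Poincar\'{e}--Bendixson keeps yielding the orbit, which gives the persistence bullet. At $\alpha=\alpha_\mathcal{H}(\e_0)$ the hypothesis $\ell_1(\alpha(\e),\e)<0$ makes the Hopf bifurcation supercritical, so $P(\alpha,\e)$ becomes a stable focus for $\alpha>\alpha_\mathcal{H}(\e_0)$. For the last bullet, both the Canard-born orbit and the Hopf-born orbit correspond, to leading order, to zeros of the Melnikov function $M(v,\delta)$; strict concavity of $M(\cdot,\delta)$ forces at most one relevant simple zero, so the two orbits must coincide and the unique $\Delta^{\alpha,s}_\e$ disappears exactly at $\alpha=\alpha_\mathcal{H}(\e_0)$.

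I expect the main obstacle to be this middle step: making the splitting of the Fenichel manifolds at $x=0$ and their global return through the fast layers precise enough to define a genuine positively (resp. negatively) invariant region, and evaluating the way-in/way-out integral \eqref{eq:R} to pin down the stability sign. Once that local-to-global matching is in place, the remainder is essentially bookkeeping of the already-proven statements \autoref{corol:toptype}, \autoref{prop:canard}, \autoref{prop:toptype} and the standard supercritical Hopf normal form.
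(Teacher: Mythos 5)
Your skeleton reproduces the paper's own proof almost step for step: the critical-point types for the first two and the last two ``node/focus'' bullets come from \autoref{corol:toptype}, the maximal Canard from \autoref{prop:canard}, the existence of $\Delta^{\alpha,s}_{\e}$ by a topological trapping argument (the paper implements it with a return map $\pi$ on a section $\Theta$ through $P(\alpha,\e)$ transverse to $\Lambda^{\alpha,s}_\e$ and Bolzano's theorem rather than an explicit annulus, but the mechanism is the same), the supercritical Hopf from $\ell_1<0$, and the identification of the Canard-born and Hopf-born orbits via strict concavity of the Melnikov function. Up to that point your proposal is sound.

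The genuine gap is your stability step. You claim the stable character of $\Delta^{\alpha,s}_{\e}$ ``follows from the entry--exit balance controlled by the sign of \eqref{eq:R}, as quantified in \autoref{pocanard} via formula \eqref{Rsign}.'' This fails on three counts. First, $R$ and the coefficient $B$ in \eqref{Rsign} govern the stability of the \emph{big} Canard-explosion orbit $\Delta^{\alpha,\mathcal{C}}_{\e}$, not of the orbit trapped between the split Fenichel manifolds. Second, $B$ is not sign-definite under the hypotheses of the theorem: \autoref{ex:Bpositivo} satisfies $\ell_1<0$ and $\alpha_\mathcal{C}<\alpha_\mathcal{H}$ yet has $B=5.66>0$, and the paper stresses, just before \autoref{pocanard}, that $\Delta^{\alpha,s}_{\e}$ exists and is stable between $\mathcal{C}$ and $\mathcal{H}$ \emph{independently} of the sign of $B$; so your argument would output the wrong stability precisely when $B>0$. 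Third, \autoref{pocanard} is proved after these theorems and its proof invokes them, so leaning on it here is circular. The repair is already contained in your trapping construction and is how the paper concludes: for $\alpha>\alpha_\mathcal{C}(\e_0)$ the unstable Fenichel manifold lies above the stable one with the unstable focus $P(\alpha,\e)$ below both (\autoref{rem:vsbvposition}), so a point $P_1$ on $\Theta$ just above $\Lambda^{\alpha,s}_\e$ returns below it, giving $\pi(P_1)-P_1<0$, while $\pi(P_2)-P_2>0$ for $P_2$ near the focus; the intermediate region is therefore attracting from both sides, and the fixed point of $\pi$ furnished by Bolzano's theorem is a stable periodic orbit --- no entry--exit integral is needed, and none would be valid here.
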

	
	\begin{proof}
		The character of the critical point is given by \autoref{corol:toptype}.
		The dynamics for $(\alpha,\e)$ below the parabola $\mathcal{D}$ has been discussed in the beginning of this section.
		We are going focus on the dynamics above the parabola $\mathcal{D}$.
		
		When $\alpha_\mathcal{D}(\e_0)^-<\alpha<\alpha_\mathcal{H}(\e_0)$ the critical point $P(\alpha,\e)$ is a unstable focus,
		see \autoref{fig:VILURch1}.

		In the case $\alpha_\mathcal{C}(\e_0) < \alpha_\mathcal{H}(\e_0)$ for each fixed $\e$ small enough, the maximal Canard occurs before the  Hopf bifurcation.
		
		For $\alpha<0$ and $\e>0$ sufficiently small, the stable Fenichel manifold $\Lambda^{\alpha,s}_\e$ becomes unbounded for positive  time.
		The same occurs for $\alpha>0$ for the unstable Fenichel manifold $\Lambda_\e^{\alpha,u}$ in negative time, see \autoref{fig:VILUR2}.
		
		When $\alpha = \delta \e$, the critical manifold $\Lambda_0^\alpha$ associated to the vector field
		$Z^\alpha_\e$ is equal to the critical manifold
		$\Lambda_0=\Lambda_0^s \cup \Lambda_0^u$ associated to the vector field $Z_\e^\alpha$, $\alpha=0$.
		therefore, when $\alpha \rightarrow \alpha_\mathcal{C}(\e_0)^\pm$, both the stable and unstable the Fenichel manifolds become ``flattened'' until they
		coincide at $\alpha=\alpha_\mathcal{C}(\e_0)$, see \autoref{fig:VISuper2}.
		However, by continuity, for $\alpha< \alpha_\mathcal{C}(\e_0)$
		the stable Fenichel manifold is above the unstable one and the opposite occurs for $\alpha>\alpha_\mathcal{C}(\e_0)$.
		
		\begin{figure}
			\centering
			\begin{tiny}
				\def\svgwidth{0.8\textwidth}
				\input{./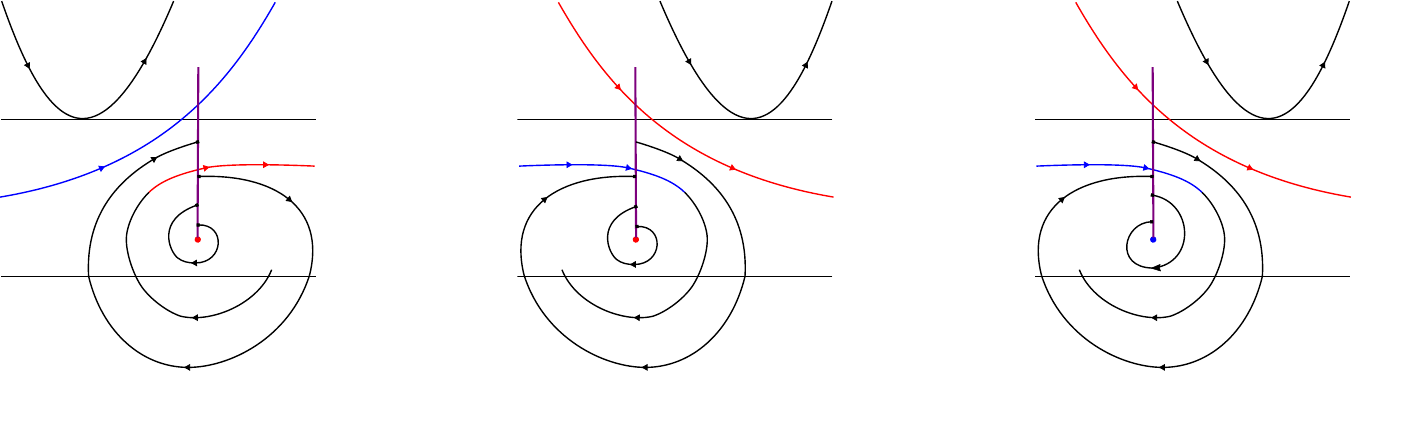_tex}
			\end{tiny}
			\caption{The first return map associated to the regularized system $Z^\alpha_\e$ when the folds have opposite visibility.}
			\label{fig:VILURhcop}
		\end{figure}
		
		At the point $\alpha=\alpha_{\mathcal{C}}(\e_0)$, the manifolds $\Lambda^{\alpha,u}_\e$ and $\Lambda^{\alpha,s}_\e$ coincide as
		in \autoref{fig:VISuper2} and the unstable focus $P(\alpha,\e)$ is below them
		(see \autoref{rem:vsbvposition}). 
		This change on the relative position of the critical manifolds $\Lambda^{\alpha,s}_\e$ and $\Lambda^{\alpha,u}_\e$, 
		gives raise to a stable periodic orbit which persists for  $\alpha_\mathcal{C}(\e_0)<\alpha<\alpha_\mathcal{H}(\e_0)$.
		
		\begin{figure}[!htb]
			\centering
			\begin{tiny}
				\subfigure[\label{fig:VISuper1}]{\def\svgscale{0.4} \input{./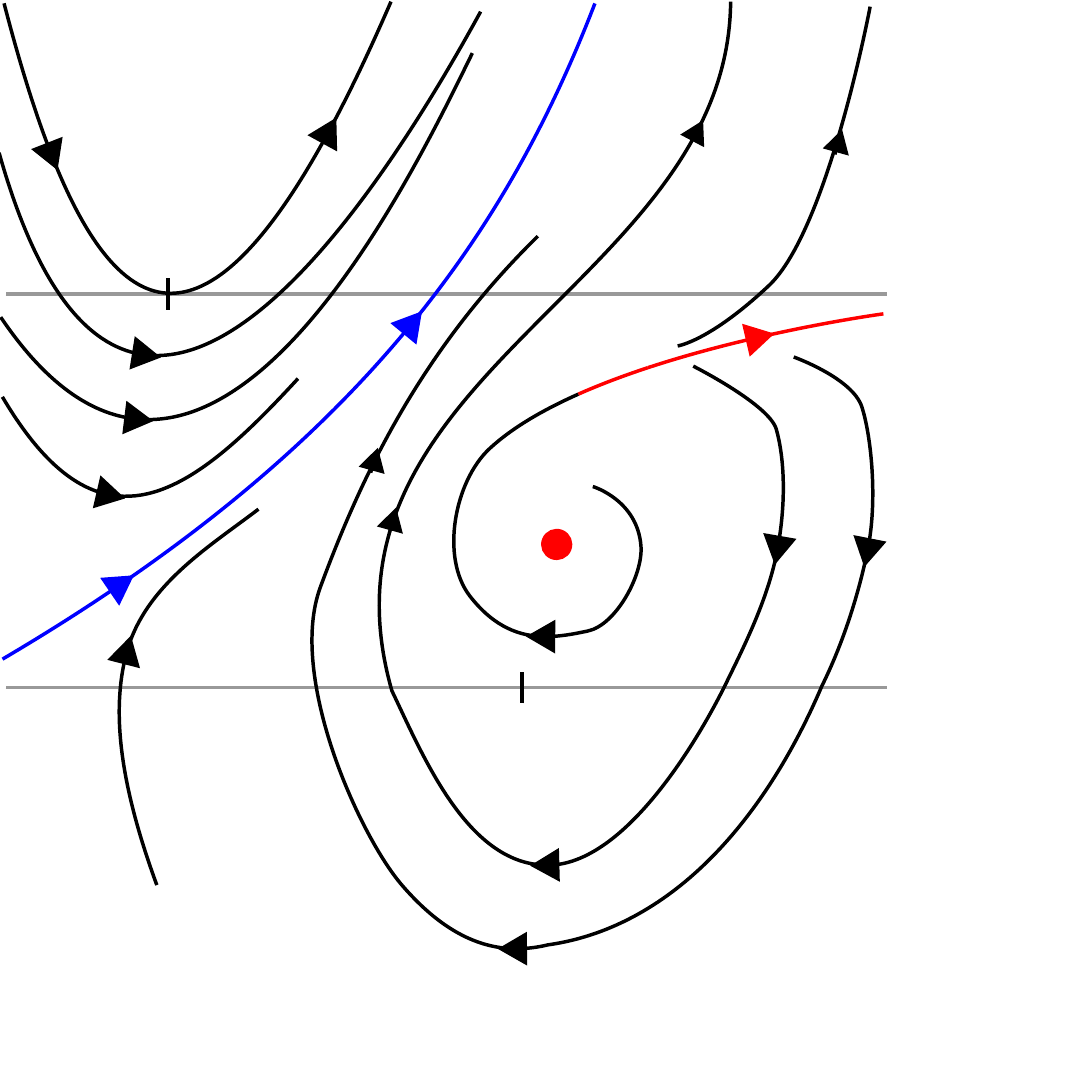_tex}}  \hspace{1cm}
				\subfigure[\label{fig:VISuper2}]{\def\svgscale{0.4} \input{./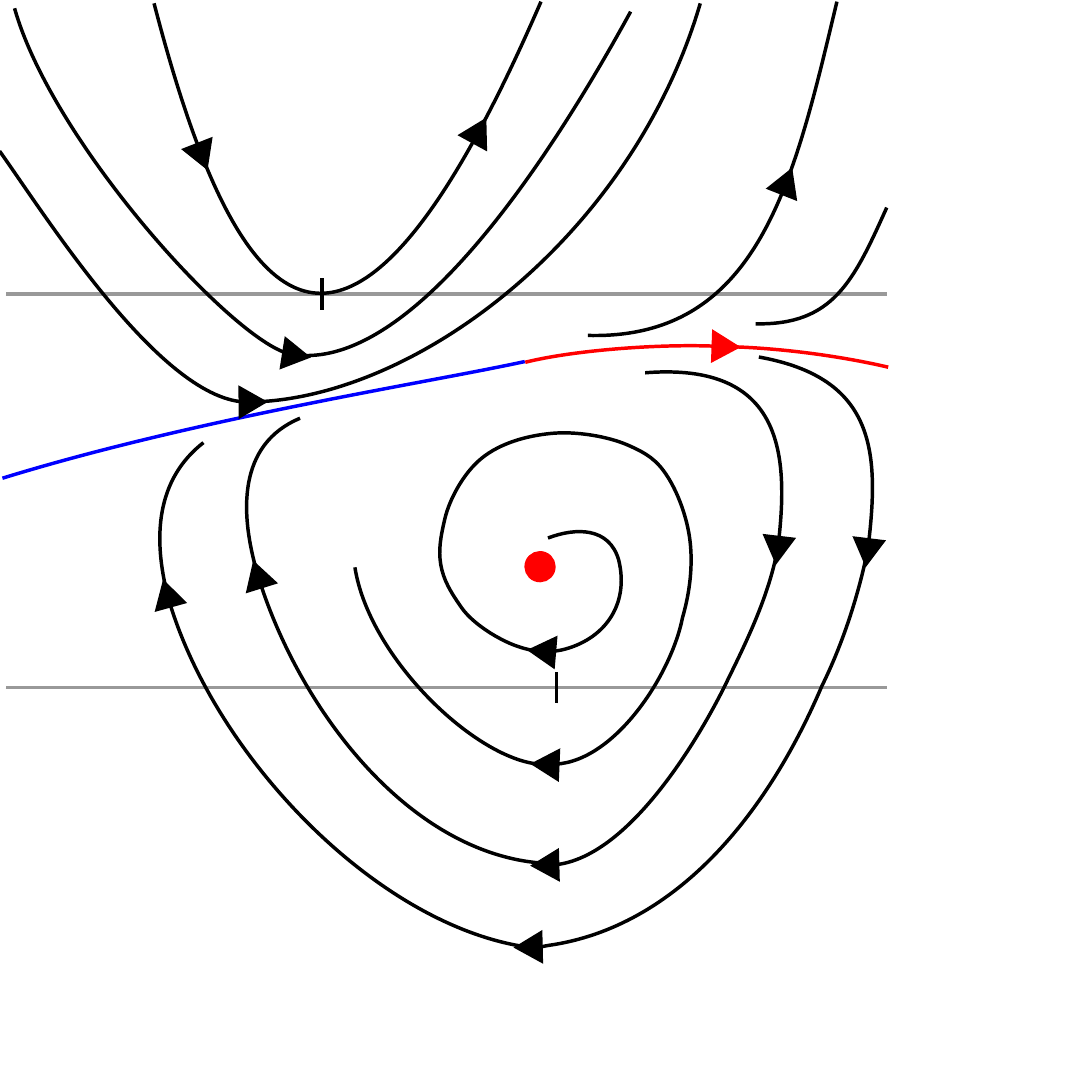_tex}}  \hspace{1cm} \\
				\subfigure[\label{fig:VISuper3}]{\def\svgscale{0.4} \input{./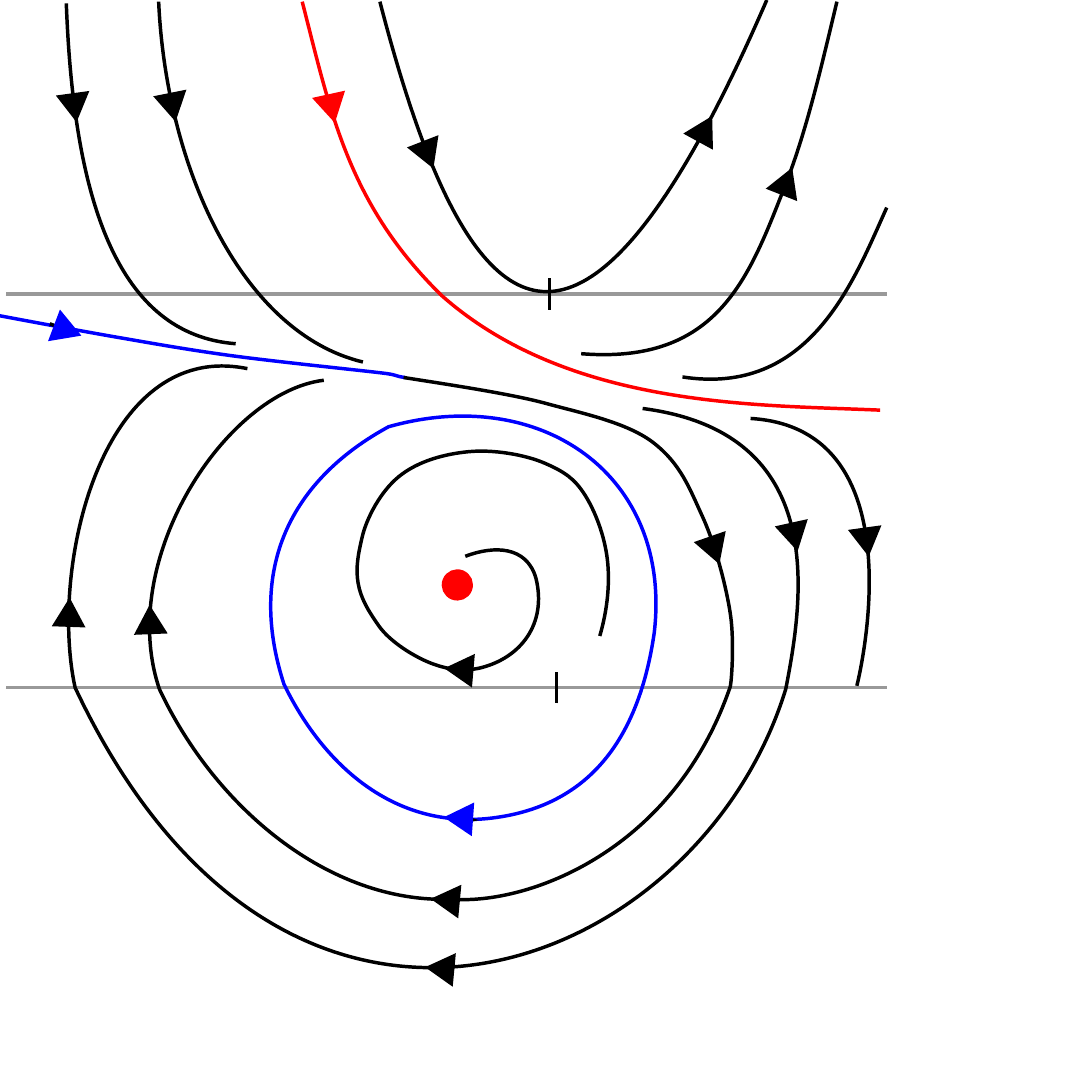_tex}}  \hspace{1cm}
				\subfigure[\label{fig:VISuper4}]{\def\svgscale{0.4} \input{./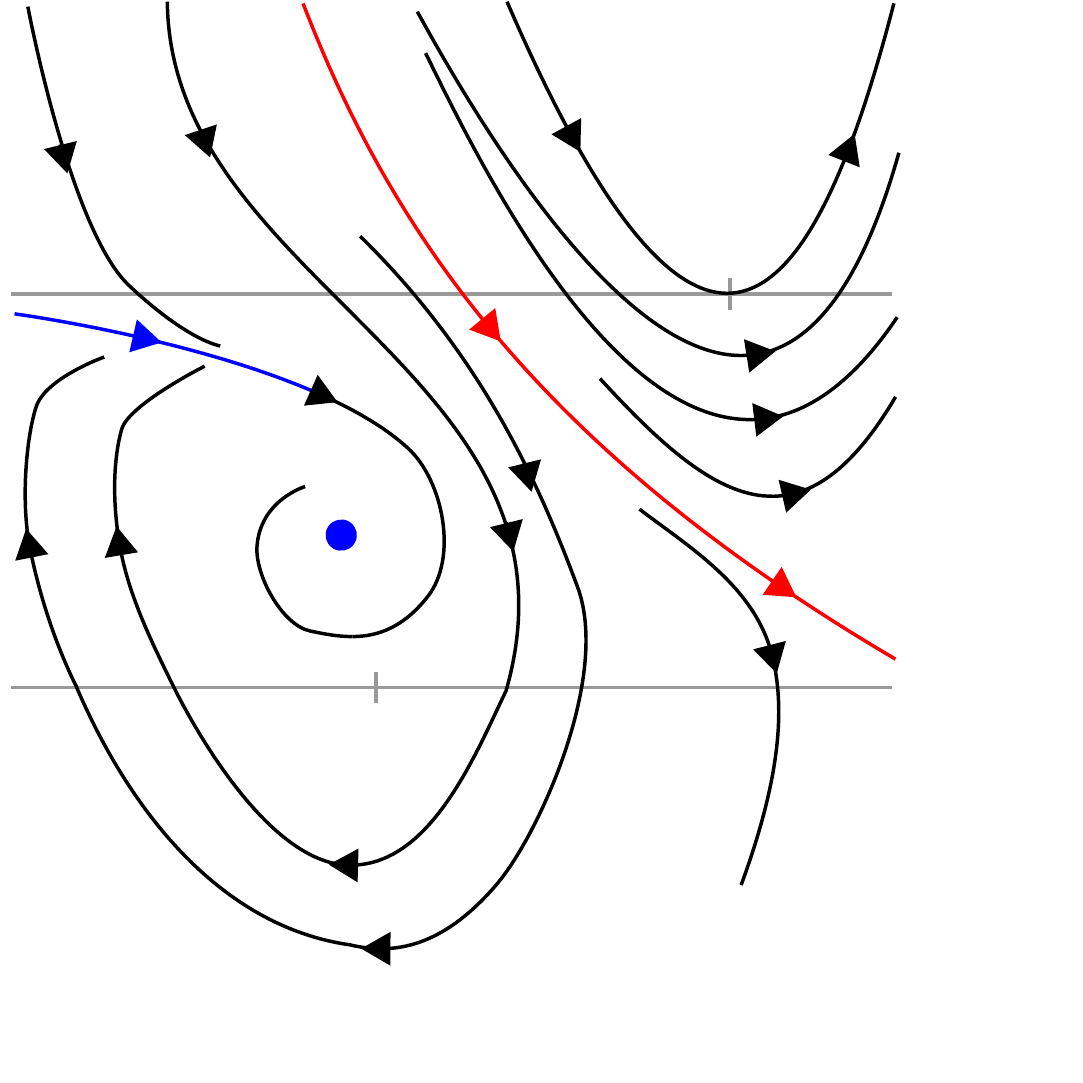_tex}}
			\end{tiny}
			\caption{The supercritical Hopf bifurcation on the regularization of a visible-invisible fold-fold satisfying $\alpha_{\mathcal{C}}<\alpha_{\mathcal{H}}$.}
			\label{fig:VISubHCdiagram}
		\end{figure}
		
		In fact, let $\Theta$ be a cross section transverse to the stable Fenichel manifold $\Lambda^{\alpha,s}_\e$ and containing
		the point $P(\alpha,\e)$, consider the return map $\pi :\Theta\to \Theta$.
		Take $P_1 \in \Theta$ which is above and sufficiently close to $\Lambda^{\alpha,s}_\e$, its trajectory  crosses the line $v=-1$ and then
		intersects the section $\Theta$ below $\Lambda^{\alpha,s}_\e$; therefore $\pi (P_1)-P_1<0$.
		Opposed to this behavior, since the point $P(\alpha,\e)$ is a unstable focus, 
		the trajectory of any initial condition $P_2 \in \Theta$ near the focus
		intersects, for negative time, $\Theta$ even closer to the focus point $P(\alpha,\e)$, see
		\autoref{fig:VILURhcop}; therefore $\pi (P_2)-P_2>0$.
		
		This way, the Bolzano-Weierstrass Theorem,  guarantees the existence of a fixed point of the return map $\pi$ which gives 
		an stable periodic orbit $\Delta^{\alpha,s}_{\e}$ for all $\alpha_\mathcal{C}(\e_0)<\alpha<\alpha_\mathcal{H}(\e_0)$.
		
		When $\alpha=\alpha_\mathcal{H}(\e_0)$, the critical point $P(\alpha,\e)$ undergoes to a supercritical Hopf bifurcation, therefore, 
		for $\alpha<\alpha_\mathcal{H}(\e_0)$ there exists a small stable periodic orbit $\bar{\Delta}^{\alpha,s}_\e$ near the critical point 
		$P(\alpha,\e)$. 
		For $\alpha>\alpha_\mathcal{H}(\e_0)$ the critical point changes its stability and becomes a stable focus, 
		therefore, there are no periodic orbits near the critical point.
		
		On the other hand, if the Melnikov function $M(v,\delta)$ is strictly concave for $\delta$ near $\delta_\mathcal{C}$ and 
		$\alpha=\delta \e$, system $Z^\alpha_\e$ has a unique periodic orbit for parameter values in this interval. 
		Therefore it follows that the two stable periodic orbits $\Delta^{\alpha,s}_\e$ and $\bar{\Delta}^{\alpha,s}_\e$ coincide. 
		This means that, in this case, the periodic orbit which raises from the canard becomes smaller until it disappear in the Hopf bifurcation. 
		Finally, for $\alpha>\alpha_\mathcal{H}(\e_0)$ the regularized system has no periodic orbits.
	\end{proof}

	\begin{theo} \label{thm:bdVIsub}  Consider the same hypothesis of \autoref{thm:bdVIsuper} but now $\ell_1(\alpha(\e),\e)>0$.
		
		Let $\alpha_\mathcal{D}^\pm(\e_0)$, $\alpha_\mathcal{H}(\e_0)$ and $\alpha_\mathcal{C}(\e_0)$ be the intersection of the line $\e=\e_0$ with the
		curves $\mathcal{D}^\pm$, $\mathcal{H}$ and $\mathcal{C}$, respectively. Generically, we have the following
		\begin{itemize}
			\item
			For $\alpha< \alpha_\mathcal{D}^-(\e_0)$ the critical point $P(\alpha,\e)$ is a unstable node;
			\item
			For $\alpha_\mathcal{D}(\e_0)^-<\alpha<\alpha_\mathcal{H}(\e_0)$ the critical point $P(\alpha,\e)$ is a unstable focus;
			\item
			When $\alpha=\alpha_\mathcal{C}(\e_0)$ the system $Z^\alpha_\e$ has a Canard trajectory;
			\item
			For $\alpha_\mathcal{C}(\e_0)<\alpha<\alpha_\mathcal{H}(\e_0)$ there exists a stable periodic orbit $\Delta^{\alpha,s}_\e$ and
			the critical point $P(\alpha,\e)$ is a unstable focus;
			\item
			When $\alpha=\alpha_\mathcal{H}(\e_0)$ a subcritical Hopf bifurcation takes place.
			The critical point $P(\alpha,\e)$ becomes a stable focus and a unstable periodic orbit $\Delta^{\alpha,u}_{\e}$ appears;
			\item
			The pair of periodic orbits coexist for $\alpha_\mathcal{H}(\e_0)<\alpha<\alpha_\mathcal{S}(\e_0)$;
			\item
			When $\alpha>\alpha_\mathcal{S}(\e_0)$ there are no periodic orbits.
			Moreover, the critical point $P(\alpha,\e)$ is an stable focus when $\alpha_\mathcal{H}(\e_0)<\alpha<\alpha_\mathcal{D}^+(\e_0)$
			and an stable node for $\alpha>\alpha_\mathcal{D}^+(\e_0)$.
		\end{itemize}
	\end{theo}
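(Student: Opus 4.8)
The plan is to follow the architecture of the proof of \autoref{thm:bdVIsuper} as far as the Canard is concerned, and then to replace its supercritical ending by the subcritical one, adding a saddle-node of periodic orbits. First I would read off the topological type of $P(\alpha,\e)$ directly from \autoref{corol:toptype}(c2): since $(\det{Z})_x(\0)<0$ and the folds have opposite visibility, $P(\alpha,\e)$ is an unstable node below $\mathcal{D}$, an unstable focus between $\mathcal{D}$ and $\mathcal{H}$, a stable focus between $\mathcal{H}$ and $\mathcal{D}^+$, and a stable node above $\mathcal{D}^+$. The existence of the maximal Canard on $\mathcal{C}$ is exactly \autoref{prop:canard}, and by hypothesis $\alpha_\mathcal{C}(\e_0)<\alpha_\mathcal{H}(\e_0)$. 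The geometry of the Fenichel manifolds $\Lambda^{\alpha,s}_\e$, $\Lambda^{\alpha,u}_\e$ below $\mathcal{D}$ and the switch of their relative position as $\alpha$ crosses $\mathcal{C}$ is the one established at the start of the section and in \autoref{prop:canard}.

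Next I would reproduce the return-map argument from \autoref{thm:bdVIsuper} to produce the stable periodic orbit $\Delta^{\alpha,s}_\e$ for $\alpha_\mathcal{C}(\e_0)<\alpha<\alpha_\mathcal{H}(\e_0)$: on a section $\Theta$ transverse to $\Lambda^{\alpha,s}_\e$ through $P(\alpha,\e)$, a point $P_1$ just above $\Lambda^{\alpha,s}_\e$ returns strictly below it after crossing $v=-1$, so $\pi(P_1)-P_1<0$, whereas a point $P_2$ close to the unstable focus spirals outward, so $\pi(P_2)-P_2>0$; Bolzano--Weierstrass then yields a stable fixed point of $\pi$. The only ingredient specific to this configuration is that $P(\alpha,\e)$ lies below the coincident manifolds at $\alpha=\alpha_\mathcal{C}$, which is \autoref{rem:vsbvposition}, and this is insensitive to the sign of $\ell_1$.

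Then I would treat the Hopf bifurcation. Because $\ell_1(\alpha(\e),\e)>0$, the bifurcation on $\mathcal{H}$ is subcritical: as $\alpha$ increases through $\alpha_\mathcal{H}(\e_0)$ the focus passes from unstable to stable and an unstable periodic orbit $\Delta^{\alpha,u}_\e$ is shed into $\alpha>\alpha_\mathcal{H}(\e_0)$, encircling the now-stable focus. I would show that the stable Canard orbit survives into $\alpha>\alpha_\mathcal{H}(\e_0)$ by a Poincar\'e--Bendixson argument in the annulus bounded inside by $\Delta^{\alpha,u}_\e$ and outside by a curve on which the flow points inward, the latter coming, as in the proof of \autoref{thm:bdIIsub}, from the attracting character of the total return map far from the origin. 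This yields the nested configuration in which the stable focus is surrounded by $\Delta^{\alpha,u}_\e$, itself surrounded by $\Delta^{\alpha,s}_\e$, and hence the coexistence on $\alpha_\mathcal{H}(\e_0)<\alpha<\alpha_\mathcal{S}(\e_0)$.

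The hard part will be the disappearance of both cycles at $\mathcal{S}$. Existence of such a curve I would obtain by a soft continuity argument: for each fixed $\alpha\neq0$ and $\e$ small the field $Z^\alpha_\e$ is $\e$-close to the Filippov unfolding $Z^\alpha$, which by \autoref{prop:VISunfolding} has only a pseudo-node and no periodic orbit; hence no cycle persists in that regime, and by continuity in $(\alpha,\e)$ the stable and unstable orbits must meet along a curve $\mathcal{S}=\{\alpha=\alpha_\mathcal{S}(\e)\}$ and annihilate, leaving no periodic orbit for $\alpha>\alpha_\mathcal{S}(\e_0)$, while $P(\alpha,\e)$ becomes a stable node above $\mathcal{D}^+$. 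What this reasoning does not deliver by itself is that the collision is a genuine codimension-one saddle-node of cycles and that $\Delta^{\alpha,s}_\e$, $\Delta^{\alpha,u}_\e$ are the only orbits involved; that refinement is precisely where the Melnikov function of \autoref{sec:melnikov}, through the analogue of \autoref{prop:melnikov} and the coefficient of \autoref{pocanard} together with the sign of $\ell_1$, is needed, which is why the statement is phrased generically. This is the step I expect to require the most care.
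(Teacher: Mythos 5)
Your proposal is correct and follows essentially the same path as the paper's proof: the type of $P(\alpha,\e)$ from \autoref{corol:toptype}, the Canard from \autoref{prop:canard}, the return-map/Bolzano construction of $\Delta^{\alpha,s}_\e$ imported verbatim from \autoref{thm:bdVIsuper}, the subcritical Hopf shedding $\Delta^{\alpha,u}_\e$ for $\alpha>\alpha_\mathcal{H}(\e_0)$, and the curve $\mathcal{S}$ forced by the fact that $Z^\alpha_\e$ has no periodic orbits for fixed $\alpha$ and $\e$ small (via \autoref{prop:VISunfolding}), with the saddle-node character left to the generic case and the Melnikov refinement, exactly as the paper does. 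One small correction: the outer inward boundary of your Poincar\'{e}--Bendixson annulus past $\alpha_\mathcal{H}$ does not come from a $\G_Z$-type globally attracting return map as in \autoref{thm:bdIIsub} (there is no such crossing-cycle mechanism in the visible-invisible case), but from the post-Canard relative position of the Fenichel manifolds, i.e.\ the $\pi(P_1)-P_1<0$ mechanism you already set up, which persists for all $\alpha>\alpha_\mathcal{C}(\e_0)$ and is what the paper means by ``guaranteed by the first return map''.
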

	
	\begin{proof}
		The character of the critical point  and the argument which gives
		the existence of the stable periodic orbit  $\Delta^{\alpha,s}_{\e}$ for $\alpha_\mathcal{C}<\alpha<\alpha_\mathcal{H}$ 
		is already proved in  \autoref{thm:bdVIsuper}, see \autoref{fig:VILURch3}.
		
		At $\alpha > \alpha_\mathcal{H}(\e_0)$, the point $P(\alpha,\e)$ becomes an stable focus.
		Since the Hopf bifurcation is subcritical ($\ell_1(\alpha(\e),\e)>0$),
		a unstable periodic orbit $\Delta^{\alpha,u}_{\e}$ appears for $\alpha> \alpha_\mathcal{H}(\e_0)$, as shown in \autoref{fig:VILURch4}.
		Observe that the stable periodic orbit $\Delta^{\alpha,s}_{\e}$
		persistence is guaranteed by the first return map and therefore, both periodic orbits coexist.
		
		Since for fixed $\alpha>0$ and $\e>0$ small enough, $Z^\alpha_\e$ has no periodic orbits, for each $\e$ it must exist a value of
		$\alpha_\mathcal{S}(\e_0)$ such that the two orbits collapse and then disappear, as illustrated in Figures
		\ref{fig:VILURch5} and \ref{fig:VILURch6}, respectively. 
		One expects that, in the simplest case, for $\alpha=\alpha_\mathcal{S}(\e_0)$ the periodic orbits collide in a saddle-node bifurcation.
	\end{proof}
	
	\begin{figure}[!htb]
		\centering
		\begin{tiny}
			\subfigure[\label{fig:VILURch1}]{\def\svgscale{0.4} \input{./FIG/VILURch1.pdf_tex}}  \hspace{1cm}
			\subfigure[\label{fig:VILURch2}]{\def\svgscale{0.4} \input{./FIG/VILURch2.pdf_tex}}  \hspace{1cm}
			\subfigure[\label{fig:VILURch3}]{\def\svgscale{0.4} \input{./FIG/VILURch3.pdf_tex}}  \hspace{1cm}
			\subfigure[\label{fig:VILURch4}]{\def\svgscale{0.4} \input{./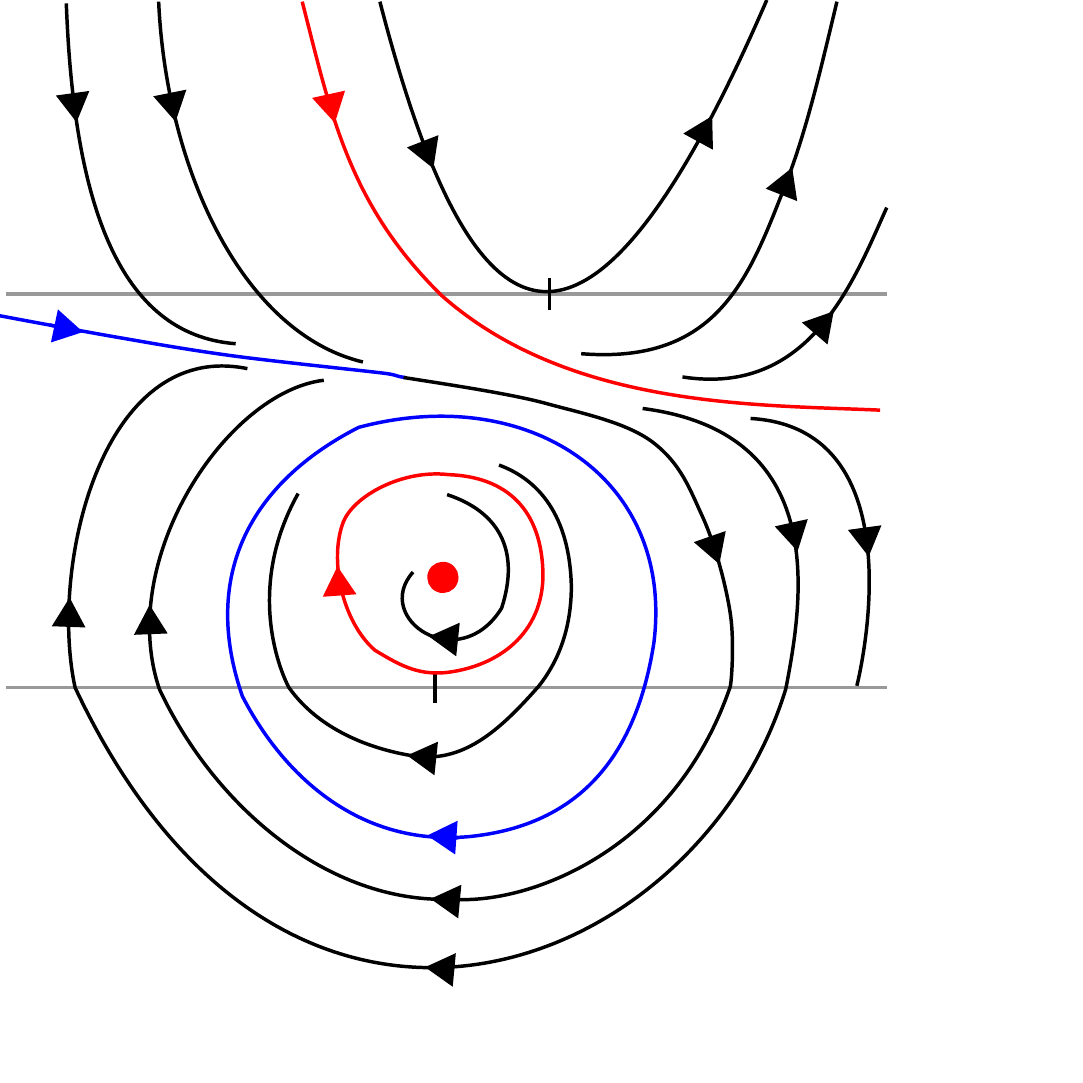_tex}}  \hspace{1cm}
			\subfigure[\label{fig:VILURch5}]{\def\svgscale{0.4} \input{./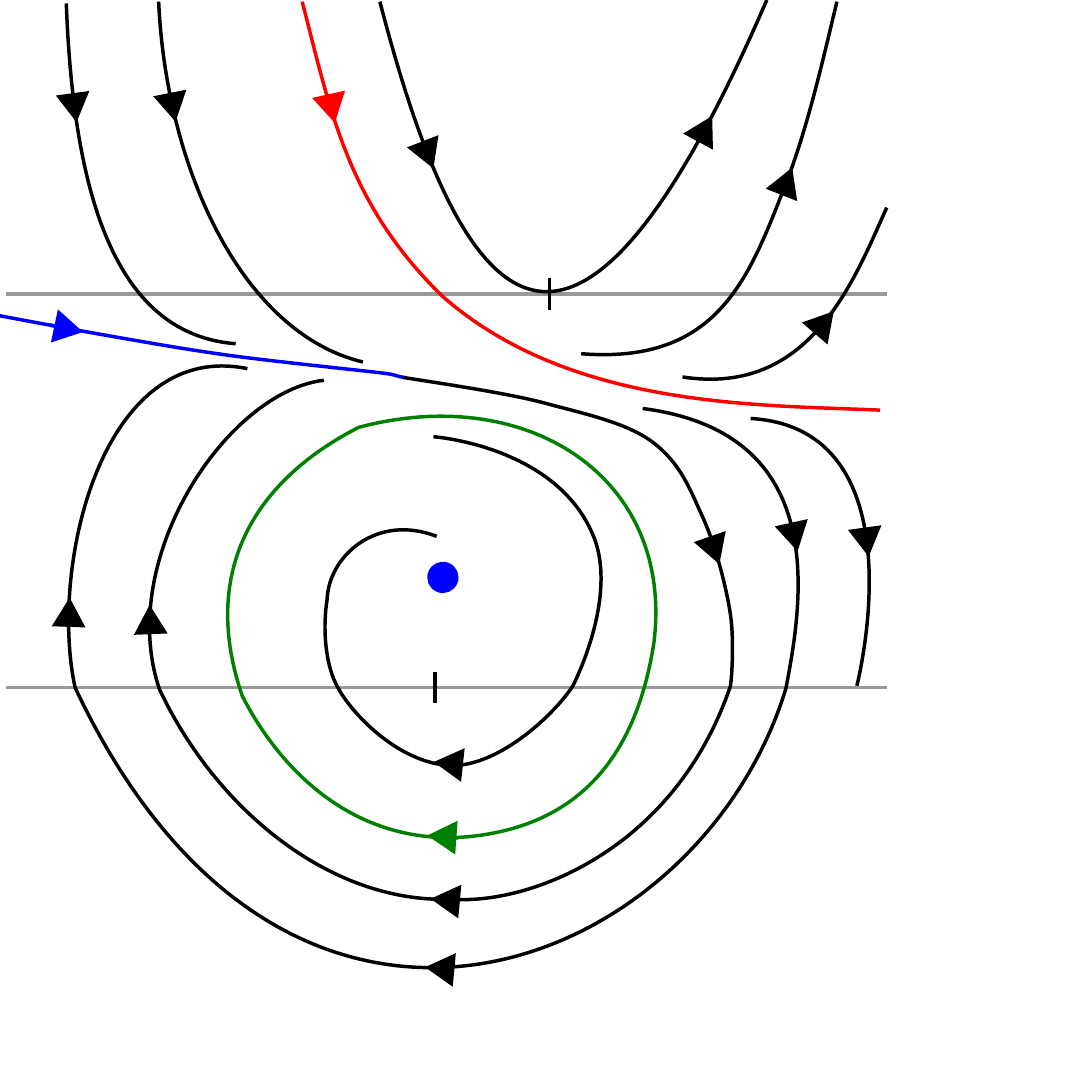_tex}}  \hspace{1cm}
			\subfigure[\label{fig:VILURch6}]{\def\svgscale{0.4} \input{./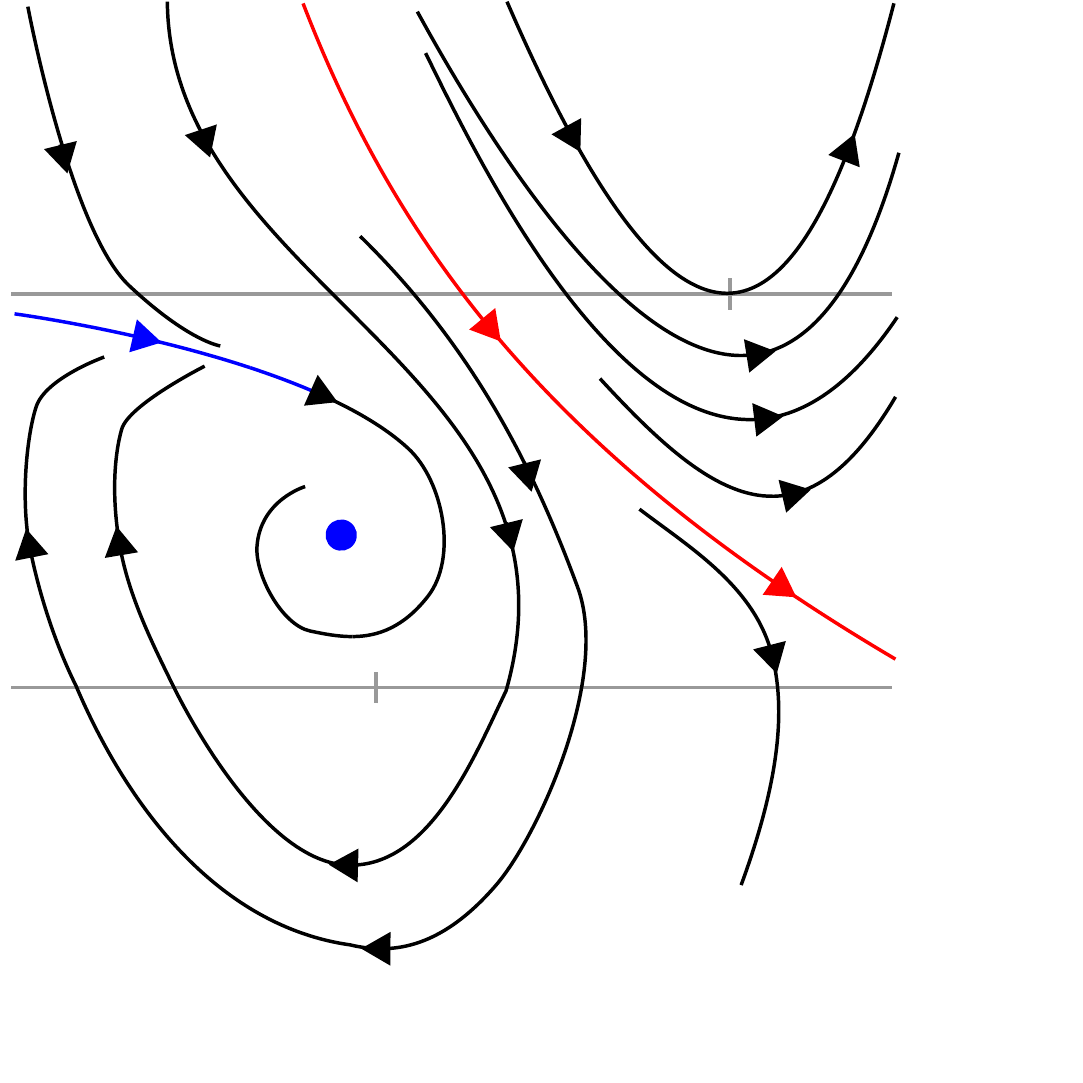_tex}}
		\end{tiny}
		\caption{The bifurcation diagram for each fixed $\e_0>0$ when $P(\alpha,\e)$ suffers a subcritical Hopf bifurcation and
			$\alpha_\mathcal{H}(\e)>\alpha_\mathcal{C}(\e)$.}
		\label{fig:VISubCHdiagram}
	\end{figure}
	
	\begin{rem}
		In this case one can obtain a proposition similar to \autoref{prop:melnikov} which gives the condition for the periodic orbits to collide in a saddle-node bifurcation 
		and, when  it exists, the value of the saddle node bifurcation parameters using the Melnikov function $M(v,\delta)$ given in \eqref{eq:melnikov}.
	\end{rem}
	
	Theorems  \ref{thm:bdVIsuper} and \ref{thm:bdVIsub} show that the regularization of the unfolding of a visible-invisible fold with $(\det{Z})_x(\0)<0$
	and $\alpha_\mathcal{C}<\alpha_\mathcal{H}$ behaves very closely to the classical slow-fast systems studied by Krupa and Szmolyan in \cite{krupa}.
	Next theorem shows that, when the transition function is linear, both systems are $\mathcal{C}^r-$conjugated inside the regularization zone.
	
	\begin{theo}[Linear regularization] \label{prop:linearcanard}
		Let $Z$ having a visible-invisible fold-fold point at the origin satisfying $X^1\cdot Y^1(\0)<0$ and $(\det{Z})_x(\0)<0$.
		Consider $Z^\alpha$  an unfolding of $Z$ and $Z^\alpha_\e$ its regularization with a linear transition function: $\varphi(v)=v$ for $v \in (-1,1)$.
		In addition, suppose that $\alpha = \delta \e$.
		Then there exists a change of variables which transforms system \ref{vsystem} in the region $|v|\le 1$ into the general slow-fast system \ref{eq:KSgeneral}.
		As a consequence  the maximal Canard occurs for $\alpha_\mathcal{C} = \delta_\mathcal{C} \e +\mathcal{O}(\e^{3/2})$
		\begin{equation}
		\delta_\mathcal{C} = \delta_\mathcal{H} + \bar{A} ,
		\end{equation}
		with $$
		\bar{A}=-\frac{1}{N(Z,\tilde{Z})} \left(-\frac{A_{10} A_3}{A_6^2} + \frac{A_9}{A_6} - A_1 \right).
		$$
		which has the same sign as the Lyapunov coefficient $\ell_1(\alpha(\e),\e)$ at the point $P(\alpha,\e).$
	\end{theo}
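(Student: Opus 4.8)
The plan is to use the linearity of $\varphi$ to bring the slow system \ref{vsystem}, restricted to the regularization band $|v|\le 1$, into the canonical canard form \ref{eq:KSgeneral} of Krupa and Szmolyan, and then to read off the location of the maximal Canard from their main result. Since $\varphi(v)=v$ on $(-1,1)$, the right-hand sides $F^1,F^2$ in \ref{Fdef} become smooth (indeed polynomial in $v$) functions of $(x,v)$, so the whole apparatus of \cite{KrupaS01} applies without the loss of regularity that a general transition function would cause. First I would locate the fold of the slow-fast structure: by the analysis of \autoref{ssec:OVcritical} the attracting branch $\Lambda_0^s$ and the repelling branch $\Lambda_0^u$ meet at the point $(0,\bar v)$, where $\bar v=m_0(0)$ is given in \ref{IVslowmanifold}, and this is precisely the generic quadratic fold of the critical manifold $\{\A(x,v)=0\}$. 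I would translate $(0,\bar v)$ to the origin and then apply successive near-identity changes of coordinates — a normalization of $x$ and of time, together with a shear removing the unwanted cross terms — writing $\delta$ in the role of the Krupa-Szmolyan breaking parameter $\lambda$.

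The key structural point is that the hypotheses $X^1\cdot Y^1(\0)<0$, $(\det{Z})_x(\0)<0$ and the opposite visibility of the folds guarantee exactly the nondegeneracy conditions required by \cite{KrupaS01}: the fold is quadratic because $(\det{Z})_x(\0)\neq 0$, the slow drift through the fold is nonzero, and the breaking parameter is transverse. Concretely I would compute the low-order Taylor coefficients of $F^1,F^2$ at $(0,\bar v)$ and express them through the constants $M_0,\dots,M_4$ already introduced in \autoref{prop:canard}; the transversality of $\delta$ as a breaking parameter is equivalent to $N(Z,\tilde Z)\neq 0$, which holds for any versal unfolding by the remark following \autoref{corol:regnumbers}. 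Verifying each of these conditions and matching the normalized system term by term with \ref{eq:KSgeneral} yields the $\mathcal{C}^r$-change of variables and, in particular, the normal-form coefficients $A_i$ appearing in the statement.

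Once the system is in the form \ref{eq:KSgeneral}, I would invoke the canard theorem of \cite{KrupaS01}, which gives the maximal Canard at a value $\lambda=\lambda_\mathcal{C}(\e)$ with an explicit first-order coefficient in the $A_i$. Undoing the changes of variables turns this into $\alpha_\mathcal{C}=\delta_\mathcal{C}\e+\mathcal{O}(\e^{3/2})$, and collecting terms gives $\delta_\mathcal{C}=\delta_\mathcal{H}+\bar A$ with $\delta_\mathcal{H}=-M(Z)/N(Z,\tilde Z)$ as in \ref{hcurve} and $\bar A$ as stated. The final sign assertion follows because, in the Krupa-Szmolyan normalization, the very combination of coefficients that fixes the offset of the canard value from the Hopf value is the one that determines the first Lyapunov coefficient at the Hopf point; hence $\sgn{\bar A}=\sgn{\ell_1(\alpha(\e),\e)}$.

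The main obstacle will be the bookkeeping in the two coefficient computations. On one hand I must track how the Taylor coefficients of $F^1,F^2$ transform under the normalizing change of variables, so that the abstract Krupa-Szmolyan offset can be rewritten in the intrinsic quantities $A_i$ (and thence $M(Z)$, $N(Z,\tilde Z)$) of the original field; on the other hand, the sign identification with $\ell_1$ is not automatic from reading off two independent formulas and requires comparing the canard expansion against the Lyapunov-coefficient computation inside the same normalization. As these calculations are long but routine, I would carry them out in the appendix, in \ref{sec:linearcanard}.
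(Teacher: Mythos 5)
Your overall architecture is the paper's: exploit the smoothness of the linearly regularized field, bring the system into the Krupa--Szmolyan setting, invoke their theorem for the Hopf and Canard values, match coefficients to intrinsic quantities (with transversality of the breaking parameter equivalent to $N(Z,\tilde Z)\neq 0$), and deduce the sign statement because the same coefficient combination $A$ controls both the offset $\bar A$ and the criticality of the Hopf bifurcation. However, the normalization step as you describe it would fail. The point $(0,\bar v)$ is \emph{not} a generic quadratic fold of the critical set of \ref{vsystem}: at $\alpha=\e=0$ one has $F^2(x,v;0,0)=x\,\A(x,v)$, so the critical set is the degenerate hyperbola $C_0\cup\Lambda_0$, two curves crossing transversally at $(0,\bar v)$ (indeed $m_0(x)=\bar v+\mathcal{O}(x)$ is transverse to $\{x=0\}$ by \ref{IVslowmanifold}); the loss of normal hyperbolicity there is of crossing type, not a fold tangency. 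The parabolic critical manifold $y=x^2+\cdots$ of \ref{eq:KSgeneral} only appears after an $\e$-dependent, singular shear --- note that in \ref{vsystem} the fast variable is $v$ while in \ref{eq:KSgeneral} it is $x$, so no near-identity change of coordinates can conjugate one to the other. This is precisely why the paper does not map \ref{vsystem} into \ref{eq:KSgeneral} directly: in \autoref{lem:KScritical} it shears the Krupa--Szmolyan system via $u=x$, $\e w=y-f(x)$ into the intermediate form \ref{KS2}, whose own critical set is again a degenerate hyperbola $\{u=0\}\cup\{w=-\tfrac12+\cdots\}$, and in \autoref{NSchange} it brings the regularized system to the same form by an affine change plus scaling; the change of variables claimed in the theorem is the composition of one with the inverse of the other.

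A second concrete misstep is the choice of center. Under the correct identification, the Krupa--Szmolyan fold at the origin pulls back to $(0,v^*)$ --- the leading-order height of the equilibrium $P(\alpha,\e)$, see \ref{eq:criticalpoint} --- and not to the canard point $(0,\bar v)$; by \autoref{rem:vsbvposition} these are at $\mathcal{O}(1)$ distance in $v$. The paper accordingly translates $(0,v^*)$ to the origin in \ref{NS42}, and the crossing point of the hyperbola lands at $(u,w)=(0,-\tfrac12+\cdots)$. Centering at $(0,\bar v)$ instead introduces a nonzero constant ($M_0\neq 0$) into the slow equation and misplaces exactly the $\mathcal{O}(\e)$ terms that feed into $\lambda_\mathcal{H}$ and $\lambda_\mathcal{C}$; your coefficients would then naturally come out in the $\bar v$-centered quantities $M_0,\dots,M_4$ of \autoref{prop:canard} rather than the $v^*$-centered $A_1,\dots,A_{10}$ of the statement, and reconciling the two is an additional computation you have not budgeted for. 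Finally, the sign assertion $\sgn{\bar A}=\sgn{\ell_1}$ is not automatic from ``the same combination appears twice'': one needs $\bar A=-\frac{1}{N(Z,\tilde Z)}\bigl(-\frac{A_{10}A_3}{A_6^2}+\frac{A_9}{A_6}-A_1\bigr)$ together with the normalization check that $-N(Z,\tilde Z)>0$ in the visible-invisible case, so that $\sgn{\bar A}=\sgn{A}$, with $A$ the Krupa--Szmolyan criticality coefficient. These gaps are repairable within your framework, but as written the plan of ``near-identity changes centered at the fold of $\{\A(x,v)=0\}$'' does not go through.
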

	\begin{proof}
		The proof of this proposition  is deferred to subsection \ref{sec:linearcanard} in the appendix.
	\end{proof}

	Consequently, when $\varphi$ is linear, if $\delta _\mathcal{C}<\delta_\mathcal{H}$, we have that $\ell_1 <0$ and therefore
	the hypothesis of \autoref{thm:bdVIsub} can not be fulfilled. Consequently
	the dynamics of the regularized system $Z^\alpha_\e$ in this case is always given by \autoref{thm:bdVIsuper}.
	One could think that it would  be possible to state an analogous result for the regularized system $Z^\alpha_\e$ with a nonlinear transition map.
	However, in  \autoref{ex:ch}, using the cubic transition map \ref{phi3}, we show that in these cases the dynamics is not always equivalent to the Krupa-Szmolyan system.
	
	\subsubsection{The function \texorpdfstring{$R$}{R}: disappearance of the periodic orbit after the occurrence of the maximal Canard}
	
	In this section we  give a description of how the stable periodic orbit $\Delta^{\alpha,s}_{\e}$ obtained in Theorems \ref{thm:bdVIsub} and \ref{thm:bdVIsuper},
	and which exists for $ \alpha >\alpha _{\mathcal{C}}$, disappears after the maximal Canard occurs for $ \alpha <\alpha _{\mathcal{C}}$.
	We present here an argument different from one used in these theorems, and which is independent of the character of  the critical point $P(\alpha, \e)$,
	which shows that, exponentially close (respect to $\varepsilon$) to the parameter value $\alpha = \alpha _{\mathcal{C}}(\e)$, a
	``big'' (of order $\mathcal{O}(\frac{1}{\e})$ in the $(x,v)$ plane) periodic orbit $\Delta^{\alpha,\mathcal{C}}_{\e}$ exists.
	The different mechanisms that make the stable orbit $\Delta^{\alpha,s}_{\e}$ ``disappear'' depend on the (attracting or repelling) character of
	$\Delta^{\alpha,\mathcal{C}}_{\e}$.
	
	The existence of the big periodic orbit $\Delta^{\alpha,\mathcal{C}}_{\e}$ is also shown in \cite{KristiansenH15} using the ideas of \cite{KrupaS01},
	but we present it here because our argument is different.
	As usually happens in singular perturbed problems (see \cite{eckahaus,KrupaS01}),
	periodic orbits of size $ \mathcal{O}(1)$ in the variable $y$ ($\mathcal{O}(\frac{1}{\e})$ in the variable $v$) exist when
	the parameter $\alpha$ is close to  the  value $\alpha_\mathcal{C}= \delta_{\mathcal{C}}\e + \mathcal{O}(\e^{3/2})$ where the maximal Canard exists, in fact, exponentially close.
	
	The reasoning which gives the existence of these  ``big'' periodic orbits will be made in the original variables $(x,y)$  of the problem and is the following.
	Take $y^*<0$, small but independent of $\e$.
	Consider $\alpha = \delta \e$,  the section $\Theta =\{ (0,y), y \le y^*\}$ and  $\Theta_\e=\{ (0,y), \ y \le \e \}$ and consider the  maps
	$
	\pi^s, \ \pi ^u( \cdot;\delta) : \Theta \to \Theta_\e
	$, which follow the flow in positive or negative times until it meets  the section $\Theta_\e$.
	These maps are well defined if the Fenichel manifolds are close enough, therefore for $\alpha=\delta \e$ close enough to $\alpha _{\mathcal{C}}$.
	
	Fix $ y\in \Theta$.
	Depending of the position of the Fenichel manifolds, which depends on the sign of $\delta-\delta_{\mathcal{C}}$,
	the sign of $f(\delta)= \pi^s(y;\delta)-\pi^u(y;\delta)$ changes (see  \autoref{fig:R1}). By Bolzano theorem
	it must exist a value of $\delta = \delta (y;\e)$ such that $f(\delta)=\pi^s(y;\delta)-\pi^u(y;\delta)=0$,
	and therefore a periodic orbit $\Delta^{\alpha,\mathcal{C}}_{\e}$ of system \ref{vsystem} passing through $(0,\frac{y}{\e})$ exists for this value of $\delta$.
	Moreover, calling $\Lambda^{\alpha,s,u}_\e\cap \{x=0\}= v^{s,u}$, for $\delta = \delta (y;\e)$:
	$$
	0=f(\delta)= \pi^s(y;\delta)-\pi^u(y;\delta)= \pi^s(y;\delta)-y^s(\delta)+ y^s(\delta)-y^u(\delta)+ y^u(\delta)-\pi^u(y;\delta)
	$$
	where $y^{s,u}= v^{s,u}\e$.
	Due to the exponential attraction of the Fenichel manifolds,  we know that $\pi^{s/u}(y;\delta)-y^{s/u}(\delta)=\e \mathcal{O}(e^{-\frac{c(y)}{\e}})$.
	Using that
	$y^s(\delta)-y^u(\delta)= \e ^{3/2}C(\delta -\delta _{\mathcal{C}})+\mathcal{O}(\e^{2})$, (see \autoref{Distcanard})
	we obtain that, for $\e$ small enough:
	$$
	\delta (y;\e)=\delta _{\mathcal{C}}+  \mathcal{O}\left(e^{-\frac{c(y)}{\e}}\right), \quad  c(y)>0.
	$$
	
	\begin{figure}[!htb]
		\centering
		\begin{tiny}
			\def\svgscale{0.5}
			\input{./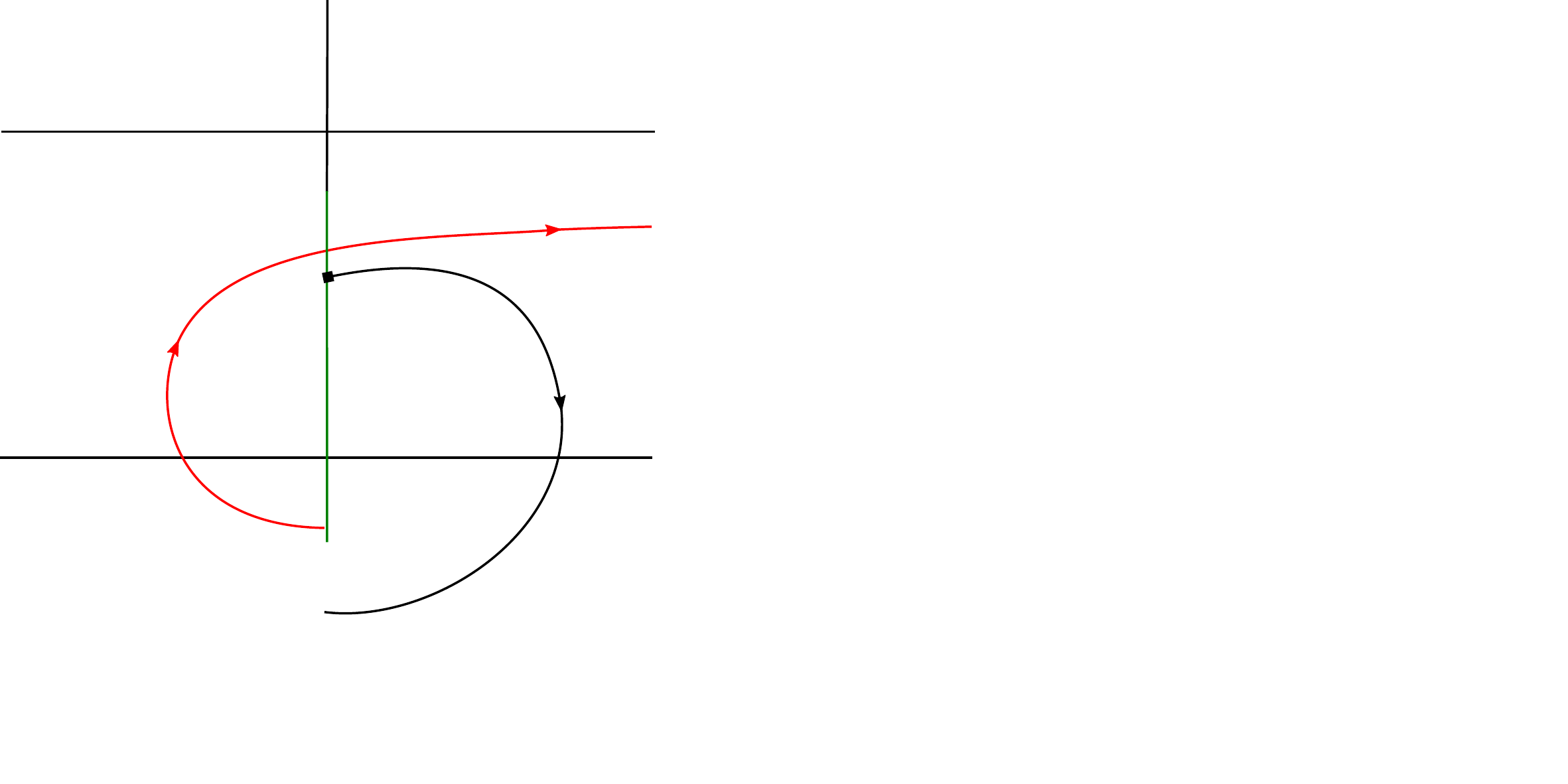_tex}
		\end{tiny}
		\caption{The maps $\pi{u,s}(y,\delta)$ for $\alpha=\delta \e$ close to the value $\alpha=\alpha_\mathcal{C}$}
		\label{fig:R1}
	\end{figure}
	
	The orbit $\Delta^{\alpha,\mathcal{C}}_{\e}$ which arises from the point $(0,v)=(0,\frac{y}{\e})$ is given, in first order,
	by the singular orbit having four pieces: $\Delta= \Delta_1\cup \Delta _2\cup \Delta_3\cup \Delta_4$.
	\begin{itemize}
		\item
		$\Delta_1$ is the orbit through $(0,y)$ of the vector field $Y$ written in $(x,v)=(x,\frac{y}{\e})$:
		$$
		\Delta_1 =\{ \varphi_Y (t; 0,y),\ -t_1\le t\le t_2\}, \mbox{where} \ \varphi_Y (-t_1; 0,y)=(x,0),  \ \varphi_Y (t_2; 0,y)=(\bar x,0)
		$$
		where $\varphi_Y$ denotes the flow of the vector field $Y$.
		\item
		$\Delta_2$ is the vertical orbit through $(\bar x,-1)$:
		$$
		\Delta_2 =\{ (\bar x, v(t))\}, \mbox{where} \ \e v'(t) = F^2(\bar x, v(t);0,0), \ t_2 \le t \le t_2+ \e \bar t:=t_3
		$$
		\item
		$\Delta_3$ is the piece of maximal Canard between the points $(x, m_0(x))$ and $(\bar x, m_0(\bar x))$ (see \autoref{IVslowmanifold}):
		$$
		\Delta_3 =\{ (x(t), m_0(x(t))\}, \mbox{where} \ x'(t) = F^1(x(t), m_0(x(t));0,0), \ t_3 \le t \le  t_4\ .
		$$
		\item
		$\Delta_4$ is the vertical orbit through $(x,\bar v)$:
		$$
		\Delta_4 =\{ (x, v(t))\}, \mbox{where} \ \e v'(t) = F^2( x, v(t);0,0), \ t_4 \le t \le t_4+ \e \tilde t:=t_5
		$$
	\end{itemize}
	The stability of the periodic orbit $\Delta^{\alpha,\mathcal{C}}_{\e}$ arising from $\Delta$ is given by:
	$$
	\int_{0}^{T} \textrm{Div} F(\Delta^{\alpha,\mathcal{C}}_{\e}(t),\delta \e,\e)dt
	$$
	where $T=T(\e)$ is its period.
	Using the form of the equations and the fact that $\varphi (v)=-1$ for $v\le -1$, this integral  is given in first order respect to $\e$
	by the so called ``way-in, way-out'' function:
	$$
	\frac{1}{\e} R= \frac{1}{\e}\int _{t_3}^{t_4} \varphi '(m_0(x(t)) (X^2-Y^2) (x(t),0) dt.
	$$
	which correspond to the integration along the ``Canard piece'' $\Delta_3$.
	It is important to mention that, even if our system is not slow fast for $v\le -1$ (it is simply the vector field $Y$
	written in variables $(x, v=\frac{y}{\e})$), and the time spent in $\Delta _1$ is of the same order that the one in $\Delta_3$, the fact that $\varphi$ is constant
	for $v \le -1$ makes the classical ``way-in, way-out'' function be  also the dominant term of this integral.
	
	Changing variables in the integral $s=x(t)$, and using that $ds= F^1(x(t), m_0(x(t));0,0)dt$ and that
	$\bar x= \phi_Y(x)$, where $\phi_Y$ is the Poincar\'{e} map associated to the vector field $Y$ near its invisible fold given in \autoref{prop:frX},
	one obtains a suitable form for this function, parameterized by the coordinate $x$:
	\begin{equation}\label{eq:R}
	R(x)=-\int _{\phi_Y(x)}^{x} \varphi '(m_0(s)) \frac{(X^2-Y^2)^2}{2(\det{Z)}}(s,0)ds .
	\end{equation}
	Note that in \cite{KrupaS01} the authors  parametrize this function by the ``hight'' of the periodic orbit that in our case corresponds to $y= \mathcal{O}(x^2)$.
	
	For $0<x<1$, using that $\phi_Y(x)=-x +\beta_Y x^2+O(x^3)$, as given in \autoref{prop:frX},  the Taylor expansion of $R(x)$ is:
	$$
	R(x)= -\left(\varphi'(\bar v) G'(0)\beta_Y+\frac{2}{3}\varphi''(\bar v)m_0'(0) G'(0)+\frac{\varphi'(\bar v)}{3}G''(0)\right) x^3 + O(x^4).
	$$
	where
	$
	\displaystyle{G(x)=\frac{(X^2-Y^2)^2}{2(\det{Z)}}(x,0)}
	$.
	Using the expression for $m_0(x)$ given in \ref{IVslowmanifold} and $G(x)$ one obtains:
	$$
	R(x) = A x^3 +O(x^4)
	$$
	with
	$$
	A=- \frac{G'(0)}{3}
	\left[
	\varphi'(\bar v)
	\left\{
	3\beta_Y +2 \left( 
	\frac{X^2_{xx}-Y^2_{xx}}{X^2_{x}-Y^2_{x}}-\frac{ (\det {Z)_{xx}} }{ 2(\det{Z)_x} } \right)\right\}
	+\frac{2\varphi''(\bar v)}{\varphi'(\bar v)}\frac{X^2_{xx}Y^2_{x}-Y^2_{xx}X^2_{x}}{(Y^2_{x}-X^2_{x})^2}\right](\0)
	$$
	and $\displaystyle{G'(0)= \frac{(X_x^2-Y_x^2)^2}{2(\det{Z)_x}}(\0)<0}$.
	
	Then, the periodic orbit $\Delta^{\alpha,\mathcal{C}}_{\e}$ of size $O(1)$ originated at the so called ``Canard explosion'' is stable if
	\begin{equation} \label{Rsign}
	B=\left[
	\varphi'(\bar v)
	\left\{
	3 \beta_Y +2 \left( 
	\frac{X^2_{xx}-Y^2_{xx}}{X^2_{x}-Y^2_{x}}-\frac{ (\det {Z)_{xx}} }{ 2(\det{Z)_x} } \right) \right\}
	+\frac{2\varphi''(\bar v)}{\varphi'(\bar v)}\frac{X^2_{xx}Y^2_{x}-Y^2_{xx}X^2_{x}}{(Y^2_{x}-X^2_{x})^2}\right](\0)<0
	\end{equation}
	and unstable otherwise.
	Moreover, the orbit $\Delta^{\alpha,\mathcal{C}}_{\e}$, for $\alpha = \delta (x)\e$  stays stable (unstable) while the function
	$R(x)$ stays negative  (positive).
	
	Next proposition tells us in what region of the parameter plane the periodic orbit $\Delta^{\alpha,\mathcal{C}}_{\e}$, which raises from the maximal Canard, 
	appears.
	Moreover, it also explains the relation between the periodic orbit $\Delta^{\alpha,\mathcal{C}}_{\e}$ and the stable  periodic orbit 
	$\Delta^{\alpha,s}_{\e}$ given in
	Theorems \ref{thm:bdVIsuper} and \ref{thm:bdVIsub}.
	Recall that the stable  periodic orbit $\Delta^{\alpha,s}_{\e}$ obtained in these theorems exists for $(\alpha,\e)$ between the curves
	$\mathcal{C}$ and $\mathcal{H}$, independently of the sign of $B$.
	
	\begin{prop} \label{pocanard}
		Let $B$ be the quantity given in \ref{Rsign}, then:
		\begin{itemize}
			\item
			If $B<0$ the periodic orbit $\Delta^{\alpha,\mathcal{C}}_{\e}$ is stable and appears for $(\alpha,\e)$ located on the right of the curve
			$\mathcal{C}$.
			In addition, for $(\alpha,\e)$ on the left of the curve $\mathcal{C}$, for any compact set $\mathcal{K}$ containing the critical point
			$P(\alpha,\e)$, for $\e$ small enough,
			given an initial condition inside $\mathcal{K}$ then its trajectory tends to $P(\alpha,\e)$ backward in time and  leaves  $\mathcal{K}$
			forward in time.
			\item
			If $B>0$ the  periodic orbit $\Delta^{\alpha,\mathcal{C}}_{\e}$ is unstable and appears for $(\alpha,\e)$ located on the left of the curve
			$\mathcal{C}$.
			Moreover, for each $\e>0$ small enough, there exists a value $\alpha_{\tilde{\mathcal{S}}}$ satisfying
			$\alpha_\mathcal{D}^-(\e)<\alpha_{\tilde{\mathcal{S}}}(\e)<\alpha_\mathcal{C}(\e)$ such that
			\begin{itemize}
				\item
				For $\alpha_{\tilde{\mathcal{S}}}(\e) <\alpha<\alpha_\mathcal{C}(\e)$ the unstable periodic orbit 
				$\Delta^{\alpha,\mathcal{C}}_{\e}$
				coexists with the smaller stable periodic orbit $\Delta^{\alpha,s}_{\e}$ given in  
				Theorems \ref{thm:bdVIsuper} and \ref{thm:bdVIsub}.
				\item
				For $\alpha<\alpha_{\tilde{\mathcal{S}}}(\e)$ there are no periodic orbits.
			\end{itemize}
		\end{itemize}
	\end{prop}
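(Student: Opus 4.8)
The plan is to separate what is already in hand from what the proposition really adds. The sign of $B$ in \eqref{Rsign} already fixes the stability of the big orbit $\Delta^{\alpha,\mathcal{C}}_\e$: writing $A=-\tfrac{G'(0)}{3}B$ with $G'(0)<0$ gives $\sgn{R(x)}=\sgn{Ax^{3}}=\sgn{B}$ for $0<x\ll1$, and since the divergence integral along $\Delta^{\alpha,\mathcal{C}}_\e$ equals $\tfrac{1}{\e} R$ to leading order, the orbit is attracting when $B<0$ and repelling when $B>0$. So I only have to establish (i) on which side of $\mathcal{C}$ this orbit lives and (ii) in the repelling case, how it interacts with the stable orbit $\Delta^{\alpha,s}_\e$ of Theorems \ref{thm:bdVIsuper} and \ref{thm:bdVIsub}.

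For (i) I would return to the balance equation $f(\delta)=[\pi^s(y;\delta)-y^s(\delta)]+[y^s(\delta)-y^u(\delta)]+[y^u(\delta)-\pi^u(y;\delta)]=0$. The two outer brackets are the deviations of the true trajectory from the Fenichel manifolds after the attracting and repelling passages along the slow manifold; they are exponentially small and their joint sign is controlled by the net contraction accumulated along the Canard segment $\Delta_3$, which is exactly $R(x)$, hence $B$ to leading order. The middle bracket is $\e^{3/2}C(\delta-\delta_\mathcal{C})+\mathcal{O}(\e^2)$, with $C$ of the sign recorded in \autoref{Distcanard} and \autoref{prop:canard}. Solving for $\delta$ and reading off signs should give $\delta(x;\e)-\delta_\mathcal{C}>0$ exactly when $R(x)<0$, i.e. the Canard orbit is pushed to the right of $\mathcal{C}$ when $B<0$ and to the left when $B>0$. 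This asymptotic sign-matching is the technical heart of the statement and the step I expect to be the main obstacle, because it requires tracking the exponentially small remainders uniformly and comparing them against the only algebraically small splitting $y^s-y^u$.

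In the stable case $B<0$, to the left of $\mathcal{C}$ there is no Canard orbit inside a fixed compact neighbourhood $\mathcal{K}$ of $P(\alpha,\e)$, while \autoref{corol:toptype} gives that $P(\alpha,\e)$ is an unstable focus for $\alpha<\alpha_\mathcal{H}$ (recall $\mathcal{C}<\mathcal{H}$). Every trajectory therefore converges to $P(\alpha,\e)$ in backward time, and since $\mathcal{K}$ contains no periodic orbit, the Poincar\'e--Bendixson theorem forbids a bounded forward $\omega$-limit set, so the trajectory must leave $\mathcal{K}$ forward in time; this is the first bullet.

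In the unstable case $B>0$ the orbit $\Delta^{\alpha,\mathcal{C}}_\e$ sits to the left of $\mathcal{C}$ and is repelling, whereas the stable orbit $\Delta^{\alpha,s}_\e$ produced by the Bolzano/return-map argument of Theorems \ref{thm:bdVIsuper} and \ref{thm:bdVIsub} is smaller and, not disappearing at $\mathcal{C}$ itself, persists a little to the left of it by continuity of its trapping annulus. The two cycles are nested with opposite stabilities, so the region between them is positively invariant; as $\alpha$ decreases I would track the displacement of the return map between them and show that, generically, they merge and annihilate at a value $\alpha_{\tilde{\mathcal{S}}}(\e)$ with $\alpha_\mathcal{D}^-(\e)<\alpha_{\tilde{\mathcal{S}}}(\e)<\alpha_\mathcal{C}(\e)$, below which the return map has no fixed point and Poincar\'e--Bendixson again excludes periodic orbits. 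To certify that this collision is a saddle-node and to locate $\alpha_{\tilde{\mathcal{S}}}$ I would argue as in \autoref{prop:melnikov}, through the double zero of the Melnikov function $M(v,\delta)$ of \eqref{eq:melnikov} (cf. the remark following \autoref{thm:bdVIsub}).
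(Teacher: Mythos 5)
Your stability determination (sign of $B$ via $R(x)=Ax^3+\mathcal{O}(x^4)$ with $\sgn{A}=\sgn{B}$) and your treatment of the case $B>0$ (persistence of $\Delta^{\alpha,s}_{\e}$ to the left of $\mathcal{C}$ by a trapping/return-map argument, saddle-node located through a double zero of the Melnikov function) are consistent with the paper. The genuine gap is in your step (i), which you yourself flag as the main obstacle and then do not carry out. You reduce the side-of-$\mathcal{C}$ question to comparing the exponentially small deviations $\pi^s(y;\delta)-y^s(\delta)$ and $y^u(\delta)-\pi^u(y;\delta)$ against the algebraic splitting $y^s-y^u=\e^{3/2}C(\delta-\delta_\mathcal{C})+\mathcal{O}(\e^2)$ of \ref{Distcanard}, and assert that the joint sign of the exponential terms "is controlled by" $R(x)$, so that "solving for $\delta$ and reading off signs should give" the conclusion. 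Nothing proven in the paper supplies that sign: the balance-equation argument preceding the proposition only yields $|\delta(y;\e)-\delta_\mathcal{C}|=\mathcal{O}(e^{-c(y)/\e})$, with no information on the sign of the remainder. Establishing it would require the full entry--exit (way-in/way-out) asymptotics of Krupa--Szmolyan type, uniformly in the exponentially small regime --- a substantial analytic development that your proposal sketches but does not execute. As written, the location claim, which is the core of the proposition, is unproven.

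The paper avoids this difficulty entirely with a short topological contradiction, and you should note the contrast. Suppose $B<0$ but $\Delta^{\alpha,\mathcal{C}}_{\e}$ existed for $(\alpha,\e)$ on the left of $\mathcal{C}$. There, the return map on the cross section $\Theta$ is repelling near the unstable Fenichel manifold, while it is attracting near the (stable) orbit $\Delta^{\alpha,\mathcal{C}}_{\e}$; hence there would exist a second, larger \emph{unstable} periodic orbit below the unstable manifold containing $\Delta^{\alpha,\mathcal{C}}_{\e}$ in its interior. But every such big orbit passes along the Canard segment, so its stability is dictated by the sign of $R$, i.e.\ by $B<0$, which forces it to be stable --- a contradiction; the case $B>0$ is symmetric. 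This uses only facts already in hand (the relative position of the Fenichel manifolds on each side of $\mathcal{C}$ from \autoref{prop:canard}, and the sign of $R$ in \ref{Rsign} as a common stability certificate for \emph{all} big orbits near the Canard), and no exponential asymptotics whatsoever. A minor additional point: in your first bullet, backward convergence to $P(\alpha,\e)$ of every trajectory in $\mathcal{K}$ does not follow from Poincar\'e--Bendixson alone (the backward limit set could a priori escape $\mathcal{K}$); you need the funneling by the Fenichel manifolds as in the phase portraits of \autoref{thm:bdVIsuper} to exclude backward escape.
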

	\begin{proof}
		At first, we prove the region of the parameter plane where the periodic orbit appears.
		Suppose $B<0$ and that $\Delta^{\alpha,\mathcal{C}}_{\e}$ exists for $(\alpha,\e)$ on the left of $\mathcal{C}$.
		Therefore, the Poincar\'{e} map defined in the cross section $\Theta$ (see \autoref{thm:bdVIsuper}) is repelling near the unstable
		manifold and attracting near the periodic orbit $\Delta^{\alpha,\mathcal{C}}_{\e}$.
		This reasoning guarantees the existence of a bigger unstable
		periodic $\tilde{\Delta}^{\alpha,u}_\e$ which is located below the unstable manifold and its interior contains $\Delta^{\alpha,\mathcal{C}}_{\e}$ 
		which is a contradiction with $B<0$. 
		The case $B>0$ can be proved analogously.
		The persistence of the periodic orbit $\Delta^{\alpha,s}_{\e}$ for $\alpha<\alpha_\mathcal{C}(\e)$
		is given by the return map 
		as we did 
		in \autoref{thm:bdVIsub} using the repelling character of $\Delta^{\alpha,\mathcal{C}}_{\e}$, for an illustration see \autoref{fig:pocanard}. 
	\end{proof}
	
	\begin{figure}[!htb]
		\centering
		\begin{tiny}
			\def\svgwidth{0.7\textwidth}
			\input{./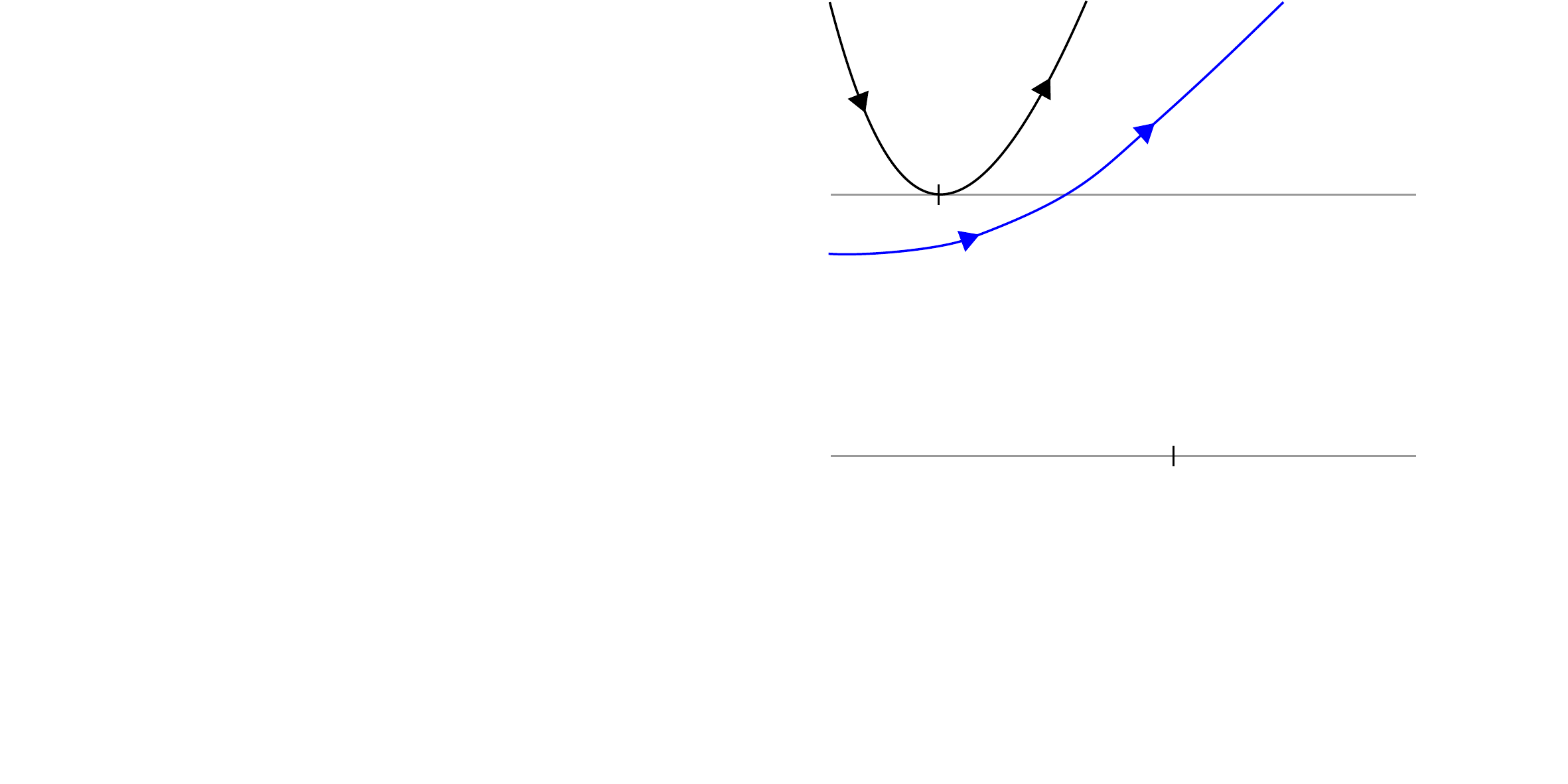_tex}
		\end{tiny}
		\caption{The periodic orbits of $Z^\alpha_\e$ for $\alpha<\alpha_\mathcal{C}$ when $B>0$.
			A saddle-node occurs at $\alpha = \alpha _{\mathcal{\tilde S}}.$}
		\label{fig:pocanard}
	\end{figure}
	
	\begin{rem}	When $B<0$, using the information given in \autoref{thm:bdVIsuper} we have two stable periodic orbits 
		$\Delta^{\alpha,C}_\e$ and $\Delta^{\alpha,s}_\e$ for $\alpha$ near the value $\alpha_\mathcal{C}$. 
		Therefore, in this situation, the simplest case is when these two periodic orbits coincide. 
		Moreover, when $B>0$, since we have two periodic orbits $\Delta^{\alpha,C}_\e$ and ${\Delta}^{\alpha,s}_\e$ for 
		$\alpha_{\tilde{\mathcal{S}}}<\alpha <\alpha_\mathcal{C}$ and no periodic for $\alpha<\alpha_{\tilde{\mathcal{S}}}$, 
		the simplest case, is when the two periodic orbits collide in a saddle-node bifurcation for $\alpha=\alpha_{\tilde{\mathcal{S}}}$ 
		and then disappear for $\alpha<\alpha_{\tilde{\mathcal{S}}}$. 
		
		The above discussion, joined to the results given in Theorems \ref{thm:bdVIsuper}, \ref{thm:bdVIsub} and  \autoref{pocanard},  suggest that
		we have four simplest possibilities for the bifurcation diagram of $Z^\alpha_\e$ having a visible-invisible fold with $(\det{Z})_x(\0)<0$.
		Moreover, if $R(x)<0$  $\forall x \ge 0$, then, for any $x>0$, one can find $\e$ small enough such that for
		$\delta=\delta (x;\e)$ the stable periodic orbit $\Delta^{\alpha,\mathcal{C}}_{\e}$ exists until $\alpha =\delta(x;\e)\e>\alpha_\mathcal{C}(\e)$.
		The orbit increases unboundedly as $\alpha$ approaches $\alpha_\mathcal{C}(\e)$ and ``disappears at infinity''.
		This is the so-called Canard explosion. Nevertheless, due to the lack of compactness, fixing a value of $\e$ small enough so that all the previous results about the Fenichel manifolds are valid,
		we can only find the periodic orbit for $|x|\le x^* (\e)$ and therefore for $\alpha$ close but until a certain distance of the $\alpha _\mathcal{C}(\e)$.
		Summarizing,  the bifurcation diagram of $Z^\alpha_\e$
		is then exactly as in Figures \ref{fig:VISubHCdiagram} or \ref{fig:VISubCHdiagram}, depending on $\ell_1(\alpha,\e)$ sign.
		
		However, if $B>0$, then we insert \autoref{fig:pocanard} between subfigures $(a)$ and $(b)$ of Figures \ref{fig:VISubHCdiagram}
		and \ref{fig:VISubCHdiagram}, depending on $\ell_1(\alpha,\e)$ sign.
		We summarize the four possible bifurcation diagram in \autoref{fig:POVI}.
	\end{rem}
	
	\begin{figure}[!htb]
		\centering
		\begin{tiny}
			\def\svgwidth{0.8\textwidth}
			\input{./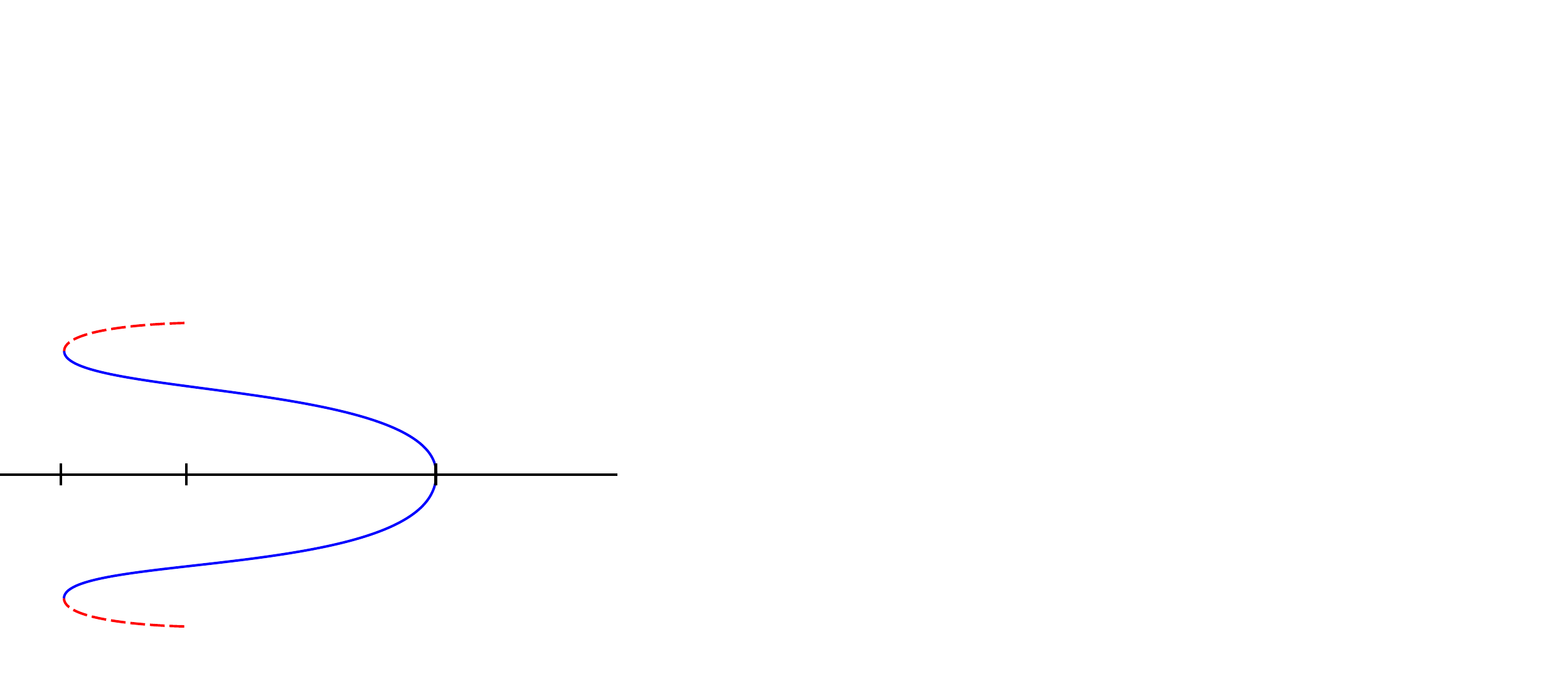_tex}
		\end{tiny}
		\caption{The four simplest- possible bifurcation diagrams for the periodic orbits of system $Z^\alpha_\e$ having a visible-invisible fold-fold and satisfying
			$(\det{Z})_x(\0)<0$, depending on the sign of $B$ and the first Lyapunov coefficient $\ell_1$.}
		\label{fig:POVI}
	\end{figure}
	
	In the sequel, we present examples which exhibit the behaviors stated in Theorems \ref{thm:bdVIsuper} and \ref{thm:bdVIsub} and in \autoref{pocanard}.
	First,  consider $Z^\alpha$ given by
	\begin{equation} \label{ex:VIns}
	Z_\alpha(x,y)=\begin{cases}
	X_\alpha(x,y) = (1+2x, x+ \frac{7}{2}y -\alpha) \\
	Y(x,y) = (-1,-3x)
	\end{cases}
	\end{equation}
	which has a visible-invisible fold at the origin for $\alpha=0$ and satisfies $(\det{Z})_x(\0)=-2$.
	
	The $\varphi$-regularization $Z^\alpha_\e(x,y)$ in  coordinates $(x,v)$ where $y=\e v$ has the form
	\begin{equation} \label{ex:VIreg}
	Z^\alpha_\e (x,\e v) = \begin{cases}
	\dot{x} = 2x + \varphi(v) \left( 2+ 2x  \right), \\
	\e \dot{v} =-2x-\alpha + \frac{7 \e v }{2}+ \varphi(v) \left( 4x - \alpha +\frac{7 \e v }{2}  \right),
	\end{cases}
	\end{equation}

	\begin{exmp}[Supercritical Hopf bifurcation for the visible-invisible fold]  \label{ex:VIsuper}
		Consider the transition map $\varphi(v)$
		$$
		\varphi(v) = v^5 +\frac{3}{2}v^3+\frac{1}{2}v, \, \textrm{for} \, v \in (-1,1).
		$$
		The  point is  $P(\alpha,\e)= \left(-\frac{1}{2}\alpha + \mathcal{O}_2(\alpha,\e), 0 + \mathcal{O}_2(\alpha,\e) \right)
		$ and the bifurcation curves are:
		\begin{eqnarray*}
			\mathcal{D} &=& \left\{ (\alpha,\e): \e = \frac{9}{32} \alpha^2  +\mathcal{O}(\alpha^3) \right\} \\
			\mathcal{H} &=& \left\{ (\alpha,\e): \alpha = \frac{11}{3} \e  +\mathcal{O}(\e^2) \right\}.
		\end{eqnarray*}
		The first Lyapunov coefficient is $
		\ell_1(\alpha(\e),\e)=\frac{1}{\sqrt{\e}} (-23.15 + \mathcal{O}(\e)),
		$
		therefore  the  Hopf bifurcation is supercritical. 
		The Canard trajectory occurs over the curve
		$$
		\mathcal{C} = \left\{ (\alpha,\e): \alpha = 1.98 \  \e  +\mathcal{O}(\e^{3/2}) \right\},
		$$
		\noindent therefore, the stated in \autoref{thm:bdVIsuper} holds.
		
		We fix $\e=0.01$ and vary $\alpha$ sufficiently small.
		We obtain $\alpha^\pm_\mathcal{D}(\e_0) \approx \pm 0.18 $, $\alpha_\mathcal{H}(\e_0) \approx 0.03$ and $\alpha_\mathcal{C}(\e_0) \approx 0.019$
		obtained by the intersection between the line $\e=\e_0$ and curves $\mathcal{D}$, $\mathcal{H}$ and $\mathcal{C}$ respectively.
		
		\begin{figure}[!htb]
			\centering
			\begin{tiny}
				\subfigure[\label{fig:ExVIchSupera0015}]{\includegraphics[scale=0.6]{./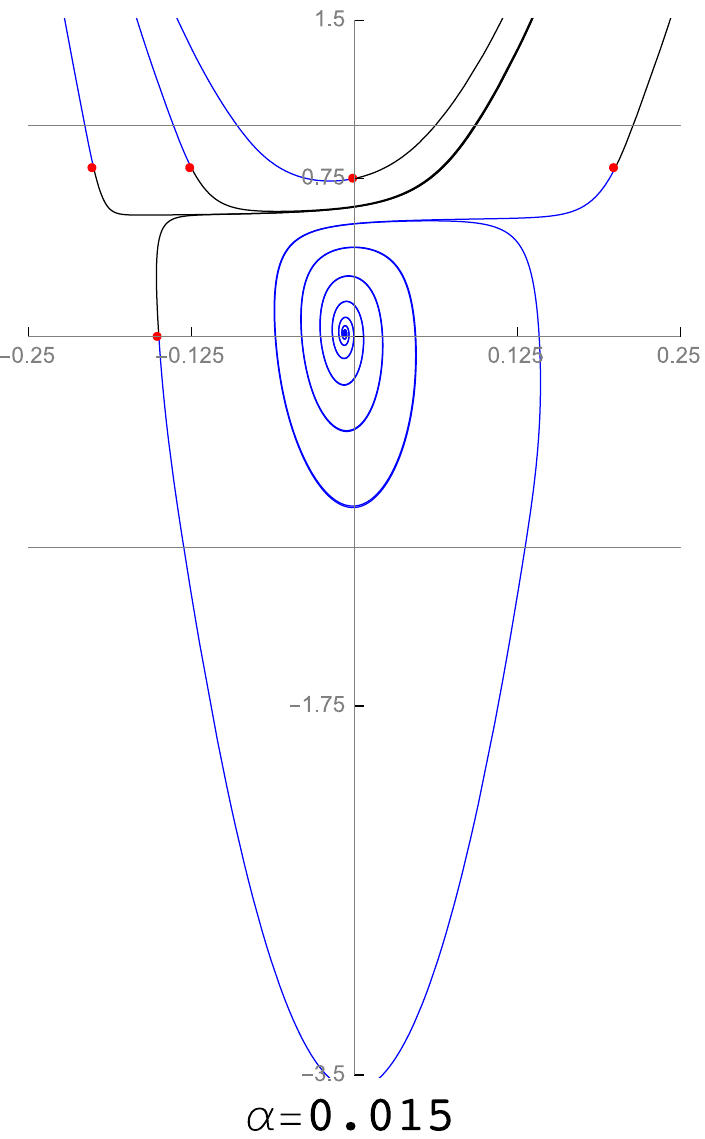}}  \hspace{1cm}
				\subfigure[\label{fig:ExVIchSupera0019}]{\includegraphics[scale=0.6]{./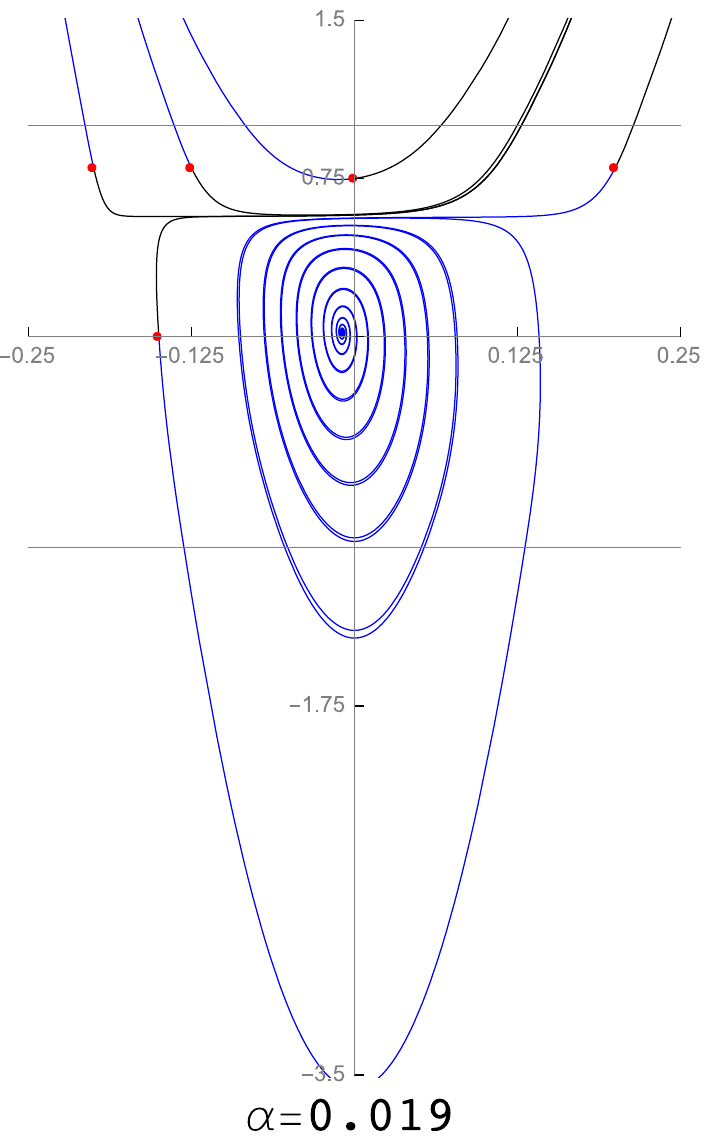}}  \hspace{1cm}
				\subfigure[\label{fig:ExVIchSupera002}]{\includegraphics[scale=0.6]{./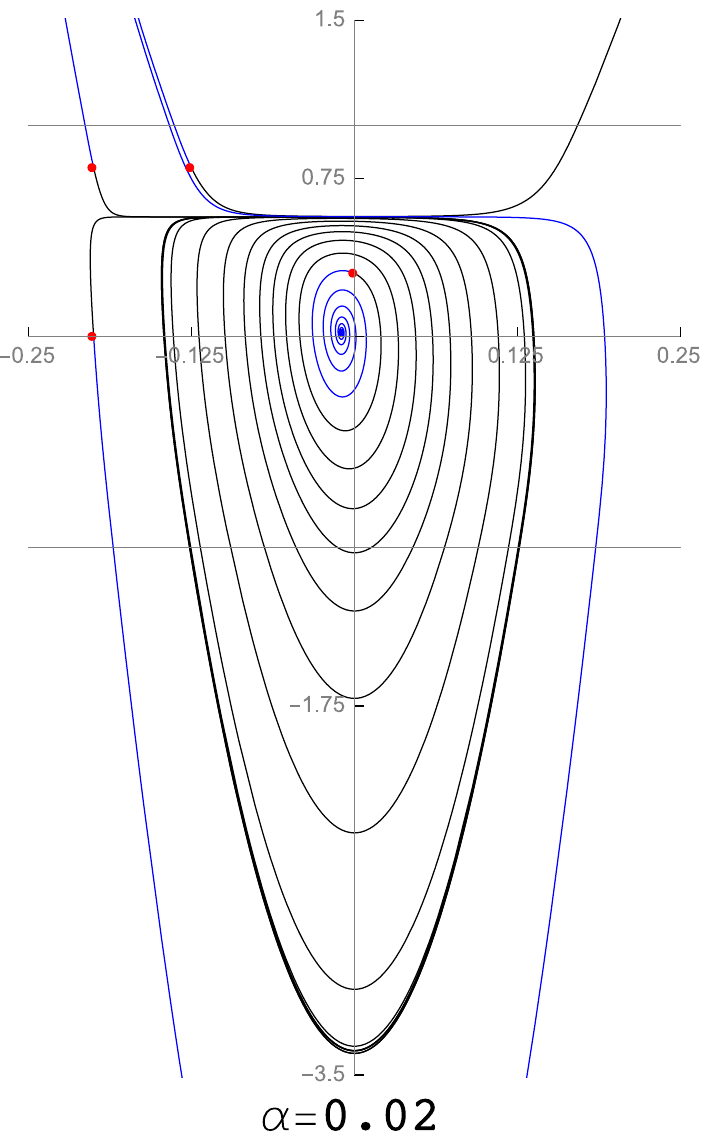}}  \\
				\subfigure[\label{fig:ExVIchSupera0025}]{\includegraphics[scale=0.6]{./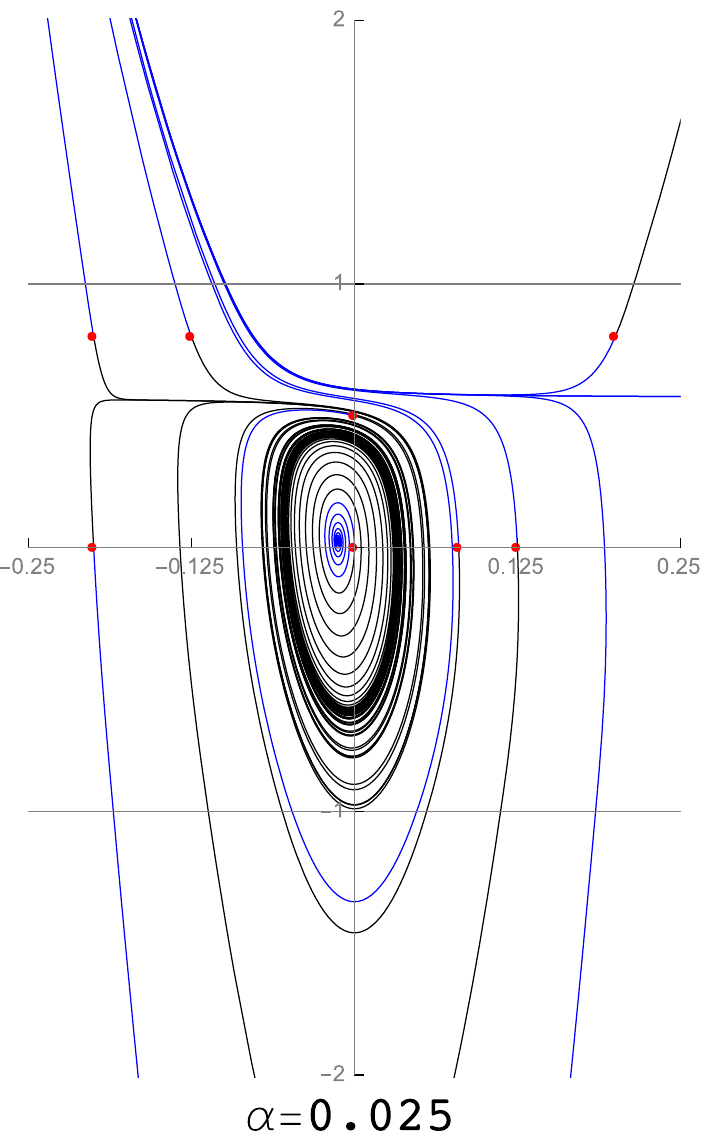}}  \hspace{1cm}
				\subfigure[\label{fig:ExVIchSupera003}]{\includegraphics[scale=0.6]{./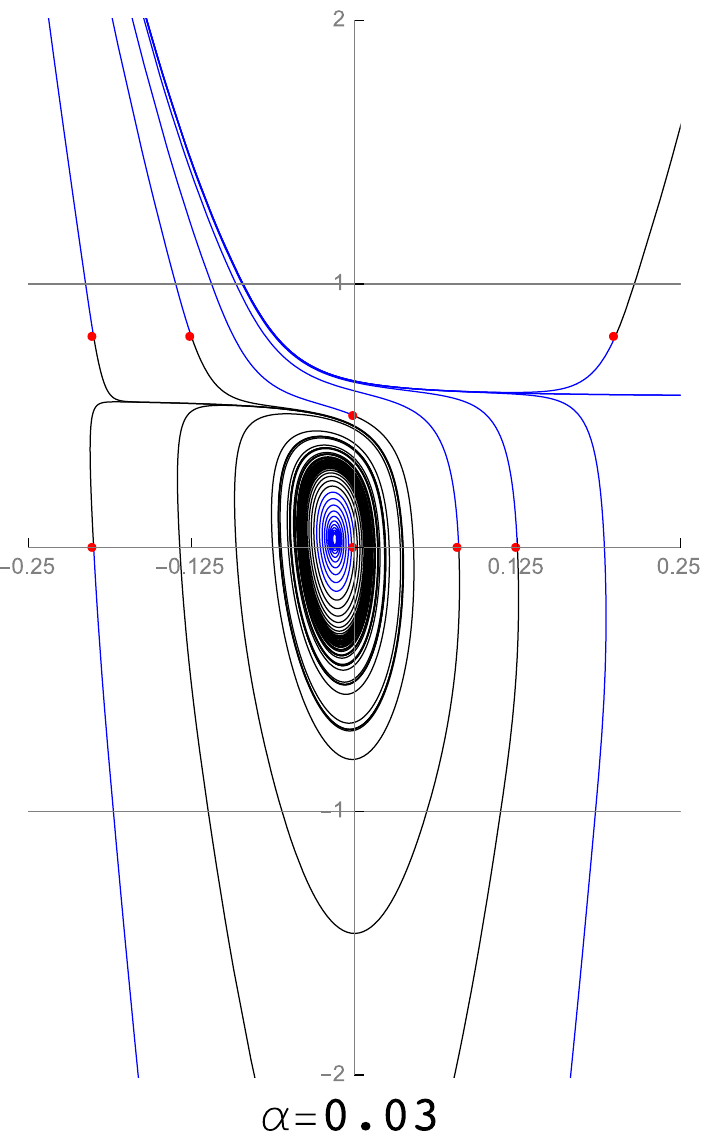}}  \hspace{1cm}
				\subfigure[\label{fig:ExVIchSupera005}]{\includegraphics[scale=0.6]{./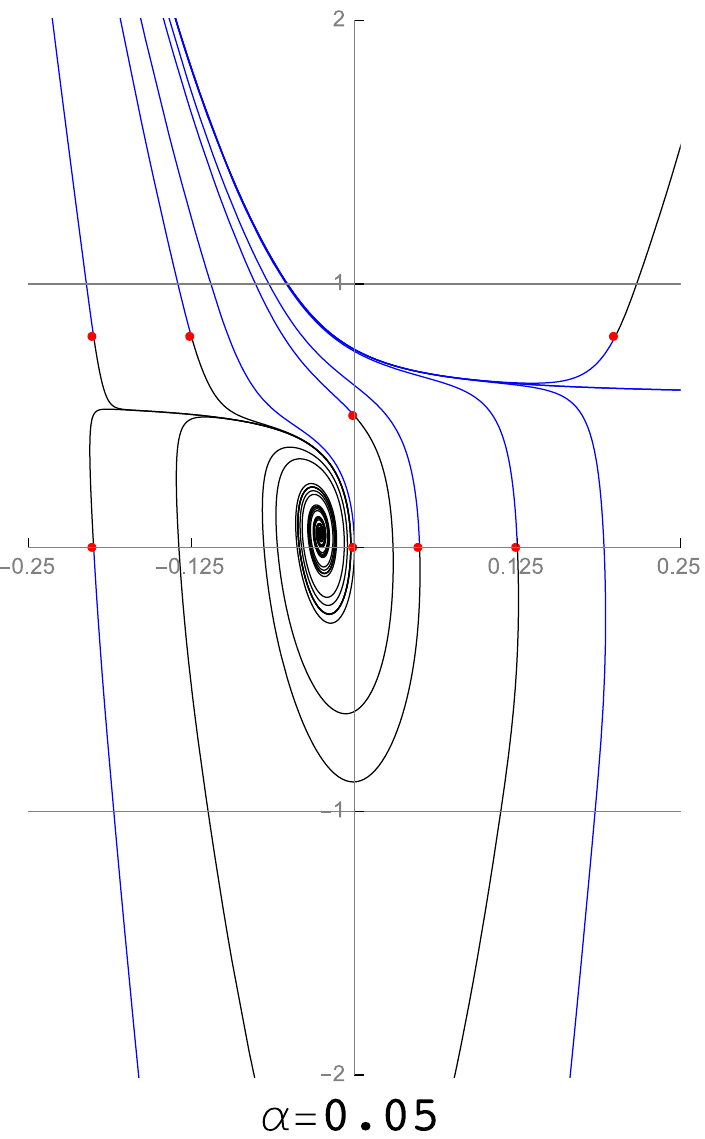}} 
			\end{tiny} \\
			\begin{scriptsize}
				\textcolor{red}{$\blacksquare$}  Initial condition \quad  \textcolor{blue}{$\blacksquare$} Negative time \quad  \textcolor{black}{$\blacksquare$} Positive Time
			\end{scriptsize}
			\caption{\autoref{ex:VIsuper}: Trajectories for the regularized vector field $Z^\alpha_\e$ for different values of $\alpha$ and $\e=0.01.$}
			\label{fig:ExVIsuper}
		\end{figure}

		In \autoref{fig:ExVIchSupera0015}, $\alpha=0.015$: the stable Fenichel manifold $\Lambda^{\alpha,s}_\e$ is above the unstable Fenichel manifold $\Lambda^{\alpha,u}_\e$ and the critical point $P(\alpha,\e)$ is an unstable focus.
		In \autoref{fig:ExVIchSupera0019}, $\alpha=0.019$: the Fenichel manifolds $\Lambda^{\alpha,u}_\e$ and $\Lambda^{\alpha,s}_\e$ are becoming closer.
		In \autoref{fig:ExVIchSupera002}, $\alpha=0.02$: the Canard has occurred and a big stable orbit $\Delta^{\alpha,\mathcal{C}}_{\e}=\Delta^{\alpha,s}_{\e}$ appears.
		In Figures \ref{fig:ExVIchSupera002} to \ref{fig:ExVIchSupera003}, one can see that the amplitude of the stable periodic orbit decreases while  $\alpha$ approaches the value $\alpha_\mathcal{H}.$
		
		In \autoref{fig:ExVIchSupera005}, $\alpha=0.05$: the subcritical Hopf bifurcation has occurred and the stable periodic orbit no longer exists. 
		The critical point $P(\alpha,\e)$ is an stable focus and there are no  periodic orbits. 
		In Figure \ref{fig:exemple43melnikov} we show the behavior of the Melnikov function for this example.
	\end{exmp}

	\begin{exmp}[Subcritical Hopf bifurcation for the visible-invisible fold] \label{ex:ch}
		
		Consider the cubic transition map \eqref{phi3}. 
		The critical point is $	P(\alpha,\e)= \left(-\frac{1}{2}\alpha + \mathcal{O}_2(\alpha,\e), \mathcal{O}_2(\alpha,\e) \right)$, and the bifurcation curves $\mathcal{D}$, and $\mathcal{H}$ are given by:
		\begin{eqnarray*}
			\mathcal{D} &=& \left\{ (\alpha,\e): \e = 0.84 \ \alpha^2  +\mathcal{O}(\alpha^3) \right\} \\
			\mathcal{H} &=& \left\{ (\alpha,\e): \alpha =1.22 \ \e  +\mathcal{O}(\e^2) \right\}.
		\end{eqnarray*}
		The first Lyapunov coefficient is $
		\ell_1(\alpha(\e),\e)=\frac{1}{\sqrt{\e}} (0.57 + \mathcal{O}(\e)),
		$
		therefore the  Hopf bifurcation is subcritical.
		\begin{figure}[!htb]
			\centering
			\begin{tiny}
				\subfigure[\label{fig:ExVISchSuba1}]{\includegraphics[scale=0.6]{./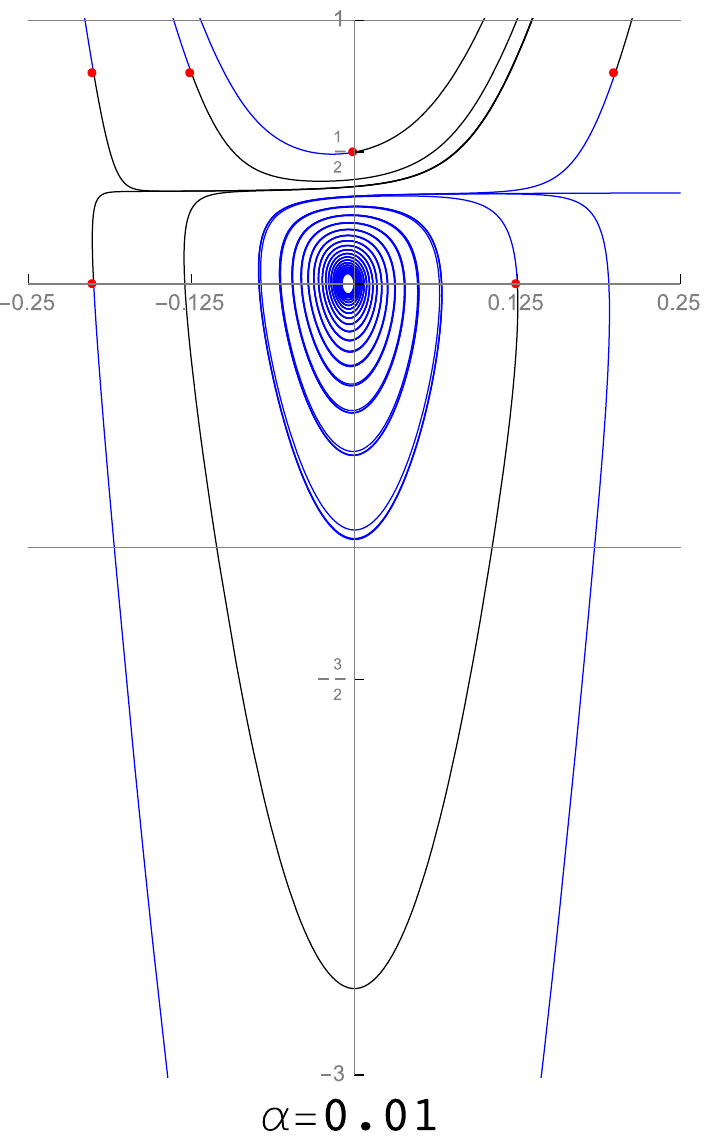}}  \hspace{1cm}
				\subfigure[\label{fig:ExVISchSuba1216}]{\includegraphics[scale=0.6]{./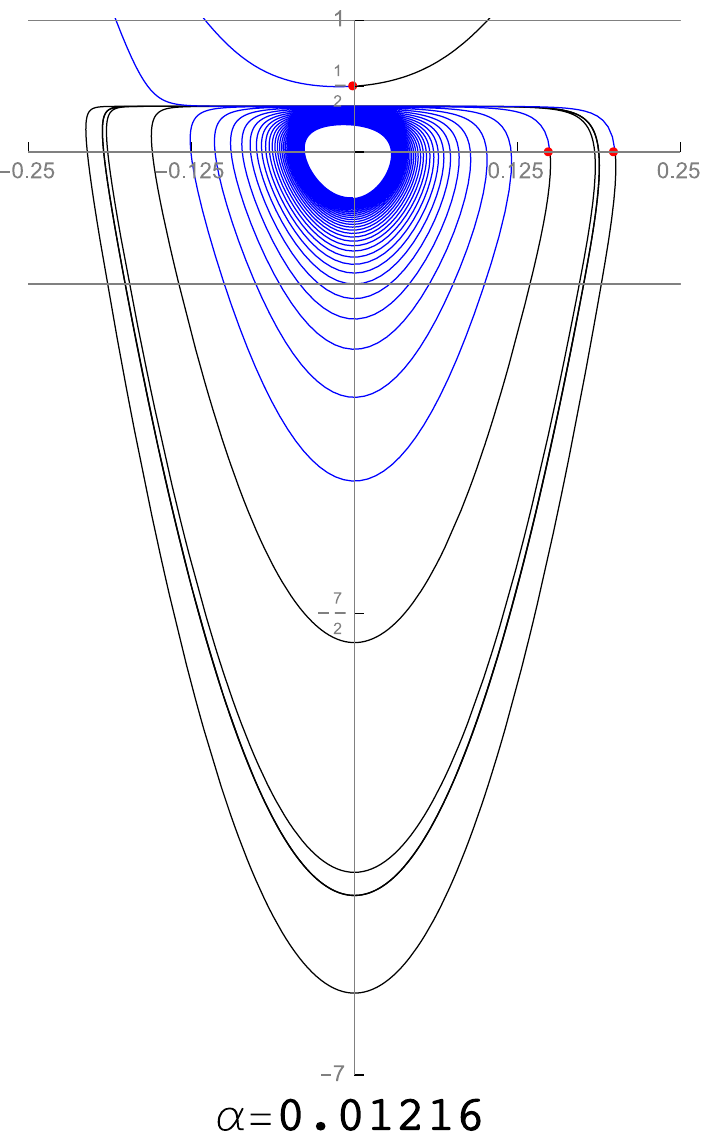}}  \hspace{1cm}
				\subfigure[\label{fig:ExVISchSuba123}]{\includegraphics[scale=0.6]{./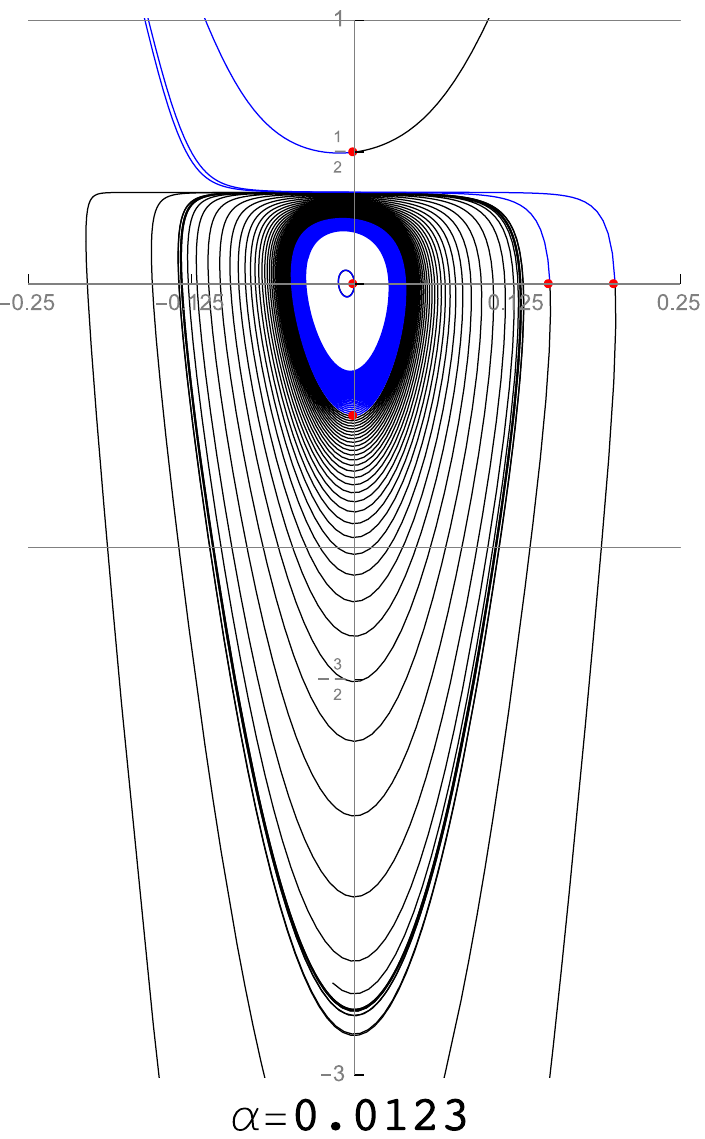}}  \hspace{1cm}
				\subfigure[\label{fig:ExVISchSuba125}]{\includegraphics[scale=0.6]{./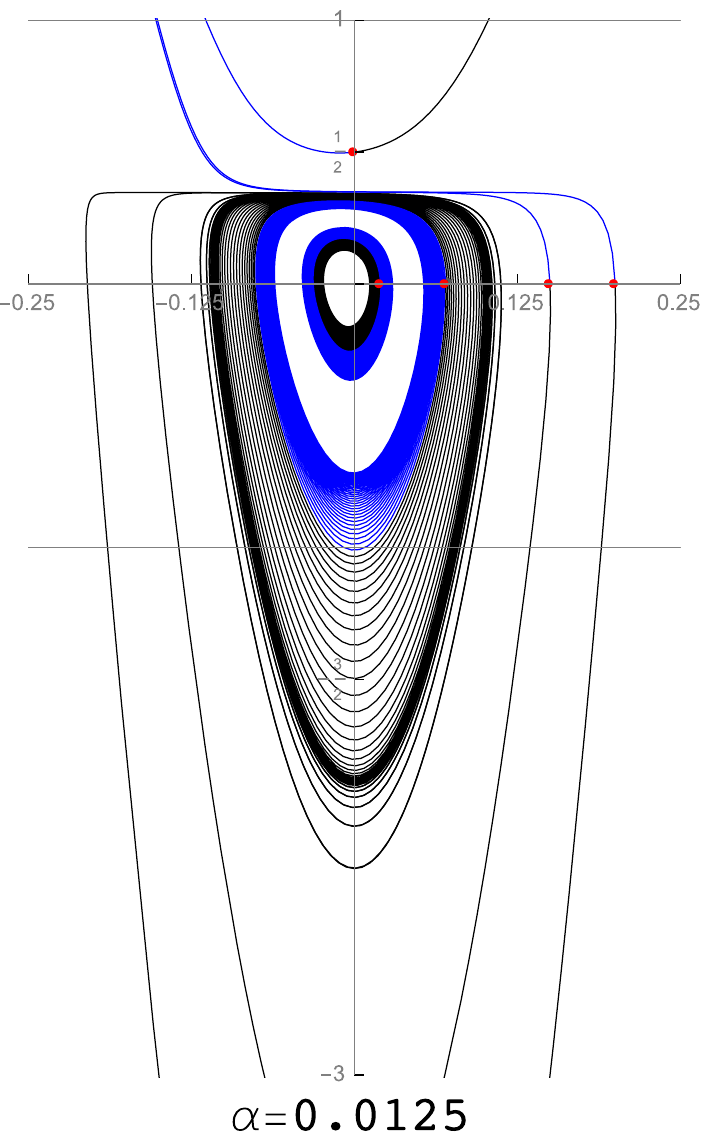}}  \hspace{1cm}
				\subfigure[\label{fig:ExVISchSuba126}]{\includegraphics[scale=0.6]{./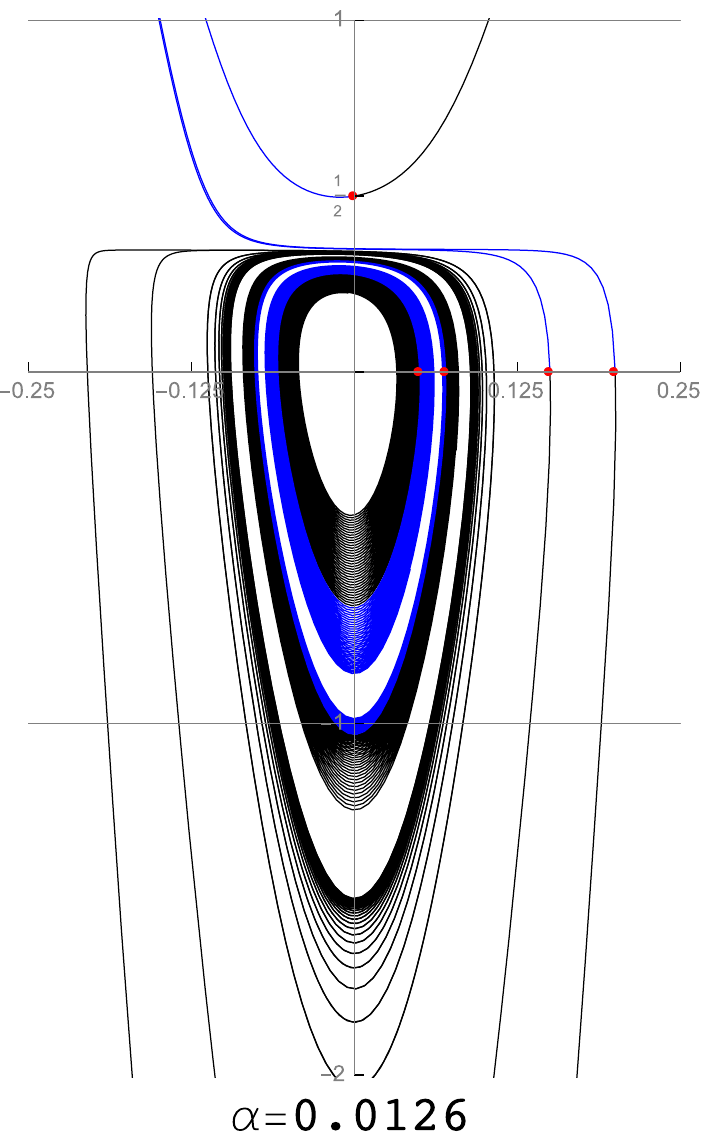}}  \hspace{1cm}
				\subfigure[\label{fig:ExVISchSuba127}]{\includegraphics[scale=0.6]{./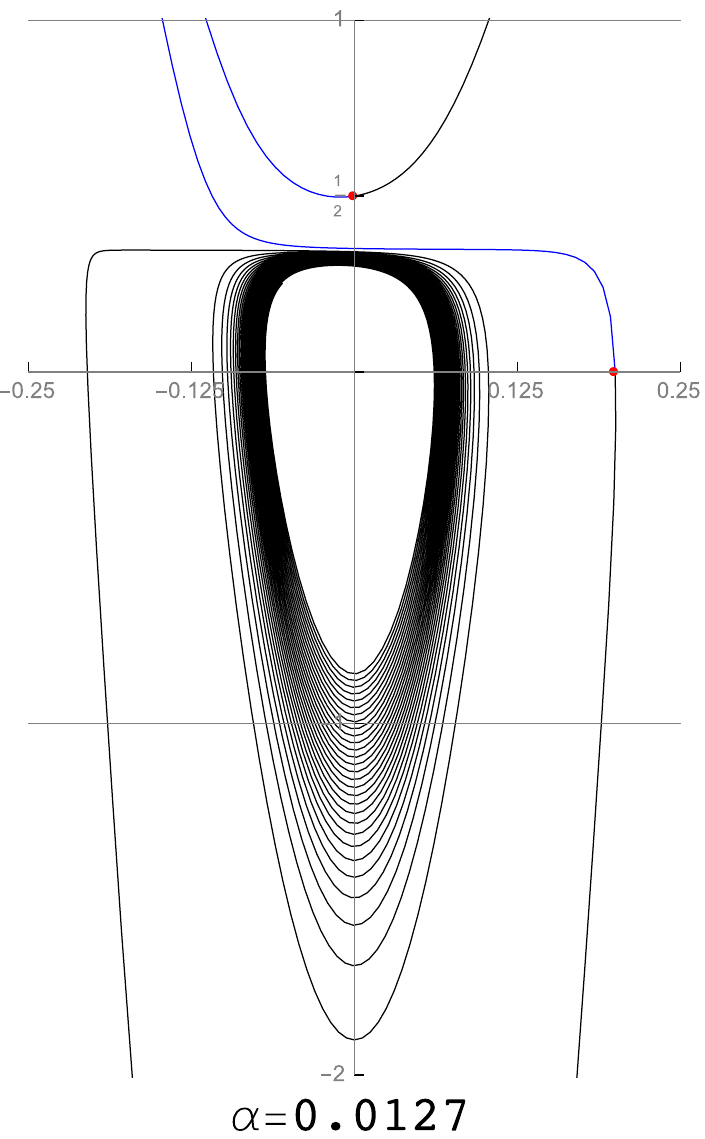}}
			\end{tiny} \\ 
			\begin{scriptsize}
				\textcolor{red}{$\blacksquare$}  Initial condition \quad  \textcolor{blue}{$\blacksquare$} Negative time \quad  \textcolor{black}{$\blacksquare$} Positive Time
			\end{scriptsize}
			\caption{Some trajectories of the regularized system $Z^\alpha_\e$ of \autoref{ex:ch} for different values of the parameter $\alpha$ and $\e=0.01$.}
			\label{fig:ExVISubch}
		\end{figure}
		The Canard trajectory occurs over the curve
		$$
		\mathcal{C} = \left\{ (\alpha,\e): \alpha = 1.21 \ \e  +\mathcal{O}(\e^{3/2}) \right\} 
		$$
		therefore, the stated in \autoref{thm:bdVIsub} holds.
		
		Fix $\e_0=0.01$ and consider values of $\alpha^\pm_\mathcal{D}(\e_0) \approx \pm 0.1 $,
		$\alpha_\mathcal{H}(\e_0) \approx 0.01222$ and $\alpha_\mathcal{C}(\e_0) \approx 0.01215$
		obtained by the intersection between the line $\e=\e_0$ and curves $\mathcal{D}$, $\mathcal{H}$ and $\mathcal{C}$, respectively.
		We are going to focus our attention to the region inside the parabola $\mathcal{D}$.
		
		In Figures \ref{fig:ExVISchSuba1} to \ref{fig:ExVISchSuba127} one can see the evolution of the dynamics of $Z^\alpha_\e$ while we vary the parameter $\alpha.$ In \autoref{fig:ExVISchSuba1}, $\alpha=0.01$,  $\alpha<\alpha_\mathcal{C}(\e_0)$:
		the stable Fenichel manifold is above the unstable one and the critical point is an unstable focus.
		In \autoref{fig:ExVISchSuba1216}, $\alpha=0.01216$: the Canard already happened.
		There exist a big stable periodic orbit $\Delta^{\alpha,s}_{\e}$ and an unstable focus.
		In \autoref{fig:ExVISchSuba123}, $\alpha=0.0123$: the subcritical Hopf bifurcation has occurred and the critical point $P(\alpha,\e)$ is a stable focus.
		An small unstable periodic $\Delta^{\alpha,u}_{\e}$ appears.
		In Figures \ref{fig:ExVISchSuba123} and \ref{fig:ExVISchSuba126}, the two periodic orbits $\Delta^{\alpha,u}_{\e}$, $\Delta^{\alpha,s}_{\e}$ 
		coexist until the parameter $\alpha$ reaches the value
		$\alpha_\mathcal{S}(\e_0)$.
		In \autoref{fig:ExVISchSuba127}, $\alpha=0.0127$: there are no periodic orbits and the stable focus $P(\alpha,\e)$ is global stable.
		This means that the value $\alpha_\mathcal{S}(\e_0)$ given by the intersection between the curve $\mathcal{S}$ and the line $\e=0.01$
		belongs to $I_\mathcal{S}=(0.01216,0.01217)$. Therefore, for $\alpha>\alpha_\mathcal{S}$ we have only an stable focus and no periodic orbits.
		In Figure \ref{fig:ex:chmelnikov} we show the behavior of the Melnikov function for this example.
		
	\end{exmp}

	\begin{figure}[htb!]
		\centering
		\subfigure[\label{fig:exemple43melnikov}]{\includegraphics[width=0.45\textwidth]{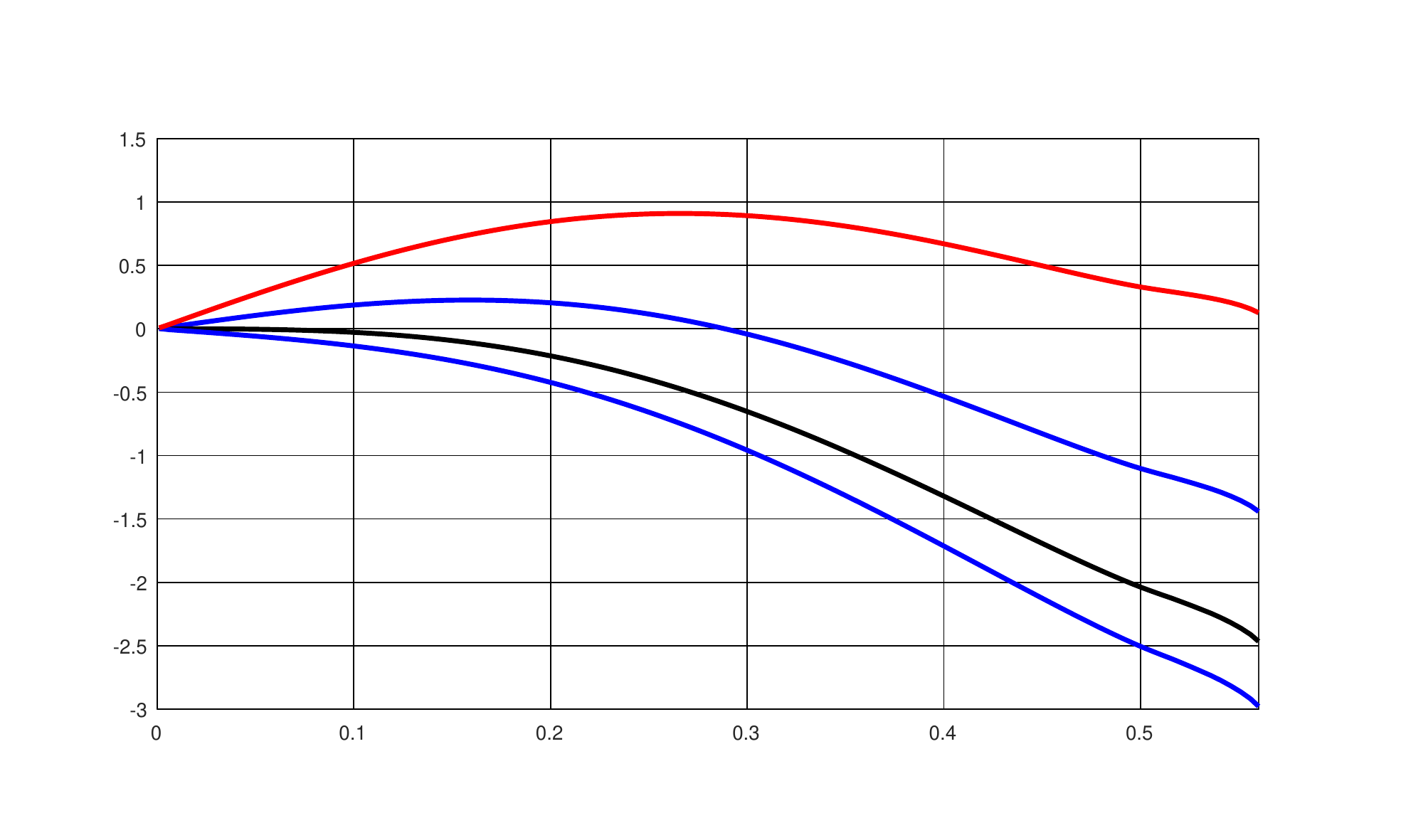}} \hspace{0.3cm}
		\subfigure[\label{fig:ex:chmelnikov}]{\includegraphics[width=0.455\textwidth]{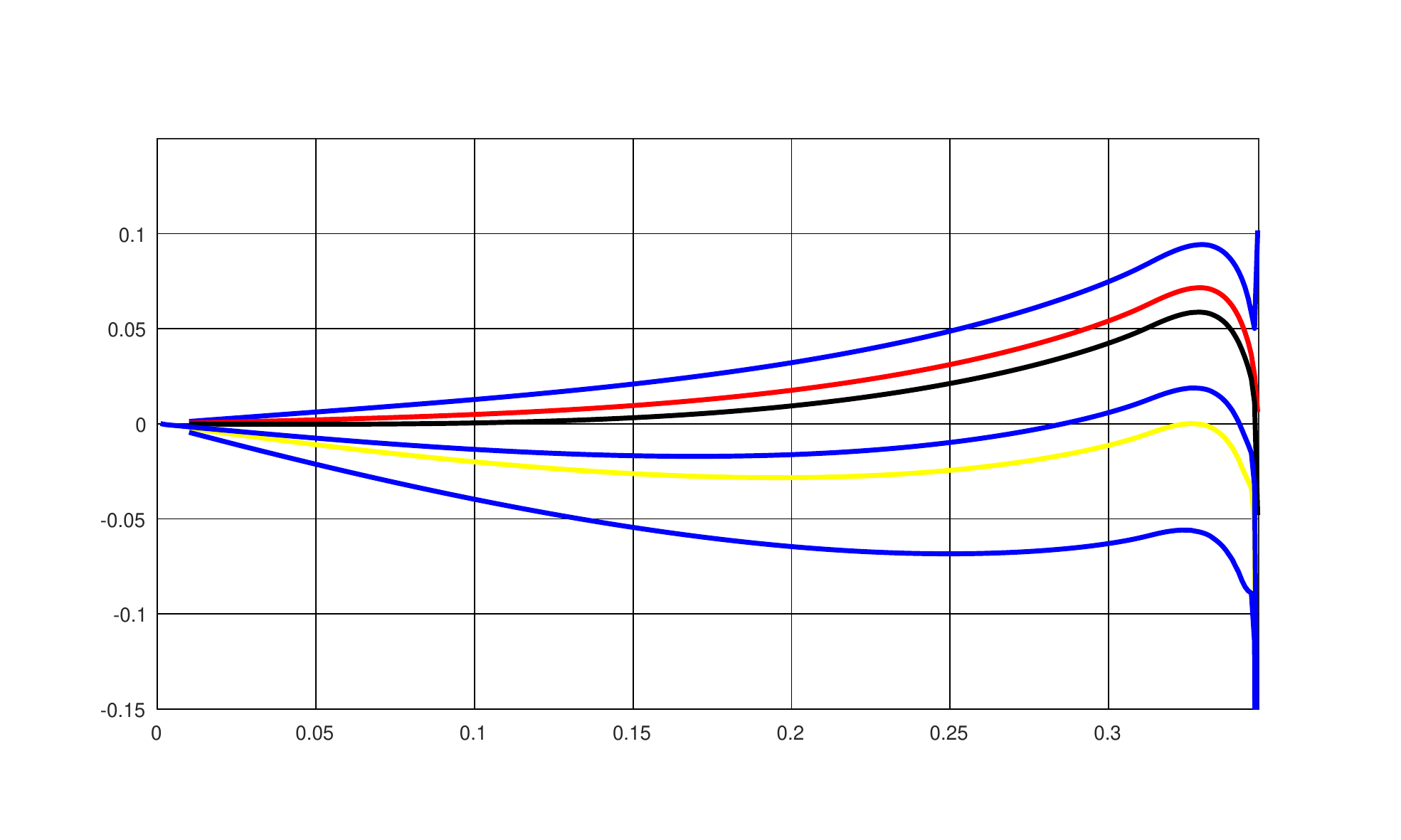}}
		\caption{(a) The Melnikov function for example \ref{ex:VIsuper}:   $\delta = \delta _\mathcal{H}$ in black is supercritical,  $\delta = \delta _\mathcal{C}$ in red. For
			$\delta _\mathcal{C}\le \delta \le  \delta _\mathcal{H}$, $M(v,\delta)$ has one zero with negative slope.
			(b) The Melnikov function for \autoref{ex:ch}:   $\delta = \delta _\mathcal{H}$ in black is subcritical, $\delta = \delta _\mathcal{C}$ in red,
			$\delta = \delta _\mathcal{S}$ in yellow. $M(v,\delta)$ has one zero with negative slope for
			$\delta _\mathcal{H}\le \delta = \delta _\mathcal{H}$, two zeros for $\delta _\mathcal{H}\le \delta \le  \delta _\mathcal{S}$ and no zeros for
			$\delta > \delta _\mathcal{S}$ and $\delta < \delta _\mathcal{C}$}
	\end{figure}

	\begin{rem}
		We want to emphasize that, due to theorem \ref{prop:linearcanard}, the hypothesis of \autoref{thm:bdVIsub} can not be fulfilled if
		we use a linear regularization function $\varphi$.
		Therefore, the importance of \autoref{ex:ch} is to show that the hypothesis of \autoref{thm:bdVIsub} are achievable for a
		$\mathcal{C}^2$ transition map.
	\end{rem}
	
	In what follows, we provide an example to demonstrate that the sign of the coefficient $B$ depends directly of the transition function $\varphi.$ For examples \ref{ex:Bnegativo} and \ref{ex:Bpositivo}, consider the vector field
	
	\begin{equation}
	\label{Bfield}
	Z_\alpha(x,y)=
	\begin{cases}
	X_\alpha= (1+0.2x,-\alpha +x \left(8 x^2+3 x+1\right)-4 y) \\
	Y(x,y)= (-1,-x \left(8 x^2+3 x+3\right))
	\end{cases}
	\end{equation}

	\begin{exmp}[A stable periodic orbit near the Canard: $B<0$] \label{ex:Bnegativo}
		\begin{figure}[!htb]
			\centering
			\begin{tiny}
				\subfigure[\label{fig:Bnegativo000140}]{\includegraphics[scale=0.6]{./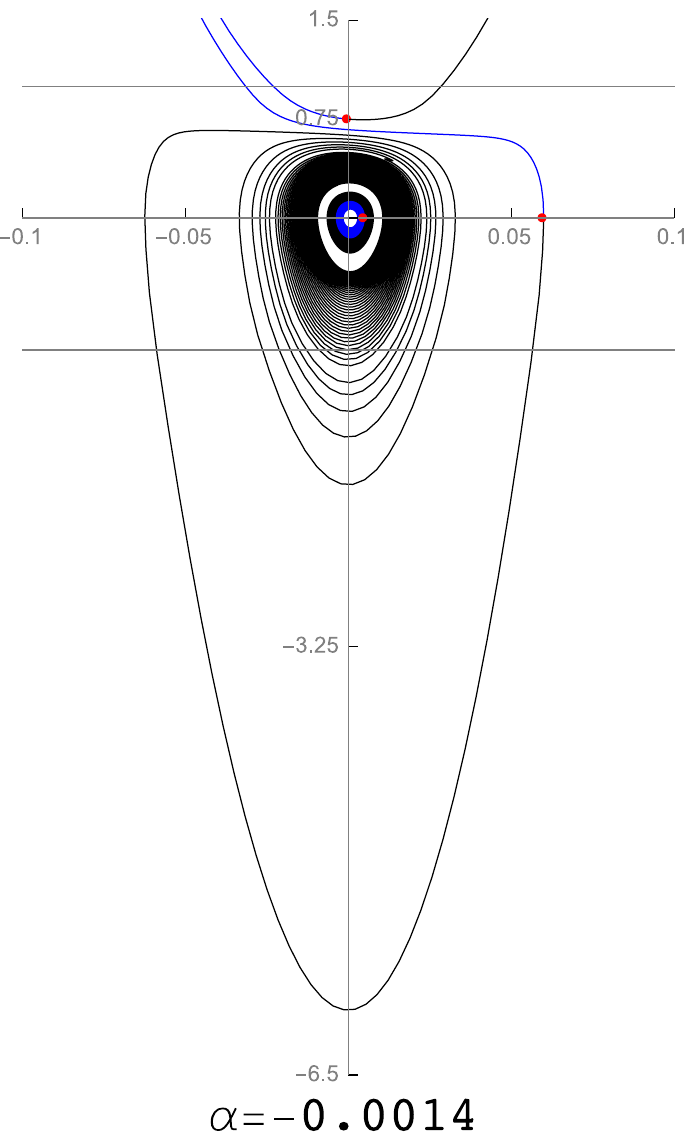}} \hspace{1cm}
				\subfigure[\label{fig:Bnegativo000216}]{\includegraphics[scale=0.6]{./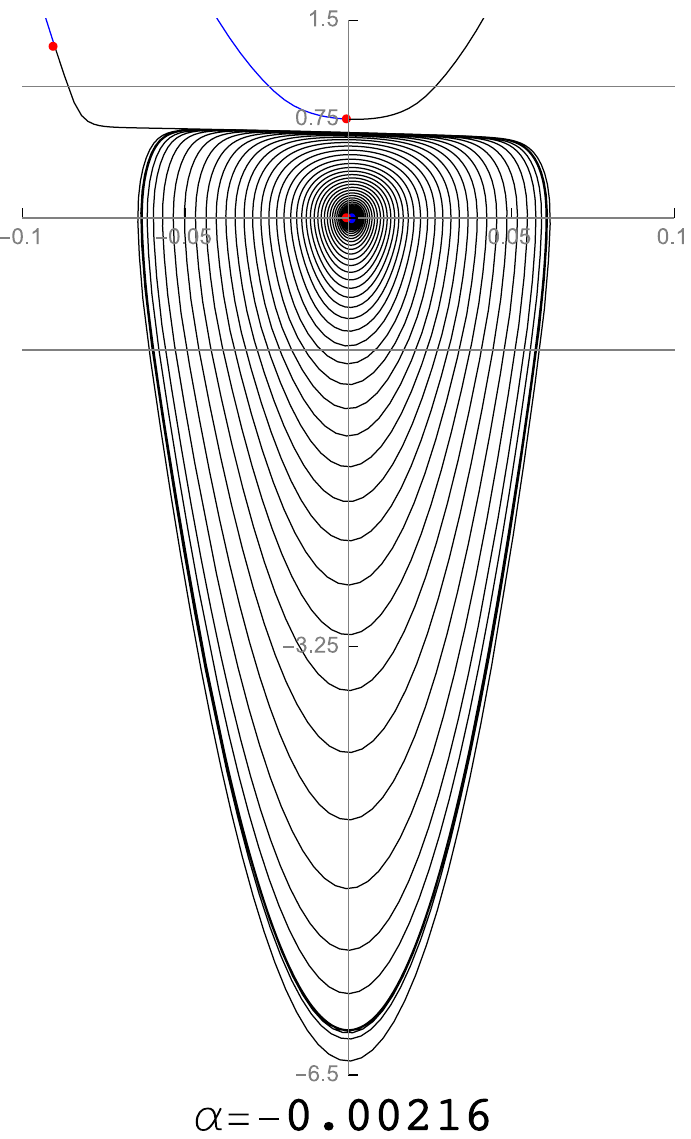}}  \hspace{1cm}
				\subfigure[\label{fig:Bnegativo000217}]{\includegraphics[scale=0.6]{./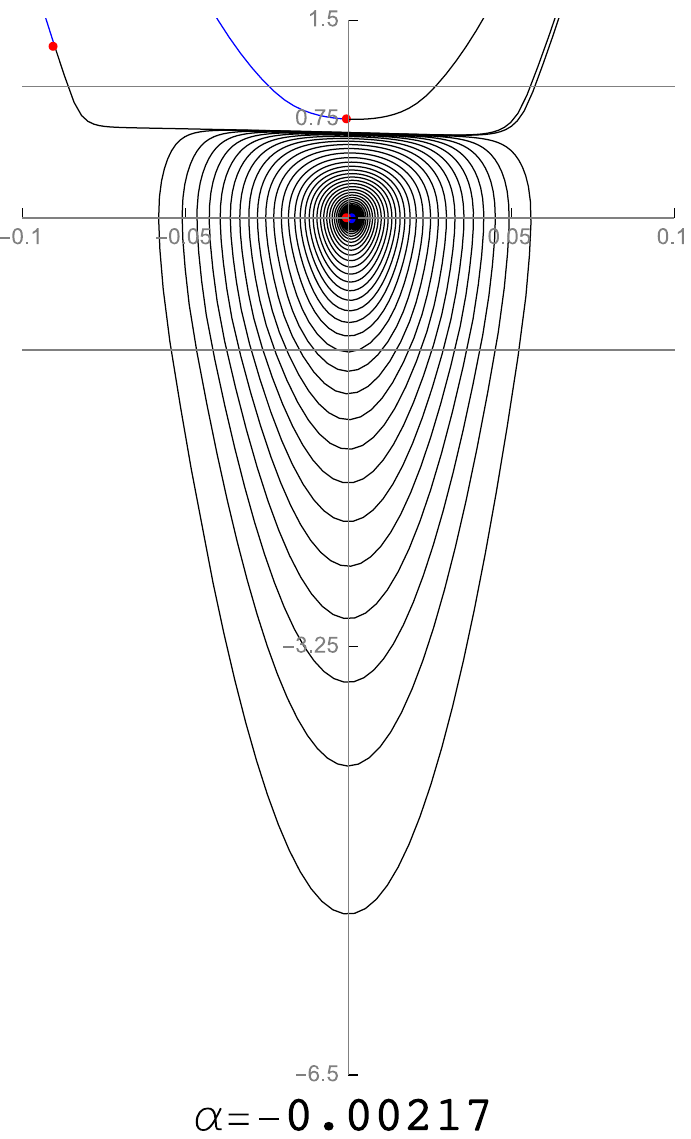}}  \hspace{1cm}
			\end{tiny} \\
			\begin{scriptsize}
				\textcolor{red}{$\blacksquare$}  Initial condition \quad  \textcolor{blue}{$\blacksquare$} Negative time \quad  \textcolor{black}{$\blacksquare$} Positive Time
			\end{scriptsize}
			\caption{Some trajectories of the regularized system $Z^\alpha_\e$ of \autoref{ex:Bnegativo} for different values of the parameter $\alpha$ and $\e=0.001$.}
			\label{fig:Bnegativo}
		\end{figure}
		
		Consider the piecewise vector field \ref{Bfield} and let $$	\varphi(v)= -\frac{5 v^7}{2}+\frac{9 v^5}{2}-2 v^3+v, \, v \in (-1,1).$$
		The critical point is $
		P(\alpha,\e)= \left(-\frac{1}{2}\alpha + \mathcal{O}_2(\alpha,\e), \mathcal{O}_2(\alpha,\e) \right)$	
		and the bifurcation curves $\mathcal{D}$, and $\mathcal{H}$ are given by:
		\begin{eqnarray*}
			\mathcal{D} &=& \left\{ (\alpha,\e): \e = 0.562 \ \alpha^2  +\mathcal{O}(\alpha^3) \right\} \\
			\mathcal{H} &=& \left\{ (\alpha,\e): \alpha =-1.26 \ \e  +\mathcal{O}(\e^2) \right\}.
		\end{eqnarray*}
		The first Lyapunov coefficient is
		$
		\ell_1(\alpha(\e),\e)=\frac{1}{\sqrt{\e}} (-6.3 + \mathcal{O}(\e)),
		$
		therefore  the  Hopf bifurcation is supercritical.
		
		The Canard trajectory occurs over the curve
		$$
		\mathcal{C} = \left\{ (\alpha,\e): \alpha = -2.167 \ \e  +\mathcal{O}(\e^{3/2}) \right\}.
		$$
		The coefficient $B=-2.17 <0$ (see \ref{Rsign}),  therefore, the stated in the first item of \autoref{pocanard} holds.

		Fix $\e_0=0.001$ and consider values of $\alpha^\pm_\mathcal{D}(\e_0) \approx \pm 0.042 $,
		$\alpha_\mathcal{H}(\e_0) \approx -0.0012$ and $\alpha_\mathcal{C}(\e_0) \approx -0.0021$
		obtained by the intersection between the line $\e=\e_0$ and curves $\mathcal{D}$, $\mathcal{H}$ and $\mathcal{C}$, respectively.
		
		We  focus our attention in the region 
		close to the canard curve $\mathcal{C}$, in order to show the behavior explained in the first item of \autoref{pocanard}.
		
		\begin{figure}[htb!]
			\centering
			\includegraphics[width=0.6\textwidth]{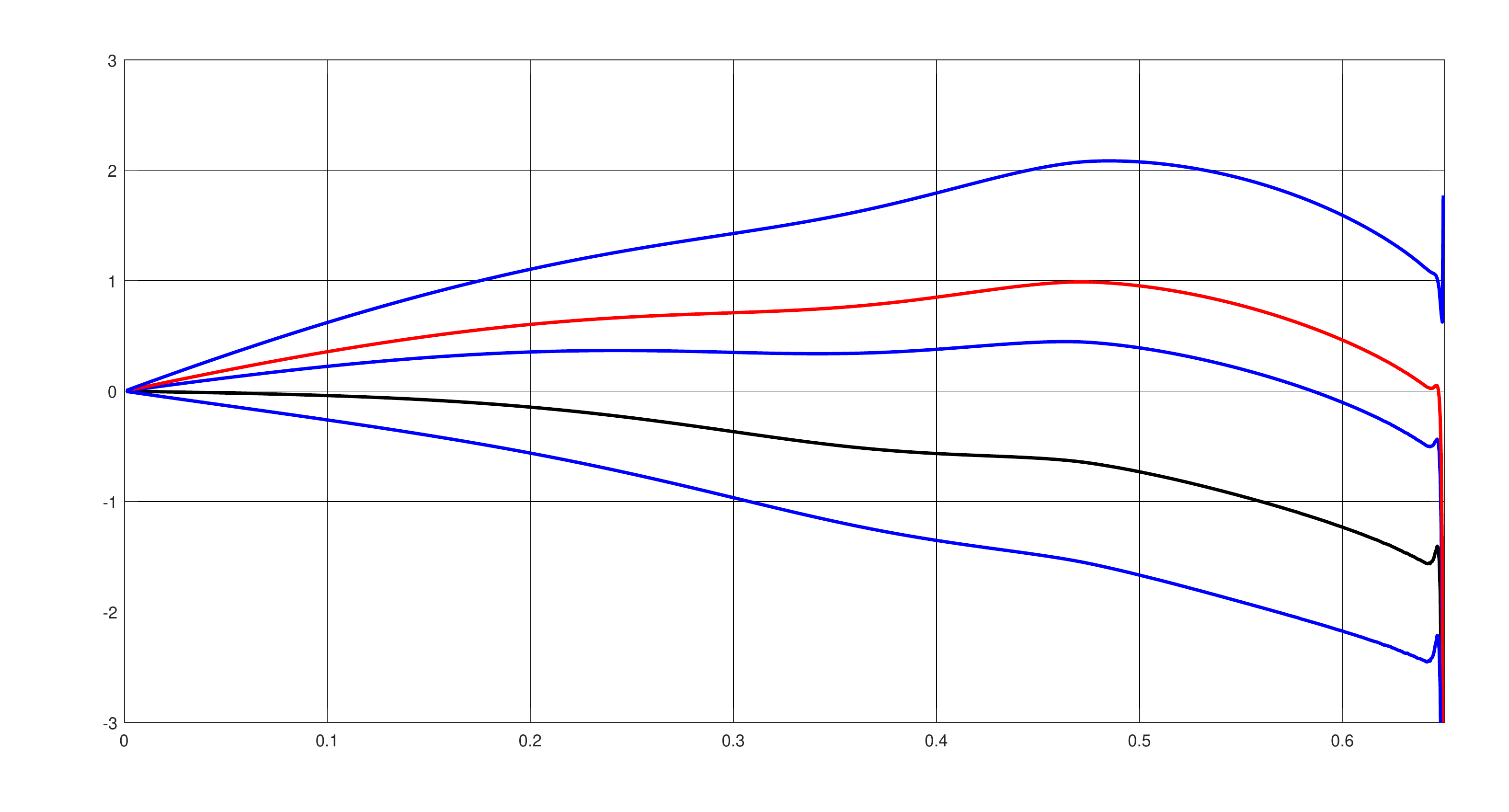}
			\caption{The Melnikov function for example \ref{ex:Bnegativo}:  $\delta = \delta _\mathcal{H}$ in black is supercritical, $\delta = \delta _\mathcal{C}$ in red. $M(v,\delta)$ has only one zero for  $\delta$ near $\delta_\mathcal{C}$ and it has negative slope.
			}
			\label{fig:ex:MelnikovBnegativo}
		\end{figure}
		
		In Figure \ref{fig:Bnegativo} we illustrate some trajectories of the vector field $Z^\alpha_\e$ for $\alpha=\delta \e$ with $\delta$ near the value $\delta_\mathcal{C}$. Observe that, for these values of $\alpha$ the critical point $P(\alpha,\e)$ is always an unstable focus.
		
		In \autoref{fig:Bnegativo000140}, $\alpha=-0.00140$, $\alpha_\mathcal{C} \ll \alpha \leq   \alpha_\mathcal{H}$: a stable periodic orbit exists and the stable Fenichel manifold is above the unstable one.
		In Figure \ref{fig:Bnegativo000216}, $\alpha=-0.00216$, $\alpha \leq \alpha_\mathcal{C}$, one can see a big stable periodic orbit, which corresponds to  the periodic orbit $\Delta^{\alpha,C}_\e$. 
		Finally, in \autoref{fig:Bnegativo000217}, $\alpha=-0.00217$, $\alpha> \alpha_\mathcal{C}$: the Canard already happened and the unstable Fenichel manifold is above the stable one. The periodic orbit $\Delta^{\alpha,C}_\e$ no longer exist. 
		In \autoref{fig:ex:MelnikovBnegativo} one can see the Melnikov function for different values of $\delta$ when $\alpha=\delta \e$.	
	\end{exmp}
	
	\begin{exmp}[A unstable periodic orbit near the Canard: $B>0$] \label{ex:Bpositivo}
		\begin{figure}[!htb]
			\centering
			\begin{tiny}
				\subfigure[\label{fig:Bpositivo0001000}]{\includegraphics[scale=0.6]{./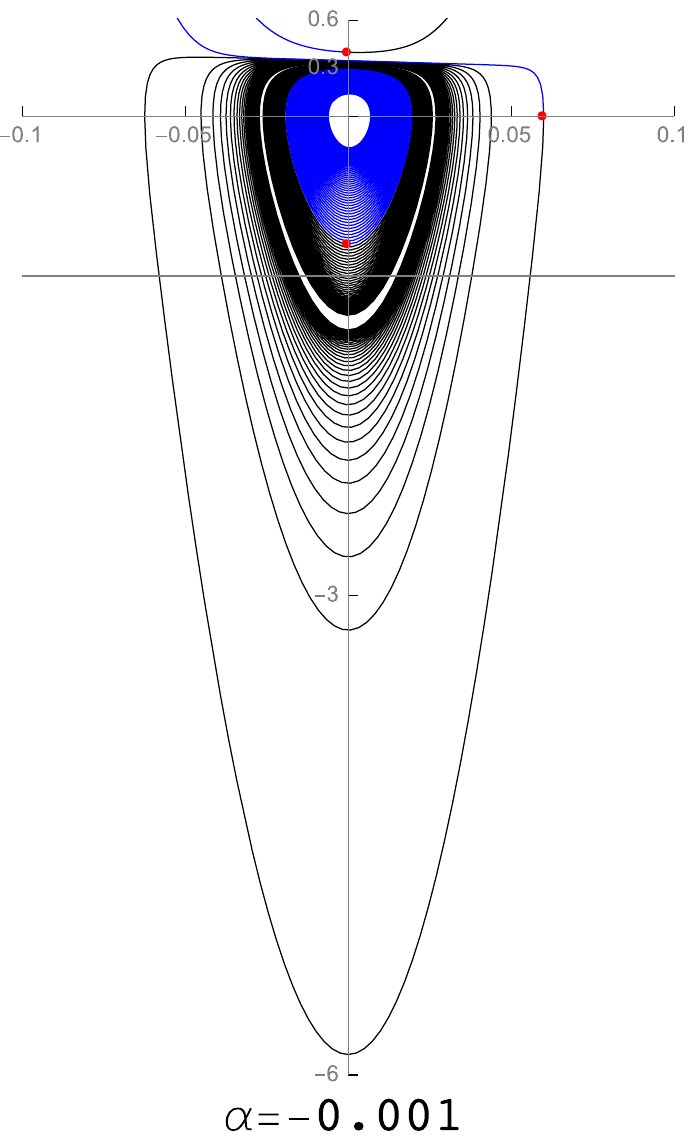}} \hspace{1cm}	
				\subfigure[\label{fig:Bpositivo000101014}]{\includegraphics[scale=0.6]{./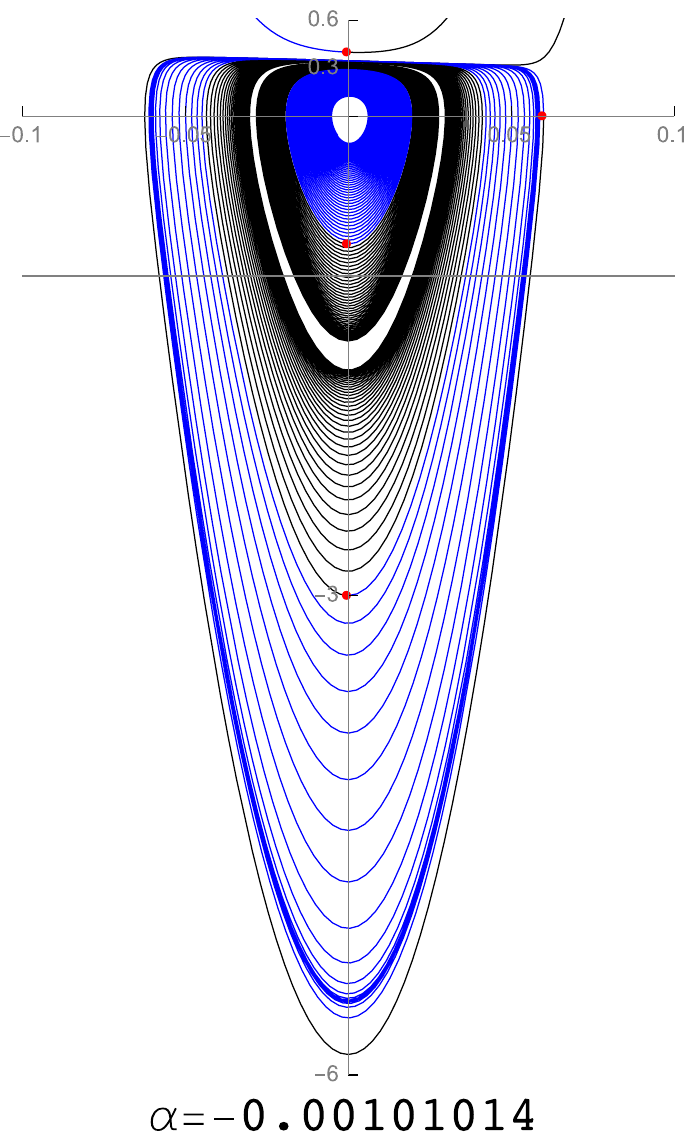}} \hspace{1cm} \linebreak
				\subfigure[\label{fig:Bpositivo000101800}]{\includegraphics[scale=0.6]{./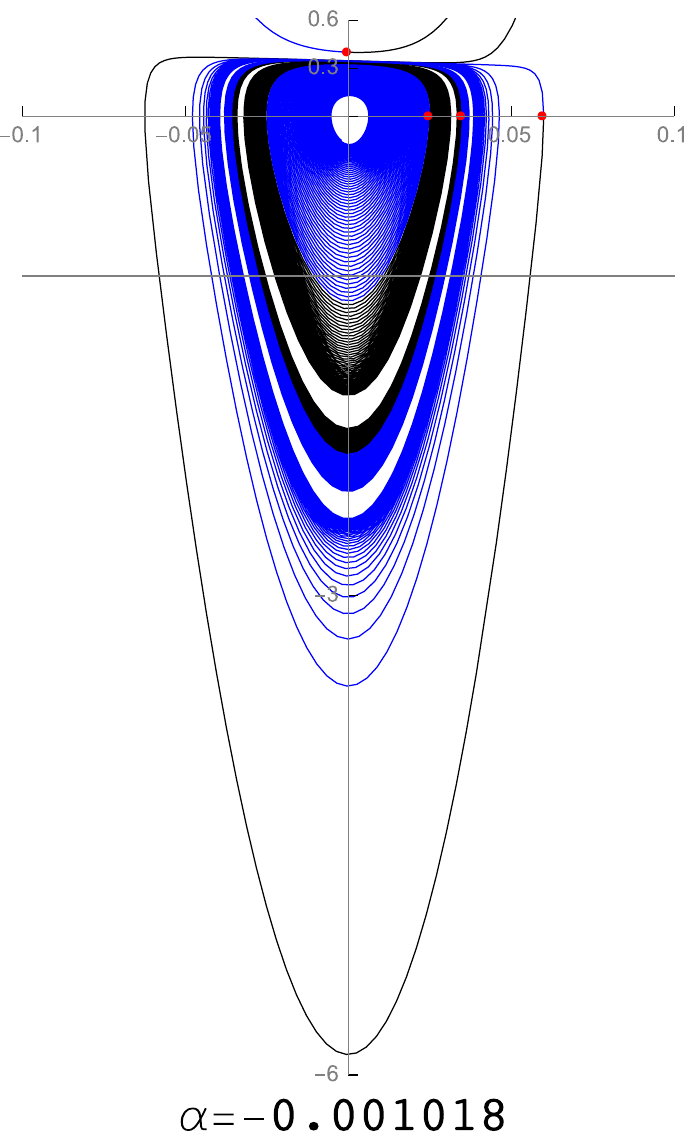}}  \hspace{1cm}
				\subfigure[\label{fig:Bpositivo000102000}]{\includegraphics[scale=0.6]{./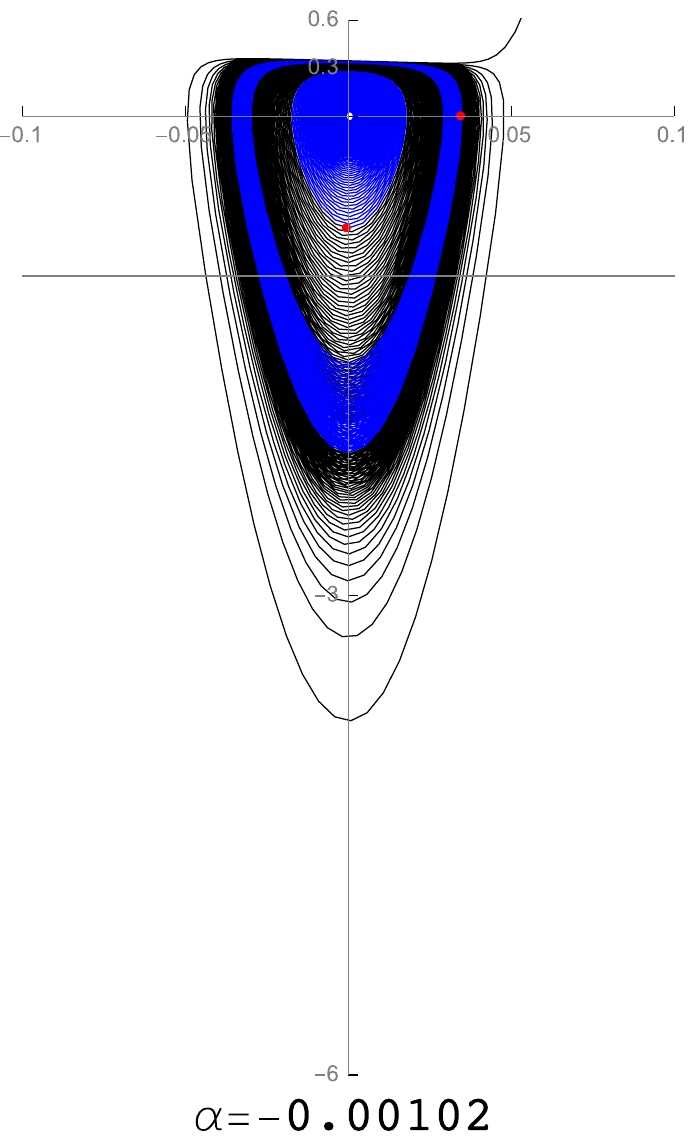}}  \hspace{1cm}
			\end{tiny} \\
			\begin{scriptsize}
				\textcolor{red}{$\blacksquare$}  Initial condition \quad  \textcolor{blue}{$\blacksquare$} Negative time \quad  \textcolor{black}{$\blacksquare$} Positive Time
			\end{scriptsize}
			\caption{Some trajectories of the regularized system $Z^\alpha_\e$ of \autoref{ex:Bpositivo} for different values of the parameter $\alpha$ and $\e=0.001$.}
			\label{fig:Bpositivo}
		\end{figure}
		
		Consider the piecewise vector field \ref{Bfield} and the cubic transition map \eqref{phi3}.
		
		The critical point is $
		P(\alpha,\e)= \left(-\frac{1}{2}\alpha + \mathcal{O}_2(\alpha,\e), \mathcal{O}_2(\alpha,\e) \right)$	
		and the bifurcation curves $\mathcal{D}$, and $\mathcal{H}$ are given by:
		\begin{eqnarray*}
			\mathcal{D} &=& \left\{ (\alpha,\e): \e =0.8437 \ \alpha^2  +\mathcal{O}(\alpha^3) \right\} \\
			\mathcal{H} &=& \left\{ (\alpha,\e): \alpha =-0.8444  \ \e  +\mathcal{O}(\e^2) \right\}.
		\end{eqnarray*}
		The first Lyapunov coefficient is
		$
		\ell_1(\alpha(\e),\e)=\frac{1}{\sqrt{\e}} (-1.09 + \mathcal{O}(\e)),
		$
		therefore  the  Hopf bifurcation is supercritical.
		
		The Canard trajectory occurs over the curve
		$$
		\mathcal{C} = \left\{ (\alpha,\e): \alpha = -1.01013 \ \e  +\mathcal{O}(\e^{3/2}) \right\}.
		$$
		The coefficient $B=5.66>0$ (see \ref{Rsign}),  therefore, the stated in the second item of \autoref{pocanard} holds.
		The system $Z^\alpha_\e$ has a saddle-node bifurcation of periodic orbits in some curve $\tilde{\mathcal{S}}$ located to the left of the canard curve 
		$\mathcal{C}.$
		
		Fix $\e_0=0.001$ and consider values of $\alpha^\pm_\mathcal{D}(\e_0) \approx \pm 0.1 $,
		$\alpha_\mathcal{H}(\e_0) \approx -0.0084$ and $\alpha_\mathcal{C}(\e_0) \approx -0.0101013$
		obtained by the intersection between the line $\e=\e_0$ and curves $\mathcal{D}$, $\mathcal{H}$ and $\mathcal{C}$, respectively.

		Once again, we  focus our attention in the region close to the canard curve $\mathcal{C}$, in order to show the behavior explained in \autoref{pocanard}.
		
		\begin{figure}[htb!]
			\centering
			\includegraphics[width=0.6\textwidth]{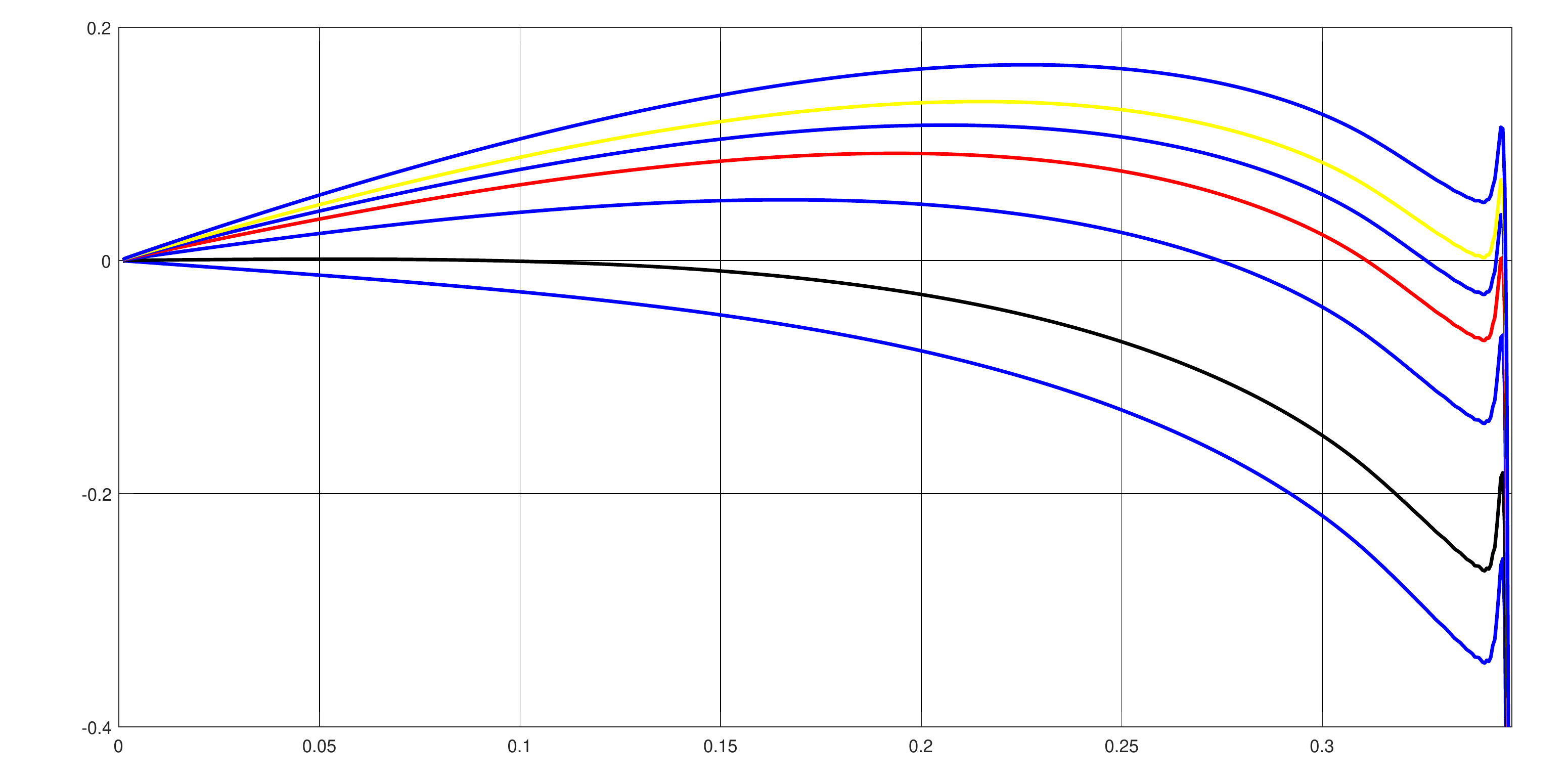}
			\caption{The Melnikov function for example \ref{ex:Bpositivo}:  $\delta = \delta _\mathcal{H}$ in black is subcritical, $\delta = \delta _\mathcal{C}$ in red,
				$\delta = \delta _{\tilde {\mathcal{ S}}}$ in yellow. $M(v,\delta)$ has no zeros for  $\delta < \delta _{\tilde {\mathcal{S}}}  $,
				two zeros for $\delta _{\tilde {\mathcal{ S}}}\le \delta \le  \delta _\mathcal{C}$, one zero with negative slope  for
				$\delta _\mathcal{C}<\delta < \delta _\mathcal{H}$ and no zeros for $\delta > \delta _\mathcal{H}$.
			}
			\label{fig:ex:MelnikovBpositivo}
		\end{figure}
		
		In Figures \ref{fig:Bpositivo0001000} to  \ref{fig:Bpositivo000102000} we illustrate some trajectories of the vector field $Z^\alpha_\e$ for $\alpha=\delta \e$ with $\delta$ near the value $\delta_\mathcal{C}$. Observe that, for these values of $\alpha$ the critical point $P(\alpha,\e)$ is always an unstable focus. We vary the parameter values from right to left on the bifurcation diagram, that is, from the Hopf bifurcation to the Canard direction.

		In \autoref{fig:Bpositivo0001000}, $\alpha=-0.001$, $\alpha_\mathcal{C} < \alpha < \alpha_{\mathcal{H}}$: a small stable periodic orbit exists and the unstable Fenichel manifold is bellow the stable one. 
		In \autoref{fig:Bpositivo000101014}, $\alpha=-0.00101014$, $\alpha_{\tilde{  \mathcal{S}}} < \alpha \lesssim \alpha_\mathcal{C}$: the stable Fenichel manifold is above the unstable one. For this value of $\alpha$, one can see two periodic orbits: a bigger one which is unstable and corresponds to the periodic orbit $\Delta^{\alpha,C}_\e$ and an smaller one, which is stable and correspond to the periodic orbit $\Delta^{\alpha,s}_\e$.
		
		Since for $\alpha=-0.001018$ we have two periodic orbits and for $\alpha=-0.0102$ we have no periodic orbits (see figures \ref{fig:Bpositivo000101800} and \autoref{fig:Bpositivo000102000})   it follows that the value of $\alpha_{\tilde{\mathcal{S}}} (\e_0)$ given by \autoref{pocanard} belongs to $I_{ \tilde{\mathcal{S}}}=(-0.0102,-0.01018)$. Moreover, in these pictures, we see that the size of the smaller stable periodic orbit increases while the size of the big unstable periodic orbit decreases.
		In \autoref{fig:ex:MelnikovBpositivo} one can see the Melnikov function for different values of $\delta$ when $\alpha=\delta \e$.
		
	\end{exmp}

	\subsubsection{The saddle case: \texorpdfstring{$(\det{Z)}_x(\0)>0$}{det>0}} \label{ssec:VIunfregG}
	
	When $(\det{Z)}_x(\0)>0$, for  any $\alpha $ and $\e>0$ sufficiently small, by \autoref{eqlemma} and \autoref{prop:toptype} the regularized vector field 
	$Z^{\alpha}_\e$ has a critical point $P(\alpha,\e)$ which is a saddle.
	
	Analogously to the visible-visible case, although the point $P(\alpha,\e)$ maintains the same character for all $\alpha,\e$,
	a ``bifurcation'' on the Fenichel manifolds when $\alpha$ crosses the $\alpha=0$ value occurs.
	
	Using the results about the critical manifolds  in \autoref{ssec:OVcritical}, for each fixed $\alpha \neq 0$ and any compact set of the critical manifolds $\Lambda^{\alpha,s}_0$ and
	$\Lambda^{\alpha,u}_0$ excluding the tangency points $(T^\alpha_X,1)$ and $(T^\alpha_Y,-1)$, for $0<\e<\e_0(\alpha)$,
	there exist two normally hyperbolic invariant manifold $\Lambda^{\alpha,s}_\e$ and $\Lambda^{\alpha,u}_\e$ which are $\e-$close to
	$\Lambda^{\alpha,s}_0$ and $\Lambda^{\alpha,u}_0$, respectively (see \autoref{fig:OVcritical+}).
	Moreover,
	\begin{itemize}
		\item for $\alpha<0$, $\Lambda^{\alpha,s}_\e$ is the unstable manifold of the saddle point $P(\alpha,\e),$
		\item for $\alpha>0$, $\Lambda^{\alpha,u}_\e$ is the stable manifold of the saddle point $P(\alpha,\e).$
	\end{itemize}
	
	Observe that for $x < T^{\alpha, \e}_X$ the vector $X^\alpha(x,1)$ points inward to the regularization zone and points outwards to the regularization
	zone for $x>T^{\alpha, \e}_X$. Analogously, for $x < T^{\alpha, \e}_Y$ the vector $Y^\alpha(x,-1)$ points inwards to the regularization zone for
	$x<T^{\alpha, \e}_Y$ and outwards to the regularization zone for $x>T^{\alpha,\e}_Y$.
	
	\begin{figure}[!htb]
		\centering
		\begin{tiny}
			\def\svgscale{0.4}
			\input{./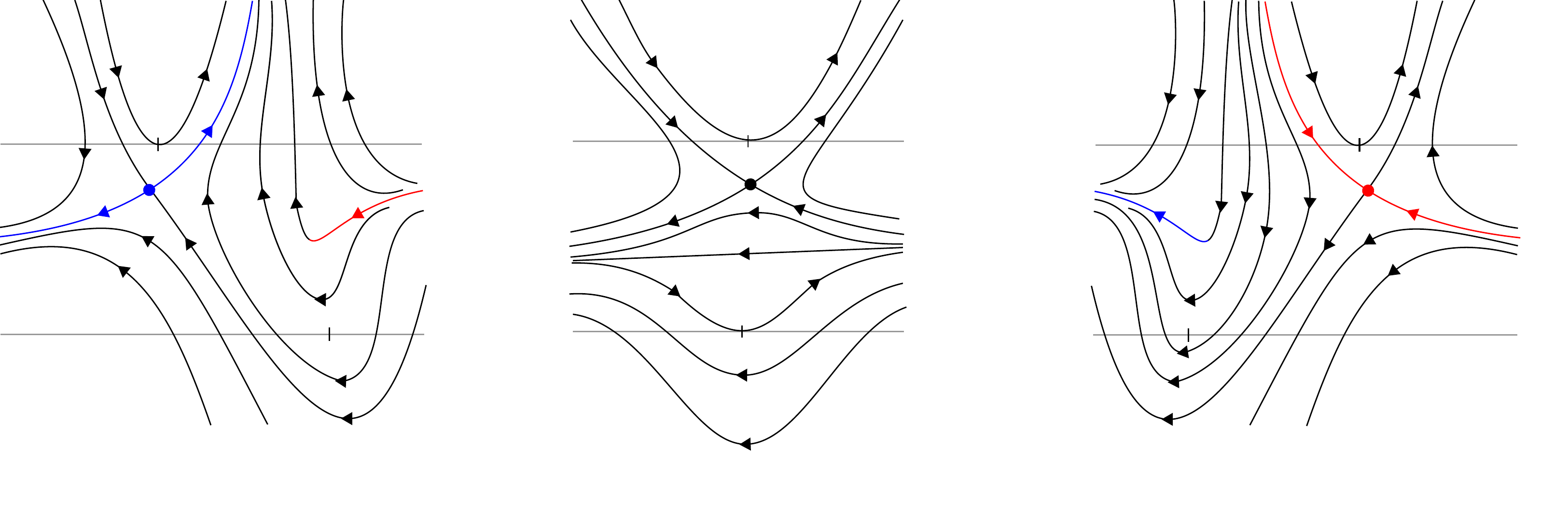_tex}
		\end{tiny}
		\caption{The phase portrait of $Z^{\alpha}_\e$ for $\e>0$ and different $\alpha$.}
		\label{fig:VIGUR2}
	\end{figure}
	
	The above information and the fact that the dynamics over the Fenichel Ma\-ni\-fold $\Lambda^\alpha_\e$ is the same of the critical manifold $\Lambda^\alpha_0$,
	it follows that
	\begin{itemize}
		\item for $\alpha<0$, the stable Fenichel manifold $\Lambda_{\e}^{\alpha,s}$ intersects the section $\{ v=1 \}$ on the right of the tangency point $T^{\alpha, \e}_{X}$. The unstable Fenichel manifold $\Lambda_{\e}^{\alpha,u}$ can intersect or not the section $v=-1$, if this intersection occurs it is located to the right of the tangency point $T^{\alpha, \e}_{Y}$,
		\item for $\alpha>0$, the unstable Fenichel manifold $\Lambda_{\e}^{\alpha,u}$ intersects the section $\{ v=1 \}$ on the left of the tangency point $T^{\alpha, \e}_{X}$. The stable Fenichel manifold $\Lambda_{\e}^{\alpha,s}$ can intersect or not the section $v=-1$, if this intersection occurs it is located to the left of the tangency point $T^{\alpha, \e}_{Y}$.
	\end{itemize}
	The phase portrait of $Z^{\alpha}_{\e}$ for $\alpha \neq 0$ is given in \autoref{fig:VIGUR2}.
	
	\begin{rem}
		Observe that even if the  Canard trajectory also exists when $(\det{Z})_x(\0)>0$ it does  not play a special role in this case.
		However, it is illustrated in \autoref{fig:VIGUR2}.
	\end{rem}
	

\section{Melnikov Method} \label{sec:melnikov}

This section will be devoted to study the existence and bifurcations of periodic orbits when $(\det{Z)}_x(\0)<0$ and therefore the fixed point 
is a focus, using the  Melnikov method. We will prove \autoref{prop:propiedadesM} and later, in \autoref{ssec:propphiaZproof} 
we will proof \autoref{prop:melnikov}, using the results in this section.
In fact, one could easily recover the results of  Theorems \ref{thm:bdIIsuper},  \ref{thm:bdIIsub},
for the invisible-invisible case, and give the proof of \autoref{prop:melnikov}, and also Theorems \ref{thm:bdVIsuper} and \ref{thm:bdVIsub} for the 
visible-invisible case using classical perturbation theory, also known as Melnikov theory. 
We will see that the Melnikov function contains all the information about the periodic orbits of the system while their size in the $x$-direction 
is ``small'' respect to the parameter $\e$.

The first observation is that   
examples \ref{ex:IIsuper} , \ref{ex:IIsub} ,  \ref{ex:VIsuper}, \ref{ex:ch}, \ref{ex:Bnegativo} and  \ref{ex:Bpositivo}  show that the periodic orbit which arises in 
the Hopf bifurcation and the possible saddle-node bifurcations of the system
occur in  a region of the phase space where $x= O(\sqrt{\varepsilon})$.

To analyze the system in this region  when  $\alpha =\delta \e $ we perform the change $u=\sqrt{\varepsilon}x$ in 
system \eqref{vsystem} and  a suitable scaling of time 
$t=\sqrt{\e}\tau$. 
Calling $\gamma= \sqrt{\e}$ the system reads:
\begin{equation}  \label{gammasystem}
\begin{aligned}
\frac{d u}{d\tau}&=  F^1(0,v;0,0) + \gamma F^1_x(0,v;0,0) u +O(\gamma ^2)\\
\frac{d v}{d\tau}&=  F^2_x(0,v;0,0) u + \gamma \left( \frac{1}{2} F^2_{xx}(0,v;0,0) u^2 + F^2_\e (0,v;0,0) + \delta \tilde{F}^2(0,v;0,0)\right)  +\mathcal{O}(\gamma ^2)
\end{aligned}
\end{equation}
Where, fixing $M>0$,  the $\mathcal{O}(\gamma ^2)$ terms are bounded by  $K \gamma ^2$ for $|u|\le M$, and $|\gamma v|\le M$, and $K=K(M)$.

The main observation is that for $\gamma=0$ the system is integrable and the function
$$
H(u,v)= \frac{u^2}{2}+ V(v), \quad V(v)= -\int_{v^*}^{v}\frac{F^1(0,r;0,0)}{F^2_x(0,r;0,0)} dr,
$$
where  $(0,v^*)$  is a  critical point and  $v^*$ is given in \ref{eq:criticalpoint},
is a first integral of the system.

As $(\det{Z})_x(\0)<0$, one has that 
$V(v^*)=0$, $V'(v^*) = \frac{F^1(0,v^*;0,0)}{F^2_x(0,v^*;0,0)} =0$ and 
$$
V''(v^*)=-\frac{\varphi'(v^*) ((X^1-Y^1)(\0))^2}{2(\det{Z})_x(\0)}>0.
$$

Therefore the point $(0,v^*)$ is a non-linear center  surrounded by periodic solutions.
We want to check which of these solutions persists when $\gamma \ne 0$ small enough, depending of the value of $\delta$. 
For Hamiltonian systems, one can apply the classical 
Melnikov method to show that, generically, fixing $\delta$, one periodic orbit can persist after the perturbation if some open condition is satisfied. 
For the system at hand, even if it is not Hamiltonian, it is autonomous and therefore one can change time to get a Hamiltonian 
system and then  apply the classical theory for the Hamiltonian case (see, for instance \cite{holmes, wiggins}, \cite{tes}).

Nevertheless, to make the paper more self contained, we will do this simple computation here without changes of time.
Consider the section
$$
\Theta= \{ (0,v), \  v\ge v^*\}
$$
and the Poincar\'{e} return map $\pi:\Theta\to \Theta$, given by
$
\pi(0,v_0)=(0,v_1)
$
where $v_1= v(T)$ and $T=T(v_0)$ is the time such that the orbit $(u(t),v(t))$ with initial condition $(u(0), v(0))=(0,v_0)$, satisfies $u(T)=0$ and $v(T)\ge v^*$. 
Our goal is to give an asymptotic formula for $v_1-v_0$.
The main observation is that we know a priori that $v_1-v_0= O(\gamma)$, because for $\gamma =0$ all the orbits are periodic.
Therefore, we observe that:
$$
H(0,v_1)-H(0,v_0)= V(v_1)-V(v_0) = V'(v_0) (v_1-v_0) + O(\gamma ^2)
$$
and consequently:
\begin{equation}\label{eq:distance}
v_1-v_0= \frac{H(0,v_1)-H(0,v_0)}{ V'(v_0)} + O(\gamma ^2).
\end{equation}

Using that $H(u,v)$ is a first integral of system \ref{gammasystem} for $\gamma=0$ and the Fundamental Theorem of Calculus applied to the function 
$f(t)=H(u(t), v(t))$, we have that
\begin{eqnarray*}
	H(0,v_1)-H(0,v_0)&=& 
	\int _{0}^{T}\frac{\partial H}{\partial u} \frac{d u}{dt}+\frac{\partial H}{\partial v} \frac{d v}{dt}(u(t), v(t)) dt\\
	&=&
	\gamma   \int _{0}^{T} \left( F^1_x(0,v(t);0,0) u(t)^2 + V'(v(t)) \left[ \frac{1}{2} F^2_{xx} (0,v(t);0,0) u(t)^2  \right. \right. \\
	&+&  
	\left. \left . F^2_\e(0,v(t);0,0)+ \delta \tilde{F}^2(0,v(t);0,0) \right] \right) dt +O(\gamma ^2).
\end{eqnarray*}

If we take the initial value $v_0 \in [v^*, V]$ for any fixed $V>v^*$ in invisible-invisible case or 
$V<\bar v$ in the visible-invisible case, 
one can ensure that, if $\gamma$ is small enough, the solution 
$u(t)=u_p(t)+\mathcal{O}(\gamma)$, $v(t)=v_p(t)+\mathcal{O}(\gamma)$, $T=T_0+\mathcal{O}(\gamma)$, where $(u_p(t),v_p(t))$ 
is the periodic solution of the unperturbed system with initial condition $(0,v_0)$ and $T_0=T_0(v_0)$ its period.
Then, using \eqref{eq:distance} we obtain:
$$
v_1-v_0= \gamma M(v_0, \delta)+ \mathcal{O}(\gamma ^2)
$$
where
\begin{eqnarray*}
	M(v_0,\delta) & =&\frac{1}{V'(v_0)} \int _{0}^{T_0} \left( F^1_x(0,v_p(t);0,0) u_p(t)^2 + V'(v_p(t)) \left[ \frac{1}{2} F^2_{xx} (0,v_p(t);0,0) u_{p}(t)^2  \right. \right. \\
	&+& \left.  \left. F^2_\e(0,v_p(t);0,0)+ \delta \tilde{F}^2(0,v_p(t);0,0) \right] \right) dt .
\end{eqnarray*}

The function $M(v_0, \delta)$ is  known as the Melnikov function. 
To simplify its expression let's $0\le \bar T_0\le T_0$ be the time where the orbit $(u_p(t), v_p(t))$ intersects $u=0$ 
for the first time at a point $(0,\bar v_0)$, with $\bar v_0 <v^*$. 
Splitting the integral from $0$ to $T_0$ into two integrals from $0$ to $\bar T_0$ and from $\bar T_0$ to $T_0$, 
and  changing  variables in both integrals
to $v=v_p(t)$, and using that $H(u_p(t), v_p(t))= H(0,v_0)=V(v_0)$, one obtains:
\begin{eqnarray*}
	M(v_0,\delta) & =& \frac{-2}{V'(v_0)} \int _{\bar v_0}^{v_0}  
	\left\{ 2F^1_x(0,v;0,0) (V(v_0)-V(v)) + V'(v) \left( F^2_{xx}(0.v;0,0) (V(v_0)-V(v)) \right. \right. \\
	&+& \left. \left. F^2_\e(0,v;0,0) + \delta \tilde{F}^2(0,v;0,0) \right) \right\}
	\frac{dv}{F^2_x(0,v;0,0) \sqrt{2(V(v_0)-V(v))}} 
\end{eqnarray*}
where $\bar v_0$ satisfies $V(v_0)= V(\bar v_0)$. Now, integrating by parts the second term in the integral:

\begin{eqnarray}
M(v_0,\delta) &=& 
\frac{-2}{V'(v_0)} \int_{\bar v_0}^{v_0} \left[ \frac{F^1_x(0,v;0,0)}{F^2_x(0,v;0,0)} +
\frac{\partial}{\partial v} \left\{\frac{1}{F^2_x(0,v;0,0)} \left( F^2_{xx}(0,v;0,0) (V(v_0)-V(v))   \right. \right. \right. \nonumber \\
&+& \left. \left. \left. F^2_\e(0,v;0,0)+   \delta \tilde F^2(0,v;0,0) \right)  \right\} \right] \sqrt{2(V(v_0)-V(v))}  dv \label{eq:melnikov}\\
&=& \frac{-2}{V'(v_0)} \int_{\bar v_0}^{v_0} f(v,v_0,\delta)  \sqrt{2(V(v_0)-V(v))} dv \nonumber
\end{eqnarray}
where:
\begin{eqnarray*} \label{fexpression}
	f(v,v_0,\delta)= &=& 
	\frac{F^1_x(0,v;0,0)}{F^2_x(0,v;0,0)} +
	\frac{\partial}{\partial v} \left\{\frac{1}{F^2_x(0,v;0,0)} \left( F^2_{xx}(0,v;0,0) (V(v_0)-V(v))   \right. \right.  \\
	&+& \left. \left.  F^2_\e(0,v;0,0)+   \delta \tilde F^2(0,v;0,0) \right)  \right\} .
\end{eqnarray*}

\begin{prop}\label{prop:propiedadesM}
	The Melnikov function $M(v,\delta)$ satisfies:
	\begin{itemize}
		\item 
		$ M(v^*,\delta)=0, \ \forall \delta$;
		\item
		$\frac{\partial M}{\partial v}(v^*, \delta_\mathcal{H})=0$;
		\item
		In the invisible-invisible case 
		$\frac{\partial^2}{\partial v \partial \delta} M(v^*,\delta_\mathcal{H})>0$;
		\item
		In the visible-invisible case 
		$\frac{\partial^2}{\partial v \partial \delta} M(v^*,\delta_\mathcal{H})<0$;
		\item
		If $\frac{\partial^2}{\partial v ^2} M(v^*,\delta_\mathcal{H})>0$ the Hopf  bifurcation is subcritical (the Lyapunov coefficient $\ell_1 >0$);
		\item
		If $\frac{\partial^2}{\partial v ^2} M(v^*,\delta_\mathcal{H})<0$ the Hopf bifurcation is supercritical (the Lyapunov coefficient $\ell_1 <0$).
	\end{itemize}
\end{prop}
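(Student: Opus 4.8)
The plan is to analyze the Melnikov function $M(v_0,\delta)$ of \eqref{eq:melnikov} through its Taylor expansion around the center $v_0=v^*$, with $v^*$ as in \eqref{eq:criticalpoint}. The technical heart of the first two bullets is that at $v_0=v^*$ both the prefactor $1/V'(v_0)$ and the integral degenerate, since $V'(v^*)=0$ and $\bar v_0\to v^*$ as $v_0\to v^*$; so the first step is to resolve this $0/0$ limit. I would set $v_0=v^*+a$, substitute $v=v^*+s$ in the integral, and use $V(v)=\tfrac12 V''(v^*)(v-v^*)^2+\mathcal{O}((v-v^*)^3)$ together with $V''(v^*)>0$ (established just before the statement). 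To leading order $\bar v_0=v^*-a+\mathcal{O}(a^2)$, $V(v_0)-V(v)=\tfrac12 V''(v^*)(a^2-s^2)$, $V'(v_0)=V''(v^*)a$, and $f(v,v_0,\delta)\to f(v^*,v^*,\delta)$, so that
\[
\int_{\bar v_0}^{v_0} f\,\sqrt{2(V(v_0)-V(v))}\,dv\;\longrightarrow\; f(v^*,v^*,\delta)\sqrt{V''(v^*)}\int_{-a}^{a}\sqrt{a^2-s^2}\,ds=\tfrac{\pi}{2}\,f(v^*,v^*,\delta)\sqrt{V''(v^*)}\,a^2.
\]
This yields the clean asymptotics
\[
M(v_0,\delta)=\frac{-\pi}{\sqrt{V''(v^*)}}\,f(v^*,v^*,\delta)\,(v_0-v^*)+\mathcal{O}\big((v_0-v^*)^2\big).
\]
The first bullet $M(v^*,\delta)=0$ is immediate, and the linear coefficient identifies $\partial_v M(v^*,\delta)=-\pi\,V''(v^*)^{-1/2}f(v^*,v^*,\delta)$.

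For the second bullet I would compute $f(v^*,v^*,\delta)$ from its definition: the term proportional to $V(v_0)-V(v)$ and its $v$-derivative both vanish at $v=v_0=v^*$ (since $V(v^*)-V(v^*)=0$ and $V'(v^*)=0$), leaving an expression that is \emph{affine} in $\delta$, say $f(v^*,v^*,\delta)=A+B\delta$, so $\partial_v M(v^*,\delta)$ is affine in $\delta$ with a unique root. To see this root is $\delta_\mathcal{H}$, I would identify $\partial_v M(v^*,\delta)$ (up to the positive factor $\pi V''(v^*)^{-1/2}$) with the stability exponent of the fixed point $v^*$ of the return map $\pi(0,v_0)=(0,v_0+\gamma M(v_0,\delta)+\mathcal{O}(\gamma^2))$, hence with the real part of the eigenvalues of $DZ^\alpha_\e(P(\alpha,\e))$ along $\alpha=\delta\e$; by \autoref{corol:regnumbers} this real part changes sign exactly on $\mathcal{H}$, i.e. at $\delta=\delta_\mathcal{H}$ of \eqref{hcurve}, giving $\partial_v M(v^*,\delta_\mathcal{H})=0$.

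For the third and fourth bullets, note that $\partial^2_{v\delta}M(v^*,\delta_\mathcal{H})=-\pi\,V''(v^*)^{-1/2}B$ is a constant whose sign equals the monotonicity of $\delta\mapsto\partial_v M(v^*,\delta)$. Since the displacement $v_1-v_0=\gamma M(v_0,\delta)+\mathcal{O}(\gamma^2)$ shows the focus is attracting precisely when $\partial_v M(v^*,\delta)<0$, I would read off the sign from the already established stability transitions: by \autoref{corol:toptype}(b) the invisible--invisible focus is stable for $\delta<\delta_\mathcal{H}$ and unstable for $\delta>\delta_\mathcal{H}$, forcing $\partial_v M(v^*,\cdot)$ to be increasing, i.e. $\partial^2_{v\delta}M(v^*,\delta_\mathcal{H})>0$; by \autoref{corol:toptype}(c2) the visible--invisible focus has the opposite ordering, giving $\partial^2_{v\delta}M(v^*,\delta_\mathcal{H})<0$.

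Finally, for the last two bullets I would work at $\delta=\delta_\mathcal{H}$, where $M(v^*,\delta_\mathcal{H})=0$ and $\partial_v M(v^*,\delta_\mathcal{H})=0$, so $M(v,\delta_\mathcal{H})=\tfrac12\,\partial^2_{vv}M(v^*,\delta_\mathcal{H})(v-v^*)^2+\mathcal{O}((v-v^*)^3)$. Then the displacement $v_1-v_0=\gamma M$ has, for $v_0$ slightly above $v^*$, the sign of $\partial^2_{vv}M(v^*,\delta_\mathcal{H})$: if it is negative the small orbits spiral inward (weakly attracting focus, supercritical Hopf, $\ell_1<0$), and if positive they spiral outward (weakly repelling, subcritical, $\ell_1>0$). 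I expect the main obstacle to be exactly this last identification: justifying rigorously that the first order (in $\gamma=\sqrt\e$) Melnikov term controls the \emph{leading} part of the Lyapunov coefficient $\ell_1(\alpha(\e),\e)=\e^{-1/2}(\mathrm{const}+\mathcal{O}(\e))$, i.e. that the $\mathcal{O}(\gamma^2)$ corrections cannot alter the sign of the weak-focus stability at criticality. This requires controlling the period-map remainder uniformly for small orbits, which is where the careful bookkeeping of the expansion \eqref{gammasystem} must be invoked.
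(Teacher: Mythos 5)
Your expansion of $M$ near $v^*$ --- resolving the $0/0$ by splitting at $v^*$, using $V(v)=\tfrac12 V''(v^*)(v-v^*)^2+\mathcal{O}\left((v-v^*)^3\right)$ and $\bar v_0-v^*=-(v_0-v^*)+\mathcal{O}\left((v_0-v^*)^2\right)$ --- is exactly the paper's computation, and your conclusions $M(v^*,\delta)=0$ and $\partial_v M(v^*,\delta)=c\, f(v^*,v^*,\delta)$ with $c<0$ agree with it (up to a harmless normalization: the paper displays the constant $-\pi/(2\sqrt{V''(v^*)})$ where the splitting actually yields $-\pi/\sqrt{V''(v^*)}$, as in your computation; no sign is affected). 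Where you genuinely diverge is in the sign bullets. The paper evaluates $f(v^*,v^*,\delta)$ directly (it is affine in $\delta$ and vanishes precisely at $\delta_\mathcal{H}$ of \eqref{hcurve}) and computes the explicit constant \eqref{deltavderivative}, whose sign is read off \autoref{visibility} together with the transversality condition \eqref{foldsbe}, which distinguishes the invisible--invisible and visible--invisible cases through the sign of $X^2_x\cdot Y^2_x(\0)$. You instead run the logic backwards: you identify $1+\gamma\,\partial_v M(v^*,\delta)$ with the return-map multiplier at the focus and import the known stability transitions of \autoref{corol:toptype} (which rest on the trace formula of \autoref{corol:regnumbers}, proved independently of the Melnikov section, so there is no circularity) to force the monotonicity of $\delta\mapsto\partial_v M(v^*,\delta)$. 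This is lighter on computation, but it costs two things. First, it needs an analytic input you only gesture at: the $\mathcal{O}(\gamma^2)$ remainder in $v_1-v_0=\gamma M(v_0,\delta)+\mathcal{O}(\gamma^2)$ must be $C^1$-small in $v_0$, uniformly and down to the actual fixed point $v_c=v^*+\mathcal{O}(\gamma)$ of the return map, before ``focus stable $\Leftrightarrow$ $\partial_v M(v^*,\delta)<0$'' is legitimate; the linear stability of $P(\alpha,\e)$ alone says nothing about the sign of $M$ at finite distance from $v^*$ without this. Second, you lose the explicit formula \eqref{deltavderivative}, and the paper later uses the Melnikov signs to \emph{derive} stability of the critical point (in the proofs of \autoref{prop:phiaeZ} and \autoref{prop:melnikov}), so the paper's direct route keeps that dependence one-directional and self-contained. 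For the last two bullets your argument coincides with the paper's, and the obstacle you honestly flag --- that the first-order term in $\gamma=\sqrt{\e}$ controls the weak-focus stability at $\delta=\delta_\mathcal{H}$, i.e.\ the sign of $\ell_1$ --- is treated with the same brevity by the paper itself, so this is a shared simplification rather than a gap specific to your proposal.
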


\begin{proof}
	It is clear that the integral vanishes at $v^*$ for any $\delta$ because in this case $\bar v_0=v_0=v^*$, 
	but as $V'(v^*)=0$ we need more information about the integral at $v^*$.
	For this reason we split the integral between $\bar v_0$ and $v^*$ and between $v^*$ and $v_0$. In the first integral we perform the change
	$v=v ^*+ t(\bar v_0-v^*)$ and in the second one $v=v ^*+ t( v_0-v^*)$ obtaining:
	
	\begin{eqnarray*}
		&&\int _{\bar v_0}^{v_0} f(v,v_0, \delta)\sqrt{2(V(v_0)-V(v))} dv \\
		&=& \int _{\bar v_0}^{v^*} f(v,v_0, \delta)\sqrt{2(V(v_0)-V(v))} dv+
		\int _{\bar v^*}^{v_0} f(v,v_0, \delta)\sqrt{2(V(v_0)-V(v))} dv \\
		&=&-(\bar v_0-v^*)\int _{0}^{1} f(v ^*+ t(\bar v_0-v^*),v_0, \delta)\sqrt{2(V(v_0)-V(v ^*+ t(\bar v_0-v^*)))} dt\\
		&+&(v_0-v^*)\int _{0}^{1} f(v ^*+ t(v_0-v^*),v_0, \delta)\sqrt{2(V(v_0)-V(v ^*+ t(v_0-v^*)))} dt =
	\end{eqnarray*}
	Next step is to use
	$$
	V(v)=  (v-v^*)^2 \tilde V(v), \ \tilde V(v^*)=\frac{V''(v^*)}{2} 
	$$
	obtaining:
	\begin{eqnarray*}
		&=&(\bar v_0-v^*)^2\int _{0}^{1} f(v ^*+ t(\bar v_0-v^*),v_0, \delta)\sqrt{2(\tilde V(\bar v_0)-t^2\tilde V(v ^*+ t(\bar v_0-v^*)))} dt\\
		&+& 
		(v_0-v^*)^2\int _{0}^{1} f(v ^*+ t(v_0-v^*),v_0, \delta)\sqrt{2(\tilde V(v_0)-t^2\tilde V(v ^*+ t(v_0-v^*)))} dt
	\end{eqnarray*}
	using these computations and that $\bar v_0-v^*=-(v_0-v^*)+ O(v_0-v^*)^2$,  one obtains:
	\begin{eqnarray*}
		M(v^*, \delta) & =& 0, 
		\\
		\frac{\partial M}{\partial v}(v^*, \delta)
		&=& -\frac{\pi}{2 \sqrt{ V''(v^*)}} f(v^*,v^*,\delta).
	\end{eqnarray*}
	Now, using the expression of $f$ one can see that this derivative vanishes if $\delta = \delta _H$ given in \eqref{hcurve}.
	Observe that, since $v^*$ is a zero of $M(\cdot,\delta)$ for any $\delta$ and a critical point of $M(\cdot,\delta_\mathcal{H})$, 
	the stability of the critical point $(0,v^*)$ is given by the second derivative 
	$\frac{\partial^2}{\partial v^2} M(v^*,\delta_\mathcal{H})$. 
	More precisely, this value corresponds to the Lyapunov coefficient of the Hopf bifurcation, which is subcritical when $v^*$ is a minimum and it is 
	supercritical when it is a maximum of  of $M(\cdot,\delta_\mathcal{H})$.

	Moreover, using that the function $f(v,v_0,\delta)$, and therefore the function $M(v_0,\delta)$ are lineal with respect to $\delta$ one easily obtains:
	\begin{equation} \label{deltavderivative}
	\begin{aligned}
	\frac{\partial^2}{\partial v \partial \delta} M(v^*,\delta_\mathcal{H}) = \frac{\pi \varphi'(v^*)}{4((\det{Z})_x(\0))^2\sqrt{ V''(v^*)} } \left( X^2_x \tilde Y^2-Y^2_x \tilde X^2 \right)(\0) 
	\end{aligned}
	\end{equation}
	which is, by  \autoref{visibility} and \eqref{foldsbe}, positive if both folds are invisible and negative if the folds have opposite visibility.
\end{proof}

Once we know the basic properties of the function $M$ we can  prove of \autoref{prop:melnikov}:

\begin{itemize}
	\item 
	First, as $\G_Z>0$, for any $\delta$, there exists $V_0>v^*$ independent of $\delta$, and $M(v,\delta)<0$ for $v>V_0$.
	\item
	As the bifurcation is subcritical, $\frac{\partial^2}{\partial v^2} M(v^*,\delta_\mathcal{H})>0$ and therefore the point $v=v^*$ is a minimum of 
	$ M(v,\delta_\mathcal{H})$ and $ M(v,\delta_\mathcal{H})>0$ for all $v^*<v<V_1$, and $(0,v^*)$ is unstable at the Hopf bifurcation.
	\item
	As in the invisible-invisible case $\frac{\partial^2}{\partial v \partial \delta} M(v^*,\delta_\mathcal{H})>0$ one has that 
	\begin{itemize}
		\item 
		If $\delta <\delta_\mathcal{H}$, $\frac{\partial M}{\partial v}(v^*, \delta)<0$, and consequently $ M(v^*, \delta)<0$ for $v>v^*$ close enough to $v^*$, 
		which implies that the critical point of the system is stable.
		\item
		If $\delta >\delta_\mathcal{H}$, $\frac{\partial M}{\partial v}(v^*, \delta)>0$, and consequently $ M(v^*, \delta)>0$ for $v>v^*$ close enough to $v^*$, 
		which implies that the critical point of the system is unstable.
	\end{itemize}
\end{itemize}
All this information together ensures that the function $M(v,\delta)$ satisfies:
\begin{itemize}
	\item 
	For $\delta >\delta_\mathcal{H}$, $M(v, \delta)>0$, for any $v^*<v<V_1$, therefore $Z^\alpha_\e$ has no periodic orbits near 
	the critical point $P(\alpha, \e)$. 
	Nevertheless, as $M(v,\delta)<0$ for  $v>V_0$, the function  $M(\cdot,\delta)$ 
	has a zero $v^s$, corresponding to an attracting periodic orbit $\Gamma^{\alpha,s}_\e$.  
	\item
	For $\delta<\delta_\mathcal{H}$, $M(v, \delta)<0$, for $v>v^*$   near $v^*$, but is positive near $V_1$, therefore it has a unique zero 
	$v^{u}=v^{u}(\delta)$ near $v^*$ satisfying 
	$\frac{\partial}{\partial v} M(v^{u}(\delta),\delta)>0$, therefore, by the implicit function theorem,  
	$Z^\alpha_\e$ has an repelling periodic orbit $\Delta^{\alpha,u}_\e$.
	In addition, as $M(v,\delta)$ is negative near $V_0$, it has a zero $v^s$, corresponding to an attracting periodic orbit $\Gamma^{\alpha,s}_\e$. 
	Therefore, 
	$M(v,\delta)$  has a maximum $v_M(\delta)\in (v^{u}, v^s)$.  
	\item
	In addition, as $M$ has also a minimum between $v^*$ and $v^u$, we can assure that its second derivative vanishes, at least, in one point. 
	If we assume that $\frac{\partial^3}{\partial v^3} M(v_\mathcal{S},\delta_\mathcal{S})\ne 0$, 
	we can ensure that there are no more zeros of $M$ besides $v^ *$, $v^u$, $v^s$.
	
	This guarantees the existence of a pair $( v_\mathcal{S},\delta_\mathcal{S})$ with 
	$v^* <  v_\mathcal{S}= v_M(\delta_\mathcal{S})$ and $\delta_\mathcal{S} <\delta_\mathcal{H}$ 
	such that  $M( v_\mathcal{S},\delta_\mathcal{S})=0$, $\frac{\partial}{\partial v} M(v_\mathcal{S},\delta_\mathcal{S})=0$,
	giving raise to a saddle-node bifurcation of periodic orbits.
\end{itemize}

The reasoning for the visible-invisible case is analogous, using that, for $\delta >\delta_\mathcal{C}$ the  Melnikov function is also negative for $v$ near $\bar v$. 

Even if the saddle-node bifurcation can not be computed analytically, 
it is worth to mention that we can use the Melnikov function to find it numerically solving the system of equations:
$$
M(v,\delta)=0, \quad \frac{\partial M}{\partial v}(v,\delta)=0.
$$

Observe that $M$ is linear in $\delta$ and therefore one can easily reduce this system to the problem of looking for zeros of a function of one variable.
This can be a useful computational tool to find the saddle-node bifurcations in the regularization for a concrete system.

	
	\section{Acknowledgments}
	
	C. Bonet-Reves and  Tere M. Seara have been partially supported by the Spanish MINECO-FEDER Grants MTM2015-65715-P and MDM-2014-0445, 
	and the Catalan Grant 2014SGR504. Tere M-Seara  is also supported by the Russian Scientific Foundation grant 14-41-00044 and the European Marie Curie Action
	FP7-PEOPLE-2012-IRSES: BREUDS. 
	
	J. Larrosa has been supported by FAPESP grants 2011/22529-8 and 2014/13970-0 and the European Marie Curie Action FP7-PEOPLE-2012-IRSES: BREUDS. 

\section{Appendix}


\subsection{Proof of \autoref{prop:phiaeZ}} \label{ssec:propphiaZproof}

To prove \autoref{prop:phiaeZ} one need the following lemmas.

\begin{lemma} \label{phi+expression}
	Let $\phi^{\alpha,\e}_+$ the map which goes to the section $v=1$ to the section $v=-1$ and $\phi^{\alpha,\e}_-$ 
	the map which goes to the section $v=-1$ to the section $v=+1$, then
	\begin{enumerate}[(a)]
		\item 
		There exists $x^+(\alpha,\e)$ such that for $x>x^+(\alpha,\e)$ the map $\phi^{{\alpha , \e}}_+$ is well defined and
		$\phi^{{\alpha, \e}}_+ (x) = x + g^+(x;\alpha) \e  + \mathcal{O}(\e^2)$ where $g^+(x;\alpha)$ is given in \ref{g+}.
		\item 
		There exists $x^-(\alpha,\e)$ such that for $x<x^-(\alpha,\e)$ the map $\phi^{{\alpha ,\e}}_-$ is well defined and
		$\phi^{{\alpha, \e}}_- (x) = x + g^-(x;\alpha) \e  + \mathcal{O}(\e^2)$ where $g^-(x;\alpha)=-g^+(x;\alpha)$.
	\end{enumerate}
\end{lemma}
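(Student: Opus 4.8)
The plan is to use $v$ as the independent variable throughout the regularization band $|v|\le 1$ and to treat the resulting scalar equation as a regular perturbation of the trivial one. Along any trajectory of the slow system \ref{vsystem} on which $F^2$ does not vanish, dividing the two equations gives
\begin{equation*}
\frac{dx}{dv}=\e\,\frac{F^1(x,v;\alpha,\e)}{F^2(x,v;\alpha,\e)},
\end{equation*}
so the transition between the sections $\{v=1\}$ and $\{v=-1\}$ is governed by an $\mathcal{O}(\e)$ vector field in $x$. The first task is therefore to exhibit a range of $x$-values on which $F^2(x,v;\alpha,\e)$ keeps a constant sign for all $v\in[-1,1]$, so that $v$ is strictly monotone along the flow and the maps $\phi^{\alpha,\e}_\pm$ are well defined.

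For this I would invoke the description of the critical manifold in \autoref{subsec:fenichelunf}: by \ref{unfcrit} one has $F^2(x,v;\alpha,0)=0$ if and only if $v=m_0^\alpha(x)$, and this relation has a solution with $v\in[-1,1]$ precisely when $(x,0)$ lies over the sliding or escaping region, the curve reaching $v=\pm1$ at the tangencies $T_X^\alpha$ and $T_Y^\alpha$. Hence for $x$ strictly to the right of both tangencies $F^2(x,v;\alpha,0)<0$ for every $v\in[-1,1]$, and for $x$ strictly to the left $F^2(x,v;\alpha,0)>0$. By continuity of $F^2$ in $\e$ these sign conditions persist for $\e$ small on closed $x$-intervals; taking $x^+(\alpha,\e)$ (respectively $x^-(\alpha,\e)$) to be the corresponding threshold gives the domains $\{x>x^+(\alpha,\e)\}$ and $\{x<x^-(\alpha,\e)\}$ announced in the statement, on which $v$ decreases (resp. increases) monotonically from one section to the other.

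With the maps well defined, the expansion follows from standard regular perturbation theory. Since $F^1/F^2$ is smooth and bounded on the relevant compact $(x,v)$-set (because $F^2$ is there bounded away from $0$), a Gronwall estimate applied to the integral equation for the trajectory $x(v;x_0)$ shows that $x(v;x_0)=x_0+\mathcal{O}(\e)$ uniformly in $v\in[-1,1]$ and in the parameters. Substituting this into
\begin{equation*}
\phi^{\alpha,\e}_+(x_0)=x_0+\e\int_{1}^{-1}\frac{F^1(x(v;x_0),v;\alpha,\e)}{F^2(x(v;x_0),v;\alpha,\e)}\,dv
\end{equation*}
and Taylor-expanding the integrand about $x=x_0$ and $\e=0$ yields $\phi^{\alpha,\e}_+(x_0)=x_0+\e\,g^+(x_0;\alpha)+\mathcal{O}(\e^2)$, where $g^+(x;\alpha)=-\int_{-1}^{1}\tfrac{F^1}{F^2}(x,v;\alpha,0)\,dv$ is the expression displayed in \ref{g+}.

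Part (b) then requires no new work: $\phi^{\alpha,\e}_-$ is the flow of the very same scalar equation, the only difference being that $v$ now runs from $-1$ to $+1$; reversing the limits of integration changes the sign of the leading coefficient, which is exactly the asserted relation $g^-(x;\alpha)=-g^+(x;\alpha)$. The step I expect to demand the most care is not the expansion itself but the control of the domains: one must check that the threshold $x^\pm(\alpha,\e)$ can be chosen uniformly in the parameters and that the remainder stays $\mathcal{O}(\e^2)$ uniformly up to the boundary of the crossing region, since there $F^2$ degenerates and the bound $|F^1/F^2|\le C$ is lost. Restricting, as in the statement, to $x$ bounded away from the tangencies keeps $F^2$ uniformly negative (resp. positive) and makes both estimates routine.
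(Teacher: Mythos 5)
Your proposal is correct and takes essentially the same route as the paper's proof: the same orbit equation $\frac{dx}{dv}=\e\,\frac{F^1}{F^2}$, the same first-order Taylor expansion of the solution giving $g^+$ exactly as in \ref{g+}, and the same sign reversal for $g^-$ in part (b). The only cosmetic difference is in setting up the domain: the paper defines $x^{\pm}(\alpha,\e)$ explicitly as extrema over $v\in[-1,1]$ of the fold curves of $X^{\alpha 2}(\cdot,\e v)$ and $Y^{\alpha 2}(\cdot,\e v)$, concluding $\dot v<0$ (resp.\ $>0$) directly from the simultaneous signs of $X^{\alpha 2}$ and $Y^{\alpha 2}$, whereas you deduce the uniform sign of $F^2$ across the band from the critical-manifold description \ref{unfcrit} at $\e=0$ plus continuity and compactness — both yield the same monotone crossing of the regularization zone.
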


\begin{proof} We prove item $(a)$, the reasoning to prove item $(b)$ is analogous.
	
	Since the origin is an invisible fold point of $X$, fixing $v \in (-1,1)$, it follows that there exist $\alpha_0$ and $\e_0$ and a map 
	$x(\alpha,\e v)$ such that $X^{\alpha 2}(x(\alpha,\e v),\e v)=0$ and $X^{\alpha 1} \cdot X_x^{\alpha 2}(x(\alpha,\e v),\e v)<0$ for every 
	$|\alpha|<\alpha_0$ and $\e<\e_0$. For each $\alpha$ and $\e$ fixed, set 
	\begin{equation} \label{xXmax}
	x^{+}_{X} (\alpha,\e) = \max_{v \in [-1,1]}  x(\alpha,\e v)  .
	\end{equation}
	
	Observe that for $x>x^{+}_{X} (\alpha,\e)$ we have $X^{\alpha 2}(x,\e v)<0$ for all $v \in [-1,1]$. 
	Using the same arguments, one can define $x^{+}_{Y} (\alpha,\e)$ in an analogous way.
	
	Define 
	\begin{equation} \label{x+}
	x^{+}(\alpha,\e) = \max \{ x^{+}_{X} (\alpha,\e), x^{+}_{Y} (\alpha,\e) \}.
	\end{equation}
	
	Therefore, for $x>x^{+}(\alpha,\e)$ we have that $X^{\alpha 2}(x,\e v)$ and $Y^{\alpha 2}(x,\e v)$ are smaller than zero, simultaneously. 
	This implies that the $Z^{\alpha}_{\e}$ trajectory of any initial condition $x>x^{+}(\alpha,\e)$ crosses the regularization zone, 
	since in this region 
	$$
	\dot{v}= \frac{1}{\e} F^2 (x,v;\alpha,\e) = (1+\varphi(v)) X^{\alpha 2}(x,\e v) + (1-\varphi(v)) Y^{\alpha 2}(x,\e v) <0,\quad \forall \, v \in [-1,1].
	$$
	
	Now we are able to compute the map $\phi^{\alpha \e}_{+}$. 
	Consider the equation of the orbits of system \ref{fsystem}: 
	\begin{eqnarray}
	\frac{dx}{dv} 	&=& \e \frac{F^1(x,v;\alpha,\e)}{F^2(x,v;\alpha,\e)} \label{orbitseq}
	\end{eqnarray}
	Let $x(v;\e)$ be the solution of \ref{orbitseq} satisfying $x(1,\e)=x_0 > x^{+}(\alpha,\e)$. 
	Taylor expanding this solution we obtain:
	\begin{equation}
	x(v;\e) = x_0 + \e \int_{1}^{v}{\frac{F^1(x_0,s;\alpha,0)}{F^2(x_0,s;\alpha,0)}}ds + \mathcal{O}(\e^2).
	\end{equation}
	Then the intersection between the $Z^{\alpha}_{\e}$ trajectory departing from $(x,1)$ with $x>x^{+}(\alpha,\e)$ with the section $v=-1$ is given by
	\begin{equation} \label{phiae+}
	\phi^{\alpha \e}_{+}(x) = x + g^+(x;\alpha) \e + \mathcal{O}(\e^2),
	\end{equation}
	where \begin{equation}\label{g+}
	g^{+}(x;\alpha) = \int_{1}^{-1}{\frac{F^1(x,s;\alpha,0)}{F^2(x,s;\alpha,0)}}ds.
	\end{equation}
	Observe that the function $g^+$ is regular respect to $\alpha$: 
	$g^{+}(x;\alpha)=g^{+}(x;0)+\mathcal{O}(\alpha)$.
\end{proof}


\begin{lemma} \label{lem:phiZae}
	For $\alpha,\e>0$ sufficiently small there exists a domain $\Theta^\alpha_\e$, given in \ref{theta}, where the generalized first return map 
	$\phi^{\alpha}_\e$ is well defined and satisfies
	\begin{equation} \label{phiex}
	\phi^{\alpha}_\e = \phi^{\alpha} + \mathcal{O}(\e) = \phi_Z + \mathcal{O}(\alpha,\e)
	\end{equation}
\end{lemma}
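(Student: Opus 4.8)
The plan is to express $\phi^\alpha_\e$ as a composition of four elementary maps, one for each arc of a returning trajectory, and then to estimate the factors one at a time. Recall that in the invisible--invisible case, written in the variables $(x,v)$, the regularized field $Z^\alpha_\e$ equals $X^\alpha$ for $v\ge 1$ and $Y^\alpha$ for $v\le -1$, the two folds sitting at $(T^{\alpha,\e}_X,1)$ and $(T^{\alpha,\e}_Y,-1)$ (see \ref{epsXtangency}, \ref{epsYtangency}). A trajectory starting on $\{v=1\}$ to the right of $x^+(\alpha,\e)$ first crosses the strip $|v|\le 1$ downwards (the map $\phi^{\alpha,\e}_+$ of \autoref{phi+expression}), then loops below $\{v=-1\}$ along the flow of $Y^\alpha$ and returns to $\{v=-1\}$ to the left of $x^-(\alpha,\e)$ (a Poincar\'{e} map $P^{\alpha,\e}_Y$ of the smooth field $Y^\alpha$ at its invisible fold), then crosses the strip upwards ($\phi^{\alpha,\e}_-$), and finally loops above $\{v=1\}$ along $X^\alpha$ back to the starting section (a Poincar\'{e} map $P^{\alpha,\e}_X$). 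Thus I would write
\begin{equation*}
\phi^\alpha_\e = P^{\alpha,\e}_X \circ \phi^{\alpha,\e}_- \circ P^{\alpha,\e}_Y \circ \phi^{\alpha,\e}_+,
\end{equation*}
and the domain $\Theta^\alpha_\e$ is precisely the set of $x$ for which the image of each factor falls in the domain of the next; this compatibility is furnished by \autoref{phi+expression} together with the fact that, for fixed $\alpha\neq 0$, the tangencies stay $\mathcal{O}(\e)$--close to $T^\alpha_X,T^\alpha_Y$.

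Next I would estimate each factor against its Filippov counterpart. The two strip--crossing maps are handled directly by \autoref{phi+expression}, which gives $\phi^{\alpha,\e}_\pm(x)=x\pm g^+(x;\alpha)\e+\mathcal{O}(\e^2)=x+\mathcal{O}(\e)$, so each is the identity up to order $\e$, uniformly on the relevant compact $x$--intervals. For the two loop maps I would use that $P^{\alpha,\e}_X$ and $P^{\alpha,\e}_Y$ are Poincar\'{e} maps of the \emph{smooth} fields $X^\alpha$, $Y^\alpha$ near their invisible folds, but taken on the shifted sections $\{v=1\}=\{y=\e\}$ and $\{v=-1\}=\{y=-\e\}$. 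Since, for fixed $\alpha$, these fields do not depend on $\e$ and the sections converge to $\{y=0\}$ as $\e\to 0$, smooth dependence of the flow on the transversal section yields $P^{\alpha,\e}_X=\phi^\alpha_X+\mathcal{O}(\e)$ and $P^{\alpha,\e}_Y=\phi^\alpha_Y+\mathcal{O}(\e)$ in the $C^1$ topology, with $\phi^\alpha_X,\phi^\alpha_Y$ the Poincar\'{e} maps of \autoref{prop:frX}; staying on $\Theta^\alpha_\e$ keeps us a fixed distance from the exact tangencies, so the square--root degeneracy of the return time at a fold is never reached.

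It then remains to compose. Because each factor is $C^1$--close to its Filippov counterpart and the counterparts have uniformly bounded derivatives on $\Theta^\alpha_\e$, the elementary estimate $\|g_1\circ g_2-\tilde g_1\circ\tilde g_2\|\le \|g_1-\tilde g_1\|+\mathrm{Lip}(\tilde g_1)\,\|g_2-\tilde g_2\|$, applied inductively to the four factors, propagates the $\mathcal{O}(\e)$ errors and gives $\phi^\alpha_\e=\phi^\alpha+\mathcal{O}(\e)$, where $\phi^\alpha$ is the generalized first return map of the unfolding from \autoref{prop:IICunfolding}, recovered as the $\e\to 0$ limit of the composition above. Finally, from \ref{fXexpression} the folds satisfy $T^\alpha_X,T^\alpha_Y=\mathcal{O}(\alpha)$ and the coefficients $\beta_X,\beta_Y$ depend continuously on the base point, so the expansion \eqref{FRunfolding} reduces to $\phi_Z$ at $\alpha=0$, i.e.\ $\phi^\alpha=\phi_Z+\mathcal{O}(\alpha)$; combining the two estimates yields $\phi^\alpha_\e=\phi_Z+\mathcal{O}(\alpha,\e)$. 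The hard part will be keeping the $\mathcal{O}(\e)$ constants in the loop maps uniform as $x$ decreases towards the crossing region: there the return time degenerates near the folds, and it is exactly to avoid this that the domain must be truncated at $x^\pm(\alpha,\e)$ rather than carried up to the tangency points.
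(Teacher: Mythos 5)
Your proof is correct and, up to a cyclic permutation of the base section, is exactly the paper's: the paper writes $\phi^{\alpha}_\e = \phi^{\alpha,\e}_- \circ \phi^{\alpha,\e}_Y \circ \phi^{\alpha,\e}_+ \circ \phi^{\alpha,\e}_X$ starting on $\{v=1\}$ at $x< T^{\alpha,\e}_X$, estimates the strip-crossing factors by \autoref{phi+expression}, estimates the loop factors by applying \autoref{prop:frX} directly to $X^\alpha(x,\e v)$ and $Y^\alpha(x,\e v)$ on the sections $v=\pm1$ (so that $T^{\alpha,\e}_{X,Y}=T^{\alpha}_{X,Y}+\mathcal{O}(\e)$ and $\beta^{\alpha,\e}_{X,Y}=\beta^{\alpha}_{X,Y}+\mathcal{O}(\e)$, which is your ``smooth dependence on the section'' argument made quantitative), and then composes, exactly as you do.

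One correction to your closing paragraph, which misidentifies the role of the truncation. The loop maps do \emph{not} degenerate at the invisible folds: \autoref{prop:frX} is a smooth quadratic expansion centered at the fold point itself, so $\phi^{\alpha,\e}_X$ and $\phi^{\alpha,\e}_Y$ are smooth, with coefficients depending smoothly on $\e$, up to and including the tangency. Hence no ``fixed distance from the exact tangencies'' is needed or available: since $x^{+}(\alpha,\e)-T^{\alpha,\e}_X=\mathcal{O}(\e)$, the domain \ref{theta} reaches within $\mathcal{O}(\e)$ of the tangency, so your restricted domain is strictly smaller than the one the lemma asserts (and is also inconsistent with your own description of $\Theta^\alpha_\e$ as the compatibility set of the four factors). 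The truncation at $x^{\pm}(\alpha,\e)$ serves a different purpose: it guarantees that $X^{\alpha 2}$ and $Y^{\alpha 2}$ have a common sign throughout $|v|\le 1$, so that the crossing of the regularization zone is monotone in $v$ and the maps $\phi^{\alpha,\e}_{\pm}$ of \autoref{phi+expression} are defined at all; it is these strip-crossing maps, not the loop maps, that fail near the folds, where trajectories inside the strip can turn around along the invariant manifolds. With that repair (i.e.\ quoting \autoref{prop:frX} at the fold instead of keeping away from it), your argument coincides with the paper's.
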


\begin{proof}
	\begin{figure}[!htb]
		\centering
		\begin{scriptsize}
			\def\svgscale{0.5}
			\input{./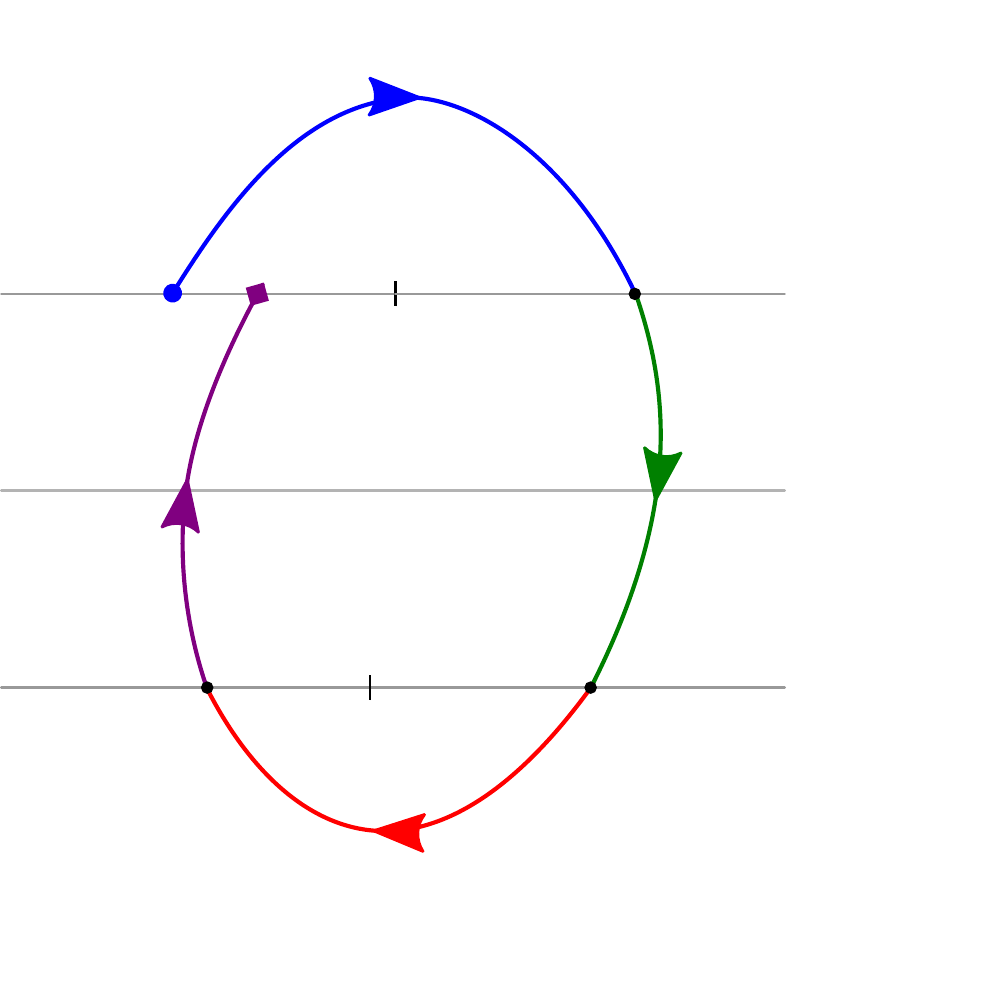_tex}
		\end{scriptsize}
		\caption{The generalized first return map $\phi^{\alpha \e}_{X}$}
		\label{fig:IIUR5}
	\end{figure}
	
	To fix ideas, lets assume that $\G_Z>0$. 
	The first step is to construct a first return map $\phi^{\alpha}_\e$ (see \autoref{fig:IIUR5}) defined in a section 
	$\Theta^\alpha_\e \subset \{ (x,1) : \ x<T^{\alpha,\e}_X \}$ as follows.
	
	Using \autoref{prop:frX} applied to the vector field $X^\alpha(x,\e v)$, we define the Poincar\'{e} map $\phi^{\alpha,\e}_X$ in the section $v=1$ for 
	$x < T^{\alpha,\e}_X$ by
	\begin{equation} \label{phiXae}
	\phi^{{\alpha, \e}}_{X}(x)=2T^{{\alpha, \e}}_{X} - x + \beta^{{\alpha, \e}}_{X} (x-T^{{\alpha, \e}}_{X})^2 + \mathcal{O}(x-T^{{\alpha, \e}}_{X})^3.
	\end{equation}
	Analogously, we have the map $\phi^{{\alpha, \e}}_{Y}$ defined at the section $v=-1$ for $x>T^{\alpha,\e}_Y$, given by
	\begin{equation} \label{phiYae}
	\phi^{{\alpha, \e}}_{Y}(x)=2T^{{\alpha, \e}}_{Y} - x + \beta^{{\alpha, \e}}_{Y} (x-T^{{\alpha, \e}}_{Y})^2 + \mathcal{O}(x-T^{{\alpha, \e}}_{Y})^3.
	\end{equation}
	Where $\beta^{{\alpha, \e}}_{X,Y}$ and $T^{{\alpha, \e}}_{X,Y}$ are given by formulas \ref{betap0} and \ref{epsXtangency}, respectively. 
	Moreover,  
	\begin{align*}
	T^{{\alpha, \e}}_{X,Y} = T^{{\alpha}}_{X,Y} + \mathcal{O}(\e)  \ \textrm{and} \  \beta^{{\alpha, \e}}_{X,Y} = \beta^{{\alpha}}_{X,Y} + \mathcal{O}(\e)
	\end{align*}
	Therefore, we conclude that $ \phi^{\alpha,\e}_{X,Y} = \phi^\alpha_{X,Y} + \mathcal{O}(\e),$ where $\phi^\alpha_{X,Y}$ 
	are the Poincar\'{e} maps defined for the vector fields $X^\alpha,Y^\alpha$.
	
	In this way, we obtain a generalized first return map $\phi^{\alpha}_\e$ defined in $\Theta_{\alpha,\e}$
	\begin{equation} \label{phiZae}
	\phi^{\alpha}_\e =  \phi^{\alpha, \e}_{-} \circ \phi^{\alpha, \e}_{Y} \circ \phi^{\alpha, \e}_{+} \circ \phi^{\alpha, \e}_{X}=\phi^{\alpha} + \mathcal{O}(\e) 
	= \phi_Z + \mathcal{O}(\alpha,\e)
	\end{equation}
	where $\phi^{\alpha}$ is given in \ref{FRunfolding} and 
	\begin{equation}\label{theta}
	\Theta_{\alpha,\e} = \{ (x,1) : \ x < \min \{ (\phi^{\alpha,\e}_X)^{-1}(x^+(\alpha,\e)), (\phi^{\alpha,\e}_Y \circ \phi^{\alpha,\e}_+ 
	\circ \phi^{\alpha,\e}_X)^{-1} (x^-(\alpha,\e)) \} \}.
	\end{equation}
\end{proof}

\begin{proof}[Proof of \autoref{prop:phiaeZ}]
	\begin{enumerate}[(a)]
		\item 
		By \autoref{corol:toptype}, for $\alpha >\alpha _\mathcal{H}$, and $\e$ small enough, the system $Z^\alpha_\e$ has an unstable focus. 
		As $\G_Z>0$ the map $\phi_Z$ is attracting, therefore using \ref{phiex}, $\phi^{\alpha}_\e$ is attracting for $a \leq x \leq b < 0$. 
		Therefore, the Poincar\'{e}-Bendixson Theorem guarantees the existence of a stable orbit $\Gamma^\alpha_\e$ for $(\alpha,\e)$ on the right of the curve $\mathcal{H}.$
	\end{enumerate}
	
	\begin{enumerate}[(b1)]
		\item 
		By \autoref{prop:IICunfolding}, if $\alpha >0$, the map $\phi^{\alpha}$ has a hyperbolic fixed point $F(\alpha)$ given in \ref{Falpha}. 
		Then the result is a consequence of the Implicit Function Theorem.
		\item 
		When $\alpha<0$ the map $\phi^{\alpha}$ has no fixed points and, by continuity, there are no fixed points for 
		$\phi^{\alpha}_\e$ when $\e>0$ is sufficiently small.
	\end{enumerate}
	\begin{enumerate}[(c1)]
		\item
		Using the properties of the Melnikov function given in \autoref{prop:propiedadesM} we have:
		\begin{itemize}
			\item 
			$M(v^*,\delta)=0$. 
			\item 
			As $\G_Z>0$, for any $\delta$, there exists $V_0>v^*$ independent of $\delta$, and $M(v,\delta)<0$ for $v>V_0$.
			\item
			As in the invisible-invisible case $\frac{\partial^2}{\partial v \partial \delta} M(v^*,\delta_\mathcal{H})>0$ one has that 
			\begin{itemize}
				\item 
				If $\delta <\delta_\mathcal{H}$, $\frac{\partial M}{\partial v}(v^*, \delta)<0$, and consequently $ M(v^*, \delta)<0$ for $v>v^*$ close enough to $v^*$, 
				which implies that the critical point of the system is stable.
				\item
				If $\delta >\delta_\mathcal{H}$, $\frac{\partial M}{\partial v}(v^*, \delta)>0$, and consequently $ M(v^*, \delta)>0$ for $v>v^*$ close enough to $v^*$, 
				which implies that the critical point of the system is unstable.
			\end{itemize}
		\end{itemize}
		\item
		For $\delta>\delta_\mathcal{H}$ Bolzano Theorem assures that the Melnikov function has a  zero $v^*<v^{s}=v^{s}(\delta)<V_0$ satisfying 
		$\frac{\partial}{\partial v} M(v^{s}(\delta))\le 0$ corresponding to 
		the attracting periodic orbit $\Gamma^{\alpha,s}_\e$.
		Moreover, $\Gamma^{\alpha,s}_\e$ is locally unique if $\frac{\partial}{\partial v} M(v^{s},\delta_\mathcal{H})<0$.
		\item 
		If we assume that $M(v, \delta)$ is strictly concave, no more zeros of $M(v, \delta)$ exist for $\delta>\delta_\mathcal{H}$ and the periodic orbit  
		$\Gamma^{\alpha,s}_\e$ is unique.
		Analogously, for $\delta<\delta_\mathcal{H}$ the function $M(v, \delta)<0$ for $v^*<v $. 
	\end{enumerate}

\end{proof}


\subsection{Proof of \autoref{prop:canard}}\label{proofcanard}

During this section we restrict ourselves to any compact set containing the folds and the results will be valid for $\e$ small enough depending of this fixed compact.

First, as numerical simulations indicate that there is a maximal Canard trajectory when $\alpha =\mathcal{O}(\e)$, we set $\alpha = \delta \e$.
Therefore, system \ref{vsystem} becomes
\begin{equation} \label{eq:I}
\begin{aligned}
\dot{x} &= F^1(x,v;\delta \e,\e) \\
\e \dot{v} &= F^2(x,v;\delta \e,\e)
\end{aligned}
\end{equation}

As $\alpha = \delta \varepsilon$, the critical manifolds of this system
are the same as the ones for the regularization of the vector field $Z$.
Then, as we saw in \autoref{ssec:OVcritical} there are two critical manifolds
$\Lambda_0^{s,u}$ given by:
$$
\Lambda_0^s = \{ (x,v), \ v=m_0(x), \ x<0\}, \
\Lambda_0^u = \{ (x,v), \ v=m_0(x), \ x>0\},
$$
where 
\begin{equation}\label{m0}
m_0(x)= -\varphi ^{-1} \left( \frac{X^2+Y^2}{X^2-Y^2}(x,0) \right), 
\end{equation}
which are normally hyperbolic (attracting and repelling respectively) and we restrict them to $|x|>\kappa$ for a small but fixed $\kappa>0$.
Applying Fenichel theorem, we know the existence of two normally
hyperbolic invariant manifolds
$$
\Lambda_\varepsilon^{s} = \{ (x,v), \ v=m^s (x;\varepsilon), x<-\kappa\}, \
\Lambda _\varepsilon^{u} = \{ (x,v), \ v=m^u (x;\varepsilon), x>\kappa\}.
$$

The idea is now, to ``extend'' these manifolds until they reach $x=0$ and
to see if they can coincide for some value of $\delta$ giving rise to
the so-called maximal Canards.

We first look for the asymptotic expansion of the functions
$m^{s,u} (x;\varepsilon)$ defining these manifolds:
\begin{equation} \label{solexp}
m^{s,u}(x;\e) = m_0(x) + m_1(x) \e + m_2(x) \e^2 + \mathcal{O}(\e^3).
\end{equation}
Using that $m^{s,u} (x;\varepsilon)$ satisfy the equation of the orbits:
\begin{equation} \label{eq:orbitequation}
\e \frac{dv}{dx} = \frac{F^2(x,v;\delta \e, \e)}{F^1(x,v;\delta \e ,\e)}
\end{equation}
we can obtain analytical expressions for $m_i(x)$ for $i=0,1,\cdots$, with $m_0(x)$  given in \autoref{m0},
and it is easy to check that they behave as:

$$
m_0(x) 	= \mathcal{O}(1) , \ 
m_1(x) 	= \mathcal{O}\displaystyle{\left( \frac{1}{x} \right)}, \
m_2(x) 	= \mathcal{O}\displaystyle{\left( \frac{1}{x^3} \right)} 
$$

Therefore while $|x|< \sqrt{\e}$ the series \ref{solexp} is asymptotic.
Next proposition gives  the behavior  of the stable Fenichel manifold $m^s(x;\e)$ shown in these expansions.
Analogously, one can prove the same result for the unstable Fenichel manifold $m^u(x;\e)$ reversing time.

\begin{prop} \label{block1}
	Take any $0<\lambda<\frac{1}{2}$. Fix $x_0>\kappa$. 
	Then, there exists $M>0$ big enough, $\sigma=\sigma(M)>0$ small enough, and $\e_0=\e_0(M,\sigma)>0$ such that, 
	for $0<\e<\e_0$, any solution of system \ref{vsystem} which enters the set
	$$
	B = \left\{ (x,v) : \ -x_0 \leq x \leq -\e^\lambda,\ | v - \ m_0(x) | \leq \frac{M\e}{|x|} \right\}
	$$
	leaves it through the boundary $x=-\e^\lambda.$
\end{prop}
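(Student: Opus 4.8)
The plan is to show that $B$ is a trapping region whose only exit is the boundary $\{x=-\e^\lambda\}$, via a barrier (differential-inequality) argument in the spirit of Mishchenko--Rozov \cite{MischenkoR80}, as already used near tangencies in \cite{tere}. First I would pass to the deviation from the critical manifold $w=v-m_0(x)$ and rewrite system \ref{eq:I} in the variables $(x,w)$. Since $F^2(x,m_0(x);0,0)=0$ on $\Lambda_0^s$, a Taylor expansion in $w$ and $\e$ yields
\begin{equation*}
\dot w = \frac{a(x)}{\e}\,w + O(1) + \frac{1}{\e}O(w^2),\qquad a(x):=\partial_v F^2(x,m_0(x);0,0),
\end{equation*}
where on the compact range $-x_0\le x\le -\e^\lambda$ (which stays away from the tangency points, since here $\Lambda_0^s$ is transverse to $\{x=0\}$ at $(0,\bv)$ with $\bv\in(-1,1)$) the functions $m_0,m_0'$ are bounded, the $O(1)$ term is uniformly bounded, and normal hyperbolicity of the attracting branch gives $a(x)\le -c<0$ by \ref{Fstability}. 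Moreover $\dot x=F^1\ge c_1>0$ on $B$, because $F^1(x,m_0(x);0,0)=2Z^s(x)$ together with $\gamma=Z^s(0)\ne 0$ (condition (C)) keeps $F^1$ bounded away from zero on all of $B$, which excludes the critical point $P(\alpha,\e)$ located at $x=O(\e)>-\e^\lambda$.

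The core step is to check that the flow points strictly inward along the two curved boundaries $w=\pm M\e/|x|$. On the upper boundary the linear term contributes $\frac{a(x)}{\e}\cdot\frac{M\e}{|x|}=-\frac{|a(x)|M}{|x|}$, of order $M/|x|\ge M/x_0$, whereas the velocity of the boundary itself is $\frac{d}{dt}\!\left(\frac{M\e}{|x|}\right)=\frac{M\e}{x^2}F^1$, of order $M\e^{1-2\lambda}$ because $|x|\ge\e^\lambda$. Here the hypothesis $\lambda<\tfrac12$ is essential: it forces $\e^{1-2\lambda}\to0$, so the boundary moves slowly compared with the $O(M/|x|)$ contraction; the quadratic remainder is controlled the same way, $\frac1\e O(w^2)=O(M^2\e^{1-2\lambda})$. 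Hence, fixing first $M$ large (so the contraction $|a|M/|x|$ beats the uniform $O(1)$ term), then $\sigma$ small enough that the strip $\{|w|\le\sigma\}$ remains in the region where $a<0$ and the Taylor estimates hold, and finally $\e_0=\e_0(M,\sigma)$ small enough that $M\e^{1-\lambda}\le\sigma$ (so $B\subset\{|w|\le\sigma\}$) and the $O(M\e^{1-2\lambda})$ errors are subdominant, the derivative of $w-M\e/|x|$ is strictly negative on the upper boundary and that of $w+M\e/|x|$ is strictly positive on the lower one. No trajectory can therefore cross these boundaries outward.

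Finally I would assemble the conclusion: a trajectory entering $B$ cannot leave through $x=-x_0$, where $\dot x>0$ points inward, nor through $w=\pm M\e/|x|$ by the previous step, so the only remaining piece of $\partial B$ is $\{x=-\e^\lambda\}$; and since $\dot x\ge c_1>0$ throughout $B$, the coordinate $x(t)$ increases and reaches $-\e^\lambda$ in time at most $(x_0-\e^\lambda)/c_1$, so the exit does occur there. The analogous statement for $m^u$ follows by reversing time. I expect the main obstacle to be the uniformity of all estimates near the right end $x=-\e^\lambda$, where the tube is widest and $|x|$ smallest (so $M\e/x^2$ is largest); the delicate point is precisely organizing the choices in the order $M\to\sigma\to\e_0$ so that the two differential inequalities hold simultaneously along the whole of $\partial B$.
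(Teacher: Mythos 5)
Your strategy coincides with the paper's: the published proof consists exactly of checking that the vector field points strictly inward through the curved boundaries $B^\pm=\left\{v=m_0(x)\mp \frac{M\e}{x}\right\}$, i.e.\ inequality \ref{boundaries}, with the sign decided by the order-$\e$ coefficient (details deferred to \cite{tere}); your barrier computation in the deviation variable $w=v-m_0(x)$ is that check written out, and your extra verifications are sound: $\dot x=F^1\ge c_1>0$ on $B$ follows from $F^1\big|_{\Lambda_0}=2Z^s$ (see \ref{induceddyn}) together with $\gamma\neq 0$ --- indeed $\gamma>0$ here by Corollary \ref{corol:signZs}, since $X^1(\0)>0$ and $(\det Z)_x(\0)<0$, so the flow moves rightwards and the exit through $x=-\e^\lambda$ occurs in finite time, and the critical point $P(\alpha,\e)$ sits at $x=\mathcal{O}(\e)$, outside $B$.

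There is, however, one false assertion that you should repair: normal hyperbolicity is \emph{not} uniform on $-x_0\le x\le-\e^\lambda$. By \ref{Fstability}, $a(x)=\varphi'(m_0(x))\,(X^2-Y^2)(x,0)$, and since the origin is a fold of both fields, $X^2(\0)=Y^2(\0)=0$, so $(X^2-Y^2)(x,0)=(X^2_x-Y^2_x)(\0)\,x+\mathcal{O}(x^2)$ with $(X^2_x-Y^2_x)(\0)>0$; hence $a(x)$ vanishes \emph{linearly} as $x\to 0^-$, $a(x)\le -c\,|x|$, and at the inner edge $|a(-\e^\lambda)|=\mathcal{O}(\e^\lambda)$ --- no $x$-independent bound $a(x)\le -c<0$ exists. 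This degeneracy is precisely what dictates the geometry of $B$: because $m_1(x)=\mathcal{O}\!\left(\frac{1}{x}\right)$, the slow manifold deviates from $\Lambda_0$ by $\mathcal{O}\!\left(\frac{\e}{|x|}\right)$, which is why the tube widens like $\frac{M\e}{|x|}$ and why the block must stop at $x=-\e^\lambda$; under your uniform-hyperbolicity premise a tube of uniform width $\mathcal{O}(\e)$ would suffice and no cutoff would be needed, so the premise contradicts the very shape of the set you are working with. As a consequence, your intermediate claim that the contraction along the boundary is ``of order $M/|x|$'' overstates it near $x=-\e^\lambda$ (there it is of order $M$, not $M\e^{-\lambda}$). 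Fortunately, only the ratio $|a(x)|/|x|$ enters your boundary inequality: since $|a(x)|/|x|\ge c_2>0$ on $[-x_0,0)$ for $x_0$ small (using $\varphi'>0$ near $\bv\in(-1,1)$), the linear term still dominates with a margin $c_2 M$, which beats the $\mathcal{O}(1)$ forcing for $M$ large and the boundary-motion and quadratic remainders $\mathcal{O}(M\e^{1-2\lambda})+\mathcal{O}(M^2\e^{1-2\lambda})$ for $\e$ small (this is where $\lambda<\frac{1}{2}$ is genuinely used, as you say). So your chain of choices $M\to\sigma\to\e_0$ closes once ``$a(x)\le -c$'' is replaced by ``$a(x)\le -c|x|$''; with that correction the proof is complete and matches the paper's.
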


\begin{proof}
	The proof is based in the fact that the vector field $\tilde Z^\alpha_\e$ \ref{vsystem} points inwards through the boundaries
	$$
	B^\pm = \left\{ (x,v): \  \ -x_0 \leq x \leq -\e^\lambda, \ v =  m_0(x) \mp \frac{M\e}{x} \right\}.
	$$
	This is straightforward and can be seen in full details in \cite{tere}. 
	For instance, to see that the vector field $\tilde Z^\alpha_\e$ points inwards $B$ through $B^+$, we must see that
	\begin{equation} \label{boundaries}
	\langle \left( -m_0'(x) - \frac{M\e}{x^2},1 \right), \tilde Z^\alpha_\e \rangle <0, \quad -x_0 \leq x \leq -\e^\lambda
	\end{equation}
	and for this purpose it will be enough to see that, in fact, the negative sign of the coefficient of the $\e$ order of this expression determines the sign of it.
\end{proof}

Next step is to enlarge the domain of existence of these stable and unstable Fenichel manifolds. 
We take a value  $x^* = -\e ^\lambda$ for the stable case (or $x^* =\e ^\lambda$ for the unstable case),
with $0<\lambda < 1/2$. We know that
\begin{align} \label{eqq}
|m^{s,u}(x^*;\varepsilon)-m_0(x^*)| \le M \e ^{1-\lambda}.
\end{align}
It is clear that \ref{solexp} looses its asymptotic character when $|x| = \e^{\frac{1}{2}}$.
This suggests the change $x =\e^{\frac{1}{2}} r.$ 
Now the Taylor expansion  of the Fenichel manifold in this new variable justifies the change $v = \bv + \e^{\frac{1}{2}} s,$ 
where $\bar v = m_0(0)$ (see \autoref{IVslowmanifold}).

Finally, setting $\gamma = \e^\frac{1}{2}$, we study the continuation of the Fenichel manifolds for small values of $x$, 
performing the following change to system \ref{vsystem}.
\begin{equation} \label{change}
\begin{cases}
x &= \gamma r, \\
v &= \bv + \gamma s,
\end{cases}
\end{equation}

With this change and re-scaling time $t=\gamma \tau$, system \ref{eq:I} becomes
\begin{equation} \label{eq:II}
\begin{cases}
\dot{r} 	&= M_0 + \mathcal{O}(\gamma), \\
\dot{s}	&= M_1 + M_2 \delta + M_3 r^2 + M_4 rs + \mathcal{O}(\gamma),
\end{cases}
\end{equation}
where $\mathcal{O}(\gamma)$ are terms bounded by $K\gamma$, where $K>0$ is independent of $\gamma$, for $r^*<r\le 0$, 
where $r^* =\frac{x^*}{\sqrt{\e}}= -\gamma ^{2\lambda -1}$ and the constants are given by

\begin{align} \label{mis}
\begin{aligned}
M_0 &=\left( X^1 + Y^1 + \varphi(\bv) (X^1-Y^1) \right) (\0), \\
M_1 &= \bv \left( X^2_{y} + Y^2_{y} + \varphi(\bv) \left( X^2_{y}-Y^2_{y} \right) \right) (\0), \\
M_2 &= \left( \tilde X^2 + \tilde Y^2  + \varphi(\bar v) (\tilde X^2 - \tilde Y^2)  \right)(\0), \\
M_3 &=\frac{1}{2} \left( X^2_{xx} + Y^2_{xx} + \varphi(\bv) \left( X^2_{xx}-Y^2_{xx} \right) \right)(\0), \\
M_4	&= \varphi'\left(\bv\right) \left( X^2_{x} - Y^2_{x} \right)(\0).
\end{aligned}
\end{align}

Putting $\gamma=0$ in  system \ref{eq:II} we obtain \begin{equation} \label{eq:III}
\begin{cases}
\dot{r} 	&= M_0, \\
\dot{s}	&= M_1 + M_2 \delta + M_3 r^2 + M_4 rs,
\end{cases}
\end{equation}
which gives us the so called inner equation 
\begin{equation} \label{eq:inner}
\frac{ds}{dr} = N_1 + N_2 \delta + N_3 r^2 + N_4 rs,
\end{equation}
where
$\displaystyle{N_i = \frac{M_i}{M_0}}$ for $i=1,2,3,4$.

Observe that since $\displaystyle{M_0 = \frac{-(\det{Z})_x(\0)}{\left( X^2_x - Y^2_x \right)(\0)}>0}$, then
$
\displaystyle{N_4 = \varphi'\left(\bv\right)\frac{\left( X^2_x - Y^2_x \right)^2(\0)}{-(\det{Z})_x(\0)}>0}$.

From now, one must find a solution of system \ref{eq:II} which coincides with the Fenichel manifold at the point $x^*=\gamma r^*$.
Let us recall that $\e= \gamma ^2$ and that  $r^* = -\gamma ^{2\lambda-1}$ for the stable case and $r^* =\gamma ^{2\lambda-1}$ for the unstable case.

In the new variables, the Fenichel manifolds satisfy
\begin{equation}\label{asimp1}
s ^{s,u}(r^*;\gamma)= \frac{m^{s,u}(x^*)-\bar v}{\gamma}= m'_0(0)r^* +O(\gamma^{1-2\lambda}, \gamma^{4\lambda-1}),
\end{equation}
and this suggests to look for  solutions $s^\mp_i (r)$ of the inner equation \eqref{eq:inner} satisfying:
$$
|s^\mp_0(r)-m'_0(0)r| \ \mbox{bounded as } \ r\to \mp \infty.
$$
As $N_4>0$, these particular solutions are
$$
s^-_0(r) = e^{\frac{1}{2}N_4 r^2} \int_{-\infty}^r {e^{-\frac{1}{2} N_4 t^2} \left( N_1 +N_2 \delta + N_3 t^2 \right)dt}, 
$$
$$
s^+_0(r) = - e^{\frac{1}{2}N_4 r^2} \int_{r}^\infty {e^{-\frac{1}{2} N_4 t^2} \left( N_1 +N_2 \delta + N_3 t^2 \right)dt}.
$$
Integrating by parts and using the expression for $M_3$ and $M_4$ given in \ref{mis} and the expression for $m_0(x)$ given in \ref{m0}, we get:
\begin{equation} \label{s0--}
s^-_0(r) = m_0'(0)r +\left( N_1 +N_2 \delta + \frac{N_3}{N_4}  \right)
\int_{-\infty}^r {e^{\frac{1}{2} N_4(r^2- t^2)} dt},
\end{equation}
\begin{equation} \label{s0++}
s^+_0(r) = m_0'(0)r - \left( N_1 +N_2 \delta + \frac{N_3}{N_4}  \right)
\int_{r}^\infty {e^{\frac{1}{2} N_4 (r^2-t^2)} dt}.
\end{equation}

Using the L'H\^{o}pital Rule
one can easily see that $s^\pm_0(r)-m_0'(0)r$ tend to zero when $r \rightarrow \pm\infty$.
More concretely:
\begin{equation} \label{error2}
s^\mp_0(r)-m_0'(0)r = \mathcal{O} \left( \frac{1}{r} \right),  \ r \to \pm \infty.
\end{equation}

Analogously to what we did in the study of the stable Fenichel manifold in the region $-x_0 < x < -\e^{\lambda}$ 
(and the unstable one for  $ \e^{\lambda}< x < x_0$), we seek solutions of \ref{eq:II} in the form
\begin{equation} \label{sexpansion}
s^{u,s}(r,\gamma) = s_0^\pm(r) + \gamma s_1^\pm(r)+ \cdots
\end{equation}

Substituting this expression in \ref{eq:II}, we obtain that the successive $s_i^\pm$ satisfy linear equations with the same homogeneous part as the one 
satisfied by  $s^\pm_0(r)$, only differing on the non homogeneous term which depends recursively of $s^\pm_{i-1}$. 
Obviously, if we want to follow the Fenichel manifolds, we must select for $s_i^\pm$ the solution with no exponential term. 
So, as we did for $s_0^\pm$, the L'H\^{o}pital Rule shows that
$$
s^\pm_1(r) = \mathcal{O}( r^2).
$$
This suggests, like in \autoref{block1}, the definition of a new block $\mathcal{B}$ for the equation \ref{eq:II}. 
Also, to see that effectively the continuation of the Fenichel manifold is well approximated by $s_0^\mp$
for $r^* \le r \le 0$ 
we have the following proposition for the behavior of the stable part. 
Analogously, reversing time, one can prove the same result for the unstable one.

\begin{prop} \label{block2}
	Take any $\frac{1}{4}< \lambda < \frac{1}{2}$. 
	Then, there exists $r_0>0$, $K>0$ and $\gamma_0 = \gamma_0 (r_0,K)$, such that for $\gamma \le \gamma_0$, any solution of system \ref{eq:II} which enters the set
	$$
	\mathcal{B} = \{ (r,s): \ r^* \le r \le 0, |s - s_0^-(r)| \le K \gamma M(r) \}
	$$
	where $r^*=-\gamma^{2 \lambda - 1}$ and the function $M(r)$ is defined by
	$$
	M(r) = \begin{cases}
	r^2,& -\infty \le r \le -r_0 <0 \\
	r_0^2,& -r_0 \le r <0
	\end{cases} .
	$$
	leaves it through the boundary $r=0$.
\end{prop}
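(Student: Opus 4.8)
The plan is to prove that the vector field of system~\ref{eq:II} points inward along both horizontal boundaries of the block $\mathcal{B}$, exactly mimicking the barrier argument of \autoref{block1} but now in the inner (blown-up) variables. By construction $s_0^-(r)$ is an exact solution of the inner equation \ref{eq:inner}, so the deviation $w(r) := s(r)-s_0^-(r)$ along a trajectory satisfies, using the second equation of \ref{eq:II}, a differential equation of the form
\begin{equation*}
\frac{dw}{dr} = N_4 r\, w + \mathcal{O}(\gamma),
\end{equation*}
where the linear coefficient $N_4 r$ comes from the term $M_4 rs/M_0$ and the $\mathcal{O}(\gamma)$ collects the remainder of system~\ref{eq:II} together with the $\gamma$-dependent part of $\dot r = M_0 + \mathcal{O}(\gamma)$. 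The crucial sign is that $N_4>0$ (established right after \ref{eq:inner}), so for $r<0$ the homogeneous coefficient $N_4 r$ is \emph{negative}; this is precisely the contraction that will confine trajectories. First I would record this linearized equation for $w$ and make explicit that the $\mathcal{O}(\gamma)$ error is uniformly bounded by $K_0\gamma$ on the compact-in-$r$ part and controlled by the growth of $M(r)$ on the unbounded part.

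Next I would verify the inward-pointing conditions on the two boundaries $s = s_0^-(r) \pm K\gamma M(r)$. On the region $-r_0\le r<0$ the function $M(r)=r_0^2$ is constant, so $\tfrac{d}{dr}(s_0^-(r)\pm K\gamma M(r)) = (s_0^-)'(r)$, and the inward condition reduces to comparing $w' = N_4 r\,w + \mathcal{O}(\gamma)$ at $w=\pm K\gamma r_0^2$. Since $N_4 r w$ has modulus $\le N_4 r_0 \cdot K\gamma r_0^2$ but the relevant thing is its \emph{sign} pushing $w$ back toward zero when combined with choosing $K$ large relative to the $\mathcal{O}(\gamma)$ constant, one gets the strict inequality for $\gamma$ small. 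On the region $r\le -r_0$ where $M(r)=r^2$, one has $\tfrac{d}{dr}M(r)=2r$, and the inward condition becomes a comparison between $w' = N_4 r w + \mathcal{O}(\gamma)$ and $\pm K\gamma\cdot 2r$; here the negative coefficient $N_4 r$ acting on $w=\pm K\gamma r^2$ produces a term $\pm N_4 K\gamma r^3$ which, for $|r|$ large, dominates both the boundary slope $2r$ and the $\mathcal{O}(\gamma)$ remainder. Choosing $r_0$ large enough that $N_4 r^2 > 2$ for $|r|\ge r_0$, and then $K$ large enough to absorb the uniform error constant, secures the inequality on this outer piece. The matching at $r=-r_0$ is compatible because \ref{error2} guarantees $s_0^-(r)-m_0'(0)r=\mathcal{O}(1/r)$, so the two definitions of $M$ glue continuously and the entry point at $r^*=-\gamma^{2\lambda-1}$ lies inside $\mathcal{B}$ thanks to the estimate \ref{asimp1} (this is where $\tfrac14<\lambda<\tfrac12$ is used, to make $\gamma^{4\lambda-1}\to 0$ so the Fenichel data enters the block).

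I would then conclude by a standard barrier/flow-box argument: since $\dot r = M_0+\mathcal{O}(\gamma)>0$ for $\gamma$ small (because $M_0>0$), every trajectory moves monotonically to the right in $r$, and the inward-pointing property on $s=s_0^-(r)\pm K\gamma M(r)$ prevents it from escaping through the top or bottom; hence it must exit through $r=0$. The main obstacle I anticipate is the outer region $r\le -r_0$: there the block width $M(r)=r^2$ is unbounded and the naive $\mathcal{O}(\gamma)$ control of the remainder in \ref{eq:II} is only valid while $|x|=\gamma|r|$ stays in the fixed compact set, so I must track carefully that the cubic gain $N_4 K\gamma |r|^3$ genuinely beats every competing term uniformly for $r^*\le r\le -r_0$, and verify that the constant $K$ and threshold $r_0$ can be chosen \emph{independently} of $\gamma$. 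This uniformity, rather than any single inequality, is the delicate point; everything else is a routine transcription of the \autoref{block1} estimate into the rescaled coordinates, and the detailed computations can be deferred to \cite{tere} and \cite{larrosa} as the paper does elsewhere.
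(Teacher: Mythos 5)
Your proposal is correct and takes essentially the same route as the paper, whose proof consists precisely of checking that the vector field \ref{eq:II} points inwards through the two boundaries $\mathcal{B}^\pm = \{ (r,s): \ r^*\le r\le 0,\ s = s_0^-(r) \pm K\gamma M(r)\}$ (with computational details deferred to \cite{tere}) plus the remark that on $-r_0\le r\le 0$ the system is regular in $\gamma$, so the $\mathcal{O}(\gamma)$ errors are absorbed by $K$; your reduction to $w'=N_4 r\,w+\mathcal{O}(\gamma)$, the sign $N_4 r<0$ for $r<0$, the choice of $r_0$ with $N_4 r_0^2>2$ followed by $K$ large, and the monotonicity $\dot r = M_0+\mathcal{O}(\gamma)>0$ are exactly the intended estimates. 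One small correction to your parenthetical: the entry of the Fenichel manifold into $\mathcal{B}$ (which is not part of the proposition itself) actually needs the tighter range $\tfrac14<\lambda<\tfrac13$, since the tail error $\gamma^{1-2\lambda}$ from \eqref{asimp1} and \eqref{error2} must be dominated by the block half-width $K\gamma^{4\lambda-1}$, as the paper notes immediately after the proposition.
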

\begin{proof}
	We proceed as in \autoref{block1} to see that the vector field \ref{eq:II} points inwards through the boundaries
	$$
	\mathcal{B}^\pm = \{ (r,s): \  \ r^* \leq r \leq 0, \ s =  s_0^-(r) \pm K \gamma M(r) \}.
	$$
	This is straightforward and details can be found in \cite{tere}. Only to remark that $r_0$ can be large, 
	but now the vector field \ref{eq:II} is regular in $\gamma$, so the behavior of the Fenichel manifolds from $r_0$ to the origin is guaranteed to be 
	$\mathcal{O}(\gamma)$.
\end{proof}
To see that the Fenichel manifold enters the block $
\mathcal{B}$ we use estimates \eqref{asimp1} and \eqref{error2}, and taking $\frac{1}{4} <\lambda<\frac{1}{3}$ we obtain the result.

Using $s_0^\mp$, we can continue the Fenichel manifolds until we reach $r=0$:
$$
v^{s,u}(x,\e)= \bar v + \gamma s^{u,s}(r;\gamma) = \bar v + \gamma s_0^{\pm}(r) + \mathcal{O}(\gamma ^2), \ r=\frac{x}{\gamma}, \ \e = \gamma ^2,
$$
and the intersection with $x=0$ is given by:
$$
v^{s,u}(0,\e)= \bar v + \gamma s_0^{\pm}(0) + \mathcal{O}(\gamma ^2).
$$
And we obtain:
\begin{eqnarray*}
	v^s(0,\e)-v^u(0,\e) &=& 
	\gamma (s ^{-}_0 (0)-s ^{+}_0(0)) + \mathcal{O}(\gamma ^2)
	= \gamma(N_1 +N_2 \delta + \frac{N_3}{N_4})\int_{-\infty}^{\infty}  e^{-\frac{1}{2} N_4 t^2} + \mathcal{O}(\gamma ^2)\\
	&=&
	\gamma(N_1 +N_2 \delta + \frac{N_3}{N_4})  \sqrt{\frac{2\pi}{N_4}} + \mathcal{O}(\gamma ^2)
\end{eqnarray*}

Calling  $\xi (\delta, \gamma)= \frac{1}{\gamma}(v^s-v^u)$ one has that $\xi (\delta_\mathcal{C}, 0)=0$, where:

\begin{align*}
\delta_\mathcal{C} = - \frac{N_3 + N_1 N_4}{N_2 N_4} =- \frac{M_0 M_3 + M_1 M_4}{M_2 M_4},
\end{align*}
where $M_i$, $i=0,1,2,3,4$ are given in \ref{mis}.
Now, using that 
$\displaystyle{\frac{\partial \xi}{\partial \delta} (\delta_\mathcal{C},0) = N_2 \sqrt{\frac{2\pi}{N_4}} \neq 0}$ 
(see \autoref{N2nozero})
one can apply the Implicit Function Theorem, obtaining  a curve
\begin{equation} \label{dce}
\delta_\mathcal{C}(\gamma) = \delta_\mathcal{C} + \mathcal{O}(\gamma),
\end{equation}
such that over this curve the trajectories $s^s(r;\gamma)$ and $s^u(r;\gamma)$ coincide.
Moreover, for $\delta = \delta_ \mathcal{C}$, one has, for $r= \mathcal{O}(1)$ (and $x=\mathcal{O}(\sqrt{\e})$):
$$
s_0^\mp (r)= m_0'(0)r, \quad v^{s,u}(x)= m_0(0)+m_0'(0)x + \mathcal{O}(\e).
$$

Coming back to the original variables $(x,v)$  and recalling that $\alpha = \delta \e$, we have a curve \begin{equation} \label{ac}
\alpha_\mathcal{C}(\e) = \delta_\mathcal{C} \e + \mathcal{O} \left( \e^\frac{3}{2} \right),
\end{equation}
where  $\Lambda^{\alpha,u}_\e$ and $\Lambda^{\alpha,s}_\e$ coincide.

In conclusion, there exists a curve $\mathcal{C}$ given by
\begin{equation} \label{ccurve}
\mathcal{C} = \left\{ (\alpha,\e) : \, \alpha= \alpha_\mathcal{C}(\e) = - \frac{M_0 M_3 + M_1 M_4}{M_2 M_4} \e + \mathcal{O}\left( \e^{\frac{3}{2}} \right) \right\},
\end{equation}
where $\Lambda^{\alpha,u}_\e = \Lambda^{\alpha,s}_\e$ giving rise to a Canard solution. 
Moreover, as $s_0(0)=0$, at this value one has that:
$$
v^{s,u}(0,\e)= \bar v +  \mathcal{O}(\e).
$$

Finally, observe that, for any $\delta$ we obtain:
\begin{equation}\label{Distcanard}
v^s-v^u=\gamma C (\delta-\delta _{\mathcal{C}})+ \mathcal{O}(\gamma ^2).
\end{equation}
where $C=\sqrt{\frac{2\pi}{N_4}}N_2 $.

\begin{rem}\label{N2nozero}
	Observe that the denominator of $\delta_\mathcal{C}$ given by $M_2 M_4 = 2 \varphi'(\bar v) (\tilde Y^2 X^2_x - \tilde X^2 Y^2_x)(\0) \neq 0$ due to the transversality condition (see \ref{foldsbe}) imposed to the unfolding $Z^\alpha$. 
\end{rem}

\begin{rem}
	In the case $(\det{Z})_x(\0)>0$, as $N_4<0$, all the solutions of the inner equation have the correct asymptotic behavior as $r\to \pm \infty$, nevertheless when
	$\delta= \delta_\mathcal{C}$ the solutions $s_0^\mp(r)= m_0'(0)r$ can be seen as the ``Canard'' solution.
\end{rem}

\subsection{Proof of \autoref{prop:linearcanard}}\label{sec:linearcanard}

In this section we will prove that, for the case of a linear regularization $\varphi$ of the visible-invisible fold-fold singularity with 
$(\det{Z})_x<0$ one can transform the system to a general slow-fast system studied in \cite{krupa,KrupaS01,Kuehn10} and apply their
results for the existence of a maximal canard. 
We will recover all the values of $\delta_\mathcal{H}$, $\delta_\mathcal{C}$ computed in this paper, as well as the first Lyapunov coefficient at the Hopf bifurcation, 
and their relations.

In \cite{krupa,KrupaS01}, the authors proved the existence of a maximal Canard for the following general system
\begin{equation} \label{eq:KSgeneral}
\begin{cases}
\dot{x} = R^1(x,y,\e,\lambda) = -y h_1(x,y,\e,\lambda) + x^2 h_2(x,y,\e,\lambda) + \e h_3(x,y,\e,\lambda), \\
\dot{y} = \e R^2(x,y,\e,\lambda) = \e \left( x h_4(x,y,\e,\lambda) -\lambda h_5(x,y,\e,\lambda) +y h_6(x,y,\e,\lambda) \right),
\end{cases}
\end{equation}
\noindent with $h_3(x,y,\e,\lambda) = \mathcal{O}(x,y,\e,\lambda)$ and $h_j(x,y,\e,\lambda) = 1+ \mathcal{O}(x,y,\e,\lambda)$ for $j=1,2,4,5$. Moreover, let

\begin{equation} \label{KSvalues}
\begin{array}{lll}
\gamma_1 = \displaystyle{ \frac{\partial}{\partial x}} h_3(0,0,0,0), & \quad
\gamma_2 = \displaystyle{ \frac{\partial}{\partial x}} h_1(0,0,0,0), & \quad
\gamma_3 = \displaystyle{ \frac{\partial}{\partial x}} h_2(0,0,0,0), \\ & & \\
\gamma_4=  \displaystyle{ \frac{\partial}{\partial x}} h_4(0,0,0,0), & \quad
\gamma_5 = h_6(0,0,0).
\end{array}
\end{equation}

Considering the system \ref{eq:KSgeneral}, they obtain:

\begin{theo}[Krupa-Szmolyan Theorem] \label{th:KrupaSzmolyan}
	For $0< \e < \e_0$, $| \lambda| < \lambda_0$, $\e_0$, $\lambda_0$ sufficiently small and under the previous assumptions, there exists a unique critical point $P(\lambda,\e)$ of system \ref{eq:KSgeneral} in a neighborhood of the origin. The equilibrium point undergoes to a Hopf bifurcation at $\lambda_H$ with
	\begin{equation} \label{KSHopf}
	\lambda_H = - \frac{\gamma_1 + \gamma_5}{2} \e + \mathcal{O}{(\e^2)}.
	\end{equation}
	
	The slow manifolds $C_{\e,l}$ and $C_{\e,r}$ intersect/coincide at a maximal Canard at $\lambda_c$ for
	\begin{equation} \label{KSCanard}
	\lambda_c = - \left( \frac{\gamma_1 + \gamma_5}{2} + \frac{A}{8} \right) \e + \mathcal{O}{(\e^{3/2})},
	\end{equation}
	where $$A=-\gamma_2 +3\gamma_3-2\gamma_4-2\gamma_5.$$
	
	The equilibrium $P(\lambda,\e)$ is stable for $\lambda<\lambda_H$ and unstable for $\lambda>\lambda_H$. 
	The Hopf bifurcation is non degenerated for $A \neq 0$, supercritical for $A<0$ and subcritical for $A>0$.
\end{theo}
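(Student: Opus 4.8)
The plan is to prove Theorem~\ref{th:KrupaSzmolyan} by the geometric singular perturbation and blow-up method of \cite{krupa,KrupaS01}. The starting point is the singular structure of \eqref{eq:KSgeneral}: here $x$ is fast and $y$ is slow. Setting $\e=0$ in the layer equation gives the critical manifold $S=\{-y h_1+x^2 h_2=0\}$, which near the origin is a graph $y=x^2\bigl(1+\mathcal{O}(x,\lambda)\bigr)$ with a fold at $\0$; since $\partial_x R^1=2x h_2+\mathcal{O}_2(x,\lambda)$ changes sign through $x=0$, the branch $x<0$ is normally hyperbolic attracting and the branch $x>0$ is normally hyperbolic repelling. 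Away from the fold, Fenichel theory furnishes the perturbed slow manifolds $C_{\e,l}$ (attracting, continuing $x<0$) and $C_{\e,r}$ (repelling, continuing $x>0$), and the whole theorem concerns how these two objects are connected, or fail to connect, through the fold as $\lambda$ varies.

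First I would locate the equilibrium and settle the Hopf assertions, which do not require the blow-up. Solving $R^1=R^2=0$ by the implicit function theorem yields a unique $P(\lambda,\e)=(x_P,y_P)$ near the origin with $x_P=\lambda+\mathcal{O}_2(\lambda,\e)$ obtained from $R^2=0$ and $y_P=\mathcal{O}_2(\lambda,\e)$. Computing the Jacobian of $(R^1,\e R^2)$ at $P$, its determinant is $\e\bigl(1+\mathcal{O}(\cdot)\bigr)>0$, so $P$ is never a saddle, and its trace, after expanding the $h_j$ and using the definitions \eqref{KSvalues}, is $\tr=2x_P+\e(\gamma_1+\gamma_5)+\mathcal{O}_2(\lambda,\e)$. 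Setting $\tr=0$ and substituting $x_P\approx\lambda$ gives the Hopf curve $\lambda_H=-\tfrac{\gamma_1+\gamma_5}{2}\e+\mathcal{O}(\e^2)$ of \eqref{KSHopf}; that $\det>0$ there forces a purely imaginary eigenvalue pair, and the sign of $\tr$ on either side yields stability for $\lambda<\lambda_H$ and instability for $\lambda>\lambda_H$.

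The core is the maximal canard. I would blow up the degenerate contact point $(x,y,\e)=(\0,0)$ by the weighted transformation
\begin{equation*}
x=\bar r\,\bar x,\qquad y=\bar r^{2}\,\bar y,\qquad \e=\bar r^{3}\,\bar\e,\qquad (\bar x,\bar y,\bar\e)\in S^{2},
\end{equation*}
and analyze the induced (desingularized) vector field in the charts covering the blown-up sphere, in particular the rescaling chart $\{\bar\e=1\}$ together with the two directional charts that match the incoming attracting and outgoing repelling branches. In the rescaling chart the $\bar r=0$ system reduces, after the standard eliminations, to a Riccati (canard) equation whose unique solution with polynomial rather than exponential growth at both ends is the maximal canard; matching this distinguished solution backward to $C_{\e,l}$ and forward to $C_{\e,r}$ through the directional charts, via the usual attracting/repelling (exchange-lemma type) estimates, defines on a fixed transversal the splitting distance $d(\lambda,\e)$ between the two Fenichel manifolds. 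Establishing $\partial_\lambda d\neq 0$ and solving $d=0$ by the implicit function theorem produces the canard curve; its leading coefficient follows by integrating the Riccati solution and collecting the first-order corrections coming from the $h_j$, giving $\lambda_c=-\bigl(\tfrac{\gamma_1+\gamma_5}{2}+\tfrac{A}{8}\bigr)\e+\mathcal{O}(\e^{3/2})$ with $A=-\gamma_2+3\gamma_3-2\gamma_4-2\gamma_5$ as in \eqref{KSCanard}.

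Finally, for the criticality statement I would bring the system at $\lambda=\lambda_H$ into Hopf normal form and evaluate the first Lyapunov coefficient by the standard cubic-truncation formula; the decisive point is that, after simplification, this coefficient equals the same combination $A$ up to a strictly positive factor, so non-degeneracy is $A\neq0$, with $A<0$ supercritical and $A>0$ subcritical. The main obstacle throughout is the blow-up analysis in the rescaling chart: proving that the distinguished Riccati solution really matches $C_{\e,l}$ and $C_{\e,r}$ with the claimed $\mathcal{O}(\e^{3/2})$ accuracy requires uniform estimates in the directional charts and careful control of the passage between charts, and it is precisely there that the constant $A/8$ is pinned down and its identification with the Lyapunov coefficient is made transparent.
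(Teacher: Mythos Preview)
The paper does not prove this theorem at all: it is quoted verbatim as the ``Krupa--Szmolyan Theorem'' from \cite{krupa,KrupaS01} and then \emph{applied} as a black box to establish \autoref{prop:linearcanard}. So there is no ``paper's own proof'' to compare against; your sketch is in effect a summary of the original Krupa--Szmolyan argument, and as such it follows the right strategy (Fenichel away from the fold, blow-up at the fold, Riccati/canard solution in the rescaling chart, implicit-function argument on the splitting distance, and identification of $A$ with the first Lyapunov coefficient up to a positive factor).

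One concrete slip: your blow-up weights are off. For the canard point in the form \eqref{eq:KSgeneral} the correct quasi-homogeneous scaling is
\[
x=\bar r\,\bar x,\qquad y=\bar r^{2}\,\bar y,\qquad \lambda=\bar r\,\bar\lambda,\qquad \e=\bar r^{2}\,\bar\e,
\]
i.e.\ weights $(1,2,1,2)$, not $\e=\bar r^{3}\bar\e$. With weight $3$ on $\e$ the layer and slow equations do not balance in the rescaling chart $\{\bar\e=1\}$ and you will not get the Riccati equation; with weight $2$ you do, and the $\mathcal{O}(\e^{3/2})$ remainder in \eqref{KSCanard} then comes out naturally from the $\bar r$-expansion. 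Apart from this, your outline is faithful to the method the paper is invoking.
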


Putting $\lambda = \kappa \e$ in \eqref{eq:KSgeneral}, the next lemma is straightforward:
\begin{lemma} \label{lem:KScritical}
	In a suitable neighborhood of the origin, one has that:
	\begin{itemize}
		\item the critical manifold $C_0$ of system \ref{eq:KSgeneral} can be written as the graph of \begin{equation} \label{eq:KScriticalmanifold}
		f(x)=x^2 + (\gamma_3-\gamma_2) x^3 + \mathcal{O}(x^4);
		\end{equation}
		\item  considering the coordinate change
		\begin{equation}
		\begin{cases}
		u 		&=x, \\
		\e w 	&= y - f(x),
		\end{cases}
		\end{equation}
		system \ref{eq:KSgeneral}, becomes
		\begin{equation} \label{KS2}
		\begin{cases}
		\dot{u}			 &= \gamma_1 u - w (1+\gamma_2 u ) + \tilde{R}^1(u,w,\e), \\	
		\e \dot{w}		 &= u - \kappa \e + (\gamma_5 + \gamma_4 - 2\gamma_1 )u^2 +\gamma_5 \e w +2u w \\
		&+(3\gamma_3 -\gamma_2) u^2 w + \tilde{R}^2(u,w,\e),
		\end{cases}
		\end{equation}
		with 
		\begin{eqnarray*}
			\tilde{R}^1(u,w,\e) 	&=& u^2K_4(u) + \e^2K_2(u,\e) + u^2 w K_5(u) + \e w K_3(u,\e) + \e w^2 K_1(u,\e w,\e), \\
			\tilde{R}^2(u,w,\e) 	&=& u^3 K_{12} (u) + u^3 w K_{13}(u) + f'(u) \left( \e^2 K_2(u,\e) + \e w K_3(u,\e) \right. \\
			&+& \left. \e w^2 K_1(u,\e w,\e) \right) + \e^2 K_7(u,\e)+ \e w K_{11}(u) + \e^2 w K_8(u,\e) \\
			&+& (\e w)^2 K_6(u,\e w,\e).
		\end{eqnarray*}
		Where the functions $K_i$ are smooth bounded functions.
	\end{itemize}
\end{lemma}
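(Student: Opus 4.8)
The plan is to prove both items by direct Taylor expansion; the only conceptual ingredient is the Implicit Function Theorem, and everything else is organized bookkeeping. Throughout I write partial derivatives at the origin using the constants $\gamma_1,\dots,\gamma_5$ introduced in \eqref{KSvalues}.

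For the first item, I would observe that the critical manifold $C_0$ is obtained by setting $\e=0$ (so that $\lambda=\kappa\e=0$ as well) in the fast equation, i.e. $C_0=\{R^1(x,y,0,0)=0\}$, which by the defining relation of $R^1$ reads $y\,h_1(x,y,0,0)=x^2h_2(x,y,0,0)$. Since $\partial_y R^1(\0)=-h_1(\0)=-1\neq 0$, the Implicit Function Theorem yields a graph $y=f(x)$ with $f(0)=f'(0)=0$. To compute the expansion I would insert the ansatz $f(x)=a_2x^2+a_3x^3+\mathcal{O}(x^4)$ into $f(x)\,h_1(x,f(x),0,0)=x^2h_2(x,f(x),0,0)$, use $h_1=1+\gamma_2x+\mathcal{O}(2)$ and $h_2=1+\gamma_3x+\mathcal{O}(2)$ (the $y$-dependence of $h_1,h_2$ enters only through $f(x)=\mathcal{O}(x^2)$ and so does not affect the $x^2,x^3$ coefficients), and match powers: the $x^2$-coefficient gives $a_2=1$ and the $x^3$-coefficient gives $a_3+a_2\gamma_2=\gamma_3$, hence $a_3=\gamma_3-\gamma_2$. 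This is exactly \eqref{eq:KScriticalmanifold}.

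For the second item, after the change $u=x$, $\e w=y-f(x)$ and $\lambda=\kappa\e$, I would differentiate to get $\dot u=R^1$ and $\e\dot w=\e R^2-f'(u)R^1$, and then pass to the slow time $\sigma=\e t$, so that the dots in \eqref{KS2} are $d/d\sigma$. The key point is that $R^1(u,f(u),0,0)=0$ by construction of $f$: writing $R^1(u,f(u)+\e w,\e,\kappa\e)=[-f(u)h_1+u^2h_2]-\e w\,h_1+\e h_3$ and Taylor-expanding the bracket about $(f(u),0,0)$ shows the bracket is $\mathcal{O}(\e u^2)$, so that $R^1/\e=\mathcal{O}(u^2)-w\,h_1+h_3$, whose leading part is $\gamma_1u-w(1+\gamma_2u)$. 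This gives the first line of \eqref{KS2} with $\tilde R^1$ absorbing the $\mathcal{O}(u^2)$, $\e$- and higher $w$-order remainders. For the $w$-equation I would substitute $R^2=u\,h_4-\kappa\e\,h_5+(f(u)+\e w)h_6$ and $f'(u)=2u+3(\gamma_3-\gamma_2)u^2+\mathcal{O}(u^3)$ into $\e\dot w=R^2-f'(u)(R^1/\e)$, using $R^1/\e=\gamma_1u-w(1+\gamma_2u)+\tilde R^1$ from the previous step. The $u^2$-coefficient then assembles as $\gamma_4+\gamma_5$ (from $R^2$) minus $2\gamma_1$ (from $f'(u)\cdot\gamma_1u$), giving $\gamma_5+\gamma_4-2\gamma_1$; the term $+2uw$ comes from $-2u\cdot(-w)$; and the $u^2w$-coefficient assembles as $2\gamma_2+3(\gamma_3-\gamma_2)=3\gamma_3-\gamma_2$. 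The terms $\gamma_5\e w$ and $-\kappa\e$ come from $\e w\,h_6$ and $-\kappa\e\,h_5$, and, crucially, $-f'(u)\tilde R^1$ reproduces the structural block $f'(u)\big(\e^2K_2+\e wK_3+\e w^2K_1\big)$ displayed in $\tilde R^2$, while the remaining monomials collect into $u^3K_{12}$, $u^3wK_{13}$, and so on.

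I would close by grouping the leftover terms by monomial type ($u^2$, $\e^2$, $u^2w$, $\e w$, $\e w^2$ and their $f'(u)$-multiples) to match the declared forms of $\tilde R^1$ and $\tilde R^2$, the smoothness and boundedness of the $K_i$ being inherited from that of the $h_j$. The main obstacle is not any single step but the sheer volume of consistent bookkeeping needed to verify every coefficient in \eqref{KS2} and the exact grouping of the remainders. The delicate coefficient is the $u^2$-term of the $w$-equation, because it mixes a contribution coming from $R^2$ with one coming from $-f'(u)(R^1/\e)$, and only the combination $\gamma_5+\gamma_4-2\gamma_1$ is the correct invariant; an analogous care is required for the $u^2w$-coefficient, where the cancellation $2\gamma_2+3(\gamma_3-\gamma_2)=3\gamma_3-\gamma_2$ must be tracked.
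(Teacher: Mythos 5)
Your proof is correct and is precisely the direct computation the paper intends: the paper gives no written proof (it sets $\lambda=\kappa\e$ and declares the lemma ``straightforward''), and your route --- Implicit Function Theorem for $f$, the observation $R^1(u,f(u),0,0)=0$, the slow-time rescaling needed for the first line of \ref{KS2} to hold literally, and coefficient matching --- is the natural one. The delicate coefficients all check out: $a_2=1$, $a_3=\gamma_3-\gamma_2$, the mixed $u^2$-coefficient $\gamma_5+\gamma_4-2\gamma_1$, the $+2uw$ term, and the cancellation $2\gamma_2+3(\gamma_3-\gamma_2)=3\gamma_3-\gamma_2$.

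One caveat, which is an imprecision of the lemma's statement at least as much as of your absorption step: the displayed remainder classes are slightly too narrow, and your phrase that $\tilde R^1$ absorbs ``the $\e$-remainders'' is exactly where this slips through. If $\dr{\e}h_3(\0)$ or $\dr{\lambda}h_3(\0)$ is nonzero, then $R^1/\e$ contains the term $\bigl(\dr{\e}h_3(\0)+\kappa\,\dr{\lambda}h_3(\0)\bigr)\e$, linear in $\e$ and free of $u$ and $w$, which does not fit the slot $\e^2K_2(u,\e)$; similarly, $u h_4$, $-\kappa\e h_5$ and $-f'(u)\,(R^1/\e)$ generate $\e u$-terms with no factor of $w$, for which $\tilde R^2$ as printed has no slot. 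The repair is to widen the slots to $\e K_2(u,\e)$ in $\tilde R^1$ and to add an $\e u K(u)$ term in $\tilde R^2$ --- note that this is exactly what the paper itself does in \ref{NS4}, whose remainders $\tilde S^1,\tilde S^2$ do contain $\e T_2(u,\e)$ and $\e u T_{10}(u)$; none of this affects the leading coefficients used later in \autoref{prop:linearcanard}, but a fully rigorous write-up should either verify these extra terms vanish (they need not) or state the lemma with the wider classes.
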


\begin{rem}
	Observe that system \ref{KS2} is a slow-fast system and its critical manifold is given by 
	$$
	\tilde{C}_0 = \{ (u,w) : u=0 \} \cup \left\{ (u,w) : w = - \frac{1}{2} + \left( - \frac{\gamma_5 + \gamma_4 -2\gamma_1}{2}+ \frac{3\gamma_3 -\gamma_2}{4} \right) u  
	+ \mathcal{O}(u^2) \right\}.
	$$
\end{rem}

Now suppose that $\varphi(v)=v$ for $v \in (-1,1)$ and let $Z^\alpha_\e$ be the $\varphi-$regularization of $Z^\alpha$ and that  $\alpha = \delta \e.$ 
The regularized system $Z^\alpha_\e$ in coordinates $(x,v)$ with $y=\e v$ has the form
\begin{equation} \label{eq:NS1}
\begin{cases}
\dot{x} 	&= F^1(x,v;\delta \e,\e),\\
\e \dot{v} 	&= F^2(x,v;\delta \e,\e).
\end{cases}
\end{equation}
where:
$$
F^i(x,v;\alpha,\e)= (X^{\alpha i}+Y^{\alpha i})(x,\e v)+  v  (X^{\alpha i}-Y^{\alpha i})(x,\e v), \, i=1,2., \ |v|\le 1
$$
Recall that for $\e=0$ system \ref{eq:NS1} has a critical point at $P(0,0)=(0,v^*)$ with 
$$
v^* = - \frac{X^1 + Y^1}{X^1 - Y^1}(\0).
$$
The next proposition shows that there exists a coordinate change, such that system  \ref{eq:NS1} is equivalent to system \ref{eq:KSgeneral} for $\e \neq 0$.
\begin{prop} \label{NSchange}
	
	There exists a change of coordinates:
	$$
	(x,v) \to (u,w)
	$$
	such that transforms system \ref{eq:NS1} into:
	\begin{equation}  \label{NS4}
	\begin{cases}
	\dot{u} 		&= \displaystyle{ \frac{A_1}{\sqrt{-A_3}} u - w \left( 1+ \frac{2A_2 \sqrt{-A_3}}{A_6}u \right) + \tilde{S}^1(u,w,\e) } \vspace{0.2cm}, \\
	\e \dot{w} 	&= \displaystyle{ u + \frac{A_6}{2A_3 \sqrt{-A_3}} \left(A_4 + A_6 \delta \right) \e +\frac{1}{\sqrt{-A_3}}(A_7 + A_8 \delta) \e w +2 u w} \\  \vspace{0.2cm} \\
	& \displaystyle{ -\frac{2 A_9}{A_6 \sqrt{-A_3}} u^2+  \frac{4 A_{10} \sqrt{-A_3}}{A_6^2}u^2 w  + \tilde{S}^2(u,w,\e)},
	\end{cases}
	\end{equation}
	where $A_i, \, i=1,\cdots,10$ are given in expressions \ref{A1exp} to \ref{A10exp} and 
	\begin{eqnarray*}
		\tilde{S}^1(u,w,\e) &=&
		u^2 T_4(u)
		+ \e T_2(u,\e)
		+ u^2 w T_5(u)
		+\e w T_3(u,\e)
		+ \e w^2 T_1(u,w,\e), \\
		\tilde{S}^2(u,w,\e) &=&
		u^3 T_9(u)
		+ \e u T_{10}(u)
		+ \e^2 T_7(u,\e)
		+ u^3 w T_{11}(u)
		+\e u w T_{12}(u) \\
		&+& \e w^2 T_8(w,\e)
		+\e w^2 T_6(u,w,\e) .
	\end{eqnarray*}
\end{prop}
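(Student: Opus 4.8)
The plan is to exhibit the coordinate change explicitly and then verify, by a finite Taylor computation, that it carries \ref{eq:NS1} into the stated normal form \ref{NS4}. The guiding observation is that, since $\varphi(v)=v$ on $(-1,1)$, the functions $F^i$ in \ref{Fdef} depend affinely on $v$ inside the regularization zone, so that after expanding $X^\alpha,Y^\alpha$ in Taylor series at the fold--fold point and using $\alpha=\delta\e$, \ref{eq:NS1} is a genuinely polynomial slow--fast system up to controlled remainders. Its layer (fast) critical set near the singularity consists of the two transverse curves $\{x=0\}$ and $\{v=m_0(x)\}$ meeting at $(0,\bv)$, with $\bv$ and $m_0$ as in \ref{IVslowmanifold}; this is precisely the geometry of the critical set $\tilde C_0$ of the transformed Krupa--Szmolyan system \ref{KS2} produced in \autoref{lem:KScritical}. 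Matching these two crossing configurations is what makes the identification possible, and is the reason one compares with \ref{KS2} rather than with \ref{eq:KSgeneral} directly.

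Concretely, I would proceed as follows. First, translate the crossing point to the origin by $\hat v = v-\bv$, keeping $\hat x=x$, so that $F^2$ has no term linear in $(\hat x,\hat v)$ and its lowest order part is $M_4\,\hat x\hat v + M_3\,\hat x^2+(M_1+M_2\delta)\e$, the $M_i$ being those of \ref{mis1}/\ref{mis}; these are exactly the coefficients already appearing in the inner equation of \autoref{proofcanard}. Second, introduce the shear and shift $w = \sigma\big(\hat v - m_0'(0)\hat x\big) - \tfrac12 + \cdots$ that straightens the slow manifold $\{v=m_0(x)\}$ into a near-horizontal graph and sends $\{x=0\}$ to $\{u=0\}$; the constant $-\tfrac12$ is essential, for it is the product of $M_4\hat x$ with this constant that manufactures the \emph{linear} term $u$ demanded by the first equation of \ref{KS2}, while $M_4\hat x\hat v$ supplies the $2uw$ term. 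Third, set $u=\rho\,\hat x+\cdots$ and choose the normalizing constants $\rho,\sigma$ (which will carry the factors $\sqrt{-A_3}$ and $A_6$ seen in \ref{NS4}, with $A_3$ proportional to $(\det{Z})_x(\0)<0$ so that $\sqrt{-A_3}$ is real) so that the quadratic and linear coefficients take the prescribed values. Finally, append near-identity nonlinear corrections to absorb the remaining monomials into the error terms. Reading off the coefficients of $u$, $w$, $u^2$, $uw$, $u^2w$, $\e$ and $\e w$ then yields the constants $A_1,\dots,A_{10}$ in the positions shown in \ref{NS4}.

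The routine but delicate part is the bookkeeping. Leading-order matching only fixes $\rho,\sigma$ and the crossing geometry; to obtain the precise values of $A_7,A_8,A_9,A_{10}$ --- which are exactly what later determine $\gamma_3,\gamma_4,\gamma_5$ and hence the sign of the coefficient $A$ and of $\ell_1$ in \autoref{th:KrupaSzmolyan} --- one must track the quadratic and $\e$-order terms through all three changes and through the nonlinear corrections, using the transversality \ref{foldsbe} to guarantee that the relevant coefficients (e.g. $A_6\propto M_4\neq0$) are nonzero. The main obstacle I anticipate is therefore not any single conceptual point but showing that \emph{all} remaining terms can be collected into the structured remainders $\tilde S^1,\tilde S^2$ with smooth bounded factors $T_i$: this requires a preparation/division argument, analogous to the one behind \autoref{lem:KScritical}, certifying that the error genuinely has the claimed monomial structure rather than merely being $\mathcal{O}$ of something. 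It is precisely the affine dependence on $v$ coming from the linearity of $\varphi$ that keeps this division elementary and the whole reduction exact at the order needed.
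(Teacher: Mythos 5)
Your framing is right on two counts: the comparison must be with \ref{KS2} rather than with \ref{eq:KSgeneral} directly (since $v=y/\e$ means \ref{eq:NS1} is already a ``rescaled-distance'' system), and your translation bookkeeping is internally consistent --- centering at the crossing point $(0,\bv)$ and then shifting $w$ by $-\tfrac12$ amounts, by \autoref{rem:vsbvposition} with $\varphi(v)=v$, to the same affine translation the paper uses, namely $\bar u=x$, $\bar v=(X^1+Y^1)(\0)+v(X^1-Y^1)(\0)$, which centers the \emph{equilibrium} $(0,v^*)$ and lets the crossing land at $w=-\tfrac12$ automatically through the scaling $\bv=\frac{2A_3}{A_6}w$. (Two incidental slips: the unit-coefficient linear term manufactured by $M_4\hat x\cdot\tfrac12$ sits in the \emph{slow} equation $\e\dot w=u-\kappa\e+\cdots$ of \ref{KS2}, not the first equation; and because the paper expands at $v^*$ rather than at $\bv$, the linear term is already present there, as $\partial_x F^2(0,v^*;0,0)=2(\det Z)_x(\0)/(X^1-Y^1)(\0)\neq 0$, whereas it vanishes at $\bv$.)

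The genuine gap is your shear $w=\sigma\bigl(\hat v-m_0'(0)\hat x\bigr)-\tfrac12+\cdots$: straightening the slow branch is incompatible with the target form \ref{NS4}, in which that branch has nonzero slope $w=-\tfrac12+\bigl(\frac{A_9}{A_6\sqrt{-A_3}}+\frac{A_{10}\sqrt{-A_3}}{A_6^2}\bigr)u+\mathcal{O}(u^2)$, encoded precisely by the explicit $u^2$ and $u^2w$ terms of the slow equation. A shear $w\mapsto w+cu$ with $c$ equal to that slope replaces the $u^2$ coefficient $-\frac{2A_9}{A_6\sqrt{-A_3}}$ by a different constant (via $2uw\mapsto 2uw+2cu^2$), changes the coefficient of $u$ in the fast equation from $\frac{A_1}{\sqrt{-A_3}}$ to $\frac{A_1}{\sqrt{-A_3}}-c$, and, through $-c\,\e\dot u$, injects a pure $\e w$ term that corrupts the $\frac{1}{\sqrt{-A_3}}(A_7+A_8\delta)$ slot. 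None of these can be absorbed by ``near-identity corrections'': $\tilde S^1$ has no $u$ slot and $\tilde S^2$ has no $u^2$ or $\e w$ slot. So your construction proves a \emph{differently normalized} variant of \ref{NS4}, with other constants exactly in the positions that later feed $\gamma_1,\gamma_5$ and the $u^2$-matching in \ref{g1}--\ref{g3}, invalidating the literal identification used in the proof of \autoref{prop:linearcanard} (the invariants $\delta_\mathcal{H},\delta_\mathcal{C},A$ could be recovered, but only after redoing that matching). The paper's proof needs none of this machinery: it performs only the affine translation above, reads the $A_i$ off the Taylor expansion at $(0,v^*)$ to get \ref{NS42}, and then applies the diagonal scaling $\bu=\frac{2\sqrt{-A_3}}{A_6}u$, $\bv=\frac{2A_3}{A_6}w$ with time change $\tau=\sqrt{-A_3}\,t$. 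Your final worry about a preparation/division argument is likewise moot in that route: since the change is affine and $\varphi$ is linear, $F^i$ is affine in $w$ with coefficients smooth in $(u,\e)$, so the monomial structure of $\tilde S^1,\tilde S^2$ follows from plain Taylor expansion with remainder.
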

Where the functions $T_i$ are smooth bounded functions.

\begin{proof} 
	We begin with a coordinate change which moves the critical point $P(0,0)$ to the origin, given by
	\begin{equation}
	\begin{cases}
	\bar{u} &= x, \\
	\bar{v} &=  (X^1 + Y^1)(\0) + v (X^1 - Y^1)(\0).
	\end{cases}
	\end{equation}
	obtaining the system 
	\begin{equation} \label{NS42}
	\begin{cases}
	\dot{\bu} &=  A_1 \bar{u} + \bar{v}(1+A_2 \bar{u} )
	+ \bu^2 T_4(\bar{u})
	+ \e T_2(\bar{u},\e)
	+ \bu^2 \bv T_5(\bar{u}) \\
	&+\e \bar{v} T_3(\bar{u},\e)
	+ \e \bar{v}^2 T_1(\bar{u},\bar{v},\e), \\
	\e \dot{\bv} 	&= A_3 \bu + (A_4 + A_5 \delta) \e + A_6 \bu \bv + (A_7 + A_8 \delta) \e \bv + A_9 \bu^2 \\
	&+ A_{10} \bu^2 \bv
	+ \bu^3 T_9(\bu)
	+ \e \bu T_{10}(\bu)
	+ \e^2 T_7(\bu,\e)
	+ \bu^3 \bv T_{11}(\bu) \\
	&+\e \bu \bv T_{12}(\bu)
	+ \e \bv^2 T_8(\bu,\e)
	+\e \bv^2 T_6(\bu,\bv,\e),
	\end{cases}
	\end{equation}
	where the constants $A_i$ $i=1,\dots, 10$ are given by
	\begin{align}
	A_1 		&=   (X^1_x + Y^1_x)(\0)+ v^* (X^1_x - Y^1_x)(\0) ), \label{A1exp} \vspace{0.2cm} \\
	A_2 		&= \frac{(X^1_x + Y^1_x)(\0)}{(X^1-Y^1)(\0)}, \label{A2exp}  \vspace{0.2cm} \\
	A_3 		&= 2 (\det{Z})_x(\0), \label{A3exp}  \vspace{0.2cm} \\
	A_4 		&= 2(\det{Z})_y(\0) v^*, \label{A4exp}  \vspace{0.2cm} \\
	A_5 		&= 2(\tilde Y^2 X^1 - \tilde X^2 Y^1)(\0), \label{A5exp}  \vspace{0.2cm} \\
	A_6 		&= (X^2_x-Y^2_x)(\0),  \label{A6exp}  \vspace{0.2cm} \\
	A_7 		&=  \frac{2(\det{Z})_y(\0)}{(X^1-Y^1)(\0)} +(X^2_y-Y^2_y)(\0)v^*, \label{A7exp}  \vspace{0.2cm} \\
	A_8 		&= (\tilde X^2 -\tilde Y^2)(\0), \label{A8exp}  \vspace{0.2cm} \\
	A_9 		&= \frac{1}{2} (X^1-Y^1)(\0) \left( (X^2_{xx}+Y^2_{xx})(\0) + v^* (X^2_{xx}-Y^2_{xx})(\0) \right), \label{A9exp} \vspace{0.2cm} \\
	A_{10}	&= \frac{1}{2}(X^2_{xx}-Y^2_{xx})(\0). \label{A10exp}
	\end{align}
	
	Now, as $A_3<0$, we can perform the scaling 
	\begin{equation}
	\begin{cases}
	\bu 		&= \displaystyle{\frac{2\sqrt{-A_3}}{A_6}} u,\\
	\bv 		&= \displaystyle{\frac{2 A_3}{A_6} } w, \\
	\tau 	&= \left(  \sqrt{-A_3} \right) t,
	\end{cases}
	\end{equation}
	system \ref{NS42} becomes the desired one, given in \ref{NS4}.
\end{proof}

Using  \autoref{lem:KScritical} and \autoref{NSchange} we can apply \autoref{th:KrupaSzmolyan} to system \ref{NS4}, 
which correspond to the regularized system $Z^\alpha_\e$. Now we can finally prove \autoref{prop:linearcanard}

\begin{proof}{Proof of \autoref{prop:linearcanard}}
	
	By Lemma \ref{lem:KScritical} and Proposition \ref{NSchange} we have that system \ref{eq:KSgeneral} and the regularized System \eqref{NS42} can be identified with
	the following relations
	\begin{align}
	\gamma_1 						&= \frac{A_1}{\sqrt{-A_3}}, \label{g1} \\
	\gamma_2 						&= \frac{2A_2 \sqrt{-A_3}}{A_6}, \\
	\gamma_5 						&= \frac{1}{\sqrt{-A_3}}(A_7 + A_8 \delta), \\
	\gamma_4 						&=-\frac{2 A_9}{A_6 \sqrt{-A_3}} -\gamma_5 + 2 \gamma_1,  \\
	3 \gamma_3 						&= \frac{4A_{10}\sqrt{-A_3}}{A_6^2} +\gamma_2, \label{g3}\\
	\kappa 							&= -\frac{A_6}{2A_3\sqrt{-A_3}}\left( A_4 + A_5 \delta \right). \label{kappa}
	\end{align}
	
	Recall that in our case, the parameter $\lambda$ from \autoref{th:KrupaSzmolyan} is $\lambda=\kappa \e$, thus the critical point undergoes to a Hopf bifurcation at $\lambda_{H} =\kappa_{H} \e$.
	\begin{eqnarray*}
		\lambda_{H}	= \kappa_H \e	&=& -\frac{ \gamma_1 + \gamma_5}{2} \e +\mathcal{O}(\e^2),
	\end{eqnarray*}
	Then the value of $\kappa$ in which that Hopf bifurcation occurs is
	\begin{equation} \label{kappaH}
	\kappa_{H} = -\frac{ \gamma_1 + \gamma_5}{2} + \mathcal{O}(\e).
	\end{equation}
	
	Moreover, combining expressions \ref{kappa} and \ref{kappaH}, it follows that
	
	\begin{align} \label{dH}
	-\frac{1}{2\sqrt{-A_3}}\left( \frac{A_4 A_6}{A_3} +\frac{A_5 A_6}{A_3} \delta_{H} \right) &= -\frac{1}{2\sqrt{-A_3}} \left( A_1 + A_7 + A_8 \delta_{H}  \right) + \mathcal{O}(\e).
	\end{align}
	
	\noindent Isolating $\delta_{H}$ in \autoref{dH} we have
	\begin{equation} \label{dH1}
	\delta_H = \frac{1}{\displaystyle{\frac{A_5 A_6}{A_3}} - A_8} \left( A_1 + A_7 -\frac{A_4 A_6}{A_3} \right) + \mathcal{O}(\e).
	\end{equation}
	\noindent where a straightforward computation gives us
	\begin{align}
	&\begin{aligned} \label{dHnum}
	A_1 + A_7 -\frac{A_4 A_6}{A_3} &=M(Z),
	\end{aligned} \\
	&\begin{aligned} \label{dHdem}
	\frac{A_5 A_6}{A_3} - A_8 &= -  N(Z,\tilde{Z}),
	\end{aligned}
	\end{align}
	where $M(Z)$ and $N(Z,\tilde{Z})$ are the constants computed in \autoref{corol:regnumbers}. From \ref{dHnum} and \ref{dHdem}, it follows that
	\begin{equation}
	\delta_{H} = - \frac{M(Z)}{N(Z,\tilde{Z})} + \mathcal{O}(\e),
	\end{equation}
	which coincides with the Hopf bifurcation value computed in \autoref{prop:toptype} setting $\alpha = \delta \e.$

	From now on we are going to compute the Canard value $\delta_c$. From \autoref{th:KrupaSzmolyan} the Canard happens for
	$$
	\kappa_\mathcal{C} = - \left(  \frac{\gamma_1 + \gamma_5}{2} + \frac{A}{8} \right) +\mathcal{O}(\sqrt{\e}).
	$$
	Using equations \ref{g1} to \ref{g3}, we have the following expression for $A$
	\begin{align*}
	A 	= -\gamma_2 +3\gamma_3-2\gamma_4-2\gamma_5 = \frac{4}{\sqrt{-A_3}} \left( -\frac{A_{10} A_3}{A_6^2} + \frac{A_9}{A_6} - A_1 \right).
	\end{align*}
	
	Using the same argument as above, \autoref{th:KrupaSzmolyan} implies that
	\begin{align*}
	-\frac{1}{2\sqrt{-A_3}}\left( \frac{A_4 A_6}{A_3} +\frac{A_5 A_6}{A_3} \delta_{C} \right) &= -\frac{1}{2\sqrt{-A_3}} \left( A_1 + A_7 + A_8 \delta_{C} -\frac{A_{10} A_3}{A_6^2} + \frac{A_9}{A_6} - A_1  \right) \\
	&+ \mathcal{O}(\sqrt{\e}).
	\end{align*}
	Isolating $\delta_c$ and using the previous computations, we obtain
	\begin{equation} \label{dC}
	\delta_c = - \frac{M(Z)}{N(Z,\tilde{Z})} + \bar{A} + \mathcal{O}(\sqrt{\e}),
	\end{equation}
	
	with $$\bar{A}=-\frac{1}{N(Z,\tilde{Z})} \left(-\frac{A_{10} A_3}{A_6^2} + \frac{A_9}{A_6} - A_1 \right).$$
	Observe that, in this particular case, $-N(Z,\tilde{Z})>0$ then $\sgn{A} =\sgn{\bar{A}}$. Therefore, $\delta_c > \delta_{H}$, if $A>0$ and $\delta_c < \delta_{H}$, if $A<0$.
	
\end{proof}

Since $\alpha = \delta \e$, this proposition gives us the existence of a curve $\mathcal{C}$ given by
\begin{equation}
\mathcal{C} = \left\{ (\alpha,\e) : \alpha(\e) = \delta_C \e + \mathcal{O}\left( \e^\frac{3}{2} \right) \right\}.
\end{equation}

Observe that the $\delta_H$ obtained in \autoref{prop:linearcanard} coincide with the value we had obtained in general in \autoref{prop:toptype}, 
if we set $\varphi(v)=v$. 
Moreover, a straightforward computation shows us that the $\delta_C$ given by this proposition also coincide with the value obtained in \autoref{prop:canard} 
for the Canard trajectory when $\varphi(v)=v$. 

	\clearpage
	\printbibliography[title=References]
	
\end{document}